\documentclass[reqno,oneside,12]{amsart}

\usepackage[a4paper, top=3.5cm, bottom=3.5cm, left = 3.5cm, right =3.5cm]{geometry}

\usepackage[utf8]{inputenc}
\usepackage[T1]{fontenc}
\usepackage[svgnames,hyperref]{xcolor}
\usepackage{amssymb}
\usepackage{lmodern}
\usepackage{bbm} 


\usepackage{tikz}
\usetikzlibrary{decorations.markings, arrows.meta, matrix, positioning, calc, angles,quotes}
\usepackage{tikz-cd}
\tikzset{
  typeone/.style={circle,draw=black,thick,inner sep=0pt,minimum size=2.3mm},
  typetwo/.style={circle,draw=black,fill=black,thin,inner sep=0pt,minimum size=2.3mm},
  typethree/.style={rectangle,draw=black,fill=black,thin,inner sep=0pt,minimum size=2.3mm}
}
\tikzset{mid arrow/.style={white!65!black, decoration={
  markings,
  mark=at position .5 with {\arrow{>}}},postaction={decorate}}}
\definecolor{lightblue}{rgb}{0.8,0.85,1}
\newcommand{\trip}[3]{{[#1,\!#2,\!#3]}}
\newcommand{\bary}[3]{barycentric cs:111={3*(#1)},110={2*(#2)},100={#3}}
\newcommand{\tbary}[3]{barycentric cs:100={#1},010={#2},001={#3}}

\usepackage[english, french]{babel}
\newcommand{\newabstract}[1]{%
  \par\bigskip
  \csname otherlanguage*\endcsname{#1}%
  \csname captions#1\endcsname
  \item[\hskip\labelsep\scshape\abstractname.]
}

\usepackage[shortlabels]{enumitem}
\setlist[enumerate]{label=\rm{(\roman*)}}
\setlist[itemize]{label=\raisebox{0.25ex}{\tiny$\bullet$}}

\theoremstyle{plain}
\newtheorem{maintheorem}{Théorème} 
\newtheorem{maincorollary}[maintheorem]{Corollaire} 
\newtheorem{theorem}{Théorème}[section]
\newtheorem{corollary}[theorem]{Corollaire}
\newtheorem{proposition}[theorem]{Proposition}
\newtheorem{lemma}[theorem]{Lemme}

\theoremstyle{definition}
 
\newtheorem{definition}[theorem]{Définition}
\newtheorem{question}[theorem]{Question}

\newtheorem{example}[theorem]{Exemple}
\newtheorem{remark}[theorem]{Remarque}


\makeatletter
\let\c@equation\c@theorem
\makeatother



\newcommand{\R}{\mathbb{R}}

\newcommand{\F}{\mathbb{F}}

\newcommand{\A}{{\mathbbm{k}}}

\newcommand{\W}{\Pi}
\newcommand{\PW}{\nabla}
\newcommand{\K}{\mathbbm{k}}

\newcommand{\Al}{\mathcal{A}}
\newcommand{\Bl}{\mathcal{B}}
\newcommand{\Cl}{\mathcal{C}}
\newcommand{\Dl}{D}
\newcommand{\Fl}{\mathcal{F}}
\newcommand{\Gl}{\mathcal{G}}
\newcommand{\Hl}{\mathcal{H}}
\newcommand{\Kl}{\mathcal{K}}

\newcommand{\Vl}{\mathcal{V}}
\newcommand{\Xl}{\mathbf{X}}


\DeclareMathOperator{\GL}{GL}
\DeclareMathOperator{\Diff}{Diff}
\DeclareMathOperator{\SL}{SL}


\DeclareMathOperator{\Aut}{Aut}

\DeclareMathOperator{\Stab}{Stab}

\DeclareMathOperator{\CAT}{CAT}
\DeclareMathOperator{\Tame}{Tame}

\DeclareMathOperator{\car}{car}
\DeclareMathOperator{\supp}{Supp}
\DeclareMathOperator{\Fix}{Fix}
\DeclareMathOperator{\mult}{mult}

\newcommand{\TA}{\Tame(\A^n)}
\newcommand{\Comp}{\mathcal{C}}
\newcommand{\CComp}{\mathbf{Y}}
\newcommand{\eps}{\varepsilon}
\renewcommand{\phi}{\varphi}
\renewcommand{\setminus}{\smallsetminus}
\newcommand{\id}{\text{\rm id}}
\newcommand{\Ap}{\mathbf E}
\newcommand{\DD }{\mathbf D}
\newcommand{\V}{\mathbf V}
\renewcommand{\le}{\leqslant}
\renewcommand{\ge}{\geqslant}
\renewcommand{\leq}{\leqslant}
\renewcommand{\geq}{\geqslant}

\newcommand{\Td}{\Aut(\A^2)}
\newcommand{\Tt}{\Tame(\A^3)}
\newcommand{\pr}{\mathrm{pr}}
\newcommand{\wgt}{\rho_+}

\newcommand{\mycolor}{Navy}
\usepackage[pdfauthor={S. Lamy \& P. Przytycki}, colorlinks, linktocpage, citecolor = \mycolor,
linkcolor = \mycolor, urlcolor = \mycolor]{hyperref}
\usepackage[all]{hypcap} 

\addto\captionsfrench{}

\title{Presqu'un immeuble pour le groupe des automorphismes modérés}
\date{\today}
\author{St\'ephane Lamy \& Piotr Przytycki}
\thanks{S.L. partiellement supporté par le projet ``BirPol'' ANR-11-JS01-004-01 et par le CRM, UMI CNRS Montréal.}
\address{Institut de Mathématiques de Toulouse UMR 5219, Université de Toulouse, UPS
F-31062 Toulouse Cedex 9, France}
\email{slamy@math.univ-toulouse.fr}

\address{Dept. of Math. \& Stats., McGill University, Montreal, Quebec, Canada H3A 0B9}
\email{piotr.przytycki@mcgill.ca}
\thanks{P.P. partiellement supporté par NSERC, FRQNT, et National Science Centre, Poland, UMO-2015/\-18/\-M/\-ST1/\-00050.}

\keywords{Automorphisme modéré; valuation; espace CAT(0)}
\subjclass[2010]{14R10,20F67,51E24}

\begin{document}

\begin{abstract}
Inspirés par l'immeuble de Bruhat--Tits du groupe $\SL_n(\F)$, pour $\F$ un corps valué, nous construisons un espace métrique complet $\Xl$ sur lequel agit le groupe $\TA$ des automorphismes modérés de l'espace affine.
Les points de $\Xl$ sont certaines valuations monomiales, et $\Xl$ admet une structure naturelle de CW-complexe euclidien de dimension $n-1$.
Quand $n = 3$, et pour $\K$ de caractéristique zéro, nous prouvons que $\Xl$ a courbure négative ou nulle et est simplement connexe, et par conséquent est un espace $\CAT(0)$.
En application nous obtenons la linéarisabilité des sous-groupes finis de $\Tame(\A^3)$.

\newabstract{english}
Inspired by the Bruhat--Tits building of $\SL_n(\F)$, for $\F$ a valuation field, we construct a complete metric space $\Xl$ with an action of the tame automorphism group of the affine space $\TA$.
The points in $\Xl$ are certain monomial valuations, and $\Xl$ admits a natural structure of Euclidean CW-complex of dimension $n-1$.
When $n = 3$, and for $\K$ of characteristic zero, we prove that $\Xl$ has non-positive curvature and is simply connected, hence is a $\CAT(0)$ space.
As an application we obtain the linearizability of finite subgroups in $\Tame(\A^3)$.
\end{abstract}

\maketitle

\setcounter{tocdepth}{3}

\section{Introduction}

Soit $\K$ un corps, et $n \ge 2$.
On note $\Aut(\A^n)$ le groupe des automorphismes polynomiaux de l'espace affine de dimension $n$ sur $\K$.
Un système de coordonnées $(x_1, \dots, x_n)$ étant fixé, un élément $g \in \Aut(\A^n)$ s'écrit
\[
g \colon (x_1, \dots, x_n) \mapsto (g_1, \dots, g_n)
\]
où les $g_i$ sont des polynômes en les $x_i$, et l'inverse de $g$ admet une écriture de la même forme.

Le groupe $\Tame(\A^n)$ des \emph{automorphismes modérés} de l'espace affine est le sous-groupe de $\Aut(\A^n)$ engendré par le groupe linéaire $\GL_n(\K)$ et par les automorphismes élémentaires, ou «transvections polynomiales», de la forme
\[
(x_1, \dots, x_n) \mapsto (x_1 + P(x_2, \dots, x_n), x_2, \dots, x_n).
\]
Nous étudions la structure du groupe modéré, en particulier en dimension $n = 3$ et sur un corps de base $\K$ de caractéristique nulle.
Après avoir obtenu l'existence de nombreux sous-groupes normaux dans \cite{LP}, nous nous intéressons dans le présent travail à la question de la classification de ses sous-groupes finis à conjugaison près.
Notre motivation à long terme est d'établir un panorama des propriétés que l'on peut attendre du groupe entier des automorphismes polynomiaux, voire même du groupe de Cremona, qui est le groupe des transformations birationnelles (et non plus seulement bipolynomiales) de l'espace affine.
Nous renvoyons aux introductions de \cite{BFL} et \cite{LP} pour plus de détails sur l'articulation entre ces différents groupes, et un historique des développements récents.
Nous nous concentrons dans cette introduction sur les aspects spécifiques à ce travail.

Un sous-groupe $G$ de $\Aut(\A^n)$ est dit \emph{linéarisable} s'il existe $\phi \in \Aut(\A^n)$ tel que $\phi G \phi^{-1} \subset \GL_n(\K)$.
Il est connu que, pour un corps $\K$ de caractéristique nulle, tout sous-groupe fini $G$ de $\Aut(\A^2)$ est linéarisable.
Rappelons l'argument, qui se résume simplement, et qui servira de modèle pour passer en dimension 3.
On considère l'action de $G$ sur l'arbre de Bass--Serre associé à la structure de produit amalgamé pour $\Tame(\K^2) = \Aut(\A^2)$ (c'est le théorème de Jung--van der Kulk, voir par exemple \cite{LamyJung}).
Celle-ci admet toujours au moins un point fixe, ainsi $G$ est conjugué à un sous-groupe de l'un des facteurs du produit amalgamé.
Finalement, les sous-groupes finis de ces deux facteurs sont linéarisables par un critère de moyennisation général, voir lemme~\ref{lem:abstract linearization}.
En dimension plus grande, l'arbre de Bass--Serre se généralise en un complexe simplicial $\Comp_n$ de dimension $n-1$ sur lequel le groupe modéré agit avec pour domaine fondamental un simplexe.
C'est ce complexe $\Comp_3$ que nous avons utilisé dans notre précédent travail \cite{LP} pour obtenir l'hyperbolicité acylindrique, qui implique en particulier l'existence de sous-groupes normaux dans $\Tame(\A^3)$ \cite{DGO}.
C'est également avec une variante de cette construction qu'a été établi dans \cite{BFL} un résultat analogue de linéarisation des sous-groupes finis pour le groupe $\Tame(V)$ des automorphismes modérés d'une quadrique affine $V \subset \K^4$.
Dans ce dernier contexte, le complexe  sur lequel agit naturellement $\Tame(V)$ est un complexe carré $\CAT(0)$, cette propriété de courbure négative ou nulle assurant l'existence d'un point fixe global pour toute action de groupe fini.
Pour étendre ces résultats au groupe $\Tame(\A^3)$, le problème que nous avons dû contourner est que l'étude du link des sommets montre que les triangles de $\Comp_3$ ne peuvent pas être munis d'une structure euclidienne équivariante qui en fasse un espace $\CAT(0)$.
Soulignons également que bien que $\Comp_3$ soit un complexe contractile de dimension 2 \cite[Theorem A]{LP}, à notre connaissance cela reste une question ouverte de savoir si  cela implique l'existence d'un point fixe pour toute action d'un groupe fini (voir \cite[p.205]{OS}).

Dans le présent article, nous introduisons un nouvel espace $\Xl_n$ sur lequel agit le groupe modéré $\TA$ (section~\ref{sec:valuation}).
L'espace $\Xl_n$,
dont les points sont certaines valuations monomiales modulo homothétie,
est lui aussi un CW-complexe dont les cellules sont des régions euclidiennes (mais en général pas des polyèdres) de dimension au plus $n-1$, et sa construction est inspirée de l'immeuble de Bruhat--Tits de $\SL_n(\F)$, pour $\F$ un corps valué.
Une propriété agréable de l'espace $\Xl_n$ est que les stabilisateurs pour l'action de $\TA$ sont conjugués à des produits semi-directs $M \rtimes L$ avec $M$ un sous-groupe d'automorphismes triangulaires stables par moyenne et $L$ un sous-groupe du groupe linéaire (proposition~\ref{pro:stabilisateur}), ce qui par le critère général de linéarisation déjà mentionné permet d'obtenir que tout sous-groupe fini d'un tel stabilisateur est linéarisable.
Reste alors à montrer que tout groupe fini agit sur l'espace $\Xl_n$ avec un point fixe global.
Comme dans les exemples mentionnés plus haut, ceci découle automatiquement une fois que l'on a pu munir $\Xl_n$ d'une distance qui en fait un espace $\CAT(0)$ complet, en considérant le «circumcenter» d'une orbite quelconque.
La construction d'une telle distance est le résultat principal de cet article.

Tout d'abord en dimension $n \ge 2$ quelconque et sur un corps $\K$ arbitraire nous construisons  $\Xl_n$ comme le quotient d'une union de copies de l'espace euclidien $\R^{n-1}$, que nous appelons appartements.
Nous montrons que la pseudo-distance quotient induite est une distance, qui fait de $\Xl_n$ un espace de longueur complet (proposition~\ref{pro:metrique} et lemme~\ref{lem:complet}).
Ensuite, restreignant l'étude au cas $n = 3$ et au cas d'un corps $\K$ de caractéristique nulle, nous montrons que $\Xl_3$ est simplement connexe (proposition~\ref{pro:X3 1-connexe}).
Ici la restriction sur la caractéristique de $\K$ vient de la théorie des réductions de Shestakov--Umirbaev et Kuroda,
dont une conséquence fameuse est l'inclusion stricte $\Tt \subsetneq \Aut(\K^3)$,
mais que nous utilisons ici seulement au travers de la description de $\Tt$ comme un produit amalgamé de trois facteurs le long de leurs intersections respectives.
Enfin, en nous appuyant sur l'étude des intersections entre appartements (section~\ref{sec:pointfixes}) nous montrons que $\Xl_3$ est localement à courbure négative ou nulle (proposition~\ref{pro:courbure negative}).
Finalement, par le théorème de Cartan--Hadamard on conclut (voir section~\ref{sec:princ}):

\begin{maintheorem} \label{thm:main}
Sur un corps $\K$ de caractéristique nulle, l'espace $\Xl_3$ est un espace métrique complet $\mathrm{\CAT(0)}$.
\end{maintheorem}

En corollaire, suivant la stratégie exposée plus haut nous obtenons :

\begin{maincorollary} \label{cor:main}
Sur un corps $\K$ de caractéristique nulle, tout sous-groupe fini de $\Tt$ est linéarisable.
\end{maincorollary}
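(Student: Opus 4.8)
The plan is to deduce the corollary from Theorem~\ref{thm:main} by the fixed-point strategy already announced in the introduction, so that the entire substantive content is packaged into the $\CAT(0)$ property and the description of stabilizers. Let $G \subset \Tt$ be a finite subgroup. First I would invoke Theorem~\ref{thm:main}: since $\K$ has characteristic zero, $\Xl_3$ is a \emph{complete} $\CAT(0)$ metric space, and by construction $\Tt$ acts on it by isometries, hence so does $G$.

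Second, I would produce a global fixed point for $G$. Choosing any base point $x \in \Xl_3$, the orbit $G \cdot x$ is finite, hence bounded, and $G$-invariant. The classical Bruhat--Tits fixed-point lemma then applies: in a complete $\CAT(0)$ space every bounded set has a unique circumcenter, and this circumcenter is preserved by any isometry stabilizing the set. Applying this to the orbit $G \cdot x$ yields a point $p \in \Xl_3$ fixed by every element of $G$, so that $G \subset \Stab(p)$.

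Third, I would bring in the structure of stabilizers. By Proposition~\ref{pro:stabilisateur}, the stabilizer $\Stab(p)$ is conjugate in $\Tt$ to a semidirect product $M \rtimes L$, where $M$ is a subgroup of triangular automorphisms stable under averaging and $L \subset \GL_3(\K)$ is linear. After replacing $G$ by a conjugate, I may therefore assume $G \subset M \rtimes L$. It then remains to invoke the abstract linearization criterion, Lemma~\ref{lem:abstract linearization}: as $G$ is finite and $\K$ has characteristic zero, one can average over $G$ to conjugate it into the linear factor $L$, and composing with the conjugacy from the previous step shows $G$ is linearizable.

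Since Theorem~\ref{thm:main}, Proposition~\ref{pro:stabilisateur}, and Lemma~\ref{lem:abstract linearization} are all available, the argument is essentially formal: every genuine difficulty has been absorbed into the construction and geometry of $\Xl_3$. The only hypothesis that must be tracked carefully is the characteristic-zero assumption on $\K$, which enters twice --- once through Theorem~\ref{thm:main} (via the Shestakov--Umirbaev--Kuroda amalgamated product structure underlying the simple connectedness of $\Xl_3$) and once in the averaging step of the final linearization. The only point that requires a word of justification rather than citation is that the circumcenter construction needs completeness, which is precisely what is guaranteed by Theorem~\ref{thm:main}, so no separate argument is needed there.
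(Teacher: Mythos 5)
Your proposal is correct and takes essentially the same approach as the paper: a global fixed point via the circumcenter of an orbit in the complete $\CAT(0)$ space $\Xl_3$ (Theorem~\ref{thm:main}), conjugation so that the fixed point lies in the standard chamber, identification of its stabilizer as $M_\alpha \rtimes L_\alpha$ (Proposition~\ref{pro:stabilisateur}), and linearization by Lemma~\ref{lem:abstract linearization}. The only detail the paper states explicitly and that you fold into the statement of Proposition~\ref{pro:stabilisateur} is the (immediate) verification that $M_\alpha$ is stable under averaging, which is what makes Lemma~\ref{lem:abstract linearization} applicable.
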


En dimension $n = 2$ la construction précédente produit un arbre $\Xl_2$ qui est non isométrique (et même non isomorphe de manière équivariante) à l'arbre de Bass--Serre de $\Aut(\A^2)$, ce qui était une première indication que ce complexe contient potentiellement des informations nouvelles sur le groupe modéré.
Concernant la possibilité de généraliser nos résultats en dimension plus grande, rappelons tout d'abord qu'il existe des sous-groupes finis non linéarisables dans $\Aut(\A^4)$.
Par exemple, suivant \cite{FMJ}, si $\K$ contient 3 racines cubiques de l'unité, $1$, $\omega$ et $\omega^{-1}$, alors l'action sur $\A^4$ du groupe symétrique $S_3 = \langle \sigma, \tau \mid \sigma^3 = \tau^2 = (\sigma \tau)^2 = 1\rangle$ définie comme suit est non linéarisable:
\begin{align*}
\sigma(a,b,x,y) &= (\omega a, \omega^{-1} b, x,y), \\
\tau(a,b,x,y) &= (b,a, -b^3x + (1+ab+a^2b^2)y, (1-ab)x + a^3y ).
\end{align*}
On peut soupçonner que $\tau$ est un automorphisme non modéré, mais ceci relève de la question ouverte de savoir si l'inclusion $\TA \subset \Aut(\K^n)$ est stricte en dimension $n \ge 4$.
En revanche, comme $\tau$ après composition par l'involution $(a,b,x,y) \mapsto (b,a,y,x)$ s'identifie à
un élément de $\SL_2(\K[a,b])$, il est connu grâce à \cite{Suslin} que cet exemple devient modéré après extension à $\K^5$, en étendant trivialement l'action sur la  variable supplémentaire.
En effet, Suslin prouve que pour tout $r \ge 3$, le groupe $\SL_r(\K[x_1, \dots, x_n])$, que l'on peut penser comme un sous-groupe de $\Aut(\K^{r + n})$, est engendré par les matrices élémentaires, qui sont des automorphismes modérés particuliers.
Mentionnons que cette propriété d'être «stablement modéré», c'est-à-dire modéré après extension triviale à quelques variables supplémentaires, n'est pas valable seulement pour les automorphismes dépendant linéairement d'une partie des variables: par \cite[Theorem 4.10]{BEW}, tout automorphisme polynomial en les deux variables $x,y$ et à coefficients dans l'anneau $\K[a,b]$ devient modéré après extension à $\A^6$.
Par ailleurs, toujours suivant \cite{FMJ}, l'exemple précédent reste non-linéarisable dans $\Aut(\A^{4+m})$, si l'on prolonge trivialement l'action de $S_3$ à un nombre $m$ quelconque de variables supplémentaires.
En combinant avec le résultat de Suslin, pour tout $n \ge 5$ on obtient donc un sous-groupe fini non linéarisable de $\Tame(\K^n)$;
autrement dit le corollaire~\ref{cor:main}, et donc également le théorème~\ref{thm:main}, ne sont plus valables en dimension $n \ge 5$.
Il serait cependant intéressant d'étudier si l'un des deux ingrédients de la propriété $\CAT(0)$ persiste, à savoir la simple connexité ou la courbure négative ou nulle.
Enfin, le cas de la dimension $n = 4$ reste ouvert, mais semble difficile en l'absence d'une théorie des réductions.

L'espace $\Xl_n$ est construit en considérant l'orbite sous l'action de $\TA$ de l'ensemble des valuations monomiales associées au système de coordonnées $x_1, \dots, x_n$.
On pourrait tout aussi bien considérer l'action du groupe $\Aut(\K^n)$, et obtenir un espace plus grand sur lequel agit le groupe entier des automorphismes polynomiaux.
Le point est que ce nouvel espace n'est plus connexe, et la composante connexe contenant les valuations monomiales initiales est précisément notre espace $\Xl_n$.
Pour éviter de multiplier les notations nous avons fait le choix de nous restreindre dès le départ à l'action du groupe modéré, cependant le lecteur intéressé pourra vérifier que la plupart des énoncés généraux (en particulier dans les sections~\ref{sec:valuation} et~\ref{sec:stabilizers}) resteraient valables pour le groupe $\Aut(\K^n)$, avec une preuve inchangée.

Nous voyons l'espace $\Xl_n$ muni de l'action de $\TA$ comme un analogue de l'immeuble affine de Bruhat--Tits associé a $\SL_n(\F)$, pour $\F$ un corps valué.
Nous avons en tête la construction via les normes ultramétriques (voir \cite{BT, Parreau2000}), où un appartement est associé à chaque base de $\F^n$ en faisant varier la pondération des normes ultramétriques associées.
Dans notre situation on peut voir chaque $f \in \TA$ comme donnant une base de $\K$-algèbre de $\K[x_1,\dots,x_n] = \K[f_1,\dots,f_n]$, et lui associer une collection $\Ap_f$ de valuations monomiales naturellement paramétrée par un espace euclidien $\R^{n-1}$, que nous appelons l'appartement associé à $f$.
Une autre similitude est que chaque tel appartement contient une valuation particulière (celle dont tous les poids sont égaux), dont un voisinage dans $\Xl_n$ est isométrique au cône sur l'immeuble sphérique associé à $\GL_n(\K)$.
Cependant, comme nous le montrons dans l'appendice~\ref{sec:appart}, pour $n\geq 3$ l'espace $\Xl_n$ n'est
pas la réalisation de Davis \cite[Definition~12.65]{AB} d'un immeuble,
car la propriété cruciale «par deux points passe un appartement» \cite[Definition~4.1(B1)]{AB} est mise en défaut, même localement.
Mentionnons également que tout élément dans le stabilisateur d'un appartement $\Ap_f$ agit sur celui-ci comme une matrice de permutation, et jamais comme une translation comme on aurait pu l'anticiper.
En particulier un domaine fondamental pour l'action de $\TA$ sur $\Xl_n$ est une chambre de Weyl entière, et donc n'est pas compact (corollaire~\ref{cor:intrin}\ref{cor:intrin2}).
Il n'est pas clair pour nous si certaines notions affaiblies d'immeuble (par exemple les
«masures» de Gaussent et Rousseau \cite{GR}) pourraient englober notre construction.

Par définition $\Xl_n$ est inclus dans l'espace de toutes les valuations sur l'anneau $\K[x_1, \dots, x_n]$, étudiés en particulier dans de précédents travaux par Boucksom, Favre et Jonsson \cite{FJ, BFJ}.
Ces auteurs étudient principalement l'espace des valuations centrées à l'origine, mais le cas des valuations centrées à l'infini est similaire, et est traité en détail dans le cas $n = 2$ dans \cite[Appendix A]{FJ}.
En particulier notre arbre $\Xl_2$ est un sous-arbre de l'arbre $\Vl_1$ introduit dans \cite[\S A.3]{FJ}, mais avec des choix de normalisation différents.
Le cas de la dimension $n$ quelconque est traité dans \cite{BFJ}, et une structure affine simpliciale est mise en évidence qui en restriction à l'espace $\Xl_n$ correspond à nos appartements.
Il serait intéressant d'explorer les relations entre les propriétés de l'espace de toutes les valuations et celles du sous-espace $\Xl_n$ que nous considérons dans cet article.

Une autre question naturelle serait de comprendre les axes des isométries loxodromiques sur $\Xl_3$.
Une telle étude pourrait déboucher sur une alternative de Tits pour $\Tame(\A^3)$, ou sur une compréhension des propriétés dynamiques des éléments de $\Tame(\A^3)$, comme la croissance des degrés sous itération.

\section*{Remerciements}

L'inspiration initiale de ce projet vient d'un mini-cours sur les immeubles euclidiens donné par Anne Parreau en août 2015 à Nancy, dans le cadre de la conférence Non-Positive Curvature \& Infinite Dimension, à laquelle participait le premier auteur.
Merci à Anne Parreau pour ces lumineux exposés, à Junyi Xie pour de nombreuses discussions sur les valuations, et à Pierre-Marie Poloni qui nous a indiqué la référence \cite{BEW}.

\section{Espace de valuations}
\label{sec:valuation}

\subsection{Valuations}

Une \emph{valuation} sur l'anneau $\K[x_1, \dots, x_n]$ est une fonction
\[
\nu\colon \K[x_1, \dots, x_n] \to \R \cup \{+\infty\}
\]
telle que, pour tous polynômes $P_1, P_2$ :
\begin{itemize}
\item $\nu(P_1 + P_2) \ge \min \{ \nu(P_1), \nu(P_2) \}$;
\item $\nu(P_1P_2) = \nu(P_1) + \nu(P_2)$;
\item $\nu(P) = 0$ pour tout polynôme $P$ constant et non nul;
\item $\nu(P) = +\infty$ si et seulement si $P = 0$.
\end{itemize}

On note $\Vl_n$ l'ensemble des classes de telles valuations, modulo homothétie par un réel positif.
Si $P \in \K[x_1, \dots, x_n]$ est un polynôme et $g = (g_1, \dots, g_n) \in
\Aut(\A^n)$ est un automorphisme, on note $g^*P$ le polynôme $P(g_1,\dots, g_n)$.
Le groupe $\Aut(\A^n)$ agit (à gauche) sur l'ensemble des valuations via la formule :
\[
(g \cdot \nu)(P) := \nu( g^*(P) ).
\]
Cette action commute avec les homothéties, on obtient ainsi une action de $\Aut(\A^n)$ sur $\Vl_n$.

Nous décrivons maintenant certaines \emph{valuations monomiales} formant un sous-ensemble de~$\Vl_n$.
Nous notons $\W$ le quadrant positif de $\R^n$, c'est-à-dire
\[\W = \{\alpha = (\alpha_1, \dots, \alpha_n) \in \R^n;\, \alpha_i > 0 \text{ pour tout } i\}.
\]
Nous dirons que $\W$ est l'\emph{espace des poids}.
On définit également le sous-espace des poids bien ordonnés
\[
\W^+ = \{\alpha = (\alpha_1, \dots, \alpha_n);\,
\alpha_1 \ge \alpha_2 \ge \dots \ge \alpha_n > 0\} \subset \W.
\]
Pour tout $\alpha = (\alpha_1, \dots, \alpha_n) \in 
\W$, nous notons $[\alpha_1, \dots, \alpha_n]$, ou simplement $[\alpha]$, la classe de $\alpha$ modulo homothétie par un réel positif, et nous notons $\PW$ la projectivisation de~$\W$, qui est un simplexe ouvert de dimension $n-1$.
De même $\PW^+\subset \PW$ désignera le simplexe semi-ouvert qui est la projectivisation de $\W^+$.
On équipe $\PW$ et $\PW^+$ de la topologie induite par $\mathbb{P}^{n}(\R)$.
En particulier, nous dirons qu'un poids $\alpha \in \PW^+$ vérifiant $\alpha_1 > \alpha_2 > \dots > \alpha_n$ est à l'intérieur de $\PW^+$, et au contraire que $\alpha$ est dans la frontière de $\PW^+$ s'il existe deux indices $i > j$ tel que $\alpha_i = \alpha_j$.
Il sera souvent utile de prolonger ces définitions pour inclure des poids avec certains coefficients nuls, qu'on appellera le
\emph{bord à l'infini} de $\PW$.

Étant donné un poids $\alpha = (\alpha_1, \dots, \alpha_n) \in \W$, on définit une valuation monomiale $\nu_{\id,\alpha}$ comme suit.
Pour tout
\[
P = \sum_{I = (i_1,\dots, i_n)} c_{I}
x_1^{i_1} \dots x_n^{i_n} \in \K[x_1, \dots, x_n],
\]
on appelle support de $P$, noté $\supp P$, l'ensemble des multi-indices $I$ tels que
$c_{I} \neq 0$, et l'on pose:
\[
\nu_{\id,\alpha} \left( P \right)
= \min_{I \in \supp P} \left( - \sum_{k=1}^n \alpha_k i_k \right).
\]
Autrement dit $-\nu_{\id,\alpha}(P)$ est le degré pondéré de $P$, où chaque variable $x_i$ est affectée du poids $\alpha_i$.
Nous avons adopté le point de vue des valuations dans un souci de compatibilité avec des travaux existants, en particulier \cite{FJ}.

Observons que pour tout poids $\alpha\in \W$ et tout $t > 0$, on a $t  \nu_{\id,\alpha} = \nu_{\id, t \alpha}$.
Ainsi la classe d'homothétie de $\nu_{\id,\alpha}$ ne dépend que de $[\alpha]\in \PW$, et on la note $\nu_{\id,[\alpha]}\in \Vl_n$.
Nous notons $\Ap_\id$ (resp.\ $\Ap_\id^+$) l'ensemble de toutes les classes $\nu_{\id,[\alpha]}$ avec $[\alpha] \in \PW$ (resp.\ $[\alpha] \in \PW^+$).
Nous définissons alors l'espace $\Xl_n \subset \Vl_n$ (noté simplement $\Xl$ si la dimension est claire par contexte) comme l'orbite de $\Ap_\id$ sous l'action de $\TA$.
Si $f \in \TA$, et $\alpha = (\alpha_1, \dots, \alpha_n) \in \W$, nous noterons
\[\nu_{f, \alpha} := f \cdot \nu_{\id,\alpha}.\]
Explicitement, en notant $f^{-1} = (g_1, \dots, g_n)$, la valuation $\nu_{f, \alpha}$ satisfait la propriété : $\nu_{f, \alpha} (g_i) = \alpha_i$ pour tout $1 \le i \le n$.
En effet $\nu_{f, \alpha} (g_i) = f\cdot \nu_{\id, \alpha}(g_i) = \nu_{\id, \alpha}(g_i \circ f) = \nu_{\id, \alpha}(x_i)$.
À nouveau, la classe d'homothétie de $\nu_{f, \alpha}$ ne dépend que de $[\alpha]\in \PW$, et on la note $\nu_{f, [\alpha]}$.

On identifie le groupe symétrique $S_n$ à un sous-groupe de $\GL_n(\K)$, et donc aussi de $\TA$, via l'action par permutation sur les vecteurs de la base canonique de $\K^n$.
En termes de coordonnées cela équivaut à poser, pour tout $\sigma \in S_n$ :
\[
\sigma = (x_{\sigma^{-1}(1)}, \dots, x_{\sigma^{-1}(n)}) \in \TA.
\]
Ainsi, pour tous $f \in \TA$ et $P \in \K[x_1, \dots, x_n]$, on a
\[
\sigma^*P = P(x_{\sigma^{-1}(1)}, \dots, x_{\sigma^{-1}(n)}) \in \K[x_1, \dots, x_n].
\]
De même, le groupe symétrique agit sur $\W$ par
\[
\sigma (\alpha) = (\alpha_{\sigma^{-1}(1)}, \dots, \alpha_{\sigma^{-1}(n)}).
\]

\begin{lemma}
\label{lem:permute}
Pour tous $f \in \TA$, $\alpha \in \W$, $\sigma \in S_n$, on a
\[
\nu_{f \sigma,\alpha} = \nu_{f, \sigma(\alpha)}.
\]
\end{lemma}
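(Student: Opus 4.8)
The plan is to reduce immediately to the case $f = \id$ by exploiting that we are dealing with a genuine left action. Recall from the composition convention $(gh)^* = h^*\circ g^*$ that $(gh)\cdot\nu = g\cdot(h\cdot\nu)$, so that the formula $(g\cdot\nu)(P) = \nu(g^*P)$ indeed defines a left action (this is already granted in the text). Applying this with $g = f$ and $h = \sigma$ gives
\[
\nu_{f\sigma,\alpha} = (f\sigma)\cdot\nu_{\id,\alpha} = f\cdot(\sigma\cdot\nu_{\id,\alpha}),
\qquad
\nu_{f,\sigma(\alpha)} = f\cdot\nu_{\id,\sigma(\alpha)}.
\]
Since $\nu\mapsto f\cdot\nu$ is a bijection of $\Vl_n$ (with inverse $\nu\mapsto f^{-1}\cdot\nu$), the two right-hand sides agree as soon as $\sigma\cdot\nu_{\id,\alpha} = \nu_{\id,\sigma(\alpha)}$. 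Thus the whole statement follows from this special case, for arbitrary $\sigma\in S_n$ and $\alpha\in\W$.

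To establish $\sigma\cdot\nu_{\id,\alpha} = \nu_{\id,\sigma(\alpha)}$, I would test both sides on an arbitrary monomial $x^I = x_1^{i_1}\cdots x_n^{i_n}$, with $I = (i_1,\dots,i_n)$. Using $\sigma^*P = P(x_{\sigma^{-1}(1)},\dots,x_{\sigma^{-1}(n)})$ one computes $\sigma^*(x^I) = \prod_{k} x_{\sigma^{-1}(k)}^{\,i_k} = \prod_{j} x_j^{\,i_{\sigma(j)}}$, so the exponent of $x_j$ in $\sigma^*(x^I)$ is $i_{\sigma(j)}$. Hence
\[
(\sigma\cdot\nu_{\id,\alpha})(x^I) = \nu_{\id,\alpha}(\sigma^*x^I) = -\sum_{j=1}^n \alpha_j\, i_{\sigma(j)} = -\sum_{k=1}^n \alpha_{\sigma^{-1}(k)}\, i_k,
\]
after the reindexing $k = \sigma(j)$. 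Since $(\sigma(\alpha))_k = \alpha_{\sigma^{-1}(k)}$ by definition, the last expression is exactly $\nu_{\id,\sigma(\alpha)}(x^I)$.

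Finally I would upgrade the monomial identity to arbitrary $P$. The substitution $P\mapsto\sigma^*P$ merely permutes the variables, so it carries $\supp P$ bijectively onto $\supp(\sigma^*P)$ and permutes the monomials of $P$ without cancellation. Taking the minimum over the support in the definition of $\nu_{\id,\alpha}$ then converts the per-monomial equality into $(\sigma\cdot\nu_{\id,\alpha})(P) = \nu_{\id,\sigma(\alpha)}(P)$ for every $P$, as desired.

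The argument is entirely routine, so I do not anticipate a genuine obstacle; the only point demanding care is the bookkeeping between $\sigma$ and $\sigma^{-1}$, namely checking that the reindexing $k=\sigma(j)$ above produces precisely the weight $\sigma(\alpha)$ with $(\sigma(\alpha))_k = \alpha_{\sigma^{-1}(k)}$, and confirming that $\sigma(\alpha)$ still lies in $\W$ (which is clear, as $\sigma$ permutes the strictly positive coordinates of $\alpha$).
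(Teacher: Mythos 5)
Your proof is correct and follows essentially the same route as the paper: both factor $(f\sigma)^*=\sigma^*\circ f^*$ and reduce the statement to the key identity $\nu_{\id,\alpha}(\sigma^*Q)=\nu_{\id,\sigma(\alpha)}(Q)$. The only difference is that where the paper justifies this last equality informally (illustrating it with the example $\sigma=(123)$), you verify it rigorously on monomials and pass to general polynomials via the support bijection — a sharpening of the same argument, not a different one.
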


\begin{proof}
La vérification est directe :
\begin{multline*}
\nu_{f \sigma,\alpha}(P)
= \nu_{\id, \alpha}((f\sigma)^*(P))
=\nu_{\id, \alpha}(\sigma^*(f^*P))
=\nu_{\id, \alpha}(f^*P(x_{\sigma^{-1}(1)}, \dots, x_{\sigma^{-1}(n)}))\\
= \nu_{\id, \sigma(\alpha)} (f^*P(x_1, \dots, x_n)) = \nu_{f,\sigma(\alpha)} (P).
\end{multline*}
On pourra se convaincre de la quatrième égalité en considérant l'exemple $f=\id, \sigma = (123)$.
Il s'agit de vérifier
\[
\nu_{\id, (\alpha_1,\alpha_2,\alpha_3)}(P(x_3, x_1, x _2)) = \nu_{\id, (\alpha_3,\alpha_1,\alpha_2)} (P(x_1, x_2,x_3)).
\]
Dans les deux cas, la première entrée du polynôme $P$ a poids $\alpha_3$, la deuxième a poids $\alpha_1$, et la troisième a poids $\alpha_2$.
\end{proof}

\subsection{Appartements}

Pour tout $f \in \TA$, nous appelons \emph{appartement} associé à $f$ l'ensemble
\[
\Ap_f := f(\Ap_\id) = \{\nu_{f,[\alpha]} ;\, [\alpha] \in \PW \} \subset \Xl.
\]
En particulier $\Ap_{\id}$ sera appelé l'\emph{appartement standard}.
De façon similaire nous appelons \emph{chambre} associée à $f$ l'ensemble
\[
\Ap_f^+ := f(\Ap_\id^+) = \{\nu_{f,[\alpha]} ;\, [\alpha] \in \PW^+ \} \subset \Ap_f,
\]
et nous appelons $\Ap_\id^+$ la \emph{chambre standard}.
Le lemme~\ref{lem:permute} entraîne que l'appartement $\Ap_f$ associé à $f$ est la réunion des $n!$ chambres $\Ap^+_{f\sigma}$, où $\sigma \in S_n$ est une permutation.

Pour tout poids $\alpha \in \W$, il existe un unique $\alpha^+ \in \W^+$ obtenu en réordonnant les $\alpha_i$.
Cette application descend aux projectivisés en une application
\[
[\alpha] \in \PW \mapsto [\alpha]^+ := [\alpha^+] \in \PW^+.
\]
Nous noterons $\alpha^+ = (\alpha_1^+, \dots, \alpha_n^+)$ les coordonnées de $\alpha^+$, en particulier
\[
\alpha_1^+ = \max_{1 \le i \le n} \alpha_i
\;\text{ et }\;
\alpha_n^+ = \min_{1 \le i \le n} \alpha_i
\]

\begin{lemma} \label{lem:formes_lineaire}
Soient $\ell_i, \dots, \ell_n$ des formes linéaires indépendantes sur $\K^n$, et $\nu = \nu_{\id, \alpha}$ pour un poids $\alpha \in \W$.
Alors
\[
-\nu(\ell_i) - \dots - \nu(\ell_n) \ge \alpha^+_i + \dots + \alpha^+_n.
\]
\end{lemma}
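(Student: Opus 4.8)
The plan is to reduce the inequality to a combinatorial statement about the coefficient matrix of the forms, and then to extract the needed selection of variables from the linear independence hypothesis via a determinant (or Hall) argument.

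First I would compute $-\nu(\ell_j)$ for a single linear form. Writing $\ell_j = \sum_{k=1}^n c_{j,k} x_k$, the support of $\ell_j$ consists of the multi-indices $e_k$ with $c_{j,k} \neq 0$, so directly from the definition of $\nu_{\id,\alpha}$ one gets
\[
-\nu(\ell_j) = \max \{ \alpha_k ;\, c_{j,k} \neq 0 \}.
\]
In particular, for any single index $k$ with $c_{j,k}\neq 0$ one has the crude bound $-\nu(\ell_j) \geq \alpha_k$. Hence, to estimate $\sum_{j=i}^n -\nu(\ell_j)$ from below, it suffices to pick, for each form $\ell_j$, one variable $x_{\phi(j)}$ actually occurring in it; the quality of the resulting bound depends entirely on these picks being distinct.

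The heart of the argument is therefore to produce such picks as a \emph{system of distinct representatives}. Let $M = (c_{j,k})$ be the coefficient matrix of the forms, of size $m \times n$ with $m = n-i+1$; since the rows are linearly independent, $M$ has rank $m$, so there is a set $S$ of $m$ columns whose $m\times m$ submatrix $M_S$ is invertible. Expanding $\det M_S \neq 0$ by the Leibniz formula, at least one of its monomials is nonzero, which yields a bijection $\phi$ from the index set $\{i,\dots,n\}$ of the forms onto $S$ with $c_{j,\phi(j)} \neq 0$ for every $j$. Equivalently, one may invoke Hall's marriage theorem: for any subset $T$ of the forms, the union of their supports has size at least $|T|$, since otherwise these $|T|$ forms would all lie in a coordinate subspace of dimension $<|T|$, contradicting their independence.

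Finally I would assemble the estimate. Applying the crude bound with $k = \phi(j)$ and summing gives
\[
\sum_{j=i}^n -\nu(\ell_j) \;\geq\; \sum_{j=i}^n \alpha_{\phi(j)}.
\]
Since $\phi$ is injective, $\{\phi(j) ;\, i \le j \le n\}$ is a set of $m$ distinct indices, so the right-hand side is a sum of $m$ distinct weights $\alpha_k$; such a sum is minimized by choosing the $m$ smallest among $\alpha_1, \dots, \alpha_n$, which are exactly $\alpha^+_i, \dots, \alpha^+_n$. This yields $\sum_{j=i}^n \alpha_{\phi(j)} \geq \alpha^+_i + \dots + \alpha^+_n$ and closes the proof. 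The only genuinely delicate point is the existence of the system of distinct representatives, which is precisely where linear independence enters; everything else is a routine unwinding of the definition of $\nu_{\id,\alpha}$ and the ordering of $\alpha^+$.
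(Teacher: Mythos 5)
Your proof is correct, and it takes a genuinely different route from the paper's. The paper argues greedily by induction on the number of forms: if every $\ell_j$ had $-\nu(\ell_j) < \alpha^+_i$, then all $n-i+1$ forms would depend only on the $n-i$ variables of weight $<\alpha^+_i$ (the variables $x_{\sigma(i+1)},\dots,x_{\sigma(n)}$ in the paper's notation), contradicting independence; hence some $\ell_j$ satisfies $-\nu(\ell_j)\ge\alpha^+_i$, one removes it, and one concludes by induction on the remaining $n-i$ forms. You instead produce, in one shot, a system of distinct representatives $\phi$ from the nonvanishing of an $m\times m$ minor (or from Hall's theorem), and then finish with the rearrangement observation that a sum of weights over $m$ distinct indices is at least $\alpha^+_i+\dots+\alpha^+_n$. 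Both proofs exploit linear independence through the same elementary fact — independent forms cannot all be supported on too few variables — but they package it differently: your matching argument is non-inductive and yields slightly more (an injective assignment of a distinct variable to each form), at the cost of invoking the Leibniz expansion or Hall's theorem, whereas the paper's greedy induction needs nothing beyond a dimension count and is correspondingly shorter. One cosmetic remark: the sum $\sum_{j}\alpha_{\phi(j)}$ is a sum over $m$ \emph{distinct indices} rather than of $m$ distinct \emph{values} (the weights may coincide), but your minimization step is unaffected by this.
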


\begin{proof}
Soit $\sigma \in S_n$ tel que $\alpha_j^+ = \alpha_{\sigma(j)}$ pour tout $1\leq j\leq n$.
Pour une forme linéaire $\ell = \sum_{j=1}^n a_{\sigma(j)} x_{\sigma(j)}$, on a $-\nu(\ell) = \max \{\alpha_j^+ \mid a_{\sigma(j)} \neq 0 \}$.
Donc si $-\nu(\ell) = \alpha_k^+$, $\ell$ ne dépend que des variables $x_{\sigma(k)}, \dots , x_{\sigma(n)}$.
Maintenant si $\ell_i, \dots, \ell_n$ sont indépendantes, elles ne peuvent pas toutes ne dépendre que des variables $x_{\sigma(i+1)}, \dots, x_{\sigma(n)}$, et donc il existe un indice $i \le j \le n$ tel que $-\nu(\ell_j) \ge \alpha_i^+$.
On conclut par récurrence sur le nombre de formes linéaires.
\end{proof}

Observons que l'inégalité peut être stricte dans le lemme~\ref{lem:formes_lineaire}.
Par exemple en dimension~$2$, si $\ell_1 = x_1$, $\ell_2 = x_1 + x_2$ et $\alpha = (2,1)$, on a
\[
-\nu(\ell_1) - \nu(\ell_2) = 2 + 2 > 2 + 1 = \alpha_1^+ + \alpha_2^+.
\]

\begin{proposition} \label{pro:alpha intrinseque}
Soit $f,g \in \TA$, et $\alpha, \beta \in \W$.
Si $\nu_{f,\alpha} = \nu_{g,\beta}$, alors $\alpha^+ = \beta^+$.
\end{proposition}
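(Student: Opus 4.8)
The plan is to show that each tail-sum $\alpha^+_i + \cdots + \alpha^+_n$ is determined by the valuation alone, and then to conclude by a majorization argument. First I would use the action to reduce to $f = \id$: applying $f^{-1}$ to the equality $\nu_{f,\alpha} = \nu_{g,\beta}$ gives $\nu_{\id,\alpha} = \nu_{h,\beta}$ with $h := f^{-1}g \in \TA$. Writing $\nu$ for this common valuation, it then suffices to prove, for every $i$, that $\alpha^+_i + \cdots + \alpha^+_n \ge \beta^+_i + \cdots + \beta^+_n$ together with the reverse inequality; taking successive differences of these tail-sums then yields $\alpha^+ = \beta^+$.

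The engine of the proof is the following strengthening of Lemma~\ref{lem:formes_lineaire} from the standard apartment to an arbitrary one. I claim that for every $\phi \in \TA$, every weight $\gamma \in \W$, and all linearly independent linear forms $\ell_i, \dots, \ell_n$ on $\K^n$,
\begin{equation*}
-\nu_{\phi,\gamma}(\ell_i) - \cdots - \nu_{\phi,\gamma}(\ell_n) \ge \gamma^+_i + \cdots + \gamma^+_n. \tag{$\star$}
\end{equation*}
Since $-\nu_{\phi,\gamma}(\ell_j) = -\nu_{\id,\gamma}(\ell_j \circ \phi)$ is the weighted degree of $W_j := \ell_j \circ \phi = \sum_k a^{(j)}_k \phi_k$ (where $\ell_j = \sum_k a^{(j)}_k x_k$), and since the $W_j$ are linearly independent $\K$-combinations of the components of $\phi$, statement $(\star)$ amounts to a lower bound on $\sum_{j=i}^n \bigl(-\nu_{\id,\gamma}(W_j)\bigr)$ for such combinations.

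The key step, which I expect to be the main obstacle, is the dimension estimate behind $(\star)$. For $t \in \R$ set $V_{\le t} = \{W \in \mathrm{span}_\K\{\phi_1,\dots,\phi_n\} : -\nu_{\id,\gamma}(W) \le t\}$; I would show $\dim_\K V_{\le t} \le \#\{k : \gamma_k \le t\}$. Indeed, a monomial containing a variable $x_k$ with $\gamma_k > t$ already has weighted degree $> t$, so every $W \in V_{\le t}$ lies in the subring $R := \K[x_k : \gamma_k \le t]$, whose transcendence degree over $\K$ equals $m := \#\{k : \gamma_k \le t\}$. The crucial point is that the $\phi_k$ are algebraically independent (being $n$ generators of $\K[x_1,\dots,x_n]$, as $\phi$ is an automorphism), so via the isomorphism sending $y_k$ to $\phi_k$ any $\K$-linearly independent family of their combinations is algebraically independent; hence $V_{\le t}$ cannot contain $m+1$ linearly independent elements, as these would be $m+1$ algebraically independent elements of $R$. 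This bound says exactly that, once the degrees $-\nu_{\id,\gamma}(W_i),\dots,-\nu_{\id,\gamma}(W_n)$ are sorted increasingly, they dominate term by term the $n-i+1$ smallest weights among the $\gamma_k$; summing gives $(\star)$.

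With $(\star)$ in hand the conclusion is quick. For the first inequality I choose $\ell_j = x_{\sigma(j)}$ with $\sigma$ a permutation sorting $\alpha$, so that the left side of $(\star)$ equals $\alpha^+_i + \cdots + \alpha^+_n$; applying $(\star)$ to these same forms through the presentation $\nu = \nu_{h,\beta}$ bounds that quantity below by $\beta^+_i + \cdots + \beta^+_n$, whence $\alpha^+_i + \cdots + \alpha^+_n \ge \beta^+_i + \cdots + \beta^+_n$. For the reverse inequality I apply this very argument to the translated valuation $\nu'' := h^{-1} \cdot \nu = \nu_{\id,\beta} = \nu_{h^{-1},\alpha}$, which interchanges the roles of $\alpha$ and $\beta$ and yields $\beta^+_i + \cdots + \beta^+_n \ge \alpha^+_i + \cdots + \alpha^+_n$. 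Combining the two inequalities for all $i$ forces $\alpha^+ = \beta^+$.
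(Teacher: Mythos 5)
Your proof is correct, but its engine differs genuinely from the paper's. Both arguments ultimately rest on the same core inequality --- for $n-i+1$ distinct components $W_i,\dots,W_n$ of an automorphism, $\sum_{j\ge i} -\nu_{\id,\gamma}(W_j)\ \ge\ \gamma^+_i+\cdots+\gamma^+_n$; indeed your $W_j=\ell_j\circ\phi$ are exactly such components, namely of $a\circ\phi$ for any $a\in\GL_n(\K)$ extending the $\ell_j$. The paper proves this inequality by passing to the \emph{linear parts} of the $W_j$, which are independent linear forms because an automorphism has invertible Jacobian, and then invoking Lemma~\ref{lem:formes_lineaire}; it then packages the conclusion by defining the tail sums intrinsically, as infima over all such tuples of components, and showing by descending induction that these infima are attained at coordinate permutations. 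You instead prove the inequality by a dimension count: the sublevel spaces $V_{\le t}$ of the span of the $\phi_k$ lie in $\K[x_k:\gamma_k\le t]$, and algebraic independence of the $\phi_k$ together with transcendence degree bounds $\dim_\K V_{\le t}$ by $\#\{k:\gamma_k\le t\}$, giving term-by-term majorization; and you conclude by symmetry (from $\nu_{\id,\alpha}=\nu_{h,\beta}$ one also gets $\nu_{h^{-1},\alpha}=\nu_{\id,\beta}$, so the inequality runs both ways) rather than via an intrinsic infimum. In effect your $V_{\le t}$ bound is the polynomial-level generalization of Lemma~\ref{lem:formes_lineaire}, whose own proof is the same count carried out for linear forms only. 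What the paper's route buys is complete elementarity (support combinatorics plus independence of linear parts) and, as a byproduct, an explicit intrinsic formula for $\alpha^+$ as an invariant of the valuation; what yours buys is a cleaner general statement ($(\star)$, i.e.\ Lemma~\ref{lem:formes_lineaire} valid in every apartment, with no mention of linear parts) and an argument that applies verbatim to arbitrary, not necessarily tame, polynomial automorphisms.
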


\begin{proof}
Étant donnée une valuation $\nu = \nu_{f,\alpha} \in \Xl$, on définit par récurrence
descendante une suite $\gamma_i$ en posant, pour $i = n, n-1, \dots, 1$:
\[
\gamma_i := \inf \left( -\nu(h_i h_{i+1} \dots h_n)\right) - \sum_{j = i+1}^n \gamma_{j},
\]
où l'infimum est pris sur les $(h_i, \dots, h_n)$ qui sont $n-i+1$ composantes distinctes d'un $h=(h_1,\ldots, h_n)\in\TA$.
Nous allons voir que $(\gamma_1, \dots, \gamma_n) = (\alpha^+_1, \dots, \alpha^+_n)$, ce qui montrera que $\alpha^+ = \gamma$ est intrinsèquement défini.

Observons d'abord que l'on peut supposer $f=\id$, car $\nu_{\id, \alpha} = f^{-1}\cdot\nu_{f,\alpha}$ et la définition de~$\gamma$ est par construction invariante sous l'action de $\TA$.
Supposons que $\gamma_j = \alpha^+_j$ pour $j = i+1, \dots, n$ (ce qui correspond à une condition vide dans le cas $i = n$), et montrons que c'est aussi le cas pour $j = i$.
Par définition
\[
\gamma_i = \inf \left( -\nu(h_i h_{i+1} \dots h_n)\right) - \alpha^+_{i+1} - \dots - \alpha^+_n.
\]
Pour tout $h = (h_1, \dots, h_n) \in \TA$
les parties linéaires des polynômes  $h_i, \dots, h_n$ sont des formes indépendantes, car sinon $h$ ne serait pas inversible.
Par le lemme~\ref{lem:formes_lineaire}, on a
\[
-\nu(h_i h_{i+1} \dots h_n) = -\nu(h_i) - \dots - \nu(h_n) \ge \alpha^+_i + \dots + \alpha^+_n
\]
et l'égalité est réalisée pour $h \in S_n$ une permutation telle que $h(\alpha) = \alpha^+$.
On conclut que $\gamma_i = \alpha^+_i$.
\end{proof}

Nous dégageons trois conséquences immédiates :

\begin{corollary}~ \label{cor:intrin}
\begin{enumerate}[wide]
\item \label{cor:intrin1}
Il existe une application bien définie $\wgt\colon \Xl \to
\PW^+$, qui envoie $\nu_{f,[\alpha]}$ sur $[\alpha]^+$, et qui est donc une bijection en restriction à chaque chambre $\Ap^+_f$.
\item \label{cor:intrin2}
La chambre standard $\Ap_{\id}^+$ est un domaine fondamental pour l'action de $\TA$ sur~$\Xl$, c'est-à-dire toute orbite rencontre $\Ap_{\id}^+$ en exactement un point.

\item \label{cor:intrin3}
Soit $\alpha = (\alpha_1, \dots, \alpha_n) \in \W^+$ et $f = (f_1, \dots, f_n) \in \TA$.
Si $f\cdot \nu_{\id,[\alpha]} = \nu_{\id,[\alpha]}$, alors $f \cdot \nu_{\id,\alpha} = \nu_{\id,\alpha}$.
\end{enumerate}
\end{corollary}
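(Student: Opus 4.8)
The plan is to derive all three statements directly from Proposition~\ref{pro:alpha intrinseque}, whose content is precisely that the reordered weight $\alpha^+$ is an intrinsic invariant of a point of $\Xl$. The one preliminary step is to record the projectivized form: if $\nu_{f,[\alpha]} = \nu_{g,[\beta]}$ in $\Xl$, then by definition $\nu_{f,\alpha} = t\,\nu_{g,\beta} = \nu_{g,t\beta}$ for some $t>0$, so Proposition~\ref{pro:alpha intrinseque} gives $\alpha^+ = (t\beta)^+ = t\,\beta^+$, hence $[\alpha]^+ = [\beta]^+$. This is essentially the only computation; everything else is bookkeeping.

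For~\ref{cor:intrin1}, the equality $[\alpha]^+ = [\beta]^+$ just obtained says exactly that the assignment $\nu_{f,[\alpha]} \mapsto [\alpha]^+$ is independent of the chosen representative, so $\wgt$ is well defined. To see that its restriction to a chamber $\Ap_f^+$ is a bijection onto $\PW^+$, I would note that for $[\alpha] \in \PW^+$ the weight is already well ordered, so $[\alpha]^+ = [\alpha]$; thus on $\Ap_f^+$ the map sends $\nu_{f,[\alpha]}$ to $[\alpha]$ itself. Surjectivity is immediate, and injectivity follows because $\nu_{f,[\alpha]} = \nu_{f,[\beta]}$ with $[\alpha],[\beta] \in \PW^+$ forces $[\alpha] = [\alpha]^+ = [\beta]^+ = [\beta]$.

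For~\ref{cor:intrin2}, I would prove separately that every orbit meets $\Ap_\id^+$ and that it does so at most once. For existence, any point of $\Xl$ has the form $\nu_{f,[\alpha]}$, and applying $f^{-1}$ brings it to $\nu_{\id,[\alpha]}$; choosing $\sigma \in S_n$ with $\sigma(\alpha) = \alpha^+$ and invoking Lemma~\ref{lem:permute} gives $\sigma \cdot \nu_{\id,\alpha} = \nu_{\sigma,\alpha} = \nu_{\id,\sigma(\alpha)} = \nu_{\id,\alpha^+}$, a point of $\Ap_\id^+$ in the same orbit. For uniqueness, if $\nu_{\id,[\alpha]}$ and $\nu_{\id,[\beta]}$ with $\alpha,\beta \in \W^+$ lie in one orbit, then $\nu_{g,[\alpha]} = \nu_{\id,[\beta]}$ for some $g \in \TA$, and the preliminary step gives $[\alpha] = [\alpha]^+ = [\beta]^+ = [\beta]$.

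Finally,~\ref{cor:intrin3} is the place where the scalar ambiguity must actually be pinned down, and this is the only point requiring a little care. From $f\cdot \nu_{\id,[\alpha]} = \nu_{\id,[\alpha]}$ one gets $\nu_{f,\alpha} = \nu_{\id,t\alpha}$ for some $t>0$; Proposition~\ref{pro:alpha intrinseque} yields $\alpha^+ = (t\alpha)^+ = t\,\alpha^+$, and since $\alpha \in \W^+$ is a nonzero well-ordered weight this forces $t = 1$, whence $f\cdot\nu_{\id,\alpha} = \nu_{\id,\alpha}$. The main (and quite mild) obstacle throughout is simply to keep the distinction between honest weights and their homothety classes straight, so that the free scalar is controlled at each step; once the projectivized form of Proposition~\ref{pro:alpha intrinseque} is in hand, no further difficulty arises.
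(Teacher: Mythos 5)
Your proof is correct and takes exactly the route the paper intends: the paper states this corollary without proof, as an immediate consequence of Proposition~\ref{pro:alpha intrinseque} (with Lemma~\ref{lem:permute} supplying the existence part of~\ref{cor:intrin2}), and your argument fills in precisely those details. In particular, your careful tracking of the homothety scalar --- projectivizing Proposition~\ref{pro:alpha intrinseque} and then pinning down $t=1$ in~\ref{cor:intrin3} via $\alpha^+ = t\,\alpha^+$ --- is the right way to make the word \og immédiates \fg{} rigorous.
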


\begin{remark}
Pour chaque appartement $\Ap_f$ et chaque choix de $\sigma \in S_n$,
on a une application
\begin{align*}
\Ap_f &\mapsto \PW \\
\nu_{f\sigma,[\alpha]} &\to [\alpha]
\end{align*}
Cependant il n'est pas possible de faire des choix cohérents qui permettraient d'étendre cette application à $\Xl$.

Par exemple, en dimension $n = 2$, il n'est déjà pas possible d'étendre cette application à l'union des trois appartements $\Ap_\id \cup \Ap_f\cup \Ap_g$  pour $f=(x_2, x_1+x_2)$ et $g=(x_1+x_2, x_1)$.
Intuitivement, cela vient du fait que ces appartements forment un «tripode».
Formellement, chacun de ces appartements contient deux parmi les trois valuations suivantes :
\begin{align*}
\nu_{\id,[1,2]}=\nu_{f,[2,1]},&& \nu_{f,[1,2]}=\nu_{g,[2,1]}, &&\nu_{g,[1,2]}=\nu_{\id,[2,1]},
\end{align*}
et chacune devrait être envoyée sur $[1,2]$ ou $[2,1]$.
Cependant, il n'existe pas d'application depuis un ensemble de cardinalité $3$ vers un ensemble de cardinalité $2$ qui soit injective sur chaque paire.

On proposera dans le lemme~\ref{lem:rho} un remède partiel à cet état de fait.
\end{remark}

Nous noterons $\Fix(f)$ le sous-ensemble de $\Xl$ fixé par $f$.
Puisque nous ne considérons jamais les points fixes de $f$ comme automorphisme de l'espace affine $\K^n$, cette notation ne devrait pas porter à confusion.

\begin{corollary} \label{cor:intersection chambres}
Soit $f \in \TA$ et $\Fix(f)\subset \Xl$ son ensemble de points fixes.
Alors
\[
\Ap^+_f \cap \Ap^+_\id = \Fix(f) \cap \Ap^+_\id.
\]
\end{corollary}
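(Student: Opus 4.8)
Le plan est de déduire l'énoncé directement du fait que, sur une chambre, une valuation détermine son poids réordonné. Plus précisément, l'ingrédient essentiel est le corollaire~\ref{cor:intrin}\ref{cor:intrin1} : l'application $\wgt\colon \Xl \to \PW^+$ est bien définie et envoie $\nu_{f,[\alpha]}$ sur $[\alpha]^+$. L'observation qui fait tout fonctionner est que l'opération $[\alpha]\mapsto[\alpha]^+$ agit comme l'identité sur $\PW^+$, puisque les poids y sont déjà bien ordonnés.

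Pour l'inclusion $\Ap^+_f \cap \Ap^+_\id \subseteq \Fix(f) \cap \Ap^+_\id$, je prendrais $\nu$ dans le membre de gauche et l'écrirais sous les deux formes $\nu = \nu_{f,[\alpha]} = \nu_{\id,[\beta]}$, avec $[\alpha], [\beta] \in \PW^+$. En appliquant $\wgt$ on obtient $[\alpha]^+ = [\beta]^+$, d'où $[\alpha] = [\beta]$ car ces deux classes sont déjà dans $\PW^+$. Il ne resterait alors qu'à dérouler la définition $\nu_{f,[\alpha]} = f\cdot \nu_{\id,[\alpha]}$ : comme $\nu = \nu_{\id,[\alpha]}$, on a $f\cdot \nu = \nu_{f,[\alpha]} = \nu$, donc $\nu \in \Fix(f)$.

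Pour l'inclusion réciproque, je partirais d'un $\nu = \nu_{\id,[\alpha]} \in \Ap^+_\id$ avec $[\alpha]\in\PW^+$ et $f\cdot\nu = \nu$. En appliquant $f$ à cette écriture on obtient $\nu_{f,[\alpha]} = f\cdot\nu_{\id,[\alpha]} = f\cdot\nu = \nu$, ce qui exhibe $\nu$ comme point de $\Ap^+_f$ et conclut.

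La seule subtilité à surveiller est le passage au projectivisé, c'est-à-dire le fait que l'égalité $[\alpha]^+ = [\beta]^+$ entraîne $[\alpha] = [\beta]$ dès que les représentants sont choisis dans $\PW^+$ ; c'est précisément ce qu'encode le caractère bien défini de $\wgt$. Tout le reste se réduit à un déroulé formel de l'action de $\TA$ sur les valuations et de la définition $\Ap^+_f = f(\Ap^+_\id)$, de sorte qu'aucun obstacle sérieux n'est à prévoir.
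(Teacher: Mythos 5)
Ta preuve est correcte et suit essentiellement la même démarche que celle du papier : les deux directions se traitent par le même déroulé formel de l'action, et l'étape clé — l'égalité $[\alpha]=[\alpha]^+=[\beta]^+=[\beta]$ pour un point des deux chambres — est exactement celle du papier, qui invoque directement la proposition~\ref{pro:alpha intrinseque} là où tu passes par le corollaire~\ref{cor:intrin}\ref{cor:intrin1}, lequel n'en est qu'une reformulation immédiate.
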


\begin{proof}
Si $\nu_{\id,[\alpha]} \in \Fix(f) \cap \Ap^+_\id$, on a
\[
\nu_{f,[\alpha]} = f(\nu_{\id,[\alpha]}) = \nu_{\id,[\alpha]} \in \Ap^+_f \cap \Ap^+_\id.
\]
Réciproquement tout point $\nu_{f,[\alpha]} = \nu_{\id,[\beta]} \in \Ap_f^+ \cap \Ap_\id^+$ vérifie
$[\alpha] = [\alpha]^+ = [\beta]^+ = [\beta]$, où la deuxième égalité est la proposition
\ref{pro:alpha intrinseque}.
On a donc  $f(\nu_{\id,[\alpha]}) = \nu_{f,[\alpha]} = \nu_{\id,[\alpha]}$, autrement dit $\nu_{\id,[\alpha]} \in \Fix(f) \cap \Ap^+_\id$ comme attendu.
\end{proof}

D'après le corollaire~\ref{cor:intersection chambres}, afin de décrire les intersections entre chambres il faut comprendre les lieux $\Fix(f)\cap \Ap^+_\id$, ce que nous allons faire dans les deux sections qui suivent.

\section{Stabilisateurs}
\label{sec:stabilizers}

Soit $\alpha = (\alpha_1, \dots, \alpha_n) \in \W^+$.
Nous allons déterminer le stabilisateur $\Stab(\nu_{\id,[\alpha]})$ de la classe $\nu_{\id,[\alpha]}$ pour l'action de $\TA$ sur $\Xl$.
Le groupe des \emph{automorphismes triangulaires}, qui est un sous-groupe de $\TA$, jouera ici un rôle important.
Rappelons qu'un automorphisme est dit triangulaire s'il est de la forme
\[
(x_1, \dots, x_n) \mapsto (a_1 x_1 + P_1(x_2, \ldots, x_n), \ldots, a_i x_i  + P_i(x_{i+1}, \ldots, x_n), \ldots, a_n x_n + c).
\]

Écrivons
\[
\alpha = (\underbrace{\gamma_1, \dots, \gamma_1}_{m_1 \text{ fois}}, \dots,
\underbrace{\gamma_r, \dots, \gamma_r}_{m_r \text{ fois}})
\]
avec $\gamma_1 > \dots > \gamma_r > 0$, on a donc $\sum_{i = 1}^r m_i = n$.
Nous définissons maintenant deux sous-groupes $L_\alpha$ et $M_\alpha$ de $\TA$, dont on vérifie immédiatement que ce sont des sous-groupes de $\Stab(\nu_{\id,[\alpha]})$.

Tout d'abord
\[
L_\alpha \simeq \GL_{m_1}(\K) \times \dots \times \GL_{m_r}(\K)
\]
est le sous-groupe de $\GL_n(\K)$ des matrices diagonales par blocs de taille $m_i$.
En particulier $L_\alpha$ contient le sous-groupe des matrices diagonales.

Pour tout indice $i = 1, \dots, n$, notons $1 \le b(i) \le r$ le numéro du bloc auquel appartient l'indice $i$, c'est-à-dire tel que $\alpha_i = \gamma_{b(i)}$.
Nous définissons $M_\alpha$ comme le sous-groupe du groupe des
automorphismes triangulaires de la forme
\[
(x_1, \dots, x_n) \mapsto (x_1 + P_1, \dots, x_i + P_i, \dots, x_n +c),
\]
où chaque $P_i$ satisfait $\alpha_i \ge - \nu_{\alpha} (P_i)$
et ne dépend que des variables $x_j$ avec $b(j) > b(i)$.
En particulier $M_\alpha$ est un groupe de dimension finie contenant le sous-groupe des translations.

À noter que les définitions de $L_\alpha$ et $M_\alpha$ ne dépendent que de la classe d'homothétie de $\alpha$.
Observer aussi que le groupe $\langle L_\alpha, M_\alpha \rangle$ contient toujours le sous-groupe des matrices triangulaires supérieures.

\begin{example}~ \label{exple:stab}
\begin{enumerate}[wide]
\item \label{exple:stab1}
Si $\alpha = (1,\dots,1)$, alors $L_\alpha = \GL_n(\K)$ et $M_\alpha$
est le groupe des translations.

\item \label{exple:stab2}
Si $\alpha = (\alpha_1, \dots, \alpha_n)$ avec $\alpha_1 >
\dots >
\alpha_n$, alors $L_\alpha$ est le groupe des matrices diagonales, et
$M_\alpha$ est le groupe des automorphismes triangulaires
\[
(x_1, \dots, x_n) \mapsto (x_1 + P_1, \dots, x_i + P_i, \dots)
\]
où les $P_i$ satisfont $\alpha_i \ge -\nu_{\alpha} (P_i)$.
\end{enumerate}

Remarquons qu'en dimension $n = 2$, les deux exemples précédents couvrent tous les cas possibles.
En dimension $n = 3$, il y a deux autres possibilités (ici nous utilisons la notation $\deg P$ pour le degré ordinaire d'un polynôme $P$, c'est-à-dire chaque variable est affectée du poids 1) :
\begin{enumerate}[wide, resume]
\item \label{exple:stab3}
Si $\alpha = (\alpha_1, 1, 1)$ avec $\alpha_1 > 1$, alors
\begin{align*}
L_\alpha &= \{(a x_1, bx_2 + cx_3, b'x_2 + c'x_3) \} \subset \GL_3(\K), \\
M_\alpha &= \{(x_1 + P(x_2,x_3), x_2 + d, x_3 + d'); \, \alpha_1 \ge \deg P \}.
\end{align*}

\item \label{exple:stab4}
Si $\alpha = (\alpha_1, \alpha_1, 1)$ avec $\alpha_1 > 1$, alors
\begin{align*}
L_\alpha &= \{(a x_1 + bx_2, a'x_1 + b'x_2, cx_3) \} \subset \GL_3(\K), \\
M_\alpha &= \{(x_1 + P(x_3), x_2 + Q(x_3), x_3 + d); \, \alpha_1 \ge \deg
P,\deg Q \}.
\end{align*}

\end{enumerate}
\end{example}

\begin{proposition} \label{pro:stabilisateur}
Soit $\nu_{\id,[\alpha]} \in \Ap^+_\id$.
Alors le stabilisateur de $\nu_{\id,[\alpha]}$ pour l'action de $\TA$ sur $\Xl$ est le produit semi-direct de $M_\alpha$ et $L_\alpha$ :
\[
\Stab(\nu_{\id,[\alpha]}) = M_\alpha \rtimes L_\alpha.
\]
\end{proposition}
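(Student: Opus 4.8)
The plan is to prove the two inclusions separately. The inclusion $M_\alpha\rtimes L_\alpha\subseteq\Stab(\nu_{\id,[\alpha]})$ amounts to the facts already recorded before the statement (both $M_\alpha$ and $L_\alpha$ lie in the stabilizer) together with the semidirect-product bookkeeping below, so the real work is the reverse inclusion $\Stab(\nu_{\id,[\alpha]})\subseteq L_\alpha M_\alpha$.

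First I would replace the class by the valuation itself: if $f=(f_1,\dots,f_n)$ fixes $\nu_{\id,[\alpha]}$, then by Corollaire~\ref{cor:intrin}\ref{cor:intrin3} it fixes $\nu_\alpha:=\nu_{\id,\alpha}$ on the nose. Evaluating $f\cdot\nu_\alpha=\nu_\alpha$ at the coordinate $x_i$ gives $\nu_\alpha(f_i)=\nu_\alpha(f^*x_i)=\nu_\alpha(x_i)=-\alpha_i$, so the $\alpha$-weighted degree $-\nu_\alpha(f_i)$ equals $\alpha_i$ for every $i$. The whole decomposition will be squeezed out of these $n$ equalities together with the invertibility of $f$.

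The heart of the argument is a rigidity statement on the monomials of $f_i$. Since every monomial $x^I$ of $f_i$ satisfies $\sum_k\alpha_ki_k\le\alpha_i=\gamma_{b(i)}$, a variable $x_p$ from an earlier block ($\alpha_p>\alpha_i$) cannot occur at all, while a variable $x_q$ from the same block ($\alpha_q=\alpha_i$) can occur only as the bare linear monomial $x_q$; every remaining monomial involves only strictly-later-block variables (those $x_j$ with $b(j)>b(i)$). Hence $f_i=\Lambda_i+S_i$, where $\Lambda_i=\sum_{b(j)=b(i)}a_{ij}x_j$ is the same-block linear part and $S_i$ is a polynomial in the strictly-later-block variables with $-\nu_\alpha(S_i)\le\alpha_i$. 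Consequently the linear part of $f$ is block upper triangular for the decreasing-weight block order, with diagonal blocks $A_\beta=(a_{ij})_{b(i)=b(j)=\beta}$. As $f$ is an automorphism its linear part is invertible (its component linear forms are independent, cf. the proof de la Proposition~\ref{pro:alpha intrinseque}), whence each $A_\beta\in\GL_{m_\beta}(\K)$. Setting $\ell:=\mathrm{diag}(A_1,\dots,A_r)\in L_\alpha$, the automorphism $h:=\ell^{-1}f$ has $i$-th coordinate $x_i+R_i$ with $R_i$ a polynomial in the strictly-later-block variables and $-\nu_\alpha(R_i)\le\alpha_i$; this is exactly the shape defining $M_\alpha$, so $h\in M_\alpha$ and $f=\ell h\in L_\alpha M_\alpha$.

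It then remains to upgrade $L_\alpha M_\alpha$ to an (internal) semidirect product. The intersection $M_\alpha\cap L_\alpha$ is trivial, since a block-diagonal linear map whose $i$-th coordinate equals $x_i$ plus strictly-later-block terms must be the identity. And $L_\alpha$ normalizes $M_\alpha$: for $\ell\in L_\alpha$ with matrix $A$ and $m=\id+(P_i)\in M_\alpha$ one computes $(\ell m\ell^{-1})_i=x_i+\sum_{b(j)=b(i)}A_{ij}P_j(\ell^{-1}x)$, and since $\ell^{-1}$ only mixes variables inside each block and preserves the $\alpha$-weighted degree, each term still lies in the strictly-later-block variables with weighted degree at most $\alpha_i$, so $\ell m\ell^{-1}\in M_\alpha$. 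Together with $M_\alpha,L_\alpha\le\Stab(\nu_{\id,[\alpha]})$ this yields $\Stab(\nu_{\id,[\alpha]})=M_\alpha\rtimes L_\alpha$. The one genuinely delicate point is the rigidity step of the previous paragraph: one must check that invertibility forbids a high-degree monomial of weighted degree $\alpha_i$ from replacing the same-block linear leading term, which is what forces each diagonal block $A_\beta$ to be nonsingular; everything else is routine.
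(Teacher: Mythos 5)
Your proof is correct and takes essentially the same route as the paper's: pass from the class to the valuation via the corollaire~\ref{cor:intrin}\ref{cor:intrin3}, exploit the constraint $-\nu_{\id,\alpha}(f_i)=\alpha_i$ to show each $f_i$ is a same-block linear form plus later-block terms, invoke invertibility of the linear part to get the block-diagonal factor in $L_\alpha$, and check triviality of $M_\alpha\cap L_\alpha$ and normalization. The only difference is organizational --- you peel off the block-diagonal part in a single factorization $f=\ell\,(\ell^{-1}f)$, where the paper reduces $f$ step by step (translation, then an element of $M_\alpha$, then the remaining linear map) --- but the underlying idea is identical.
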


\begin{proof}
Considérons $f = (f_1, \dots, f_n) \in \Stab(\nu_{\id,[\alpha]})$.
Grâce au corollaire~\ref{cor:intrin}\ref{cor:intrin3}, nous avons $f \in  \Stab(\nu_{\id,\alpha})$.
En composant par une translation, qui est un élément de $M_\alpha$, nous pouvons supposer que les $f_i$ n'ont pas de terme constant.
Pour chaque indice $i$, nous écrivons $f_i = \ell_i + P_i$, avec $\ell_i$ linéaire et $P_i$ dont tous les monômes sont de degré au moins 2.
La condition
\[
\alpha_i  = -\nu_{\id,\alpha}(x_i) = -(f\cdot \nu_{\id,\alpha})(x_i) = -\nu_{\id,\alpha}(\ell_i + P_i) \ge -\nu_{\id,\alpha}(P_i)
\]
implique que $P_i$ ne dépend que des variables $x_j$ vérifiant $\alpha_i > \alpha_j$.
Donc en composant par un élément de $M_\alpha$ nous pouvons nous ramener au cas où tous les $P_i$ sont nuls.
On obtient ainsi un élément de $\GL_n(\K) \cap \Stab(\nu_{\id,\alpha})$, qui doit être triangulaire par blocs de taille $m_i \times m_j$.
En particulier, nous pouvons écrire une telle matrice comme la composée d'un élément de $L_\alpha$ et de $\GL_n(\K) \cap M_\alpha$.

On conclut que
$\Stab(\nu_{\id,[\alpha]}) = \langle M_\alpha, L_\alpha \rangle$.
Par construction $M_\alpha \cap L_\alpha = \{\id\}$,
et le fait que $M_\alpha$ soit normalisé par $L_\alpha$ est un calcul immédiat.
\end{proof}

\begin{corollary} \label{cor:stab chambre}
Le stabilisateur dans $\TA$ de la chambre standard $\Ap^+_\id$ est le produit semi-direct du groupe des translations et du groupe des matrices triangulaires supérieures.
En particulier, $f\in \TA$ fixe chaque point de $\Ap_\id$ si et seulement si $f$ est de la forme
\[
f = (c_1 x_1 + t_1, c_2 x_2 + t_2, \dots).
\]
\end{corollary}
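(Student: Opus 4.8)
The plan is to prove both assertions by first identifying the (setwise) stabilizer of the chamber with its pointwise stabilizer, and then intersecting the vertex stabilizers furnished by Proposition~\ref{pro:stabilisateur}.

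The first step is to observe that the setwise stabilizer of $\Ap^+_\id$ coincides with its pointwise stabilizer. Indeed, the weight map $\wgt\colon\Xl\to\PW^+$ of Corollary~\ref{cor:intrin}\ref{cor:intrin1} is $\TA$-invariant and restricts to a bijection from $\Ap^+_\id$ onto $\PW^+$. Hence, if $f\in\TA$ satisfies $f(\Ap^+_\id)=\Ap^+_\id$, then for each $\nu\in\Ap^+_\id$ the point $f\cdot\nu$ again lies in $\Ap^+_\id$ and carries the same weight as $\nu$; by injectivity of $\wgt$ on the chamber this forces $f\cdot\nu=\nu$. Thus the stabilizer of $\Ap^+_\id$ equals the pointwise stabilizer $\bigcap_{[\alpha]\in\PW^+}\Stab(\nu_{\id,[\alpha]})$.

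Next I would compute this intersection using Proposition~\ref{pro:stabilisateur}. For the inclusion $\supseteq$, every translation lies in $M_\alpha$ and every upper triangular matrix lies in $\langle L_\alpha,M_\alpha\rangle$, so the group generated by the translations and the upper triangular matrices is contained in each $\Stab(\nu_{\id,[\alpha]})=M_\alpha\rtimes L_\alpha$. For $\subseteq$ it suffices to intersect over the interior weights, which give an a priori larger group; there Example~\ref{exple:stab}\ref{exple:stab2} applies. Fix such an $f$; after composing with a translation, write $f_i=\ell_i+P_i$ with $\ell_i$ linear and $P_i$ of degree $\ge 2$. Membership in $M_\alpha\rtimes L_\alpha$ forces $\alpha_i\ge -\nu_\alpha(P_i)$, i.e.\ $\alpha_i\ge\sum_k\alpha_k e_k$ for every monomial $x^e$ occurring in $P_i$. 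Letting $\alpha$ range over interior weights arbitrarily close to $(1,\dots,1)$, the right-hand side tends to the total degree of $x^e$, which is $\ge 2$, while $\alpha_i\to 1$; this contradiction shows $P_i=0$, so $f$ is affine. Its linear part then lies in $\GL_n(\K)\cap\Stab(\nu_{\id,\alpha})$, which for interior $\alpha$ is precisely the group of upper triangular matrices, giving the first assertion.

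Finally, for the \emph{en particulier} I would use that $\Ap_\id=\bigcup_{\sigma\in S_n}\Ap^+_\sigma$ (Lemma~\ref{lem:permute}). Fixing $\Ap^+_\sigma=\sigma(\Ap^+_\id)$ pointwise is equivalent to $\sigma^{-1}f\sigma$ fixing $\Ap^+_\id$ pointwise, hence by the first part to $\sigma^{-1}f\sigma$ being upper triangular affine. So $f$ fixes all of $\Ap_\id$ iff $\sigma^{-1}f\sigma$ is upper triangular affine for every $\sigma\in S_n$. Taking $\sigma=\id$ shows $f$ is affine with linear part $A$; requiring $\sigma^{-1}A\sigma$ to be upper triangular for all permutation matrices then forces $A$ diagonal, since an entry $A_{ij}\ne 0$ with $i<j$ would reappear in the strictly lower position $(j,i)$ after conjugating by the transposition $(i\,j)$. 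Thus $f_i=c_ix_i+t_i$, and conversely any such map is upper triangular affine even after conjugation by a permutation, hence fixes each chamber, and so all of $\Ap_\id$. The one genuinely substantive point is the degeneration argument in the third paragraph: passing from stabilizing a single interior weight to stabilizing the whole chamber is exactly what annihilates all nonlinear terms, and it relies on letting the weight tend to equal entries while remaining in the interior; the remaining verifications are elementary.
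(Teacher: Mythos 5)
Your proof is correct and follows essentially the same route as the paper's: first identify the setwise stabilizer of $\Ap^+_\id$ with its pointwise stabilizer via the weight map (i.e.\ proposition~\ref{pro:alpha intrinseque}), then compute that pointwise stabilizer by intersecting the vertex stabilizers of la proposition~\ref{pro:stabilisateur} (using interior weights and l'exemple~\ref{exple:stab}\ref{exple:stab2} to get triangularity), and finally deduce the \emph{en particulier} by conjugating by permutations, exactly as the paper does. The only divergence is minor: where the paper kills the nonlinear terms by evaluating at the corner weight $(1,\dots,1)$, whose stabilizer is translations $\rtimes\ \GL_n(\K)$ by l'exemple~\ref{exple:stab}\ref{exple:stab1}, you obtain the same conclusion by letting interior weights tend to $(1,\dots,1)$ so that the admissible inequalities $\alpha_i \ge \sum_k \alpha_k e_k$ fail for any monomial of total degree $\ge 2$ --- the same computation that underlies the paper's step, so this is a variant of detail rather than of method.
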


\begin{proof}
Soit $f \in \Stab(\Ap^+_\id)$, et $\alpha \in \W^+$.
Par le corollaire~\ref{cor:intrin}\ref{cor:intrin3},
on a $\nu_{f,\alpha} = f(\nu_{\id,\alpha}) = \nu_{\id,\beta}$ pour un certain $\beta \in \W^+$, et par la proposition~\ref{pro:alpha intrinseque}, $\alpha = \alpha^+ = \beta^+ = \beta$.
Autrement dit $f$ fixe point par point les éléments de $\Ap^+_\id$.
En particulier $f$ fixe la valuation monomiale de poids $(1, \dots, 1)$, ce
qui correspond à l'exemple~\ref{exple:stab}\ref{exple:stab1}.
En composant par une translation, on peut donc supposer $f \in \GL_n(\K)$.
Si on considère ensuite $\alpha$ tel que $ \alpha_1 > \dots > \alpha_n > 0$, alors $f \in \Stab(\nu_{\id,[\alpha]})$ et relève de l'exemple~\ref{exple:stab}\ref{exple:stab2}.
Comme on vient de voir que $f$ est linéaire, on obtient que $f$ est triangulaire supérieure.
Ainsi $\Stab(\Ap_\id^+)$ est le groupe affine triangulaire, et donc égal au produit semi-direct annoncé.

La deuxième assertion s'obtient en conjuguant par des éléments du groupe symétrique.
\end{proof}

On peut utiliser le corollaire~\ref{cor:stab chambre} pour démontrer la fidélité de l'action de $\TA$ sur~$\Xl$.
On reporte la preuve à l'appendice (proposition~\ref{pro:fidele}) car cet énoncé n'est pas nécessaire pour la preuve du théorème~\ref{thm:main}.

Appelons \emph{face} de $\PW^+$ le sous-ensemble de $\PW^+$ défini par un certain nombre d'égalités de la forme $\alpha_k=\alpha_{k+1}$.
Une \emph{face} de $\Xl$ est un ensemble de valuations $\nu_{f,[\alpha]}$, pour $f\in\TA$ fixé et $[\alpha]$ dans une face fixée de $\PW^+$.

Soit $\Delta$ le poset des classes à gauche de $\GL_n(\K)$ par rapport aux sous-groupes paraboliques standards.
Rappelons suivant \cite[Definition~6.32]{AB} que $\Delta$ est l'immeuble sphérique $\Delta(G,B)$ pour $G=\GL_n(\K)$ et $B$ le sous-groupe des matrices triangulaires supérieures.

\begin{lemma}
\label{lem:GL}
Le poset des faces de $\Xl$ contenant $\nu_{\id,[1,\ldots, 1]}$ est isomorphe à $\Delta$.
\end{lemma}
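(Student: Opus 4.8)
The plan is to unwind the definition of a face of $\Xl$ and to match it, through the stabilizer computation of Proposition~\ref{pro:stabilisateur}, with a left coset of a standard parabolic subgroup of $\GL_n(\K)$.

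First I would pin down which faces pass through the apex. A face of $\Xl$ is a set $\sigma_{f,F}=\{\nu_{f,[\alpha]}:[\alpha]\in F\}$ with $f\in\TA$ and $F$ a face of $\PW^+$. As $[1,\ldots,1]$ satisfies every equality $\alpha_k=\alpha_{k+1}$, it belongs to every $F$; and if $\nu_{\id,[1,\ldots,1]}=\nu_{f,[\beta]}$ with $[\beta]\in F$, then Proposition~\ref{pro:alpha intrinseque} forces $[\beta]=[\beta]^+=[1,\ldots,1]$. Hence $\sigma_{f,F}$ contains $\nu_{\id,[1,\ldots,1]}$ exactly when $f$ fixes $\nu_{\id,[1,\ldots,1]}$, whose stabilizer is the affine group $T\rtimes\GL_n(\K)$ by Example~\ref{exple:stab}\ref{exple:stab1}, with $T$ the translations. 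Writing $f=\ell\tau$ with $\ell\in\GL_n(\K)$ and $\tau\in T$, and using that translations fix $\Ap_\id$ pointwise (Corollaire~\ref{cor:stab chambre}), I get $\nu_{f,[\alpha]}=\ell\cdot(\tau\cdot\nu_{\id,[\alpha]})=\nu_{\ell,[\alpha]}$, so $\sigma_{f,F}=\sigma_{\ell,F}$. Thus every face through the apex is some $\sigma_{\ell,F}$ with $\ell\in\GL_n(\K)$.

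Next I would make this parametrization faithful. By Corollaire~\ref{cor:intrin}\ref{cor:intrin1}, $\wgt$ carries $\sigma_{\ell,F}$ bijectively onto $F$, so $F$ is recovered from the face and the matching of points is forced: $\sigma_{\ell,F}=\sigma_{\ell',F}$ if and only if $\nu_{\ell,[\alpha]}=\nu_{\ell',[\alpha]}$ for all $[\alpha]\in F$, that is if and only if $\ell^{-1}\ell'\in\bigcap_{[\alpha]\in F}\big(\Stab(\nu_{\id,[\alpha]})\cap\GL_n(\K)\big)$. For $\alpha_0$ in the interior of $F$, Proposition~\ref{pro:stabilisateur} identifies $\Stab(\nu_{\id,[\alpha_0]})\cap\GL_n(\K)=(M_{\alpha_0}\cap\GL_n(\K))\rtimes L_{\alpha_0}$ with the standard block upper triangular parabolic $P_F$ attached to the block sizes of $\alpha_0$; the parabolics of the remaining (boundary) points of $F$ all contain $P_F$, so the intersection is exactly $P_F$. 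Hence the faces with $\wgt$-image $F$ correspond bijectively to left cosets $\ell P_F$.

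Running over all faces $F$ of $\PW^+$, and invoking the standard bijection between these faces (subsets of the walls $\alpha_k=\alpha_{k+1}$, equivalently compositions of $n$) and the standard parabolics of $\GL_n(\K)$, the assignment $\sigma_{\ell,F}\mapsto\ell P_F$ becomes a bijection onto the set of left cosets of standard parabolics, that is onto $\Delta$; the apex, where $F=\{[1,\ldots,1]\}$ and $P_F=\GL_n(\K)$, goes to the minimal coset $\GL_n(\K)$. Finally I would check that it is a poset map: $\sigma_{\ell,F}\subseteq\sigma_{\ell',F'}$ forces $F\subseteq F'$, hence $P_{F'}\subseteq P_F$, together with $\ell^{-1}\ell'\in P_F$, which rearranges to $\ell'P_{F'}\subseteq\ell P_F$, precisely the face relation in $\Delta$ (smaller simplex, larger parabolic, larger coset). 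The delicate step is the third one: one must use the explicit shape of $M_\alpha$ and $L_\alpha$ to see that the stabilizer intersected with $\GL_n(\K)$ is the parabolic $P_F$, and that intersecting over a whole face collapses to the parabolic of an interior point. The rest is bookkeeping, the only thing to watch being the reversal of order between faces and parabolics.
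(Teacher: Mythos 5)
Your proof is correct and follows essentially the same route as the paper's: reduce to faces of the form $\sigma_{\ell,F}$ with $\ell\in\GL_n(\K)$ via the stabilizer of $\nu_{\id,[1,\ldots,1]}$ and the fact that translations fix $\Ap_\id$ pointwise (corollaire~\ref{cor:stab chambre}), then identify the stabilizers in $\GL_n(\K)$ of the standard faces with the standard parabolic subgroups via la proposition~\ref{pro:stabilisateur}, yielding the correspondence with left cosets. The paper compresses this into three sentences; your write-up simply makes explicit the same orbit--stabilizer bookkeeping (the intersection over a face collapsing to the parabolic of an interior point, and the order-reversal check) that the paper leaves implicit.
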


\begin{proof}
Par le corollaire~\ref{cor:stab chambre} le sous-groupe $\GL_n(\K)\subset  \Stab(\nu_{\id,[1,\ldots, 1]})$ agit transitivement sur les faces de $\Xl$ contenant $\nu_{\id,[1,\ldots, 1]}$.
De plus, par la proposition~\ref{pro:stabilisateur}, les stabilisateurs dans $\GL_n(\K)$ des faces de $\Xl$ contenues dans $\Ap^+_\id$ sont les sous-groupes paraboliques standards de $\GL_n(\K)$.
Ainsi les faces de $\Xl$ contenant $\nu_{\id,[1,\ldots, 1]}$ correspondent aux classes à gauche de $\GL_n(\K)$ par rapport aux sous-groupes paraboliques standards.
\end{proof}

Notons $\Tame_0(\A^n) \subset \TA$ le sous-groupe des automorphismes modérés fixant l'origine.
Tout automorphisme $f \in \TA$ s'écrit sous la forme $f = f_0 \circ t$, où $f_0 \in \Tame_0(\A^n)$ et $t$ est la translation qui envoie $f^{-1}(0)$ sur $0$.
Par le corollaire~\ref{cor:stab chambre}, les translations fixent point par point l'appartement standard $\Ap_\id$, donc on a $\Ap_f = \Ap_{f_0}$.
Ainsi $\Xl$ est couvert par les $\Ap_{f_0}$ avec $f_0 \in \Tame_0(\A^n)$.

Soit $\Diff\colon\Tame_0(\A^n)\to \GL_n(\K)$ la différentielle à l'origine, ou autrement dit l'homomorphisme qui oublie tous les termes de degré~$> 1$ en $x_1,\ldots, x_n$.
Par ailleurs, pour chaque $a\in \GL_n(\K)$ la décomposition de Bruhat \cite[IV.2]{Bourbaki} donne un unique $\sigma_a\in S_n$ tel que $a\in B_n\sigma_aB_n$, où $B_n\subset \GL_n(\K)$ est le sous-groupe des matrices triangulaires supérieures.
Pour chaque $f\in \Tame_0(\A^n)$, notons $\sigma_f:=\sigma_{\Diff(f)}$.

\begin{lemma}~ \label{lem:rho}
\begin{enumerate}[wide]
\item L'application $\rho\colon \Xl\to \PW$ qui a tout $\nu=\nu_{f,[\alpha]} \in \Xl$ avec $f \in \Tame_0(\A^n)$ et $\alpha\in \W^+$ associe $\rho(\nu)=[\sigma_f(\alpha)]\in \PW$ est bien définie.
\item
L'application $\wgt$ du corollaire~\ref{cor:intrin}\ref{cor:intrin1} factorise par $\rho$, au sens où le diagramme suivant commute $($on suppose toujours $f \in \Tame_0(\A^n)$ et $\alpha\in \W^+)$ :
\[\begin{tikzcd}
\nu_{f,[\alpha]} \in \Xl \ar[rr,"\wgt"] \ar[dr,"\rho",swap]&& \PW^+ \ni [\alpha]^+ = [\sigma_f(\alpha)]^+ \\
& {[\sigma_f(\alpha)]} \in \PW \ar[ur]
\end{tikzcd}\]
\item \label{lem:rho_goodprojection}
Soit $f\in \Tame_0(\A^n)$ tel que $\Diff(f)\in B_n$.
Alors pour tout $\alpha\in \W$ on a
$\rho(\nu_{f,[\alpha]})=[\alpha]$.
\end{enumerate}
\end{lemma}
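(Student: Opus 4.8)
The plan is to treat the three parts in turn, with essentially all of the content concentrated in part (i); parts (ii) and (iii) are short manipulations once the machinery of (i) is in place.

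For (i) I must show that $\rho$ does not depend on the chosen representative. So suppose $\nu_{f,[\alpha]}=\nu_{g,[\beta]}$ with $f,g\in\Tame_0(\A^n)$ and $\alpha,\beta\in\W^+$; since $\rho$ takes values in the projective object $\PW$, I may first rescale $\beta$ so that $\nu_{f,\alpha}=\nu_{g,\beta}$ on the nose. Then Proposition~\ref{pro:alpha intrinseque} gives $\alpha^+=\beta^+$, and because $\alpha,\beta\in\W^+$ this forces $\alpha=\beta$. Consequently $h:=g^{-1}f$ fixes $\nu_{\id,\alpha}$, hence (via Corollary~\ref{cor:intrin}\ref{cor:intrin3}) fixes $\nu_{\id,[\alpha]}$, so by Proposition~\ref{pro:stabilisateur} we have $h\in M_\alpha\rtimes L_\alpha$. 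As $\Diff$ is a homomorphism, $\Diff(f)=\Diff(g)\Diff(h)$, and the crux is to locate $\Diff(h)$.

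This is where I expect the real work. Write $S_\alpha=S_{m_1}\times\cdots\times S_{m_r}\subseteq S_n$ for the stabilizer of $\alpha$ (the Weyl group of the Levi $L_\alpha$), let $P_\alpha\subseteq\GL_n(\K)$ be the standard parabolic containing $B_n$ with Levi $L_\alpha=\GL_{m_1}(\K)\times\cdots\times\GL_{m_r}(\K)$, and let $U_\alpha$ be its unipotent radical. I would check on generators that $\Diff(L_\alpha)=L_\alpha$ and $\Diff(M_\alpha)\subseteq U_\alpha$: indeed, for $m=(x_1+P_1,\dots,x_n+c)\in M_\alpha$ each $P_i$ depends only on variables $x_j$ with $b(j)>b(i)$, so its linear part produces matrix entries coming strictly from later blocks, that is, strictly above the diagonal blocks, with identity diagonal blocks. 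Hence $\Diff(h)\in U_\alpha\cdot L_\alpha=P_\alpha$. The second ingredient is the refinement of the Bruhat decomposition relative to a parabolic (a $BN$-pair computation, cf.\ \cite{AB}): since $P_\alpha=\bigsqcup_{u\in S_\alpha}B_n u B_n$, the double cosets $B_n\backslash\GL_n(\K)/P_\alpha$ correspond to the cosets $wS_\alpha\in S_n/S_\alpha$ via $a\mapsto\sigma_a S_\alpha$, so right multiplication by an element of $P_\alpha$ leaves this coset unchanged. Therefore $\sigma_f\in\sigma_g S_\alpha$, and as $S_\alpha$ fixes $\alpha$ we conclude $\sigma_f(\alpha)=\sigma_g(\alpha)$, giving $[\sigma_f(\alpha)]=[\sigma_g(\beta)]$ and the well-definedness of $\rho$. (The existence of at least one representative with $f\in\Tame_0(\A^n)$ and $\alpha\in\W^+$ follows from the discussion preceding the lemma together with Lemma~\ref{lem:permute} to reorder the weight.)

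Part (ii) is then immediate: $\sigma_f(\alpha)$ merely permutes the entries of $\alpha$, so $(\sigma_f(\alpha))^+=\alpha^+$ and hence $[\sigma_f(\alpha)]^+=[\alpha]^+=\wgt(\nu_{f,[\alpha]})$ by Corollary~\ref{cor:intrin}\ref{cor:intrin1}, which is exactly the commutativity of the diagram. For part (iii), given $f\in\Tame_0(\A^n)$ with $\Diff(f)\in B_n$ and an arbitrary $\alpha\in\W$, I would choose $\pi\in S_n$ with $\alpha=\pi(\alpha^+)$ and invoke Lemma~\ref{lem:permute} to write $\nu_{f,[\alpha]}=\nu_{f\pi,[\alpha^+]}$, where $f\pi\in\Tame_0(\A^n)$ and $\alpha^+\in\W^+$. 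Then $\Diff(f\pi)=\Diff(f)\,\pi\in B_n\pi B_n$, so uniqueness of the Bruhat cell yields $\sigma_{f\pi}=\pi$, and therefore $\rho(\nu_{f,[\alpha]})=[\pi(\alpha^+)]=[\alpha]$, as claimed. The only genuinely delicate step is the parabolic Bruhat computation in part (i); everything else is bookkeeping with the definitions of $M_\alpha$, $L_\alpha$, and $\Diff$.
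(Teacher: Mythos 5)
Your proof is correct and follows essentially the same route as the paper's: reduce to $\alpha=\beta$ via Proposition~\ref{pro:alpha intrinseque}, place $g^{-1}f$ in $M_\alpha\rtimes L_\alpha$ via Proposition~\ref{pro:stabilisateur}, observe that its differential lies in the standard parabolic $P_\alpha=B_n\Sigma B_n$, and conclude that $\sigma_f$ and $\sigma_g$ agree modulo the stabilizer of $\alpha$; parts (ii) and (iii) are identical to the paper's. The only cosmetic difference is that you invoke the parabolic Bruhat decomposition $B_n\backslash\GL_n(\K)/P_\alpha\cong S_n/S_\alpha$ as a packaged fact, whereas the paper derives the same conclusion directly from the Tits-system axiom (T3) applied to a product of standard generators of $\Sigma$.
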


\begin{proof}
\begin{enumerate}[wide]
\item
Soient $f,g \in \Tame_0(\A^n)$ et $\alpha, \beta \in \W^+$ tel que $\nu_{f,[\alpha]}=\nu_{g,[\beta]}$.
Par la proposition~\ref{pro:alpha intrinseque} on a $[\alpha] = [\beta]$.
Il s'agit maintenant de montrer que $\sigma_g=\sigma_f \circ \sigma$ pour une permutation $\sigma\in S_n$ vérifiant $\sigma(\alpha)=\alpha$.
Notons donc $\Sigma\subset S_n$ le sous-groupe des permutations fixant~$\alpha$.
Observons que puisque $\alpha \in \W^+$, le groupe $\Sigma$ est engendré par un sous-ensemble des générateurs standards $\sigma_i=(i,i+1)\in S_n$.

Par la proposition~\ref{pro:stabilisateur}, $f^{-1}g\in M_\alpha\rtimes L_\alpha$.
Notons que $\Diff(M_\alpha \cap \Tame_0(\A^n))\subset B_n$.
De plus, en considérant une décomposition de Bruhat pour chaque bloc on obtient $L_\alpha\subset B_n\Sigma B_n$.
 Ainsi $\Diff(g)\in \Diff(f)B_n\Sigma B_n$, donc $\Diff(g)\in B_n\sigma_fB_n\sigma_{i_1}\cdots\sigma_{i_k}B_n$, où les $\sigma_{i_j} \in \Sigma$ sont des générateurs standards.
Par un des axiomes des systèmes de Tits (l'axiome (T3) dans \cite[IV.2]{Bourbaki}), il existe un sous-produit $\sigma$ du produit $\sigma_{i_1}\cdots\sigma_{i_k}$ tel que $\Diff(g)\in B_n \sigma_f\sigma B_n$. Ainsi $\sigma_g=\sigma_f\sigma$, comme attendu.

\item Ce point est immédiat, puisque par définition pour tout poids $\alpha$ et toute permutation~$\sigma$ on a $[\alpha]^+ = [\sigma(\alpha)]^+$.

\item
L'hypothèse $\Diff(f)\in B_n$ implique $\sigma_f=\id$.
Étant donné $\alpha \in \W$, écrivons $\alpha=\sigma(\alpha^+)$ avec $\alpha^+\in \W^+,\sigma\in S_n$.
On a $\Diff(f\sigma)= \Diff(f)\sigma\in  B_n\sigma$, et par conséquent $\sigma_{f \sigma}=\sigma$. Ainsi par le lemme~\ref{lem:permute},
\[
\rho(\nu_{f,[\alpha]})=\rho(\nu_{f,[\sigma(\alpha^+)]})=\rho (\nu_{f\sigma ,[\alpha^+]})=[\sigma (\alpha^+)] = [\alpha],
\]
comme attendu. \qedhere
\end{enumerate}
\end{proof}

\begin{corollary} \label{cor:intersection appartements}
Soit $f \in \TA$ tel que $\Ap_f$ partage un point avec l'intérieur de $\Ap^+_\id$.
Alors il existe $\sigma\in S_n$ tel que $\Ap_f \cap \Ap_\id = \Fix(f\sigma) \cap \Ap_\id$
\end{corollary}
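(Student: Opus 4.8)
Mon plan est d'exploiter l'application de rétraction $\rho\colon \Xl \to \PW$ du lemme~\ref{lem:rho} à la manière des rétractions sur un appartement en théorie des immeubles. L'idée directrice est de choisir $\sigma$ de sorte que $g := f\sigma$ ait une différentielle triangulaire supérieure ; alors $\rho$ agira comme l'identité aussi bien sur $\Ap_\id$ que sur l'appartement $\Ap_f = \Ap_{f\sigma}$, et comparer les deux valeurs de $\rho$ forcera les paramètres à coïncider.

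Je commencerais par fixer le choix de $\sigma$. Par hypothèse il existe un poids intérieur $\alpha$, c'est-à-dire $\alpha_1 > \dots > \alpha_n$, avec $\nu_{\id,[\alpha]} \in \Ap_f$, que j'écris $\nu_{f,[\delta]}$. La proposition~\ref{pro:alpha intrinseque} donne $\delta^+ = \alpha$, et comme $\alpha$ est à coordonnées distinctes il existe $\sigma \in S_n$ avec $\sigma(\alpha) = \delta$ ; le lemme~\ref{lem:permute} fournit alors $\nu_{f\sigma,[\alpha]} = \nu_{f,[\delta]} = \nu_{\id,[\alpha]}$. Ainsi $g := f\sigma$ fixe $\nu_{\id,[\alpha]}$, donc appartient à $M_\alpha \rtimes L_\alpha$ par la proposition~\ref{pro:stabilisateur}. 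Comme $\Ap_{f\sigma} = \Ap_f$ (lemme~\ref{lem:permute}), il restera à établir $\Ap_g \cap \Ap_\id = \Fix(g) \cap \Ap_\id$.

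L'étape suivante, que je vois comme le cœur technique, consiste à vérifier que $\rho(\nu_{g,[\beta]}) = [\beta]$ pour tout $\beta$. Comme $\alpha$ est intérieur, on est dans le cas de l'exemple~\ref{exple:stab}\ref{exple:stab2}, où $L_\alpha$ est diagonal et tout élément de $M_\alpha$ s'écrit $(x_1 + P_1, \dots, x_n + c)$ avec $P_i$ ne dépendant que de $x_{i+1}, \dots, x_n$. La jacobienne d'un tel élément de $M_\alpha$ est unitriangulaire supérieure \emph{en tout point}, et celle d'un élément de $L_\alpha$ est diagonale : la différentielle de $g$ est donc triangulaire supérieure en tout point. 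En écrivant $g = g_0 \circ t'$ avec $g_0 \in \Tame_0(\A^n)$ et $t'$ la translation envoyant $g^{-1}(0)$ sur l'origine, $\Diff(g_0)$ coïncide avec la différentielle de $g$ au point $g^{-1}(0)$, et appartient ainsi à $B_n$ ; c'est ici le point à ne pas négliger, car il faut contrôler la différentielle non pas seulement à l'origine mais au point $g^{-1}(0)$. Comme les translations fixent $\Ap_\id$ point par point (corollaire~\ref{cor:stab chambre}), on a $\nu_{g,[\beta]} = \nu_{g_0,[\beta]}$, et le lemme~\ref{lem:rho}\ref{lem:rho_goodprojection} donne bien $\rho(\nu_{g,[\beta]}) = [\beta]$.

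Il ne resterait plus qu'à conclure via $\rho$. L'inclusion $\Fix(g) \cap \Ap_\id \subset \Ap_g \cap \Ap_\id$ est immédiate : un point fixe $\nu = \nu_{\id,[\gamma]}$ vérifie $\nu = g\cdot \nu = \nu_{g,[\gamma]} \in \Ap_g$. Pour la réciproque, j'écrirais $\nu \in \Ap_g \cap \Ap_\id$ sous la forme $\nu_{\id,[\gamma]} = \nu_{g,[\epsilon]}$ ; le lemme~\ref{lem:rho}\ref{lem:rho_goodprojection} appliqué à $\id$ donne $\rho(\nu) = [\gamma]$, tandis que l'étape précédente donne $\rho(\nu) = [\epsilon]$, d'où $[\gamma] = [\epsilon]$ et $g\cdot \nu = \nu_{g,[\gamma]} = \nu$, soit $\nu \in \Fix(g)$. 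On obtiendrait finalement $\Ap_f \cap \Ap_\id = \Fix(g) \cap \Ap_\id = \Fix(f\sigma) \cap \Ap_\id$, comme souhaité.
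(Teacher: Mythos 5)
Votre proposition est correcte et suit pour l'essentiel la même démarche que la preuve du texte : même choix de $\sigma$ (via la proposition~\ref{pro:alpha intrinseque} et le lemme~\ref{lem:permute}), même recours à la proposition~\ref{pro:stabilisateur} et à l'exemple~\ref{exple:stab}\ref{exple:stab2} pour placer la différentielle dans $B_n$, et même conclusion en appliquant la rétraction $\rho$ du lemme~\ref{lem:rho}\ref{lem:rho_goodprojection} à la fois à $\id$ et à $f\sigma$. L'unique différence, purement comptable et sans incidence, est que le texte se ramène d'emblée à $f\in\Tame_0(\A^n)$ en composant par une translation, alors que vous traitez la translation en fin de parcours, en évaluant la jacobienne de $g$ au point $g^{-1}(0)$ pour vérifier $\Diff(g_0)\in B_n$.
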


\begin{proof}
Quitte à composer $f$ à droite par une translation, on peut supposer $f \in \Tame_0(\A^n)$.
Pour chaque $\sigma\in S_n$ on a $\Ap_f=\Ap_{f\sigma}$, d'où l'inclusion
$\Fix(f\sigma) \cap \Ap_\id\subset \Ap_{f\sigma} \cap \Ap_\id=\Ap_f \cap \Ap_\id$.
Pour la réciproque, par hypothèse il existe $[\alpha']$ à l'intérieur de $\PW^+$ avec $\nu_{\id,[\alpha']}\in \Ap_f$.
Par la proposition~\ref{pro:alpha intrinseque} et le lemme~\ref{lem:permute},
\[
\nu_{\id,[\alpha']}=\nu_{f,[\sigma(\alpha')]}=\nu_{f\sigma,[\alpha']}
\]
pour un certain $\sigma\in S_n$.
Par la proposition~\ref{pro:stabilisateur} et l'exemple~\ref{exple:stab}\ref{exple:stab2}, $\Diff(f\sigma)\in B_n$, on peut donc appliquer le lemme~\ref{lem:rho}\ref{lem:rho_goodprojection} à $f\sigma$.
Ainsi pour chaque $\nu =\nu_{f\sigma,[\alpha]} \in \Ap_f$ on a $\rho(\nu)=[\alpha]$.
Si de plus $\nu\in \Ap_\id$, de nouveau par le lemme~\ref{lem:rho}\ref{lem:rho_goodprojection} appliqué à $\id$ on a $\nu=\nu_{\id,[\alpha]}$.
Ainsi $\nu\in \Fix(f\sigma)$ comme attendu.
\end{proof}

\section{Points fixes}
\label{sec:pointfixes}
\subsection{Équations admissibles}

Nous dirons qu'un poids $\alpha \in \W$ satisfait une \emph{équation admissible} s'il
existe des entiers $m_j \ge 0$ non tous nuls et un indice $i$ tel que
\begin{equation} \label{eq:alpha}
\alpha_i = \sum_{j \neq i} m_j \alpha_j.
\end{equation}

Nous appelons \emph{hyperplan admissible} associé à une telle équation l'ensemble
des $\alpha\in \W$ vérifiant l'équation,
et \emph{demi-espace admissible} l'ensemble des $\alpha$ vérifiant
l'inégalité
\[
\alpha_i \ge \sum_{j \neq i} m_j \alpha_j.
\]
En particulier, si l'équation admissible n'est pas de forme $\alpha_i=\alpha_k$, alors le demi-espace admissible est le demi-espace ne contenant pas
$(1, \dots, 1)$.
On utilisera également la terminologie d'\emph{hyperplans} et de \emph{demi-espaces admissibles} pour leurs projectivisations dans $\PW$.

Par exemple, quand $n=2$ les hyperplans admissibles de $\PW$ sont les points $[p,1]$ et $[1,p]$, où $p \ge 1$ est entier, et les demi-espaces admissibles sont les intervalles semi-ouverts de $[p,1]$ vers le point au bord à l'infini $[1,0]$, ou de $[1,p]$ vers $[0,1]$.
Quand $n=3$ les hyperplans admissibles dans~$\PW$ sont des segments de droites que nous appellerons \emph{droites admissibles} (voir figure~\ref{fig:nervure}).

Parmi les hyperplans admissibles, dans $\W$ aussi bien que dans $\PW$, nous appelons \emph{hyperplan principal} tout hyperplan d'équation de la forme $\alpha_i = m_k \alpha_k$, pour des indices $i \neq k$.
Autrement dit dans l'équation (\ref{eq:alpha}) nous demandons que tous les $m_j$ sauf un soient nul.
Géométriquement les hyperplans principaux dans $\PW$ sont exactement les hyperplans admissibles passant par $n-2$ sommets du simplexe.

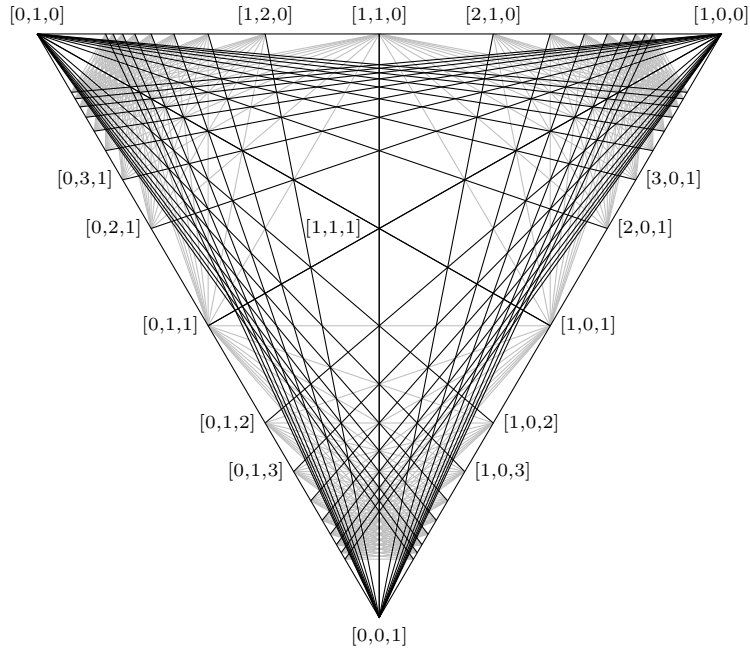
\begin{figure}[ht]
\[
\begin{tikzpicture}[scale = 9,font=\scriptsize]
\coordinate [label=above:$\trip{0}{1}{0}$] (010) at (0,0);
\coordinate [label=above:$\trip{1}{0}{0}$] (100) at (1,0);
\coordinate [label=below:$\trip{0}{0}{1}$] (001) at (.5, -.86);
\coordinate [label=above:$\trip{1}{1}{0}$](110) at ($ (010)!.5!(100) $) {};
\coordinate [label=right:$\trip{1}{0}{1}$] (101) at ($ (001)!.5!(100) $) {};
\coordinate [label=left:$\trip{0}{1}{1}$] (011) at ($ (001)!.5!(010) $) {};
\coordinate [label={left,xshift=-3:$\trip{1}{1}{1}$}] (111) at (intersection of 011--100 and 101--010);
\coordinate [label=above:$\trip{2}{1}{0}$] (210) at ($ (100)!1/(3)!(010) $) {};
\coordinate [label=above:$\trip{1}{2}{0}$] (120) at ($ (010)!1/(3)!(100) $) {};
\coordinate (310) at ($ (100)!1/(4)!(010) $) {};
\coordinate (130) at ($ (010)!1/(4)!(100) $) {};
\foreach \x in {2,...,3}
  {
  \coordinate [label=right:$\trip{\x}{0}{1}$] (\x01) at ($ (100)!1/(\x+1)!(001) $) {};
  \coordinate [label=right:$\trip{1}{0}{\x}$] (10\x) at ($ (001)!1/(\x+1)!(100) $) {};
  \coordinate [label=left:$\trip{0}{\x}{1}$] (0\x1) at ($ (010)!1/(\x+1)!(001) $) {};
  \coordinate [label=left:$\trip{0}{1}{\x}$] (01\x) at ($ (001)!1/(\x+1)!(010) $) {};
  }
\foreach \x in {4,...,9}
  {
  \coordinate (\x10) at ($ (100)!1/(\x+1)!(010) $) {};
  \coordinate (1\x0) at ($ (010)!1/(\x+1)!(100) $) {};
  \coordinate (\x01) at ($ (100)!1/(\x+1)!(001) $) {};
  \coordinate (10\x) at ($ (001)!1/(\x+1)!(100) $) {};
  \coordinate (0\x1) at ($ (010)!1/(\x+1)!(001) $) {};
  \coordinate (01\x) at ($ (001)!1/(\x+1)!(010) $) {};
  }
\foreach \x in {1,...,9}
  \foreach \y in {1,...,9}
    {
    \draw [white!75!black] (0\x1)--(1\y0);
    \draw [white!75!black] (01\x)--(10\y);
    \draw [white!75!black] (\x10)--(\y01);
    }
\foreach \x in {1,...,9}
  {
  \draw (001)--(\x10);
  \draw (001)--(1\x0);
  \draw (010)--(\x01);
  \draw (010)--(10\x);
  \draw (100)--(01\x);
  \draw (100)--(0\x1);
  }
\draw (010)--(100)--(001)--(010);
\end{tikzpicture}
\]
\caption{Quelques droites admissibles, principales ou non, dans le cas $n = 3$.} \label{fig:nervure}
\end{figure}

\begin{lemma} \label{lem:finite}
Pour tout compact $K \subset \PW$, l'ensemble des hyperplans admissibles
rencontrant $K$ est fini.
\end{lemma}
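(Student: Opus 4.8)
The plan is to exploit the fact that on a compact subset of the open simplex $\PW$ the projective coordinates stay uniformly bounded away from $0$ and from $\infty$, and to combine this with the non-negativity of the integer coefficients appearing in an admissible equation. Concretely, I would first normalize: identifying $\PW$ with the open standard simplex $\{\alpha\in\W : \sum_k\alpha_k = 1\}$ via the (homeomorphic) rescaling $[\alpha]\mapsto \alpha/\sum_k\alpha_k$, the compact set $K$ corresponds to a compact subset of this open simplex. Each coordinate function $\alpha\mapsto\alpha_k$ is continuous and strictly positive there, so there exist constants $0 < c \le C$ with $c \le \alpha_k \le C$ for every index $k$ and every normalized representative $\alpha$ of a point of $K$.

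Next, suppose an admissible hyperplane meets $K$; it is defined by an equation $\alpha_i = \sum_{j\neq i} m_j\alpha_j$ with the $m_j \ge 0$ integers not all zero, and by hypothesis there is a point $[\alpha]\in K$ satisfying it. Evaluating on the normalized representative and using the bounds, one gets
\[
C \ge \alpha_i = \sum_{j\neq i} m_j\alpha_j \ge c\sum_{j\neq i} m_j,
\]
so that $\sum_{j\neq i} m_j \le C/c$. Since the $m_j$ are non-negative integers, each of them is bounded by $C/c$, and there are only finitely many such tuples $(m_j)_{j\neq i}$; combined with the $n$ possible choices of the distinguished index $i$, this yields only finitely many admissible equations, hence finitely many admissible hyperplanes, that can meet $K$. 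Distinct equations may of course define the same hyperplane, but that only decreases the count, so finiteness is preserved.

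The argument has no serious obstacle; it is essentially a single boundedness estimate. The only points requiring a little care are, first, the passage to uniform bounds on the coordinates over $K$, which rests on $K$ being compact inside the \emph{open} simplex $\PW$: the estimate would fail near the bord à l'infini, where some $\alpha_j\to 0$ and arbitrarily large coefficients $m_j$ become admissible. Second, the \emph{non-negativity} of the coefficients $m_j$ is what allows one to convert the single scalar bound $\sum_{j\neq i} m_j \le C/c$ into a bound on each individual $m_j$, and hence into finiteness of the set of tuples.
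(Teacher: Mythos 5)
Your proof is correct and follows essentially the same route as the paper: the paper exhausts $\PW$ by the compacts $K_p = \{[\alpha] : \alpha_1^+/\alpha_n^+ \le p\}$ and observes that any admissible equation met inside $K_p$ forces $\sum_{j\neq i} m_j \le p$, which is exactly your estimate $\sum_{j\neq i} m_j \le C/c$ obtained from uniform coordinate bounds on $K$. The key point in both arguments — non-negativity of the integer coefficients converting a bound on the ratio of coordinates into finiteness of the possible equations — is identical.
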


\begin{proof}
Pour tout entier $p \ge 1$, posons
\[
K_p = \left\lbrace [\alpha];\, p \ge \frac{\alpha_1^+}{\alpha_n^+} \right\rbrace.
\]
Comme les $K_p$ forment une exhaustion de $\PW$ par des compacts, il est suffisant de
prouver le lemme pour $K = K_p$.
Or ceci découle de l'observation que si $\sum_{j \neq i} m_j > p$ dans
(\ref{eq:alpha}),
alors pour tout $[\alpha] \in K_p$
\[
\sum_{j \neq i} m_j \alpha_j > p \alpha_n^+ \ge \alpha_1^+ \ge \alpha_i,
\]
et donc l'hyperplan admissible correspondant ne rencontre pas $K_p$.
\end{proof}

Pour $\alpha\in \W$ on définit la \emph{multiplicité} $\mult(\alpha)=\mult([\alpha])$ comme le nombre d'équations admissibles satisfaites par $\alpha$.
Comme conséquence immédiate du lemme~\ref{lem:finite}, on a $\mult([\alpha])<\infty$ pour tout $[\alpha]\in \PW$.
On obtient également :

\begin{remark} \label{rem:nervure}
Soit $[\alpha] \in \PW$, et notons $\Hl_{[\alpha]} \subset \PW$ la réunion de tous les hyperplans admissibles ne passant pas par $[\alpha]$.
Notons $U$ la composante connexe contenant $[\alpha]$ de $\PW \setminus \Hl_{[\alpha]}$.
Alors par le lemme~\ref{lem:finite} $U$ est un voisinage de $[\alpha]$ dans $\PW$, qui par construction n'intersecte aucun autre hyperplan admissible que ceux passant par $[\alpha]$.
\end{remark}

\begin{remark} \label{rem:lieu fixe}
Pour tout $L \subset \PW$ un demi-espace admissible, il existe $g \in \TA$ tel que
$g(\nu_{\id,[\alpha]}) =  \nu_{\id,[\alpha]}$ si et seulement si $[\alpha] \in L$.
En effet, quitte à composer par une permutation on peut supposer que l'inégalité définissant $L$ est de la forme $\alpha_1 \ge \sum_{i \ge 2} m_i \alpha_i$, et alors $g = (x_1 + x_2^{m_2}\dots x_n^{m_n}, x_2, \dots, x_n)$ convient.
\end{remark}

Plus généralement, les demi-espaces admissibles permettent de caractériser le lieu fixé par un automorphisme.

\begin{proposition} \label{pro:lieu fixe}
Soit $f = (f_1, \dots, f_n) \in \TA$ et $\alpha = (\alpha_1, \dots, \alpha_n) \in \W$.
Alors $\nu_{\id,[\alpha]} \in \Fix(f) \cap \Ap_\id$ si et seulement si $\alpha$ satisfait chaque inégalité de la forme
\[
\alpha_{i} \ge \sum_{k = 1}^r m_{j_k} \alpha_{j_k},
\]
où $i = 1,\dots, n$ et $x_{j_1}^{m_{j_1}} \dots  x_{j_r}^{m_{j_r}}$ est un monôme (distinct de $x_i$) apparaissant dans le polynôme~$f_i$.
\end{proposition}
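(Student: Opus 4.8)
Le plan est de ramener la condition de point fixe à l'égalité de valuations $f\cdot\nu=\nu$, où $\nu:=\nu_{\id,\alpha}$, puis de traduire cette égalité en le système d'inégalités. D'abord, $\nu_{\id,[\alpha]}\in\Fix(f)$ signifie qu'il existe $t>0$ avec $\nu_{f,\alpha}=f\cdot\nu=t\nu=\nu_{\id,t\alpha}$. La proposition~\ref{pro:alpha intrinseque} appliquée aux deux écritures $(f,\alpha)$ et $(\id,t\alpha)$ donne $\alpha^+=(t\alpha)^+=t\,\alpha^+$, d'où $t=1$ puisque $\alpha^+\neq 0$. Ainsi la condition de point fixe équivaut à $f\cdot\nu=\nu$. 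Le sens direct est alors immédiat : si $f\cdot\nu=\nu$, en évaluant sur $x_i$ on obtient $\nu(f_i)=(f\cdot\nu)(x_i)=\nu(x_i)=-\alpha_i$, ce qui signifie précisément que tout monôme $x_{j_1}^{m_{j_1}}\cdots x_{j_r}^{m_{j_r}}$ de $f_i$ vérifie $\sum_k m_{j_k}\alpha_{j_k}\le\alpha_i$, c'est-à-dire les inégalités annoncées (le cas du monôme $x_i$ donnant l'égalité, il est exclu).

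Pour la réciproque, supposons les inégalités satisfaites ; elles équivalent à $\nu(f_i)\ge-\alpha_i$ pour tout $i$. Je montrerais d'abord que l'on a en fait l'égalité $\nu(f_i)=-\alpha_i$. Comme $f$ est inversible, les parties linéaires $\ell_1,\dots,\ell_n$ des $f_i$ sont indépendantes, et par hypothèse chaque $\ell_i$ ne fait intervenir que des variables $x_j$ avec $\alpha_j\le\alpha_i$, de sorte que $-\nu(\ell_i)\le\alpha_i$. Le lemme~\ref{lem:formes_lineaire} appliqué à $\ell_1,\dots,\ell_n$ donne $\sum_i\bigl(-\nu(\ell_i)\bigr)\ge\sum_i\alpha_i^+=\sum_i\alpha_i$ ; comparé à $\sum_i\bigl(-\nu(\ell_i)\bigr)\le\sum_i\alpha_i$, ceci force $-\nu(\ell_i)=\alpha_i$ terme à terme. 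Comme les monômes linéaires de degré pondéré $\alpha_i$ ne peuvent se simplifier avec les monômes de degré $\ge 2$ de $f_i$, on conclut $\nu(f_i)=-\alpha_i$.

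Il reste à établir $f\cdot\nu=\nu$, autrement dit $\nu(f^*P)=\nu(P)$ pour tout $P$. Puisque $\nu(f^J)=\sum_k J_k\,\nu(f_k)=\nu(x^J)$ pour tout multi-indice $J$, l'inégalité de valuation donne déjà $\nu(f^*P)=\nu\bigl(\sum_J c_J f^J\bigr)\ge\min_J\nu(c_J x^J)=\nu(P)$. L'égalité équivaut à l'absence de simplification entre les termes dominants, c'est-à-dire à ce que la forme initiale $\mathrm{in}(P)$ (la composante de plus haut degré pondéré de $P$), évaluée en $\bigl(\mathrm{in}(f_1),\dots,\mathrm{in}(f_n)\bigr)$, soit non nulle pour tout $P$. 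Or ceci pour tout $P$ n'est rien d'autre que l'indépendance algébrique sur $\K$ des formes initiales $\mathrm{in}(f_1),\dots,\mathrm{in}(f_n)$.

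C'est précisément cette indépendance algébrique qui constitue l'obstacle principal, et je l'obtiendrais via le critère jacobien. Chaque entrée $\partial f_i/\partial x_j$ admet pour composante de degré pondéré $\alpha_i-\alpha_j$ exactement $\partial\,\mathrm{in}(f_i)/\partial x_j$ ; en extrayant la composante de degré pondéré $0$ du déterminant $\det\bigl(\partial f_i/\partial x_j\bigr)$, chaque terme de permutation $\sigma$ ne peut contribuer que par le produit de ces composantes dominantes, puisque $\sum_i(\alpha_i-\alpha_{\sigma(i)})=0$. On obtient ainsi que la composante de degré pondéré $0$ du jacobien coïncide avec $\det\bigl(\partial\,\mathrm{in}(f_i)/\partial x_j\bigr)$. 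Comme le jacobien d'un automorphisme est une constante non nulle, il est égal à sa composante de degré $0$, d'où $\det\bigl(\partial\,\mathrm{in}(f_i)/\partial x_j\bigr)\neq 0$, ce qui entraîne l'indépendance algébrique recherchée. On conclut $\nu(f^*P)=\nu(P)$ pour tout $P$, donc $f\cdot\nu=\nu$ et $\nu_{\id,[\alpha]}\in\Fix(f)\cap\Ap_\id$, achevant la preuve.
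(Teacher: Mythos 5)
Votre preuve est correcte, et son squelette coïncide avec celui de l'article : sens direct en évaluant $f\cdot\nu$ sur les coordonnées $x_i$, réciproque en combinant l'indépendance des parties linéaires $\ell_i$ et le lemme~\ref{lem:formes_lineaire} pour forcer $\alpha_i=-\nu(f_i)=-\nu(\ell_i)$ pour tout $i$. La différence porte sur la toute dernière étape. L'article tient pour immédiate l'équivalence entre «$f$ fixe $\nu_{\id,\alpha}$» et «$-\nu(f_i)=\alpha_i$ pour tout $i$» (c'est le «ce qui revient à dire» du sens direct, puis le «donc» qui conclut la réciproque), alors que vous identifiez correctement la direction non triviale : les valeurs sur les coordonnées ne fournissent que l'inégalité $f\cdot\nu\geq\nu$, et l'égalité équivaut à l'indépendance algébrique des formes initiales $\mathrm{in}(f_1),\dots,\mathrm{in}(f_n)$, que vous démontrez par un argument jacobien (la composante de degré pondéré $0$ de $\Jac(f)$ est $\det\bigl(\partial\,\mathrm{in}(f_i)/\partial x_j\bigr)$, donc égale à la constante non nulle $\Jac(f)$). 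Cet argument est valide, y compris sur un corps quelconque : la direction du critère jacobien que vous utilisez (jacobien non nul entraîne indépendance algébrique) vaut en toute caractéristique, seule sa réciproque échouant en caractéristique positive — précision qui mériterait d'être faite explicitement, puisque cette partie de l'article ne suppose rien sur $\K$. L'article, lui, s'appuie implicitement sur le même type de vérification que l'inclusion $M_\alpha,L_\alpha\subset\Stab(\nu_{\id,[\alpha]})$ déclarée «immédiate» à la section~\ref{sec:stabilizers} : l'égalité $-\nu(\ell_i)=\alpha_i$ donne aux formes initiales une structure triangulaire par blocs, qui en fait les composantes d'un automorphisme, donc des polynômes algébriquement indépendants. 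Votre rédaction a le mérite de rendre ce point complètement explicite, au prix d'un détour par le jacobien qui est à la fois plus court et plus uniforme que l'argument triangulaire.
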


\begin{proof}
On s'intéresse aux classes d'homothétie $\nu_{\id,[\alpha]}$ fixées par $f$, ce qui par le corollaire~\ref{cor:intrin}\ref{cor:intrin3} revient à trouver les valuations $\nu_{\id,\alpha}$ fixées par $f$.

Supposons la valuation $\nu = \nu_{\id,\alpha}$ fixée par~$f$, ce qui revient à dire que pour chaque $i$ on a $-\nu(f_i) = \alpha_i$.
En particulier pour chaque monôme $x_{j_1}^{m_{j_1}} \dots  x_{j_r}^{m_{j_r}}$ distinct de $x_i$ apparaissant dans $f_i$ on doit avoir
\[
\alpha_{i} = -\nu(f_i) \ge
-\nu(x_{j_1}^{m_{j_1}} \dots  x_{j_r}^{m_{j_r}})
=\sum_{k = 1}^r m_{j_k} \alpha_{j_k}.
\]
Ceci implique que $i \not \in \{ j_1, \dots ,j_r\}$, et donne la liste attendue d'équations admissibles.

Réciproquement, supposons que $\alpha$ satisfasse toutes ces inégalités.
Notons $\ell_i$ la partie linéaire de la composante $f_i$, on a donc $(\ell_1, \dots, \ell_n) \in \GL_n(\K)$ et $-\nu(f_i)\geq -\nu(\ell_i)$ pour chaque $i$.
Par le lemme~\ref{lem:formes_lineaire}, pour toute valuation $\nu = \nu_{\id,\alpha}$ nous avons
\[
-\nu(\ell_1)-\cdots -\nu(\ell_n)\geq \alpha_1+\cdots +\alpha_n.
\]
Le fait que $\alpha_i$ satisfasse les inégalités données par les monômes des $f_i$ donne $\alpha_i \ge -\nu(f_i)$ pour tout $i$.
On obtient $\alpha_i \ge -\nu(\ell_i)$ pour tout $i$, et si l'une de ces inégalités était stricte, alors on obtiendrait une contradiction
\[
\alpha_1 +\dots +\alpha_n > -\nu(\ell_1) - \dots -\nu(\ell_n) \ge \alpha_1 +\dots +\alpha_n.
\]
On conclut que $\alpha_i = -\nu(f_i) = -\nu(\ell_i)$ pour tout $i$, et donc $f$ fixe $\nu_{\id,\alpha}$ comme attendu.
\end{proof}

Par la proposition~\ref{pro:lieu fixe}, pour chaque automorphisme linéaire ou élémentaire $f\in \TA$ on a $\Ap_f\cap\Ap_\id\neq \emptyset$. Puisque $\TA$ est engendré par les automorphismes linéaires et élémentaires, on obtient immédiatement :

\begin{corollary}
\label{cor:connexe}
Pour tous $f,g \in \TA$, il existe une suite $f_0, \dots, f_k \in \TA$ tel que $f = f_0$, $g = f_k$, et $\Ap_{f_{i-1}} \cap \Ap_{f_i} \neq \emptyset$ pour chaque $i = 1, \dots, k$.
\end{corollary}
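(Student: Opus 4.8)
Le plan est d'exploiter le fait que $\TA$ est engendré par les automorphismes linéaires et élémentaires, combiné à l'observation qui précède selon laquelle $\Ap_s\cap\Ap_\id\neq\emptyset$ pour chaque tel générateur $s$ (conséquence de la proposition~\ref{pro:lieu fixe}). L'idée est de transformer une décomposition de $f^{-1}g$ en produit de générateurs en une chaîne d'appartements consécutivement sécants, en transportant cette observation par l'action du groupe.

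Je commencerais par dégager l'ingrédient central, à savoir que la relation «~les appartements se rencontrent~» est invariante sous l'action de $\TA$. En effet, pour tout $h\in\TA$ on a $h\cdot\Ap_{f'}=\Ap_{hf'}$, car $h\cdot\nu_{f',\alpha}=\nu_{hf',\alpha}$ et donc $h(f'(\Ap_\id))=(hf')(\Ap_\id)$ ; comme de plus $h$ agit bijectivement sur $\Xl$, on obtient pour tous $f',f''\in\TA$
\[
\Ap_{hf'}\cap\Ap_{hf''}=h\bigl(\Ap_{f'}\cap\Ap_{f''}\bigr),
\]
de sorte que le membre de gauche est non vide si et seulement si $\Ap_{f'}\cap\Ap_{f''}\neq\emptyset$.

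Ensuite, j'écrirais $f^{-1}g=s_1\cdots s_m$ comme produit de générateurs, chaque $s_i$ étant linéaire ou élémentaire, puis je poserais $f_i:=f\,(s_1\cdots s_i)$ pour $i=0,\dots,m$, de sorte que $f_0=f$ et $f_m=f\,f^{-1}g=g$. Pour deux indices consécutifs on a $f_i=f_{i-1}s_i$, et l'identité d'équivariance ci-dessus appliquée à $h=f_{i-1}$, $f'=\id$, $f''=s_i$ donne
\[
\Ap_{f_{i-1}}\cap\Ap_{f_i}=f_{i-1}\bigl(\Ap_\id\cap\Ap_{s_i}\bigr).
\]
Comme $s_i$ est linéaire ou élémentaire, l'observation préalable fournit $\Ap_\id\cap\Ap_{s_i}\neq\emptyset$, d'où $\Ap_{f_{i-1}}\cap\Ap_{f_i}\neq\emptyset$, ce qui conclut avec $k=m$.

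Je ne m'attends pas à rencontrer d'obstacle réel ici : tout le contenu géométrique est déjà concentré dans la proposition~\ref{pro:lieu fixe}, à travers le fait que l'appartement de chaque générateur rencontre $\Ap_\id$. Le seul point méritant un soin est l'identité d'équivariance $\Ap_{hf'}\cap\Ap_{hf''}=h(\Ap_{f'}\cap\Ap_{f''})$, qui repose uniquement sur la bijectivité de l'action et sur la compatibilité $h\cdot\Ap_{f'}=\Ap_{hf'}$ de la définition des appartements avec l'action à gauche.
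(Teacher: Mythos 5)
Votre preuve est correcte et suit essentiellement la même démarche que l'article : celui-ci déduit le corollaire «~immédiatement~» du fait (via la proposition~\ref{pro:lieu fixe}) que $\Ap_s\cap\Ap_\id\neq\emptyset$ pour chaque générateur linéaire ou élémentaire $s$, et votre argument ne fait qu'expliciter la construction de la chaîne $f_i=f\,s_1\cdots s_i$ et l'équivariance $\Ap_{hf'}=h(\Ap_{f'})$ que l'article laisse implicites.
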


\begin{example}
Si $g = (x_1 + x_2^3, x_2)$ et $h = (x_1, x_2 + x_1^2)$, alors les appartements $\Ap_h$, $\Ap_\id$, $\Ap_g$ et $\Ap_{gh}$ forment une suite d'appartements de $\Xl_2$ tel que chaque couple d'appartements consécutifs s'intersectent, voir figure~\ref{fig:chaine_appart}.
On établira plus loin dans la section~\ref{sec:arbre} que, comme suggéré par la figure, $\Xl_2$ admet une structure d'arbre.
\end{example}

\begin{figure}[ht]
\[
\begin{tikzpicture}[xscale = 2, yscale=1, font=\scriptsize]
\coordinate [label=above:$\nu_{gh, [1,1]}$] (gh 11) at (2,3);
\coordinate [label=above:$\nu_{gh, [2,1]}$] (gh 21) at (3,3);
\coordinate [label=above:$\nu_{gh, [3,1]}$] (gh 31) at (3.7,3);
\coordinate [label=above:$\Ap_{gh}$] (gh 41) at (5,3);
\coordinate [label=above:$\Ap_{g}$](g 13) at (0,2);
\coordinate [label=below:${\nu_{g, [1,2]} = \nu_{gh, [1,2]}}$] (g 12) at (1,2);
\coordinate [label=above:$\nu_{g, [1,1]}$] (g 11) at (2,2);
\coordinate [label=above:$\nu_{g, [2,1]}$] (g 21) at (3,2);
\coordinate (id left) at (-1,1);
\coordinate (id 13) at (0,1);
\coordinate [label=above:${\nu_{\id, [1,2]} = \nu_{h, [1,2]}}$] (id 12) at (1,1);
\coordinate [label=above:$\nu_{\id, [1,1]}$] (id 11) at (2,1);
\coordinate [label=above:$\nu_{\id, [2,1]}$]  (id 21) at (3,1);
\coordinate [label=below:${\nu_{\id, [3,1]} = \nu_{g, [3,1]}}$] (id 31) at (3.7,1);
\coordinate [label=above:$\Ap_{\id}$] (id 41) at (5,1);
\coordinate [label=below:$\nu_{h, [1,1]}$] (h 11) at (2,0);
\coordinate [label=below:$\nu_{h, [2,1]}$] (h 21) at (3,0);
\coordinate [label=below:$\nu_{h, [3,1]}$] (h 31) at (3.7,0);
\coordinate [label=above:$\Ap_{h}$] (h 41) at (5,0);
\draw (g 13)--(g 12)--(gh 11)--(gh 41)
      (g 12)--(g 21)--(id 31)--(id 41)
      (id 12)--(id 31)
      (id 13)--(id 12)--(h 11)--(h 41);
\foreach \v in {gh 11, gh 21, gh 31, g 12, g 11, g 21, id 12, id 11, id 21, id 31, h 11, h 21, h 31} {
\node[circle,fill=black,minimum size=4pt,inner sep=0pt] at (\v) {};
}
\end{tikzpicture}
\]
\caption{Quatre appartements dans $\Xl_2$} \label{fig:chaine_appart}
\end{figure}

La proposition~\ref{pro:lieu fixe} a aussi les conséquences suivantes :

\begin{corollary}
\label{cor:stabjump}
Soient $\alpha',\alpha''\in \W$. Supposons que chaque demi-espace admissible contenant $\alpha'$ contient $\alpha''$. Alors le stabilisateur dans $\TA$ de $\nu_{\id,[\alpha']}$ est contenu dans le stabilisateur de $\nu_{\id,[\alpha'']}$.
\end{corollary}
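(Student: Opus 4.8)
Le plan est d'exploiter la proposition~\ref{pro:lieu fixe} dans ses deux directions afin de traduire l'appartenance aux stabilisateurs en termes d'inégalités admissibles. L'observation décisive est que les inégalités intervenant dans la proposition~\ref{pro:lieu fixe} sont précisément celles qui définissent les demi-espaces admissibles : une fois ce dictionnaire posé, l'hypothèse se transfère mot pour mot.

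Concrètement, je partirais d'un automorphisme $f = (f_1, \dots, f_n) \in \Stab(\nu_{\id,[\alpha']})$. Par le corollaire~\ref{cor:intrin}\ref{cor:intrin3} on peut travailler avec $\nu_{\id,\alpha'}$ plutôt qu'avec sa classe, puis appliquer la proposition~\ref{pro:lieu fixe} : le poids $\alpha'$ satisfait chaque inégalité de la forme $\alpha'_{i} \ge \sum_{k=1}^r m_{j_k} \alpha'_{j_k}$, où $x_{j_1}^{m_{j_1}} \dots x_{j_r}^{m_{j_r}}$ est un monôme distinct de $x_i$ apparaissant dans $f_i$.

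Ensuite je distinguerais deux cas selon la nature du monôme. Si le monôme n'est pas constant, les exposants $m_{j_k}$ ne sont pas tous nuls, de sorte que l'inégalité correspondante définit un demi-espace admissible contenant $\alpha'$ ; par hypothèse ce demi-espace contient aussi $\alpha''$, et donc $\alpha''$ satisfait la même inégalité. Si le monôme est constant, l'inégalité se réduit à $\alpha_i \ge 0$, automatiquement vérifiée par $\alpha'' \in \W$ puisque $\alpha''_i > 0$. Dans tous les cas, $\alpha''$ satisfait donc l'ensemble des inégalités de la proposition~\ref{pro:lieu fixe} associées à $f$.

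Enfin j'appliquerais la réciproque de la proposition~\ref{pro:lieu fixe}, cette fois à $\alpha''$, pour conclure que $f$ fixe $\nu_{\id,[\alpha'']}$, c'est-à-dire $f \in \Stab(\nu_{\id,[\alpha'']})$. Comme $f$ est arbitraire, on obtient l'inclusion voulue $\Stab(\nu_{\id,[\alpha']}) \subset \Stab(\nu_{\id,[\alpha'']})$. Je n'anticipe pas de réel obstacle ici : la difficulté se réduit à reconnaître que les inégalités de la proposition~\ref{pro:lieu fixe} sont exactement les demi-espaces admissibles, le seul point technique à ne pas négliger étant le traitement séparé (et trivial) des éventuels termes constants des $f_i$.
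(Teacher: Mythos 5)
Your proof is correct and follows essentially the same route as the paper: both directions of the proposition~\ref{pro:lieu fixe} translate membership in the stabilizer into the admissible-half-space inequalities, and the hypothesis transfers these from $\alpha'$ to $\alpha''$ (the paper compresses this by saying $\Fix(f)\cap\Ap_\id$ is an intersection of admissible half-spaces, while you make explicit the harmless case of constant monomials, which give no constraint). The detour through the corollaire~\ref{cor:intrin}\ref{cor:intrin3} is unnecessary — and strictly speaking that corollary is stated for $\alpha\in\W^+$ — but it is also harmless, since the proposition~\ref{pro:lieu fixe} is already stated for classes $\nu_{\id,[\alpha]}$ with $\alpha\in\W$ arbitrary.
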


\begin{proof}
Soit $f\in \TA$.
Par la proposition~\ref{pro:lieu fixe}, $\Fix(f) \cap \Ap_\id=\{\nu_{\id,[\alpha]}\}$ où $[\alpha]\in L_1\cap \ldots \cap L_k$, intersection de demi-espaces admissibles de $\PW$.
Si $\nu_{\id,[\alpha']}\in \Fix(f)$, alors $[\alpha']\in L_i$ pour chaque $i=1,\ldots, k$. Par l'hypothèse, $[\alpha'']\in L_i$ pour chaque $i=1,\ldots, k$. Donc $\nu_{\id,[\alpha'']}\in \Fix(f)$, comme attendu.
\end{proof}

\begin{corollary}
\label{cor:stab}
Soient $n=3$ et $\alpha\colon (t_0,\infty)\to \mathrm{int}(\W^+)$ une courbe le long de laquelle $\frac{\alpha_1}{\alpha_2}$ et $\frac{\alpha_2}{\alpha_3}$ sont non décroissantes.
Alors les stabilisateurs dans $\TA$ des points $\nu_{\id,[\alpha(t)]}$ forment une famille croissante de groupes.
\end{corollary}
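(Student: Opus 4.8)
The plan is to deduce this directly from Corollary~\ref{cor:stabjump}. Fix $t_0 < t \le t'$; proving that the family is increasing amounts to proving $\Stab(\nu_{\id,[\alpha(t)]}) \subseteq \Stab(\nu_{\id,[\alpha(t')]})$, and by Corollary~\ref{cor:stabjump} it suffices to check that every admissible half-space $L\subset \W$ containing $\alpha(t)$ also contains $\alpha(t')$. Thus the whole statement reduces to a monotonicity property of admissible half-spaces along the curve, and the remaining work is to enumerate the relevant half-spaces and verify they are preserved as $t$ grows.

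First I would classify the admissible half-spaces that can contain $\alpha(t)$; since $\alpha(t)\in\mathrm{int}(\W^+)$, these are exactly the ones meeting $\mathrm{int}(\W^+)$. An admissible half-space has the form $\alpha_i \ge \sum_{j\neq i} m_j\alpha_j$ with integers $m_j\ge 0$ not all zero, and on $\mathrm{int}(\W^+)$ one has $\alpha_1>\alpha_2>\alpha_3>0$. For $i=3$ the right-hand side is at least $\min(\alpha_1,\alpha_2)>\alpha_3$, so no such half-space meets $\mathrm{int}(\W^+)$. For $i=2$, having $m_1\ge 1$ would force the right-hand side above $\alpha_1>\alpha_2$, so necessarily $m_1=0$ and the inequality reads $\alpha_2\ge m_3\alpha_3$ with $m_3\ge 1$. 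For $i=1$ we simply obtain $\alpha_1\ge m_2\alpha_2+m_3\alpha_3$. Hence only the two families $\alpha_2\ge m_3\alpha_3$ and $\alpha_1\ge m_2\alpha_2+m_3\alpha_3$ need to be considered.

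It then remains to show these two families are preserved when passing from $t$ to $t'$. The first rewrites as $\frac{\alpha_2}{\alpha_3}\ge m_3$, which is preserved because $\frac{\alpha_2}{\alpha_3}$ is non-decreasing. For the second, dividing by $\alpha_2>0$ gives
\[
\frac{\alpha_1}{\alpha_2} \ge m_2 + m_3\,\frac{\alpha_3}{\alpha_2}.
\]
Here the left-hand side $\frac{\alpha_1}{\alpha_2}$ is non-decreasing by hypothesis, while $\frac{\alpha_3}{\alpha_2}=\big(\frac{\alpha_2}{\alpha_3}\big)^{-1}$ is non-increasing, so the right-hand side is non-increasing; thus the inequality at $t$ implies the inequality at $t'$, and multiplying back by $\alpha_2(t')>0$ recovers $\alpha_1(t')\ge m_2\alpha_2(t')+m_3\alpha_3(t')$, as needed.

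I do not expect a genuine obstacle: the two monotonicity hypotheses on $\frac{\alpha_1}{\alpha_2}$ and $\frac{\alpha_2}{\alpha_3}$ are tailored exactly so that both families of admissible inequalities move in the favourable direction. The only points requiring care are the bookkeeping of the classification --- in particular that $i=3$ contributes nothing and that $i=2$ forces $m_1=0$ --- together with the convention (recalled just before the statement) that the admissible half-space is always the side $\alpha_i\ge\sum_{j\neq i}m_j\alpha_j$, which is precisely what makes the orientations line up with the monotonicity of the ratios.
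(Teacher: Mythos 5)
Your proof is correct and follows essentially the same route as the paper: both arguments reduce the statement to Corollary~\ref{cor:stabjump}, observe that any admissible half-space containing a point of $\mathrm{int}(\W^+)$ must be of the form $\alpha_2 \ge m_3\alpha_3$ or $\alpha_1 \ge m_2\alpha_2+m_3\alpha_3$, and then use the monotonicity of $\frac{\alpha_1}{\alpha_2}$ and $\frac{\alpha_2}{\alpha_3}$ to conclude. Your write-up merely spells out in more detail the classification step (why $i=3$ is impossible and why $i=2$ forces $m_1=0$) that the paper states without justification.
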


\begin{proof} Soient $t\in (t_0,\infty)$ et $f\in \TA$ tels que $\nu_{\id,[\alpha(t)]}\in \Fix(f)$.
Si $L$ est un demi-espace admissible dans $\PW$ contenant $[\alpha(t)]$, vu que $\alpha_1(t) \ge \alpha_2(t) \ge \alpha_3(t)$, l'inégalité définissant $L$ est de la forme $\alpha_2 \ge m_3\alpha_3$ ou $\alpha_1 \ge m_2\alpha_2+m_3\alpha_3$.
Puisque $\frac{\alpha_1}{\alpha_2}$ et $\frac{\alpha_2}{\alpha_3}$ sont non décroissantes le long de $\alpha(t)$, dans tous les cas $[\alpha(t')]\in L$ pour $t'\geq t$.
Par le corollaire~\ref{cor:stabjump}, on obtient $\nu_{\id,[\alpha(t')]}\in \Fix(f)$.
\end{proof}

\subsection{Autour de \texorpdfstring{$[m,p,1]$}{[m,p,1]}} \label{sec:mp1}

On se place maintenant en dimension $n = 3$, et on va étudier plus précisément les intersections d'appartements autour d'une valuation $\nu_{\id,[\alpha]}$ de poids $\alpha=(m,p,1)$ avec $m \ge p \ge 1$ deux entiers.

\begin{remark}
\label{rem:mp1}
Soient $p \ge 1$ un entier et $\alpha \in \W^+$ avec $\alpha_2 = p \alpha_3$ et $\mult (\alpha) \ge 2$.
Chaque autre équation admissible pour $\alpha$ est de la forme $\alpha_1=m_2\alpha_2+m_3\alpha_3$, ce qui entraîne $\alpha_1=m_2p\alpha_3+m_3\alpha_3=(m_2p+m_3)\alpha_3$.
Donc $\alpha=[m,p,1]$ pour $m=m_2p+m_3$.
\end{remark}

Observons qu'il existe des poids $\alpha \in \W^+$ avec $\mult (\alpha) \ge 2$ qui ne sont pas de cette forme, par exemple $(6,3,2)$ vérifiant $\alpha_1 = 2 \alpha_2$ et $\alpha_1 = 3 \alpha_3$, ou encore $(11,3,2)$ vérifiant $\alpha_1 = 3 \alpha_2 + \alpha_3$ et $\alpha_1 = \alpha_2 + 4 \alpha_3$.

\begin{remark}
\label{rem:mp1_equations}
Fixons $m \ge p \ge 1$ deux entiers, et notons $m = pq + r$ la division euclidienne de $m$ par $p$.
Le poids $\alpha=(m,p,1)$ satisfait exactement $q+2$ équations admissibles, qui sont $\alpha_2 = p\alpha_3$ et $\alpha_1 = a\alpha_2 + (m-pa)\alpha_3$, $a = 0, \dots, q$.
Ces équations correspondent respectivement aux directions de $[m,p,1]$ vers $[1,0,0]$ (ou $[0,p,1]$) et vers $[m-pa,0,1]$ (ou $[a,1,0]$).
\end{remark}

Soit $\alpha$ dans l'intérieur de $\W^+$ et $U \subset \PW^+$ le voisinage de $[\alpha]$ de la remarque~\ref{rem:nervure}.
Soient $f,g\in \TA$ avec $\nu:=\nu_{f,[\alpha]}=\nu_{g,[\alpha]}$.
Notons $U_{f,g}:=\{[\alpha']\in U;\ \nu_{f,[\alpha']}=\nu_{g,[\alpha']}\}\subset \PW^+$.
On dira que $\Ap_f$ et $\Ap_g$ sont \emph{localement équivalents} en $\nu$, noté $f \sim_{\nu} g$, si $U_{f,g}=U$.

Si $\nu=\nu_{\id,[\alpha]}$ avec $\alpha=(m,p,1)$, $m > p > 1$, rappelons que $M_\alpha\subset \Stab(\nu)$ est le sous-groupe des automorphismes
\[
(x_1 + P(x_2, x_3), x_2 + Q(x_3), x_3 + d)
\]
avec $-\nu(P) \leq m$, $-\nu(Q)=\deg Q \leq p$.
Définissons alors $N_\alpha\subset M_\alpha$ comme le sous-groupe des automorphismes avec $-\nu(P) < m$, $\deg Q <p$.

\begin{lemma}
\label{lem:normal}
Soit $\nu=\nu_{\id,[\alpha]}$ de poids $\alpha=(m,p,1)$ avec $m > p > 1$.
\begin{enumerate}
\item \label{lem:normal1}
Pour chaque $f\in \Stab(\nu)$ il existe $g\in M_\alpha$ tel que $\Ap_f^+ = \Ap_g^+$, et donc en particulier $f\sim_\nu g$.
\item \label{lem:normal1bis}
Soit $h \in M_\alpha$. Alors $\id \sim_\nu h$ si et seulement si $h \in N_{\alpha}$.
\item \label{lem:normal2}
$N_\alpha$ est normal dans $M_\alpha$. En particulier pour $f,g\in M_\alpha$ on a $f \sim_\nu g$ si et seulement si $f$ et $g$ deviennent égaux dans le quotient $M_\alpha /N_\alpha$.
\end{enumerate}
\end{lemma}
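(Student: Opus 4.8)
The plan is to handle the three items in turn, reducing each to the description of the fixed loci on the standard apartment (Proposition~\ref{pro:lieu fixe}) and to the explicit list of admissible equations at $[m,p,1]$ (Remark~\ref{rem:mp1_equations}); throughout I use that, since $m>p>1$, the class $[\alpha]$ is interior to $\PW^+$, so the neighbourhood $U$ of Remark~\ref{rem:nervure} lies in $\PW^+$ and meets only the admissible hyperplanes through $[\alpha]$. For \ref{lem:normal1}, write $f=g\ell$ with $g\in M_\alpha$ and $\ell\in L_\alpha$ (Proposition~\ref{pro:stabilisateur}); since $[\alpha]$ is interior, $L_\alpha$ is the diagonal group (Example~\ref{exple:stab}\ref{exple:stab2}), and by Corollary~\ref{cor:stab chambre} a diagonal matrix fixes $\Ap_\id$ pointwise, so $\Ap_f^+=g(\ell(\Ap_\id^+))=g(\Ap_\id^+)=\Ap_g^+$. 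Since $\wgt$ is a bijection on each chamber sending $\nu_{g,[\alpha']}$ to $[\alpha']$ (Corollary~\ref{cor:intrin}\ref{cor:intrin1}), this equality of chambers forces $\nu_{f,[\alpha']}=\nu_{g,[\alpha']}$ for all $[\alpha']\in\PW^+$, in particular on $U$; hence $f\sim_\nu g$.

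For \ref{lem:normal1bis}, note that $\id\sim_\nu h$ means precisely that $h$ fixes $\nu_{\id,[\alpha']}$ for every $[\alpha']\in U$. By Proposition~\ref{pro:lieu fixe}, $\Fix(h)\cap\Ap_\id$ is cut out by one admissible inequality per monomial of $h$: each monomial $x_2^ax_3^b$ of $P$ gives $\alpha_1\ge a\alpha_2+b\alpha_3$ and each monomial $x_3^c$ of $Q$ gives $\alpha_2\ge c\alpha_3$ (the constant term of the third coordinate contributes nothing). Because $U$ meets only the admissible hyperplanes through $[\alpha]$, an inequality holds on all of $U$ exactly when its bounding hyperplane avoids $[\alpha]$ (there $[\alpha]$ satisfies the strict inequality, which persists on the connected set $U$); if the hyperplane passes through $[\alpha]$ it separates $U$ into two nonempty halves and the inequality fails on one of them. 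Thus $\id\sim_\nu h$ holds if and only if no monomial of $h$ produces an admissible hyperplane through $[\alpha]$. By Remark~\ref{rem:mp1_equations}, these hyperplanes are exactly the ones arising from the monomials of $P$ of weighted degree $m$ and from the monomial $x_3^p$ of $Q$; their absence is precisely the condition $-\nu(P)<m$ and $\deg Q<p$, that is $h\in N_\alpha$.

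For \ref{lem:normal2}, I first record that $\nu_{f,[\alpha']}=\nu_{g,[\alpha']}$ if and only if $g^{-1}f$ fixes $\nu_{\id,[\alpha']}$, so $U_{f,g}=U_{\id,\,g^{-1}f}$ and hence $f\sim_\nu g$ if and only if $\id\sim_\nu g^{-1}f$. Combined with \ref{lem:normal1bis}, this gives, for $f,g\in M_\alpha$, that $f\sim_\nu g$ if and only if $g^{-1}f\in N_\alpha$. It then remains to show that $N_\alpha$ is normal in $M_\alpha$, which I would do by exhibiting it as a kernel. For $h=(x_1+P,\,x_2+Q,\,x_3+d)\in M_\alpha$, set $\lambda(h)=(P_m,c)$, where $P_m$ is the weighted-degree-$m$ part of $P$ and $c$ is the coefficient of $x_3^p$ in $Q$. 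A direct composition computation, using that the substitutions $x_2\mapsto x_2+c'x_3^p$ and $x_3\mapsto x_3+d'$ do not raise weighted degree, gives
\[
\lambda(h\circ h')=\big(P'_m+P_m(x_2+c'x_3^p,\,x_3),\ c+c'\big),
\]
with $\lambda(h)=(P_m,c)$ and $\lambda(h')=(P'_m,c')$. This is the product law of the semidirect product of the additive group of weighted-degree-$m$ forms by $(\K,+)$ acting through $x_2\mapsto x_2+c'x_3^p$; hence $\lambda$ is a homomorphism and $N_\alpha=\ker\lambda$ is normal, and the quotient statement follows.

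I expect the composition computation in \ref{lem:normal2} to be the main obstacle: one must verify that lower weighted-degree monomials of $P$ never feed into the degree-$m$ part after substitution, and that the displayed law genuinely defines a group, equivalently that $c'\mapsto\big(x_2\mapsto x_2+c'x_3^p\big)$ is an action of $(\K,+)$. A smaller point to be careful about, in \ref{lem:normal1bis}, is that each admissible hyperplane through the interior point $[\alpha]$ really separates $U$, so that no leading monomial can slip through unnoticed.
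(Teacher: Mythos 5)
Your proof is correct. Parts \ref{lem:normal1} and \ref{lem:normal1bis} follow essentially the paper's route: the factorization $f=g\ell$ from Proposition~\ref{pro:stabilisateur} plus Corollary~\ref{cor:stab chambre} for \ref{lem:normal1}, and Proposition~\ref{pro:lieu fixe} for \ref{lem:normal1bis} (the paper disposes of the latter in one line --- $h\in N_\alpha$ iff $[\alpha]$ lies in the \emph{interior} of every admissible half-space attached to a monomial of $h$ --- which is precisely the separation argument you spell out; your insistence that a hyperplane through the interior point $[\alpha]$ genuinely cuts $U$ into two nonempty parts is exactly the point that makes the ``only if'' direction work). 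Part \ref{lem:normal2} is where you genuinely diverge: the paper proves normality by a direct computation, writing out $f^{-1}gf$ for $f\in M_\alpha$, $g\in N_\alpha$ and checking that the bounds $-\nu(\cdot)<m$ and $\deg(\cdot)<p$ survive the substitutions, whereas you realize $N_\alpha$ as the kernel of the leading-term map $\lambda(h)=(P_m,c)$ with values in a semidirect product of the weighted-degree-$m$ forms by $(\K,+)$. Your composition formula is right: the substitutions $x_2\mapsto x_2+Q'(x_3)$, $x_3\mapsto x_3+d'$ cannot raise weighted degree, so the part of $P$ of weighted degree $<m$ never contributes in degree $m$, and only the top term $c'x_3^p$ of $Q'$ matters there; moreover the displayed law is indeed a group law because $c'\mapsto\bigl(x_2\mapsto x_2+c'x_3^p\bigr)$ is an action of $(\K,+)$ by weighted-degree-preserving automorphisms. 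One small convention caveat: with your ordering one gets $\lambda(h\circ h')=\lambda(h')\lambda(h)$ for the standard semidirect-product multiplication, i.e.\ $\lambda$ is an anti-homomorphism (equivalently, a homomorphism to the opposite product); the preimage of the identity is normal in either case, so nothing breaks. Your approach buys a structural bonus the paper does not state, namely an explicit identification of the quotient $M_\alpha/N_\alpha$, which would also streamline the computation of $h_0h_1h_2h_3$ modulo $N_\alpha$ in the proof of Proposition~\ref{pro:123}; the paper's computation, conversely, is self-contained and yields the explicit inversion/conjugation formulas in $M_\alpha$ that it reuses there.
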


\begin{proof}
\begin{enumerate}[wide]
\item
On est dans le cadre de l'exemple~\ref{exple:stab}\ref{exple:stab2}, donc la proposition~\ref{pro:stabilisateur} donne
\[
f = (x_1 + P(x_2, x_3), x_2 + Q(x_3), x_3+d) \circ (a_1x_1, a_2x_2, a_3x_3) \in M_\alpha \rtimes L_\alpha,
\]
avec $-\nu(P) \le m$, $\deg Q \le p$.
Par le corollaire~\ref{cor:stab chambre} on a $\Ap^+_\id=\Ap^+_a$ où $a = (a_1x_1, a_2x_2, a_3x_3)$.
On conclut en prenant $ g= (x_1 + P(x_2, x_3), x_2 + Q(x_3), x_3+d)$.

\item
Ce point découle de la proposition~\ref{pro:lieu fixe}, en remarquant que $h \in N_\alpha$ si et seulement si le poids $\alpha$ est contenu dans l'intérieur de chaque demi-espace admissible associé à un monôme de l'une des composantes de $h$.

\item
Considérons
\begin{align*}
f &= \big(x_1 + P(x_2, x_3), x_2 + Q(x_3), x_3+d\big)\in M_\alpha,\\
g &= \big(x_1 + P'(x_2, x_3), x_2 + Q'(x_3), x_3+d'\big)\in N_\alpha,\\
f^{-1} &= \Big(x_1 - P\big(x_2-Q(x_3-d), x_3-d\big), x_2 - Q(x_3-d), x_3-d\Big).
\end{align*}
Posons
\begin{align*}
Q''(x_3)&=Q(x_3)-Q(x_3+d') ,\\
P''(x_2,x_3)&=P(x_2,x_3)-P\big(x_2+Q''(x_3)+Q'(x_3+d),x_3+d'\big).
\end{align*}
Alors
\begin{align*}
f^{-1}gf=f^{-1}&\Big(x_1+P(x_2,x_3)+P'\big(x_2+Q(x_3), x_3+d\big),x_2+Q(x_3)+Q'(x_3+d),x_3+d+d'\Big)\\
=&\Big(x_1+P''(x_2,x_3)+P'\big(x_2+Q(x_3), x_3+d\big),x_2+Q''(x_3)+Q'(x_3+d), x_3+d'\Big)
\end{align*}
Puisque $\deg Q'<p$ et $\deg Q''<\deg Q\leq p$ la deuxième composante de $f^{-1}gf$ est comme attendu. De même,
$-\nu (P')<m$ et $-\nu (P'')<-\nu(P)\leq m$, donc la première composante de $f^{-1}gf$ est comme attendu, et $f^{-1}gf\in N_\alpha$.\qedhere
\end{enumerate}
\end{proof}

Un \emph{secteur centré en $[\alpha]$} dans $\PW$ est l'intersection de deux demi-espaces admissibles dont les droites frontières passent par $[\alpha]$.
La frontière d'un secteur est ainsi l'union de deux demi-droites issues de $[\alpha]$, chacune dirigée vers un point au bord à l'infini de $\PW$.

\begin{lemma} \label{lem:secteur commun}
Soit $\nu =\nu_{\id,[\alpha]}$ de poids $\alpha=(m,p,1)$ avec $m > p > 1$ et $f,g\in M_\alpha$.
Si $V_{f,g} \neq V$, alors $V_{f,g}$ est l'intersection de $V$ avec un secteur $S_{f,g} \subset \PW$ centré en $[\alpha]$,
et $g \sim_\nu fh$ pour un automorphisme triangulaire $h$ que l'on peut choisir de la forme suivante, en fonction des deux demi-droites du secteur $S_{f,g}$:
\begin{enumerate}
\item \label{lem:secteur1}
Si l'une des demi-droites de $S_{f,g}$ est dirigée vers $[m-pa, 0 ,1]$, alors l'autre demi-droite est dirigée vers $[b, 1,0]$ avec $\lfloor \frac{m}{p} \rfloor \ge b \ge a \ge 0$, et
\[
h = (x_1 + \sum_{i = a}^{b} c_i  x_2^i x_3^{m - pi}, x_2, x_3), \quad
c_a \neq 0, c_b \neq 0.
\]

\item \label{lem:secteur2}
Si l'une des demi-droites de $S_{f,g}$ est dirigée vers $[0,p,1]$, alors $S_{f,g}$ est le demi-espace $ \alpha_2/\alpha_3 \ge p$ et
\[
h = (x_1, x_2 + cx_3^p, x_3), \quad c\neq0.
\]

\item \label{lem:secteur3}
Si l'une des demi-droites de $S_{f,g}$ est dirigée vers $[1,0,0]$ et $S_{f,g}$ n'est pas un demi-espace, alors l'autre demi-droite est dirigée vers $[b, 1,0]$ avec $\lfloor \frac{m}{p} \rfloor \ge b \ge 0$ et
\[
h = (x_1 + \sum_{i = 0}^{b} c_i  x_2^i x_3^{m - pi}, x_2 + cx_3^p, x_3),
\quad c_b \neq 0, c\neq0.
\]
\end{enumerate}
\end{lemma}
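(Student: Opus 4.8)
Le plan est de ramener l'étude de $V_{f,g}$ au lieu fixe de $\phi:=f^{-1}g$, décrit via la proposition~\ref{pro:lieu fixe}, puis de reconnaître géométriquement ce lieu comme un secteur. Comme $M_\alpha$ est un groupe, $\phi\in M_\alpha$, et l'égalité $\nu_{f,[\alpha']}=\nu_{g,[\alpha']}$ équivaut à $\nu_{\id,[\alpha']}\in\Fix(\phi)$; en identifiant $[\alpha']\in V$ à $\nu_{\id,[\alpha']}$, on a donc $V_{f,g}=V\cap\Fix(\phi)$. Écrivons $\phi=(x_1+R(x_2,x_3),\,x_2+S(x_3),\,x_3+e)$ avec $-\nu(R)\le m$ et $\deg S\le p$. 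Par la proposition~\ref{pro:lieu fixe}, $\Fix(\phi)\cap\Ap_\id$ est l'intersection des demi-espaces admissibles attachés aux monômes de $R$ et de $S$; le choix de $V$ (remarque~\ref{rem:nervure}) fait que seuls comptent dans $V$ ceux dont la frontière passe par $[\alpha]$. D'après la remarque~\ref{rem:mp1_equations}, ce sont exactement les demi-espaces issus des monômes de poids $m$ de $R$, c'est-à-dire les $x_2^ax_3^{m-pa}$ pour $a$ parcourant un ensemble $A\subseteq\{0,\dots,\lfloor\frac{m}{p}\rfloor\}$, et celui issu de $x_3^p$ lorsque ce monôme figure dans $S$. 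En particulier $V_{f,g}\neq V$ équivaut à la présence d'un tel monôme dominant, soit $\phi\notin N_\alpha$.

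Le cœur de la preuve est un calcul local en $[\alpha]$. Dans la carte affine $\alpha_3=1$, de coordonnées $(\alpha_2,\alpha_1)$, le demi-espace $\alpha_1\ge a\alpha_2+(m-pa)\alpha_3$ a pour normale intérieure $(-a,1)$, et $\alpha_2\ge p\alpha_3$ a pour normale $(1,0)$. L'intersection des conditions indexées par $a\in A$ est la région située au-dessus de l'enveloppe supérieure des droites $\alpha_1=a\alpha_2+(m-pa)$, qui est convexe; c'est donc un secteur dont seules les valeurs extrêmes $\min A$ et $\max A$ déterminent les demi-droites frontières, les contraintes intermédiaires étant impliquées. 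Si $x_3^p$ ne figure pas dans $S$ et $A\neq\emptyset$, on obtient le secteur borné par $[\max A,1,0]$ et $[m-p\min A,0,1]$: c'est le cas~\ref{lem:secteur1}, avec $b=\max A$, $a=\min A$. Si $x_3^p$ figure dans $S$, on ajoute le demi-espace $\alpha_2\ge p\alpha_3$, de demi-droites $[1,0,0]$ et $[0,p,1]$: lorsque $A=\emptyset$ c'est le secteur entier (cas~\ref{lem:secteur2}); lorsque $A\neq\emptyset$, l'intersection avec le cône précédent conserve $[\max A,1,0]$ mais remplace $[m-p\min A,0,1]$ par $[1,0,0]$, d'où le secteur borné par $[1,0,0]$ et $[\max A,1,0]$ (cas~\ref{lem:secteur3}, $b=\max A$). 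Dans chaque cas $V_{f,g}=V\cap S_{f,g}$, les monômes de poids $<m$ de $R$ et de degré $<p$ de $S$ donnant des demi-espaces contenant tout $V$.

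Il reste à exhiber $h$ de la forme annoncée vérifiant $g\sim_\nu fh$. Comme $\sim_\nu$ coïncide avec l'égalité dans $M_\alpha/N_\alpha$ (lemme~\ref{lem:normal}\ref{lem:normal2}) et que la multiplication à gauche par $f\in M_\alpha$ préserve les classes de $N_\alpha$, on a $g\sim_\nu fh$ si et seulement si $h\sim_\nu\phi$. On prend pour $h$ la \emph{partie dominante} de $\phi$, à savoir l'automorphisme triangulaire obtenu en ne gardant que les monômes de poids $m$ de $R$ et le monôme $x_3^p$ de $S$ (et en supprimant la constante $e$): par construction $h$ a exactement la forme prescrite dans le cas correspondant, et les indices extrêmes $a=\min A$, $b=\max A$ assurent $c_a\neq0$, $c_b\neq0$. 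Enfin $h^{-1}\phi\in N_\alpha$: lorsqu'on substitue $x_2\mapsto x_2+(\text{termes de degré}<p)$ et $x_3\mapsto x_3+e$ dans la partie dominante, sa composante de poids $m$ reste inchangée, de sorte que les parties dominantes de $h^{-1}\phi$ s'annulent. On conclut $h\sim_\nu\phi$, puis $g\sim_\nu fh$.

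Le principal obstacle est le calcul géométrique du deuxième paragraphe: s'assurer que l'intersection des demi-espaces admissibles se réduit à un secteur (intersection de deux demi-espaces seulement), identifier correctement ses deux demi-droites dans les trois cas, et contrôler l'effet du monôme $x_3^p$, seul à même de remplacer la demi-droite inférieure $[m-pa,0,1]$ par la direction $[1,0,0]$.
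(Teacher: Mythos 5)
Your proof is correct and takes essentially the same route as the paper's: reduce to comparing with the identity (the paper assumes $f=\id$ via Corollary~\ref{cor:intrin}\ref{cor:intrin2}, you work with $\phi=f^{-1}g$), replace the automorphism by its weighted-homogeneous dominant part modulo $N_\alpha$ using Lemma~\ref{lem:normal}, and read off the three cases from Proposition~\ref{pro:lieu fixe} together with Remark~\ref{rem:mp1_equations}. The only difference is that you spell out two steps the paper leaves implicit — the upper-envelope computation identifying the two extreme slopes $\min A$, $\max A$ as the boundary rays of the sector, and the explicit check that $h^{-1}\phi\in N_\alpha$ — which is a useful elaboration but not a different argument.
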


\begin{proof}
Par le corollaire~\ref{cor:intrin}\ref{cor:intrin2} on peut supposer $f = \id$.
Écrivons
\[g = (x_1 + P(x_2, x_3), x_2 + Q(x_3), x_3+d),\]
avec $-\nu(P) \le m$, $\deg Q \le p$.
Par le lemme~\ref{lem:normal}, quitte à composer par un élément de $N_\alpha$ on peut supposer $d=0$, $Q$ homogène de degré $p$, et $P$ homogène de degré $m$ avec les variables $x_2$, $x_3$ de poids respectifs $p$, $1$.
Ceci donne le $h$ attendu : par la proposition~\ref{pro:lieu fixe} les trois cas de l'énoncé correspondent respectivement à
\ref{lem:secteur1} $Q= 0$,
\ref{lem:secteur2} $P=0$,
et~\ref{lem:secteur3} $P$ et $Q$ tous deux non nuls.
Le cas $Q=P=0$ est exclu par l'hypothèse $V_{\id,g}\neq V$.
\end{proof}

\begin{figure}[ht]
\[\begin{array}{ccc}
\begin{tikzpicture}[scale=3.5,font=\scriptsize]
\coordinate [label=above:$\trip{0}{1}{0}$] (010) at (0,0);
\coordinate [label=above:$\trip{1}{0}{0}$] (100) at (1,0);
\coordinate [label=below:$\trip{0}{0}{1}$] (001) at (.5, -.86);
\coordinate [label=above:$\trip{1}{1}{0}$](110) at ($ (010)!.5!(100) $) {};
\coordinate [label=below right:$\trip{1}{0}{1}$] (101) at ($ (001)!.5!(100) $) {};
\coordinate [label=below left:$\trip{0}{1}{1}$] (011) at ($ (001)!.5!(010) $) {};
\coordinate [label=below right:$\trip{3}{0}{1}$] (301) at ($ (100)!1/4!(001) $) {};
\coordinate [label=left:$\trip{0}{2}{1}$] (021) at ($ (010)!1/3!(001) $) {};
\coordinate [label={[left, yshift=-8, xshift=5]:$[\alpha]$}] (alpha) at (intersection of 021--100 and 010--301);
\fill[lightblue] (301)--(alpha)--(010)--(100)--cycle;
\draw [white!50!black] (010)--(100)--(001)--(010);
\draw (021)--(100);
\draw (110)--(101);
\draw (010)--(301);
\draw [white!75!black] (011)--(100);
\draw [white!75!black] (101)--(010);
\draw [white!75!black] (110)--(001);
\end{tikzpicture}
&
\begin{tikzpicture}[scale=3.5,font={\fontsize{7}{7}\selectfont}]
\coordinate [label=above:$\trip{0}{1}{0}$] (010) at (0,0);
\coordinate [label=above:$\trip{1}{0}{0}$] (100) at (1,0);
\coordinate [label=below:$\trip{0}{0}{1}$] (001) at (.5, -.86);
\coordinate [label=above:$\trip{1}{1}{0}$](110) at ($ (010)!.5!(100) $) {};
\coordinate [label=below right:$\trip{1}{0}{1}$] (101) at ($ (001)!.5!(100) $) {};
\coordinate [label=below left:$\trip{0}{1}{1}$] (011) at ($ (001)!.5!(010) $) {};
\coordinate [label=below right:$\trip{3}{0}{1}$] (301) at ($ (100)!1/4!(001) $) {};
\coordinate [label=left:$\trip{0}{2}{1}$] (021) at ($ (010)!1/3!(001) $) {};
\coordinate [label={[left, yshift=-8, xshift=5]:$[\alpha]$}] (alpha) at (intersection of 021--100 and 010--301);
\fill[lightblue] (301)--(alpha)--(110)--(100)--cycle;
\draw [white!50!black] (010)--(100)--(001)--(010);
\draw (021)--(100);
\draw (110)--(101);
\draw (010)--(301);
\draw [white!75!black] (011)--(100);
\draw [white!75!black] (101)--(010);
\draw [white!75!black] (110)--(001);
\end{tikzpicture}
&
\begin{tikzpicture}[scale=3.5,font={\fontsize{7}{7}\selectfont}]
\coordinate [label=above:$\trip{0}{1}{0}$] (010) at (0,0);
\coordinate [label=above:$\trip{1}{0}{0}$] (100) at (1,0);
\coordinate [label=below:$\trip{0}{0}{1}$] (001) at (.5, -.86);
\coordinate [label=above:$\trip{1}{1}{0}$](110) at ($ (010)!.5!(100) $) {};
\coordinate [label=below right:$\trip{1}{0}{1}$] (101) at ($ (001)!.5!(100) $) {};
\coordinate [label=below left:$\trip{0}{1}{1}$] (011) at ($ (001)!.5!(010) $) {};
\coordinate [label=below right:$\trip{3}{0}{1}$] (301) at ($ (100)!1/4!(001) $) {};
\coordinate [label=left:$\trip{0}{2}{1}$] (021) at ($ (010)!1/3!(001) $) {};
\coordinate [label={[left, yshift=-8, xshift=5]:$[\alpha]$}] (alpha) at (intersection of 021--100 and 010--301);
\fill[lightblue] (101)--(alpha)--(110)--(100)--cycle;
\draw [white!50!black] (010)--(100)--(001)--(010);
\draw (021)--(100);
\draw (110)--(101);
\draw (010)--(301);
\draw [white!75!black] (011)--(100);
\draw [white!75!black] (101)--(010);
\draw [white!75!black] (110)--(001);
\end{tikzpicture}
\\
\ref{lem:secteur1}\, a=b=0
&
\ref{lem:secteur1}\, a=0,\, b=1
&
\ref{lem:secteur1}\, a=b=1
\\
&& \\
\begin{tikzpicture}[scale=3.5,font={\fontsize{7}{7}\selectfont}]
\coordinate [label=above:$\trip{0}{1}{0}$] (010) at (0,0);
\coordinate [label=above:$\trip{1}{0}{0}$] (100) at (1,0);
\coordinate [label=below:$\trip{0}{0}{1}$] (001) at (.5, -.86);
\coordinate [label=above:$\trip{1}{1}{0}$](110) at ($ (010)!.5!(100) $) {};
\coordinate [label=below right:$\trip{1}{0}{1}$] (101) at ($ (001)!.5!(100) $) {};
\coordinate [label=below left:$\trip{0}{1}{1}$] (011) at ($ (001)!.5!(010) $) {};
\coordinate [label=below right:$\trip{3}{0}{1}$] (301) at ($ (100)!1/4!(001) $) {};
\coordinate [label=left:$\trip{0}{2}{1}$] (021) at ($ (010)!1/3!(001) $) {};
\coordinate [label={[left, yshift=-8, xshift=5]:$[\alpha]$}] (alpha) at (intersection of 021--100 and 010--301);
\fill[lightblue] (100)--(alpha)--(021)--(010)--cycle;
\draw [white!50!black] (010)--(100)--(001)--(010);
\draw (021)--(100);
\draw (110)--(101);
\draw (010)--(301);
\draw [white!75!black] (011)--(100);
\draw [white!75!black] (101)--(010);
\draw [white!75!black] (110)--(001);
\end{tikzpicture}
&
\begin{tikzpicture}[scale=3.5,font={\fontsize{7}{7}\selectfont}]
\coordinate [label=above:$\trip{0}{1}{0}$] (010) at (0,0);
\coordinate [label=above:$\trip{1}{0}{0}$] (100) at (1,0);
\coordinate [label=below:$\trip{0}{0}{1}$] (001) at (.5, -.86);
\coordinate [label=above:$\trip{1}{1}{0}$](110) at ($ (010)!.5!(100) $) {};
\coordinate [label=below right:$\trip{1}{0}{1}$] (101) at ($ (001)!.5!(100) $) {};
\coordinate [label=below left:$\trip{0}{1}{1}$] (011) at ($ (001)!.5!(010) $) {};
\coordinate [label=below right:$\trip{3}{0}{1}$] (301) at ($ (100)!1/4!(001) $) {};
\coordinate [label=left:$\trip{0}{2}{1}$] (021) at ($ (010)!1/3!(001) $) {};
\coordinate [label={[left, yshift=-8, xshift=5]:$[\alpha]$}] (alpha) at (intersection of 021--100 and 010--301);
\fill[lightblue] (100)--(alpha)--(010)--cycle;
\draw [white!50!black] (010)--(100)--(001)--(010);
\draw (021)--(100);
\draw (110)--(101);
\draw (010)--(301);
\draw [white!75!black] (011)--(100);
\draw [white!75!black] (101)--(010);
\draw [white!75!black] (110)--(001);
\end{tikzpicture}
&
\begin{tikzpicture}[scale=3.5,font={\fontsize{7}{7}\selectfont}]
\coordinate [label=above:$\trip{0}{1}{0}$] (010) at (0,0);
\coordinate [label=above:$\trip{1}{0}{0}$] (100) at (1,0);
\coordinate [label=below:$\trip{0}{0}{1}$] (001) at (.5, -.86);
\coordinate [label=above:$\trip{1}{1}{0}$](110) at ($ (010)!.5!(100) $) {};
\coordinate [label=below right:$\trip{1}{0}{1}$] (101) at ($ (001)!.5!(100) $) {};
\coordinate [label=below left:$\trip{0}{1}{1}$] (011) at ($ (001)!.5!(010) $) {};
\coordinate [label=below right:$\trip{3}{0}{1}$] (301) at ($ (100)!1/4!(001) $) {};
\coordinate [label=left:$\trip{0}{2}{1}$] (021) at ($ (010)!1/3!(001) $) {};
\coordinate [label={[left, yshift=-8, xshift=5]:$[\alpha]$}] (alpha) at (intersection of 021--100 and 010--301);
\fill[lightblue] (100)--(alpha)--(110)--cycle;
\draw [white!50!black] (010)--(100)--(001)--(010);
\draw (021)--(100);
\draw (110)--(101);
\draw (010)--(301);
\draw [white!75!black] (011)--(100);
\draw [white!75!black] (101)--(010);
\draw [white!75!black] (110)--(001);
\end{tikzpicture}
\\
\ref{lem:secteur2}
&
\ref{lem:secteur3}\, b=0
&
\ref{lem:secteur3}\, b=1
\end{array}
\]
\caption{Les 3 droites admissibles de $\PW$ passant par $[\alpha] =[3,2,1]$, et les différents secteurs $S_{f,g}$ possibles.}
\label{fig:321}
\end{figure}
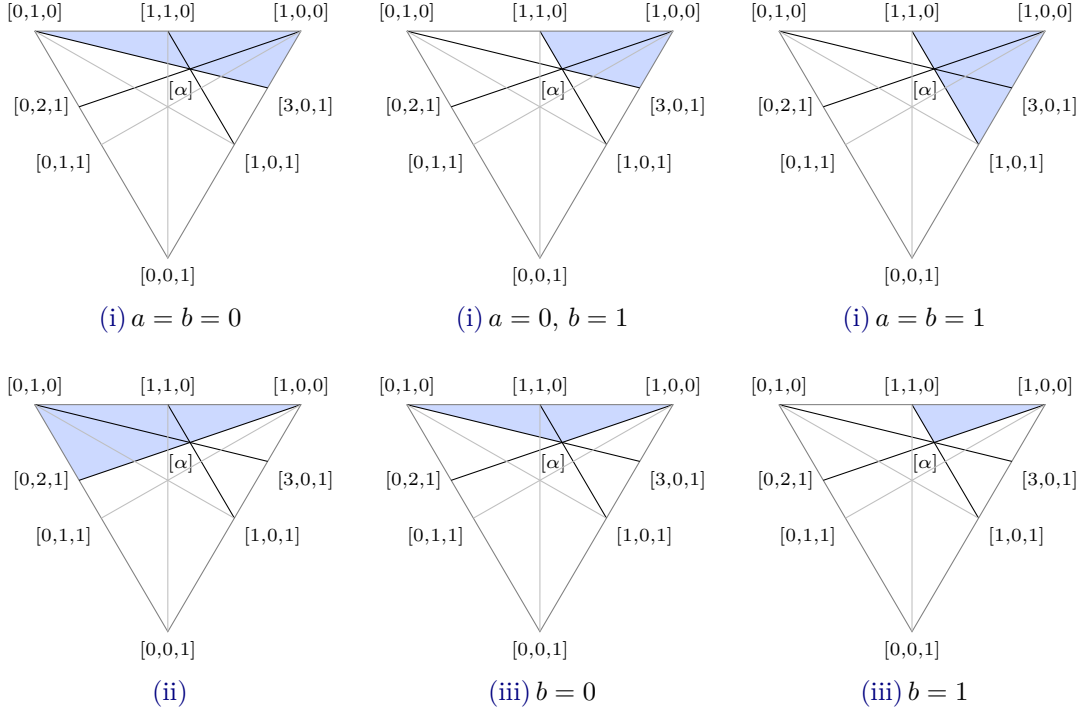

\begin{example}[figure~\ref{fig:321}]
Si $\alpha = (3,2,1)$, il y a exactement 3 droites admissibles passant par
$[\alpha]$, correspondant aux équations:
\begin{align*}
\alpha_2 &=  2\alpha_3; &
\alpha_1 &=  3\alpha_3; &
\alpha_1 &=  \alpha_2 + \alpha_3.
\end{align*}
Observer que l'équation $2\alpha_1 = 3\alpha_2$ est également satisfaite par $\alpha$, mais par définition n'est pas une équation admissible.
Pour tout choix des coefficients $c_i$, l'automorphisme suivant est un élément
de $\Stab(\nu_{\id,[\alpha]})$:
\[
(x_1 + c_0 x_3^3 + c_1 x_2x_3 , x_2 + c_2 x_3^2, x_3).
\]
En annulant certains coefficients parmi les $c_i$, on peut réaliser chacun des six secteurs $S_{f,g}$ centrés en $[\alpha]$ prédits par le lemme~\ref{lem:secteur commun}.
\end{example}

\section{Distance} \label{sec:metrique}

\subsection{Espace de longueur}
D'après le corollaire~\ref{cor:intrin}\ref{cor:intrin2}, $\Xl$ est la réunion des translatés de $\Ap^+_{\id}$ sous l'action de $\Tame(\A^n)$.
Nous pouvons donc identifier $\Xl$ avec $\Big(\bigsqcup \Ap^+_f\Big)/\sim$, réunion disjointe de copies de $\Ap_{\id}^+$ indicées par  $\Tame(\A^n)$, quotientée par une relation d’équivalence~$\sim$.
On commet ici l'abus d'écriture de noter $\Ap^+_f$ aussi bien la chambre dans $\Xl$ que sa copie qui nous sert dans la construction abstraite par union disjointe et quotient.

Afin de munir $\Xl$ d'une distance, concentrons-nous d'abord sur (chaque copie de) $\Ap_{\id}^+$ et plus précisément, via l'application $\wgt$ du corollaire~\ref{cor:intrin}\ref{cor:intrin1}, sur le simplexe des poids projectivisés $\PW^+$.
Nous identifions chaque $[\alpha]\in \PW^+$ avec son représentant dans $\W^+$ contenu dans l'hyperboloïde $\prod \alpha_i = 1$.
Passant aux logarithmes $\beta_i=\log \alpha_i$, nous obtenons
\[
\PW^+ = \bigl\lbrace [\alpha]; \alpha = (\exp \beta_1, \dots, \exp \beta_n), \beta_1\geq  \dots \geq \beta_n, \sum \beta_i = 0 \bigr\rbrace.
\]
Nous munissons alors $\PW^+$ de la distance $|\cdot,\cdot|$ induite par la distance euclidienne de $\R^n=\{(\beta_1,\ldots,\beta_n)\}$.
Cela rend $\PW^+$ isométrique à la chambre de Weyl définie par les inégalités $\beta_1\geq  \dots \geq \beta_n$ dans
\[\R^{n-1} =  \bigl\lbrace (\beta_1, \dots, \beta_n) \in \R^n;\; \sum \beta_i = 0 \bigr\rbrace .\]

Notons que la même application de passage aux logarithmes identifie le simplexe $\PW$ entier à $(\R^{n-1},|\cdot,\cdot|)$.
Ce procédé de définir une distance en passant aux logarithmes des poids est celui utilisé dans \cite{BT, Parreau2000} dans le contexte de l'immeuble de $\SL_n(\F)$.
Si on ne passait pas aux logarithmes, l'espace total obtenu ne serait plus à courbure négative ou nulle, voir plus loin l'exemple~\ref{exple:angles}.
Un autre choix \textit{a priori} naturel serait d'équiper le simplexe~$\PW$ de la distance de Hilbert. Cependant dès que le bord d'un domaine convexe contient deux segments de droite engendrant un plan, la distance de Hilbert n'est pas uniquement géodésique \cite[Thm~5.6.8]{Pa}, et en particulier n'est pas $\CAT(0)$.

\begin{lemma}
\label{lem:convex}
Chaque demi-espace admissible de $\PW$ est un domaine convexe sous l'identification avec $(\R^{n-1},|\cdot,\cdot|)$.
\end{lemma}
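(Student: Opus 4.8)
Le plan est de ramener la convexité du demi-espace admissible à celle d'un ensemble de sous-niveau d'une fonction convexe explicite, en travaillant directement dans les coordonnées logarithmiques $\beta_k = \log \alpha_k$ qui servent à identifier $\PW$ avec $(\R^{n-1}, |\cdot,\cdot|)$.

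Je commencerais par récrire l'inégalité définissant le demi-espace. Comme l'inégalité $\alpha_i \ge \sum_{j \neq i} m_j \alpha_j$ est homogène de degré $1$ et que les $\alpha_k$ sont strictement positifs, on peut diviser par $\alpha_i$ sans changer l'ensemble des solutions dans $\PW$, ce qui donne la forme équivalente
\[
\sum_{j \neq i} m_j e^{\beta_j - \beta_i} \le 1.
\]
Le membre de gauche ne dépend que des différences $\beta_j - \beta_i$, il est donc bien défini sur $\PW$, indépendamment du choix de normalisation.

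L'étape clé serait ensuite d'observer que la fonction $g(\beta) = \sum_{j \neq i} m_j e^{\beta_j - \beta_i}$ est convexe sur $\R^{n-1} = \{\beta \in \R^n;\ \sum_k \beta_k = 0\}$. En effet, pour chaque $j$ l'application $\beta \mapsto \beta_j - \beta_i$ est linéaire, donc $\beta \mapsto e^{\beta_j - \beta_i}$ est convexe (composée de l'exponentielle avec une forme linéaire) ; comme les coefficients $m_j$ sont positifs ou nuls, $g$ est une combinaison à coefficients positifs ou nuls de fonctions convexes, donc convexe, et sa restriction au sous-espace affine $\{\sum_k \beta_k = 0\}$ l'est encore.

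Pour conclure, le demi-espace admissible s'identifie exactement à l'ensemble de sous-niveau $\{g \le 1\}$, qui est convexe puisque $g$ l'est. Le cas particulier d'une équation de la forme $\alpha_i = \alpha_k$ se traite de la même manière : on obtient alors le demi-espace linéaire $\{\beta_k \le \beta_i\}$, trivialement convexe. Je ne m'attends pas à rencontrer d'obstacle sérieux dans cette approche ; le seul point demandant un peu de soin est de vérifier que le passage aux logarithmes transforme bien l'inégalité admissible en une inégalité de sous-niveau (ce qui repose sur la positivité des $\alpha_k$ et la monotonie de $\log$), et que la convexité euclidienne voulue coïncide avec la convexité au sens des ensembles de sous-niveau, les géodésiques de $(\R^{n-1}, |\cdot,\cdot|)$ étant des segments de droite.
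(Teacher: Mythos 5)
Your proof is correct, and it takes a genuinely different route from the paper's. You realize the admissible half-space as the sublevel set $\{g \le 1\}$ of the function $g(\beta) = \sum_{j \neq i} m_j e^{\beta_j - \beta_i}$, which is convex as a nonnegative combination of exponentials of linear forms (and well defined on $\PW$ since it only depends on the ratios $\alpha_j/\alpha_i$); convexity of the domain then follows from the standard fact that sublevel sets of a convex function are convex, geodesics of $(\R^{n-1},|\cdot,\cdot|)$ being straight segments. The paper argues differently: it keeps the inequality in the form $\alpha_1 \ge \sum_{j>1} m_j \alpha_j$, observes that the solution domain is closed, reduces convexity to showing that the midpoint of any two \emph{boundary} points lies in the domain, and verifies this by a direct computation which, after squaring, is exactly the Cauchy--Schwarz-type inequality $\sqrt{\bigl(\sum_j m_j\alpha_j\bigr)\bigl(\sum_j m_j\alpha'_j\bigr)} \ge \sum_j \sqrt{(m_j\alpha_j)(m_j\alpha'_j)}$, deduced from the arithmetic--geometric mean inequality. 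Your argument is shorter and bypasses the paper's reduction step (for a closed set, the midpoint property for boundary pairs implies convexity), which is true but stated without justification and does require a small argument (if the set were not convex, shrinking a bad segment to its last exit and first re-entry points would produce two boundary points whose midpoint is outside); in exchange, the paper's proof is entirely self-contained, using nothing beyond $\frac{x+y}{2} \ge \sqrt{xy}$. Both proofs ultimately rest on the same phenomenon, the convexity of the exponential in the logarithmic coordinates $\beta_k = \log \alpha_k$, packaged either termwise (your sum of convex functions) or globally on the boundary (the paper's Cauchy--Schwarz).
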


\begin{proof}
Considérons une équation admissible  $\alpha_1 = \sum_{j >1} m_j \alpha_j$.
Le domaine des solutions de l'inégalité $\alpha_1 \geq  \sum_{j >1} m_j \alpha_j$ dans $(\R^{n-1},|\cdot,\cdot|)$ étant fermé, pour obtenir sa convexité il suffit de démontrer que pour chaque $\beta,\beta'$ à la frontière du domaine, le point $\frac{\beta+\beta'}{2}$ est dans le domaine.
Pour de tels $(\beta_i)=(\log \alpha_i)$, $(\beta'_i)=(\log \alpha'_i)$ on a $\alpha_1 = \sum_{j >1} m_j \alpha_j$ et $\alpha'_1 = \sum_{j >1} m_j \alpha'_j$.
On doit donc vérifier

\begin{multline*}
\exp\bigg(\frac{\log (m_2\alpha_2+\cdots +m_n\alpha_n)+\log(m_2\alpha'_2+\cdots +m_n\alpha'_n)}{2}\bigg) \geq  \\
 m_2\exp\bigg(\frac{\log\alpha_2+\log\alpha_2'}{2}\bigg)+ \cdots+m_n\exp\bigg(\frac{\log\alpha_n+\log\alpha_n'}{2}\bigg),
\end{multline*}
que l'on peut encore écrire
\begin{align*}
\sqrt{(m_2\alpha_2+\cdots +m_n\alpha_n)(m_2\alpha'_2+\cdots +m_n\alpha'_n)}&\geq \sqrt{(m_2\alpha_2)(m_2\alpha'_2)}+\cdots+\sqrt{(m_n\alpha_n)(m_n\alpha'_n)}.
\end{align*}
En passant au carré, on obtient l'inégalité équivalente :

\begin{align*}
\frac12 \sum_{j,k}\big((m_j\alpha_j)(m_k\alpha'_k)+(m_k\alpha_k)(m_j\alpha'_j)\big)&\geq \sum_{j,k}\sqrt{(m_j\alpha_j)(m_j\alpha'_j)(m_k\alpha_k)(m_k\alpha'_k)}.
\end{align*}
Cette dernière découle directement de la classique inégalité des moyennes arithmético-géomé\-triques $\frac{x + y}{2} \ge \sqrt{xy}$.
\end{proof}

\begin{remark}
\label{rem:droites}
Les hyperplans principaux sont encore des hyperplans pour la distance $|\cdot,\cdot|$.
Pour $n=3$ les droites principales dans $\PW$ forment 3 familles des droites parallèles s'intersectant avec des angles $\pi/3$.
On représente sur la figure~\ref{fig:homeo} les droites principales dans $\PW$ avec à gauche la distance du simplexe, et à droite la distance $(\R^2,|\cdot,\cdot|)$.
Sont également représentées, en rouge et bleu, deux droites admissibles mais non principales de $\PW$: leurs images dans $\R^2$ deviennent des courbes, et le demi-espace admissible devient un convexe.
\end{remark}

\begin{figure}[ht]
\begin{align*}
\begin{tikzpicture}[scale = 6]
\coordinate (010) at (0,0);
\coordinate (100) at (1,0);
\coordinate (001) at (.5, -.86);
\coordinate (010') at (0.01,-0.007);
\coordinate (100') at (0.99,-0.007);
\coordinate (001') at (.5, -.84);
\coordinate (110) at ($ (010)!.5!(100) $) {};
\coordinate (101) at ($ (001)!.5!(100) $) {};
\coordinate (011) at ($ (001)!.5!(010) $) {};
\coordinate (111) at (intersection of 011--100 and 101--010);
\foreach \x in {2,...,59}
  {
  \coordinate (\x10) at ($ (100)!1/(\x+1)!(010) $) {};
  \coordinate (1\x0) at ($ (010)!1/(\x+1)!(100) $) {};
  \coordinate (\x01) at ($ (100)!1/(\x+1)!(001) $) {};
  \coordinate (10\x) at ($ (001)!1/(\x+1)!(100) $) {};
  \coordinate (0\x1) at ($ (010)!1/(\x+1)!(001) $) {};
  \coordinate (01\x) at ($ (001)!1/(\x+1)!(010) $) {};
  }
\foreach \x in {1,...,59}
  {
  \draw[very thin] (001) -- (\x10);
  \draw[very thin] (001) -- (1\x0);
  \draw[very thin] (010) -- (\x01);
  \draw[very thin] (010) -- (10\x);
  \draw[very thin] (100) -- (01\x);
  \draw[very thin] (100) -- (0\x1);
  }
\draw[thick] (010) -- (100) -- (001) -- cycle;
\draw[line width=3] (010') -- (100') -- (001') -- cycle;
\draw[red, thick] (110)--(101);
\draw[blue, thick] (110)--(201);
\end{tikzpicture}
&&
\begin{tikzpicture}[scale = 1.03, radius = 2.6]
\clip(0,0) circle;
\foreach \c in {1,...,20}
  {
  \coordinate (\c-1) at (-{ln(\c)},-3) {};
  \coordinate (\c-2) at (-{ln(\c)},+3) {};
  \coordinate (\c+1) at ({ln(\c)},-3) {};
  \coordinate (\c+2) at ({ln(\c)},+3) {};
  \coordinate (red\c) at ( {ln(\c+1)}, {(1 / sqrt(3)) * (ln(\c+1) - 2*ln(\c))} ) {};
  \coordinate (red+\c) at ( {ln(\c+1) - ln(\c)}, {(1 / sqrt(3)) * (ln(\c+1) + ln(\c))} ) {};
  \coordinate (blue\c) at ( {ln(2*\c+1)}, {(1 / sqrt(3)) * (ln(2*\c+1) - 2* ln(\c))} ) {};
  \coordinate (blue+\c) at ( {ln(\c+2)-ln(\c)}, {(1 / sqrt(3)) * (ln(\c+2) + ln(\c))} ) {};
  }
\foreach \c [evaluate=\c as \d using \c+1] in {1,...,19}
  {
  \draw[thin] (\c-1)--(\c-2);
  \draw[thin] (\c+1)--(\c+2);
  \draw[thin] plot [domain=-3:3,samples=1] (\x,{(1 / sqrt(3)) * (\x + 2*ln(\c))});
  \draw[thin] plot [domain=-3:3,samples=1] (\x,{(1 / sqrt(3)) * (\x - 2*ln(\c))});
  \draw[thin] plot [domain=-3:3,samples=1] (\x,{(-1 / sqrt(3)) * (\x + 2*ln(\c))});
  \draw[thin] plot [domain=-3:3,samples=1] (\x,{(-1 / sqrt(3)) * (\x - 2*ln(\c))});
  \draw [red,thick] (red\c)--(red\d);
  \draw [red,thick] (red+\c)--(red+\d);
  \draw [blue,thick] (blue\c)--(blue\d);
  \draw [blue,thick] (blue+\c)--(blue+\d);
  }
\end{tikzpicture}
\end{align*}
\caption{Homéomorphisme entre $\PW$ et $\R^2$.}
\label{fig:homeo}
\end{figure}
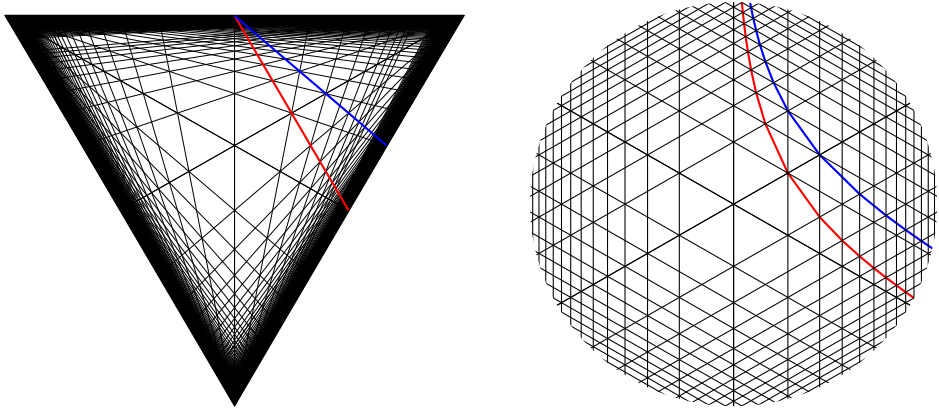

À travers l'identification $[\alpha]\mapsto \nu_{f,[\alpha]}$ de $\PW^+$ avec $\Ap_{f}^+$, on munit chaque chambre $\Ap_{f}^+$ d'une distance que l'on note encore $|\cdot,\cdot|$.
Suivant \cite[I.5.19]{BH}, nous définissons sur $X$ une \emph{pseudo-distance quotient}:

\begin{definition}
\label{def:chaîne}
Une \emph{chaîne} (de \emph{longueur} $k$)
est une suite
\[
(x'_0,x_1\sim x_1',x_2\sim x_2',\ldots,x_{k-1}\sim x'_{k-1},x_k)
\]
dans $\bigsqcup \Ap^+_f$ telle qu'il existe $f_0,\ldots, f_{k-1}\in\TA$ avec $x_i',x_{i+1}\in \Ap_{f_i}^+$.
Considérons maintenant
\[\overline x,\overline y\in \Xl=\Big(\bigsqcup \Ap^+_f\Big)/\sim.\]
La pseudo-distance quotient $d_\Xl(\overline x,\overline y)$ est définie comme l'infimum des sommes $\sum_{i=0}^{k-1}|x'_i,x_{i+1}|$, pour tous les entiers $k$ et toutes les chaînes de longueur $k$, avec $x_0'\in\overline x$ (c'est-à-dire $x'_0$ est un représentant de $\overline x$) et $x_k\in\overline y$.
\end{definition}

\begin{proposition} \label{pro:metrique}
La pseudo-distance quotient $d_\Xl$ est une distance, et $\Xl$ muni de cette distance est un espace de longueur.
\end{proposition}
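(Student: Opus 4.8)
Le plan est de traiter séparément les deux assertions : d'abord que $d_\Xl$ est une pseudo-distance de longueur (intrinsèque), puis qu'elle est non dégénérée. Le premier point est formel : la définition par chaînes est exactement la construction du quotient de \cite[I.5.19]{BH}, donc $d_\Xl$ est une pseudo-distance, et étant un infimum de longueurs de chaînes elle est intrinsèque. Elle est finie car deux chambres quelconques sont reliées par une chaîne finie : par le corollaire~\ref{cor:connexe} les appartements $\Ap_f$ forment une famille connexe pour la relation « s'intersecter », et dans chaque chambre $\Ap^+_f \cong \PW^+$ la distance $|\cdot,\cdot|$ est finie. Ainsi, une fois établi que $d_\Xl$ sépare les points, $(\Xl, d_\Xl)$ sera automatiquement un espace de longueur. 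Notons enfin que $\TA$ agit par isométries sur $(\Xl, d_\Xl)$, puisque $g \in \TA$ envoie isométriquement la chambre $\Ap^+_f$ sur $\Ap^+_{gf}$ en respectant $\sim$ ; on s'en servira pour normaliser les points base.

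Le premier ingrédient, facile, pour la séparation est la projection $\wgt \colon \Xl \to \PW^+$ du corollaire~\ref{cor:intrin}\ref{cor:intrin1}. Sur chaque chambre $\Ap^+_f$ c'est, par construction, une isométrie sur $\PW^+$, et elle est globalement bien définie sur $\Xl$. Donc le long de toute chaîne $(x_0', x_1 \sim x_1', \dots, x_k)$ on a $|\wgt(x_i'), \wgt(x_{i+1})| = |x_i', x_{i+1}|$ (les deux points sont dans la chambre commune $\Ap^+_{f_i}$) et $\wgt(x_i) = \wgt(x_i')$ dès que $x_i \sim x_i'$ ; en sommant et par l'inégalité triangulaire dans $\PW^+$ il vient $|\wgt(\overline x), \wgt(\overline y)| \le \sum_i |x_i', x_{i+1}|$. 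En passant à l'infimum, $\wgt$ est $1$-lipschitzienne, d'où $d_\Xl(\overline x, \overline y) \ge |\wgt(\overline x), \wgt(\overline y)|$. En particulier $d_\Xl$ sépare deux points de types distincts, et il ne reste qu'à séparer deux points distincts $\overline x = \nu_{f,[\alpha]}$ et $\overline y = \nu_{g,[\alpha]}$ de même type $[\alpha] = [\alpha]^+ \in \PW^+$.

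Ce dernier point est le cœur de la preuve. Grâce à l'action isométrique de $\TA$ on se ramène à $\overline x = \nu_{\id,[\alpha]} =: \nu$, et on raisonne localement. Par la remarque~\ref{rem:nervure} (qui repose sur la finitude locale du lemme~\ref{lem:finite}) il existe un voisinage $U \ni [\alpha]$ dans $\PW$ ne rencontrant que les hyperplans admissibles, en nombre fini, qui passent par $[\alpha]$. Par le corollaire~\ref{cor:intersection chambres} joint à la proposition~\ref{pro:lieu fixe}, deux chambres passant par $\nu$ se recouvrent exactement le long d'une intersection finie de demi-espaces admissibles, qui par le lemme~\ref{lem:convex} est une partie convexe de $\PW^+$ (identifiée à une chambre de Weyl de $\R^{n-1}$). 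On veut en déduire qu'une petite boule de $d_\Xl$ autour de $\nu$ s'obtient en recollant des secteurs euclidiens — les germes en $\nu$ des chambres passant par $\nu$, en nombre fini par finitude locale — le long de sous-secteurs convexes fermés, la combinatoire étant régie par l'arrangement fini des hyperplans admissibles passant par $[\alpha]$ dans $U$. Pour un tel recollement fini le long de convexes fermés, chaque secteur se plonge isométriquement, et une chaîne issue de $\overline x$ aboutissant à $\overline y$ doit s'écarter puis revenir vers les lieux de recollement convexes : sa longueur est alors minorée en fonction de l'écart entre $\overline x$, $\overline y$ et le recouvrement convexe commun, minoration strictement positive dès que $\overline x \neq \overline y$. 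D'où $d_\Xl(\overline x, \overline y) > 0$, ce qui achève de montrer que $d_\Xl$ est une distance.

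Le principal obstacle est précisément cette étape locale. La stratégie tentante — plier isométriquement l'étoile de $\nu$ dans une unique copie de $\R^{n-1}$ pour y lire la non-dégénérescence — est indisponible : comme souligné dans l'introduction, $\Xl$ n'est pas un immeuble, même localement, de sorte que deux germes passant par $\nu$ ne sont pas toujours contenus dans un appartement commun et qu'aucun « développement » global de l'étoile n'existe. L'argument doit donc rester intrinsèque, la minoration positive étant extraite directement de la convexité des recouvrements (lemme~\ref{lem:convex}) et de la finitude locale de l'arrangement d'hyperplans admissibles (lemme~\ref{lem:finite}) ; c'est ce qui remplace l'axiome « par deux points passe un appartement » pour garantir qu'une chaîne courte ne peut pas écraser deux points distincts. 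La difficulté technique sous-jacente est de vérifier que les petites boules métriques sont bien locales, c'est-à-dire qu'elles ne court-circuitent pas par des chambres éloignées, ce que l'on contrôle en combinant le caractère $1$-lipschitzien de $\wgt$, qui confine le type, avec la convexité et la finitude, qui confinent le germe.
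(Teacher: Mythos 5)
La partie formelle de votre texte (pseudo-distance quotient au sens de \cite[I.5.19]{BH}, finitude via le corollaire~\ref{cor:connexe}, action de $\TA$ par isométries) et la réduction, par le caractère $1$-lipschitzien de $\wgt$, au problème de séparer deux points distincts de même poids $[\alpha]$ sont correctes ; ce sont d'ailleurs des ingrédients de la preuve du papier (proposition~\ref{pro:alpha intrinseque}, lemme~\ref{lem:restriction}). En revanche, votre étape centrale — décrire une petite boule de $d_\Xl$ autour de $\nu=\nu_{\id,[\alpha]}$ comme un recollement de secteurs le long de convexes, puis minorer la longueur des chaînes — comporte un trou véritable, que vous signalez vous-même sans le combler.

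D'abord, les germes en $\nu$ des chambres passant par $\nu$ ne sont \emph{pas} en nombre fini : le stabilisateur $\Stab(\nu)=M_\alpha\rtimes L_\alpha$ est un groupe infini, donc une infinité de chambres passent par $\nu$ (déjà dans l'arbre $\Xl_2$ le branchement est infini). Le lemme~\ref{lem:finite} ne borne que le nombre d'hyperplans admissibles rencontrant un compact, c'est-à-dire la combinatoire de l'arrangement, pas le nombre de pièces recollées ; votre minoration « pour un tel recollement fini » ne s'applique donc pas telle quelle. Ensuite, et surtout, votre phrase « on veut en déduire qu'une petite boule de $d_\Xl$ autour de $\nu$ s'obtient en recollant » les germes des chambres passant par $\nu$ est précisément ce qu'il faut démontrer : il faut exclure qu'un point $y$ proche d'un représentant $x\in\overline x$ soit identifié à un $y'$ d'une chambre ne passant pas par $\overline x$, autrement dit il faut la \emph{saturation} de $\bigcup_{x\in\overline x}B(x,\eps)$ pour la relation $\sim$. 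Ni la convexité (lemme~\ref{lem:convex}) ni la finitude locale (lemme~\ref{lem:finite}) n'y suffisent ; l'ingrédient indispensable est la monotonie des stabilisateurs (corollaire~\ref{cor:stabjump}, issu de la proposition~\ref{pro:lieu fixe}), qui n'apparaît nulle part dans votre argument. Le mécanisme est le suivant : si $f$ fixe $\overline y$ avec $\wgt(y)$ dans le voisinage $U$ de la remarque~\ref{rem:nervure}, alors tout demi-espace admissible (fermé) contenant $\wgt(y)$ contient $[\alpha]$ — ceux dont l'hyperplan frontière ne rencontre pas $U$ contiennent $U$ tout entier, les autres passent par $[\alpha]$ — donc $f$ fixe aussi $\overline x$, et la boule est saturée. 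C'est exactement ainsi que procède le papier : les deux propriétés « les identifications sont isométriques » (proposition~\ref{pro:alpha intrinseque}) et « les boules $\eps$ sont saturées » (remarque~\ref{rem:nervure} plus corollaire~\ref{cor:stabjump}) sont les hypothèses du critère \cite[I.5.28]{BH}, qui donne immédiatement que la pseudo-distance quotient est une distance, puis \cite[I.5.20]{BH} donne l'espace de longueur — sans aucune minoration directe de longueurs de chaînes. Votre minoration (« la chaîne doit s'écarter puis revenir ») resterait, elle, à justifier même une fois le tableau local acquis, alors que le critère de \cite{BH} rend cette étape inutile.
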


\begin{proof}
Pour montrer que $d_\Xl$ est une distance, d'après \cite[I.5.28]{BH} il suffit de vérifier les deux propriétés ci-dessous.
De plus, $\Xl$ sera alors un espace de longueur par \cite[I.5.20]{BH}.
\begin{enumerate}
\item \label{pro:metrique1}
Pour tous $x,z\in \Ap^+_f, x',z'\in \Ap^+_{f'}$ tels que $x\sim x'$ et $z\sim z'$, on a $|x,z|=|x',z'|$.
\item \label{pro:metrique2}
Pour tout $\overline x\in \Xl$ il existe $\eps=\eps (\overline x) > 0$ tel que $\bigcup_{x \in \overline x}B(x,\eps)$ est une réunion de classes d’équivalence pour la relation $\sim$, où $B(x,\eps)$ dénote une boule ouverte dans chaque $(\Ap_f^+,|\cdot,\cdot|)$.
\end{enumerate}

La propriété~\ref{pro:metrique1} est une conséquence directe de la proposition~\ref{pro:alpha intrinseque}.

Pour établir la propriété~\ref{pro:metrique2}, si $x=\nu_{f,[\alpha]}\in \Ap^+_{f}$, choisissons d'abord $\eps >0$ suffisamment petit pour que sous l'identification de $\Ap^+_{f}$ avec $\PW^+\subset \PW$, le voisinage $U$ de $[\alpha]$ de la remarque~\ref{rem:nervure} contienne $B(x,\eps)$.
Ainsi $B(x,\eps)$ n'intersecte aucun autre hyperplan admissible que ceux passant par $[\alpha]$.
Ceux-ci sont lisses dans les coordonnées de $(\R^{n-1},|\cdot,\cdot|)$, et sont en nombre fini à nouveau par le lemme~\ref{lem:finite}, donc quitte à diminuer $\eps$, pour chaque $y\in B(x,\eps)$ il existe un chemin de $y$ vers $x$ dans $B(x,\eps)$ le long duquel la multiplicité est constante, sauf peut-être en l'extrémité $x$ où elle peut augmenter.
Notons que $\eps=\eps([\alpha])$ ne dépend pas de $f$ mais seulement de $[\alpha]$, grâce à la proposition~\ref{pro:alpha intrinseque}.
Ainsi $\eps(\overline x)$ est bien défini.

Considérons maintenant $y\in B(x,\eps)\subset \Ap^+_{f}$, et soit $y'\sim y$ avec $y'\in \Ap^+_{f'}$.
En utilisant l'action de $\TA$ (et en remplaçant $f$ par $f'^{-1}f$) on peut supposer $f'=\id$.
Par la proposition~\ref{pro:alpha intrinseque}, la relation $y'\sim y$ se traduit par $\overline{y}\in \Fix(f)$.
Par définition de $\eps$, il existe un chemin de $y$ vers $x$ dont tous les points sont contenus dans le même ensemble d'hyperplans admissibles (sauf peut-être $x$ qui est alors contenu dans un ensemble d'hyperplans encore plus grand).
Alors par le corollaire~\ref{cor:stabjump}, on a $\overline{x}\in \Fix(f)$, et donc le point $x'\in \Ap^+_{\id}$ de poids $[\alpha]$ satisfait $x'\sim x$.
Finalement $y'\in \bigcup_{x \in \overline x}B(x,\eps)$, et cet ensemble est donc bien saturé pour la relation d'équivalence $\sim$.
\end{proof}

Dans le lemme qui suit nous conservons les notations $B(z,\eps)$ et $\overline {B}(z, \eps)$ pour les boules respectivement ouvertes ou fermées dans une chambre $\Ap^+_f$, et nous utilisons les notations $B_\Xl(\overline z, \eps)$, $\overline {B}_\Xl(\overline z, \eps)$ pour les boules dans $\Xl$ par rapport à $d_\Xl$.

\begin{lemma}
\label{lem:boule}
Soit $\overline z\in \Xl$ et $\eps = \eps(\overline z)$ vérifiant la propriété~\ref{pro:metrique2} ci-dessus.
Soit $\V(\overline z,\eps)=\bigsqcup_{z \in \overline z}B(z,\eps)/\sim$ avec la pseudo-distance quotient induite de $\bigsqcup_{z \in \overline z}B(z,\eps)$.
De même, soit $\overline{\V}(\overline z,{\frac{\eps}{4}})=\bigsqcup_{z \in \overline z}\overline{B}(z,{\frac{\eps}{4}})/\sim$ avec la pseudo-distance quotient induite de $\bigsqcup_{z \in \overline z}\overline{B}(z,{\frac{\eps}{4}})$. Alors
\begin{enumerate}[$(a)$] 
\item \label{boule_a}
$\V(\overline z,\eps)\subset \Xl$ coïncide comme ensemble avec $B_{\Xl}(\overline z,\eps)$, et
\item \label{boule_b}
$\overline{\V}(\overline z,{\frac{\eps}{4}})$ est isométrique à la boule $\overline{B}_{\Xl}(\overline z,\frac{\eps}{4})$.
\end{enumerate}
\end{lemma}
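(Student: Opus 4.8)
The plan is to handle the two parts in order, in each case comparing the chain pseudo-distance of the local model with $d_\Xl$ and leaning on the two properties \ref{pro:metrique1} and \ref{pro:metrique2} established inside the proof of Proposition~\ref{pro:metrique}. For \ref{boule_a}, I would first note that the defining property \ref{pro:metrique2} of $\eps=\eps(\overline z)$ makes $\bigcup_{z\in\overline z}B(z,\eps)$ saturated for $\sim$, so the tautological map $\V(\overline z,\eps)\to\Xl$ is a well-defined injection whose image is exactly the set of classes having a representative in some $B(z,\eps)$. One inclusion into $B_\Xl(\overline z,\eps)$ is immediate: a representative $p\in B(z,\eps)$ with $z\in\overline z$ gives a length-$1$ chain, hence $d_\Xl(\overline z,\overline p)\le|z,p|<\eps$. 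For the converse I take a chain $(x_0',x_1\sim x_1',\dots,x_k)$ from $\overline z$ to $\overline p$ of total length $<\eps$ and argue by induction on $j$ that the chamber $\Ap^+_{f_{j-1}}$ containing $x_j$ contains a representative $z_{j-1}\in\overline z$ with $|z_{j-1},x_j|\le S_j$, where $S_j=\sum_{i<j}|x_i',x_{i+1}|$. The inductive step is the crux: since $S_j<\eps$ we have $x_j\in B(z_{j-1},\eps)$, so by saturation the whole class of $x_j$, in particular $x_j'$, lies in $\bigcup_z B(z,\eps)$; this produces $z_j\in\overline z$ in the chamber of $x_j'$, and property \ref{pro:metrique1} (intrinsicness of the weight, i.e.\ Proposition~\ref{pro:alpha intrinseque}) gives $|z_j,x_j'|=|z_{j-1},x_j|\le S_j$, whence $|z_j,x_{j+1}|\le S_{j+1}$. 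At $j=k$ this exhibits $p$ in $B(z_{k-1},\eps)$, so $\overline p$ lies in the image.

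For \ref{boule_b} I would first upgrade the above into the intrinsic formula $d_\Xl(\overline z,\overline w)=|z_f,w_f|$, valid for any chamber $\Ap^+_f$ containing representatives $z_f\in\overline z$ and $w_f\in\overline w$: the inequality $\le$ is again the length-$1$ chain, the inequality $\ge$ is exactly the $j=k$ output of the induction, and independence of $f$ is property \ref{pro:metrique1}. This formula shows at once that $\overline w\in\overline B_\Xl(\overline z,\tfrac{\eps}{4})$ if and only if every chamber-representative $w_f$ lies in $\overline B(z_f,\tfrac{\eps}{4})$, so the tautological map $\overline{\V}(\overline z,\tfrac{\eps}{4})\to\Xl$ is a set-theoretic bijection onto $\overline B_\Xl(\overline z,\tfrac{\eps}{4})$. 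The induced pseudo-distance $d_{\overline{\V}}$ uses fewer chains than $d_\Xl$, so $d_{\overline{\V}}\ge d_\Xl$ is automatic, and the content of \ref{boule_b} is the reverse inequality.

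To obtain $d_{\overline{\V}}\le d_\Xl$, for $\overline x,\overline y\in\overline B_\Xl(\overline z,\tfrac{\eps}{4})$ I take a chain realizing $d_\Xl(\overline x,\overline y)$ up to $\delta$; its length being $<\eps$, part \ref{boule_a} guarantees that each chamber $\Ap^+_{f_i}$ of the chain contains a representative $z_{f_i}\in\overline z$. I then project the chain chamber by chamber via the nearest-point map $\pi_{f_i}$ onto the convex set $\overline B(z_{f_i},\tfrac{\eps}{4})$, which is $1$-Lipschitz (so no segment length increases) and fixes the endpoints $x_0',x_k$, already in the relevant $\tfrac{\eps}{4}$-ball by the intrinsic formula. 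The decisive point is that the projections are compatible with $\sim$. If $x_i\sim x_i'$, I normalise (via the $\TA$-action, as in the proof of Proposition~\ref{pro:metrique}) so that $x_i'=\nu_{\id,[\beta]}$ and $x_i=\nu_{f,[\beta]}$ with $\nu_{\id,[\beta]}\in\Fix(f)$, while the $z$-representatives give $[\zeta]$ with $\nu_{\id,[\zeta]}\in\Fix(f)$. By Proposition~\ref{pro:lieu fixe} the set $C=\{[\delta];\,\nu_{\id,[\delta]}\in\Fix(f)\}$ is an intersection of admissible half-spaces, hence convex by Lemma~\ref{lem:convex}, and $C\cap\PW^+$ is convex and contains both $[\zeta]$ and $[\beta]$. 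The Euclidean projection onto the round ball is radial, so it lands on the segment $[\,[\zeta],[\beta]\,]\subset C\cap\PW^+$, which stays inside the chamber, making the truncation of the ball by $\partial\PW^+$ irrelevant; the projected weight $[\eta]$ therefore lies in $C$, giving $\nu_{f,[\eta]}\sim\nu_{\id,[\eta]}$, i.e.\ $\pi_{f_{i-1}}(x_i)\sim\pi_{f_i}(x_i')$. Hence the projected sequence is a chain inside $\bigsqcup_z\overline B(z,\tfrac{\eps}{4})$ of length $\le d_\Xl(\overline x,\overline y)+\delta$, and letting $\delta\to0$ yields $d_{\overline{\V}}\le d_\Xl$.

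The main obstacle is precisely this last $\sim$-compatibility of the radial projection: it is what forces the reduced radius $\tfrac{\eps}{4}$ and requires both the convexity of the admissible half-spaces (Lemma~\ref{lem:convex}) and the description of fixed loci (Proposition~\ref{pro:lieu fixe}). Everything else reduces to bookkeeping with Definition~\ref{def:chaîne} and the intrinsicness of weights (Proposition~\ref{pro:alpha intrinseque}).
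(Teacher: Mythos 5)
Your proof is correct and is essentially the paper's argument: the decisive step in both is the radial (nearest-point) projection of a short chain onto the balls $\overline B(z,\tfrac{\eps}{4})$, whose compatibility with $\sim$ rests, exactly as you say, on the convexity of the fixed loci $\Fix(f)\cap\Ap^+_\id$ coming from Proposition~\ref{pro:lieu fixe} and Lemma~\ref{lem:convex}, together with the intrinsicness of weights (Proposition~\ref{pro:alpha intrinseque}). The only divergence is organisational: the paper delegates part~\ref{boule_a}, the set identification in~\ref{boule_b}, and the comparison with the ambient distance to \cite[I.5.27]{BH}, so that it only needs to project chains lying inside $\V(\overline z,\eps)$, whereas you reprove those facts by a direct induction on chains and project $d_\Xl$-chains directly --- equivalent mathematical content, made self-contained.
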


\begin{proof}
D'après \cite[I.5.27(1,2)]{BH}, l'ensemble $\V(\overline z,\eps)\subset \Xl$ coïncide avec $B_{\Xl}(\overline z,\eps)$ et l'ensemble
$\overline{\V}(\overline z,{\frac{\eps}{4}})\subset \Xl$ coïncide avec $\overline{B}_{\Xl}(\overline z,{\frac{\eps}{4}})$.
De plus, d'après \cite[I.5.27(3)]{BH}, cette dernière identification est une isométrie pour les distances induites par les espaces ambiants $\V(\overline z,\eps)$ et $\Xl$.
Observer que nous prenons un rayon $\eps/4$, au lieu de $\eps/2$ dans \cite{BH}, parce que nous voulons travailler ici avec des boules fermées.

Il reste à remarquer que $\overline{\V}(\overline z,\frac{\eps}{4})$ est isométriquement plongé dans $\V(\overline z,\eps)$, car pour tous $\overline x, \overline y$ dans $\overline{\V}(\overline z,{\frac{\eps}{4}})$ et toute chaîne dans $\bigsqcup_{z \in \overline z}B(z,\eps)$ de la forme $(x'_0\in \overline x,x_1\sim x_1',\ldots,x_k\in \overline y)$ comme précédemment, nous pouvons la remplacer par la chaîne suivante. Les points $x_i',x_i$ appartiennent aux boules $B(z_i, \eps), B(z_{i-1},\eps)$ dans $\Ap^+_{f_i},\Ap^+_{f_{i-1}}$, pour $z_i,z_{i-1}$ dans $\Ap^+_{f_i},\Ap^+_{f_{i-1}}$ représentant $\overline {z}$.
Appelons $p(x_i')$ la projection radiale (par rapport au centre $z_i$) de $x_i'$ sur $\overline {B}(z_i, \frac{\eps}{4})$ dans $\Ap^+_{f_i}$ et définissons de même $p(x_i)\in \overline {B}(z_{i-1}, \frac{\eps}{4})$.
Par la définition d'une chaîne on a $x_i\sim x_i'$.
Par la proposition 2.3 cela veut dire, en supposant pour simplicité que $f_{i-1}=\id$, que $f_i$ fixe $\overline{x}_i$.
Puisque $f_i$ fixe aussi $\overline{z}$, par la convexité de $\Fix(f_i)$ dans $\Ap^+_\id$ qui vient du lemme~5.1 et de la proposition 4.5, on obtient que $f_i$ fixe $\overline{p(x_i)}$. Alors $p(x_i)\sim p(x_i')$ et les projections forment une chaîne avec $\sum_{i=0}^{k-1}|p(x'_i),p(x_{i+1})|\leq\sum_{i=0}^{k-1}|x'_i,x_{i+1}|$.
\end{proof}

\begin{lemma}
\label{lem:restriction}
\begin{enumerate}[wide]
\item L'application $\wgt\colon (\Xl, d_\Xl)\to (\PW^+,|\cdot,\cdot|)$ du corollaire~$\ref{cor:intrin}\ref{cor:intrin1}$
est une isométrie en restriction à chaque $\Ap^+_f$.
\item L'application $\rho\colon (\Xl, d_\Xl)\to (\PW,|\cdot,\cdot|)$ du lemme~$\ref{lem:rho}$ est une isométrie en restriction à chaque $\Ap^+_f$.
\item Les applications $\wgt\colon \Xl \to \PW^+$ et $\rho\colon \Xl \to \PW$ n'accroissent pas les distances.
\end{enumerate}
\end{lemma}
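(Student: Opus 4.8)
Le plan est d'établir d'abord l'assertion (iii), dont découleront (i) et (ii) par un simple encadrement. En effet, pour $x,y$ dans une même chambre $\Ap^+_f$, la chaîne de longueur $1$ donnée par $(x,y)$, entièrement contenue dans $\Ap^+_f$, fournit $d_\Xl(x,y)\le |x,y|$, où $|x,y|$ désigne la distance interne de la chambre. Comme on le vérifie ci-dessous, $\wgt$ (resp.\ $\rho$) restreinte à $\Ap^+_f$ est une isométrie sur son image dans $(\PW^+,|\cdot,\cdot|)$ (resp.\ $(\PW,|\cdot,\cdot|)$), d'où $|x,y| = |\wgt(x),\wgt(y)|$ (resp.\ $= |\rho(x),\rho(y)|$). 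Combiné à (iii), cela donnerait $|\wgt(x),\wgt(y)|\le d_\Xl(x,y)\le |x,y| = |\wgt(x),\wgt(y)|$, donc égalité partout, ce qui prouve (i), et de même pour (ii).

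Il resterait alors deux points à vérifier. Pour le caractère isométrique des restrictions : par construction même de la distance sur $\Ap^+_f$ via l'identification $[\alpha]\mapsto \nu_{f,[\alpha]}$, l'application $\wgt$ restreinte à $\Ap^+_f$ coïncide avec l'inverse de cette identification (puisque $[\alpha]^+=[\alpha]$ pour $[\alpha]\in\PW^+$), et est donc une isométrie sur $\PW^+$. Quitte à composer $f$ à droite par une translation, on peut supposer $f\in\Tame_0(\A^n)$ ; alors par le lemme~\ref{lem:rho} on a $\rho(\nu_{f,[\alpha]})=[\sigma_f(\alpha)]$, de sorte que $\rho|_{\Ap^+_f}$ est la composée de l'identification précédente avec l'action de la permutation fixe $\sigma_f$ sur $\PW$. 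Or dans les coordonnées logarithmiques $\beta_i=\log\alpha_i$ servant à définir $|\cdot,\cdot|$, cette permutation agit par permutation des coordonnées $\beta_i$, c'est-à-dire par une transformation orthogonale de $\R^n$ préservant l'hyperplan $\sum\beta_i=0$ ; c'est donc une isométrie de $(\PW,|\cdot,\cdot|)$, et $\rho|_{\Ap^+_f}$ est bien une isométrie sur son image.

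Pour l'assertion (iii), je raisonnerais le long d'une chaîne. Soit $(x'_0,x_1\sim x'_1,\ldots,x_k)$ une chaîne avec $x'_i,x_{i+1}\in\Ap^+_{f_i}$, représentant $\overline x,\overline y\in\Xl$. Les applications $\wgt$ et $\rho$ étant bien définies sur le quotient $\Xl$ (corollaire~\ref{cor:intrin}\ref{cor:intrin1} et lemme~\ref{lem:rho}), elles prennent la même valeur sur $x_{i+1}$ et $x'_{i+1}$ à chaque jonction. En appliquant sur chaque segment l'égalité $|\wgt(\overline{x'_i}),\wgt(\overline{x_{i+1}})|=|x'_i,x_{i+1}|$ issue du paragraphe précédent, puis l'inégalité triangulaire dans $(\PW^+,|\cdot,\cdot|)$, on obtiendrait
\[
|\wgt(\overline x),\wgt(\overline y)|\le \sum_{i=0}^{k-1}|\wgt(\overline{x'_i}),\wgt(\overline{x_{i+1}})|=\sum_{i=0}^{k-1}|x'_i,x_{i+1}|.
\]
En passant à l'infimum sur toutes les chaînes, on conclurait $|\wgt(\overline x),\wgt(\overline y)|\le d_\Xl(\overline x,\overline y)$, et le même argument s'appliquerait mot pour mot à $\rho$ en remplaçant $\PW^+$ par $\PW$.

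La seule réelle subtilité de cette preuve est de s'assurer que le télescopage le long d'une chaîne est licite, ce qui repose précisément sur le fait que $\wgt$ et $\rho$ sont constantes sur les classes de $\sim$ ; ce point est garanti par les énoncés cités. Le reste est formel, une fois reconnu que $\sigma_f$ agit comme une isométrie euclidienne de $\PW$.
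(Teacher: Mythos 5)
Votre preuve est correcte et suit essentiellement la même démarche que celle de l'article : l'inégalité $d_\Xl \ge |\wgt(\cdot),\wgt(\cdot)|$ par télescopage le long d'une chaîne (licite car $\wgt$ et $\rho$ sont constantes sur les classes de $\sim$), l'inégalité inverse par la chaîne triviale, la réduction à $f\in\Tame_0(\A^n)$ par le corollaire~\ref{cor:stab chambre}, et le fait que $\sigma_f$ agit comme isométrie de $(\PW^+,|\cdot,\cdot|)$ dans $(\PW,|\cdot,\cdot|)$. Le seul écart est cosmétique : vous établissez (iii) d'abord puis en déduisez (i) et (ii) par encadrement, alors que l'article démontre (i) directement par le même encadrement et en déduit (ii) puis (iii).
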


\begin{proof}
\begin{enumerate}[wide]
\item Soit $\overline{x},\overline {y}\in \Ap^+_f$. Soit $(x'_0,x_1\sim x_1',x_2\sim x_2',\ldots,x_{k-1}\sim x'_{k-1},x_k)$ une chaîne avec $x'_0\in \overline{x}, x_k\in \overline{y}$.
Alors $\sum_{i=0}^{k-1}|x'_i,x_{i+1}|=\sum_{i=0}^{k-1} | \wgt(\overline{x}_{i}), \wgt(\overline{x}_{i+1}) |\geq |\wgt(\overline{x}), \wgt(\overline{y})|$ avec l'égalité pour la chaîne triviale de longeur $k=2$.
Ainsi $d_\Xl(\overline{x}, \overline {y})= |\wgt(\overline{x}), \wgt(\overline{y})|$.
\item Par le corollaire~\ref{cor:stab chambre} on peut supposer $f\in \Tame_0(\A^n)$. Par définition pour $\alpha\in \W^+$ on a $\rho(\nu_{f,[\alpha]})=[\sigma_f(\alpha)]$.
Alors le point~(ii) découle du point~(i) et du fait que l'application $[\alpha]\to [\sigma_f(\alpha)]$ est une isométrie de $(\PW^+,|\cdot,\cdot|)$ sur son image dans $(\PW,|\cdot,\cdot|)$.
\item Grâce aux points précédents c'est une conséquence immédiate de la définition de la distance $d_\Xl$. \qedhere
\end{enumerate}
\end{proof}

\begin{remark}
\label{rem:plonge}
L'application $e\colon (\PW,|\cdot,\cdot|) \to (\Xl, d_\Xl)$ définie par $e([\alpha])=\nu_{\id,[\alpha]}$ n'accroît pas la distance.
Par le lemme~\ref{lem:restriction}(iii), $\rho$ n'accroît pas la distance non plus.
De plus, par le lemme~\ref{lem:rho}\ref{lem:rho_goodprojection},
$\rho\circ e$ est l'identité. Ainsi, $\Ap_\id$ identifié avec $\R^{n-1}$ est isométriquement plongé dans $\Xl$. En utilisant l'action de $\TA$ on obtient que chaque appartement $\Ap_f$ est un $\R^{n-1}$ isométriquement plongé, ce qui justifie la notation $\Ap$ pour «Euclidien».
\end{remark}

\begin{lemma}
\label{lem:complet}
L'espace $\Xl$ est complet.
\end{lemma}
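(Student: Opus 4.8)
The plan is to take an arbitrary Cauchy sequence $(\overline x_k)$ in $(\Xl,d_\Xl)$ and produce a limit; since a Cauchy sequence converges as soon as one of its subsequences does, I am free to pass to subsequences throughout. First I would push the sequence down to the space of weights: by Lemma~\ref{lem:restriction}(iii) the map $\wgt\colon\Xl\to\PW^+$ does not increase distances, so $(\wgt(\overline x_k))$ is Cauchy in $\PW^+$. In the logarithmic coordinates of Section~\ref{sec:metrique}, $\PW^+$ is a closed Euclidean Weyl chamber in $\R^{n-1}$, hence complete, so $\wgt(\overline x_k)\to[\alpha_\infty]$ for some $[\alpha_\infty]\in\PW^+$. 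Set $\eps_\infty:=\eps([\alpha_\infty])>0$, the radius furnished by property~\ref{pro:metrique2} in the proof of Proposition~\ref{pro:metrique}, and recall from there that $\eps$ depends only on the weight. After passing to a subsequence I may assume $d_\Xl(\overline x_j,\overline x_k)<\eps_\infty/16$ and $|\wgt(\overline x_k),[\alpha_\infty]|<\eps_\infty/16$ for all $j,k$.

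The key step, and the one that circumvents the main difficulty, is the choice of a good centre. The obstacle is that $\eps$ degenerates along the tail: a point whose weight merely \emph{approaches} $[\alpha_\infty]$ while lying off some admissible hyperplane through $[\alpha_\infty]$ has arbitrarily small $\eps$, so Lemma~\ref{lem:boule} cannot be applied with a fixed radius at the points $\overline x_k$ themselves. I would avoid this by sliding to a centre of weight \emph{exactly} $[\alpha_\infty]$. Concretely, fix a representative of $\overline x_1$ in a chamber $\Ap^+_{f_1}$ (the chambers cover $\Xl$ by Corollary~\ref{cor:intrin}\ref{cor:intrin2}); since $[\alpha_\infty]\in\PW^+$, the point $\overline z:=\nu_{f_1,[\alpha_\infty]}\in\Ap^+_{f_1}$ is defined, and as $\wgt$ is an isometry on $\Ap^+_{f_1}$ (Lemma~\ref{lem:restriction}(i)) we get $d_\Xl(\overline x_1,\overline z)=|\wgt(\overline x_1),[\alpha_\infty]|<\eps_\infty/16$. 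Hence the whole tail lies in $\overline B_\Xl(\overline z,\eps_\infty/4)$, and because $\eps$ depends only on the weight, $\eps(\overline z)=\eps([\alpha_\infty])=\eps_\infty$, so Lemma~\ref{lem:boule} \emph{does} apply at $\overline z$ with radius $\eps_\infty/4$.

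It then remains to see that this ball is complete. By Lemma~\ref{lem:boule}(b), $\overline B_\Xl(\overline z,\eps_\infty/4)$ is isometric to $\overline{\V}(\overline z,\eps_\infty/4)=\bigsqcup_{z\in\overline z}\overline B(z,\eps_\infty/4)/\!\sim$. Each constituent $\overline B(z,\eps_\infty/4)$ is the intersection of a Euclidean closed ball with the closed chamber $\Ap^+_f\cong\PW^+\subset\R^{n-1}$, hence compact, and all of these balls are centred at representatives of the single point $\overline z$. Moreover, by Proposition~\ref{pro:alpha intrinseque} and Corollary~\ref{cor:intersection appartements}, two such balls are glued precisely along the locus where the corresponding valuations coincide, namely an intersection with a fixed locus $\Fix(\cdot)$, which is closed, convex (Lemma~\ref{lem:convex}) and contains $\overline z$. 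Thus $\overline{\V}(\overline z,\eps_\infty/4)$ is a union of compact Euclidean sectors emanating from the common centre $\overline z$ and glued along closed convex subsectors through $\overline z$; equivalently, it is the closed $\eps_\infty/4$-ball in the Euclidean cone over the link of $\overline z$. Granting completeness of this cone-ball, the tail of $(\overline x_k)$ is Cauchy in a complete space, hence converges to a point of $\Xl$, which is then the limit of $(\overline x_k)$.

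The main obstacle is exactly the degeneration of $\eps$ described above, and the heart of the argument is the centring trick that replaces it by the non-degenerate value $\eps_\infty=\eps([\alpha_\infty])$. The only remaining technical point is the completeness of the local ball $\overline{\V}(\overline z,\eps_\infty/4)$; here I would either invoke the gluing/completeness machinery of \cite{BH} already used for Lemma~\ref{lem:boule}, or argue directly that, parametrising each point of $\overline{\V}(\overline z,\eps_\infty/4)$ by its radius $t\in[0,\eps_\infty/4]$ and its direction at $\overline z$, a Cauchy sequence has convergent radii (in a compact interval) and Cauchy directions (by the cone law of cosines), so that completeness of the link of $\overline z$, a spherical complex assembled from the buildings of the linear blocks $\GL_{m_i}(\K)$ in the spirit of Lemma~\ref{lem:GL}, delivers the limit.
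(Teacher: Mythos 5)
Your first half is exactly the paper's argument: project the Cauchy sequence by $\wgt$, obtain the limit weight $[\alpha_\infty]$, and centre at $\overline z=\nu_{f_1,[\alpha_\infty]}$ in a chamber through one far-out term, exploiting that $\eps$ depends only on the weight. The gap is in your conclusion, where you reduce everything to the unproved claim that $\overline B_\Xl(\overline z,\eps_\infty/4)$ is complete; neither of your justifications for that claim works. First, this ball is \emph{not} the closed ball of a Euclidean cone over the link of $\overline z$: two pieces $\overline B(z,\eps_\infty/4)$ and $\overline B(z',\eps_\infty/4)$ are glued along a convex set bounded by admissible hypersurfaces through $[\alpha_\infty]$, and a non-principal admissible hypersurface is genuinely curved in the logarithmic coordinates (remarque~\ref{rem:droites}), hence is not a union of radii emanating from $\overline z$. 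The identifications are therefore not radially invariant, the metric is not a cone metric, and the ``cone law of cosines'' argument collapses; this non-conicality is precisely the difficulty that forces the paper, in Section~6, to introduce the linearized model $\V_0$ and the Gromov--Hausdorff approximation by the $\V_m$ --- an analysis carried out only for $n=3$, whereas the present lemma is stated for all $n$. Second, there is no completeness machinery ``already used for Lemma~\ref{lem:boule}'': that lemma rests on \cite[I.5.27]{BH}, which is not a completeness statement; completeness of local models via \cite[I.7.13]{BH} enters only in the proof of Proposition~\ref{pro:V est CAT0}, again only for $n=3$ and only after a bilipschitz polyhedral replacement. Your description of the link as a spherical complex assembled from buildings of the blocks $\GL_{m_i}(\K)$ is likewise unsupported: Lemma~\ref{lem:GL} covers only the weight $[1,\ldots,1]$, Lemma~\ref{lem:not a building} shows the geometry around any other weight of positive multiplicity is not building-like, and even completeness of this (infinite) link would itself require an argument.

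The irony is that no completeness of local balls is needed: your own set-up finishes in two lines, and this is how the paper concludes. Once the tail lies in $B_\Xl(\overline z,\eps_\infty)=\V(\overline z,\eps_\infty)$ (lemme~\ref{lem:boule}\ref{boule_a}), every term $\overline x_k$ of the tail has, by the very definition of $\V(\overline z,\eps_\infty)$ as $\bigsqcup_{z\in\overline z}B(z,\eps_\infty)/\sim$, a representative lying in a ball $B(z_k,\eps_\infty)$ of some chamber $\Ap^+_{f_k}$ which also contains a representative $z_k$ of $\overline z$. Since $\wgt$ restricted to $\Ap^+_{f_k}$ is an isometry (lemme~\ref{lem:restriction}(i)), this yields
\[
d_\Xl(\overline z,\overline x_k)\le \bigl|[\alpha_\infty],\wgt(\overline x_k)\bigr| \longrightarrow 0,
\]
so the sequence converges to the centre $\overline z$ itself. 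The centring trick you identified is indeed the heart of the proof, but its payoff is a direct exhibition of the limit, not a reduction to local completeness.
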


\begin{proof}
Soit $(\nu_m)$ une suite de Cauchy dans $\Xl$.
Par le lemme~\ref{lem:restriction}(iii), $(\wgt(\nu_m))$ est aussi une suite de Cauchy, et admet une limite $[\alpha]\in \PW^+$ puisque $(\PW^+,|\cdot,\cdot|)$ est fermé dans $\R^{n-1}$.
Soit $\eps=\eps([\alpha])$ correspondant à la propriété~\ref{pro:metrique2} dans la preuve de la proposition~\ref{pro:metrique}.
Soit $M$ suffisamment grand tel que pour chaque $m\geq M$ on ait
\[
d_\Xl(\nu_M,\nu_{m}) < \frac{\eps}{2}
\text{ et }
|\wgt(\nu_{m}),[\alpha]| < \frac{\eps}{2}.
\]
Si $\nu_M\in \Ap^+_{f}$, soit $\overline{z}=\nu_{f,[\alpha]}$.
Ainsi pour $m\geq M$ on a $\nu_{m}\in B_{\Xl}(\overline{z},\eps)=\V(\overline{z},\eps)$ par le lemme~\ref{lem:boule}\ref{boule_a}.
Par définition de $\V(\overline{z},\eps)$, pour chaque $m\geq M$ il existe $z_{m}\in \overline{z}$ avec $\nu_{m}\in B(z_{m},\eps)$.
Donc $d_\Xl(\overline{z}, \nu_{m})\leq |[\alpha],\wgt(\nu_{m})|$ qui tend vers 0 quand $m$ tend vers l'infini.
Ainsi $\nu_m\to \overline z$.
\end{proof}

Finalement, remarquons que
puisque pour tous $f,g \in \TA$ l'action de $f$ induit une isométrie de $\Ap_g$ vers $\Ap_{fg}$,
l'action de $\TA$ sur $\Xl$ est par isométries.
De plus, par le corollaire~\ref{cor:connexe}, $\Xl$ est connexe.

\subsection{Un lemme angulaire}

\begin{lemma} \label{lem:angle}
Soit $\alpha=(m,p,1)\in\W$ et $0\leq k\leq m$.
Dans $\PW$ considérons les demi-droites (voir figure~\ref{fig:angles}) :
\begin{itemize}
\item $c_1$ de $[\alpha]$ vers $[0,0,1]$;
\item $c_2$ de $[\alpha]$ vers $[k,0,1]$;
\item $c_3$ de $[\alpha]$ vers $[m-k,0,1]$;
\item $c_4$ de $[\alpha]$ vers $[m,0,1]$.
\end{itemize}
Les $c_i$ deviennent des courbes lisses pour la distance $|\cdot,\cdot|$ de $\R^2$.
Pour $1 \le i < j \le 4$, notons $\theta_{ij}$ l'angle au point $[\alpha]$ entre les courbes $c_i$ et $c_j$, pour la distance $|\cdot,\cdot|$.
Alors $\theta_{12} = \theta_{34}$, ou de façon équivalente $\theta_{12} + \theta_{13} = \pi/3$.
\end{lemma}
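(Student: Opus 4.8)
\emph{Proof plan.} The strategy is to pass to the Euclidean model: via the logarithm map used to define the metric on $\PW$, the simplex $\PW$ is identified with the plane $(\R^2,|\cdot,\cdot|)=\{(\beta_1,\beta_2,\beta_3)\in\R^3;\ \sum\beta_i=0\}$, and I will compute the tangent direction at $[\alpha]$ of each $c_i$ there, then produce a single isometry of the plane permuting these directions suitably.

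First I would determine the tangent directions. The half-line from $[\alpha]$ toward a point $[v]$ with $v=(v_1,v_2,v_3)$ is the projective segment $\lambda\mapsto[\alpha+\lambda(v-\alpha)]$; applying $[w]\mapsto(\log w_1,\log w_2,\log w_3)$ and differentiating at $\lambda=0$ gives the velocity $(\tfrac{v_1}{\alpha_1}-1,\tfrac{v_2}{\alpha_2}-1,\tfrac{v_3}{\alpha_3}-1)$, which modulo the diagonal $\R(1,1,1)$ equals $(\tfrac{v_1}{\alpha_1},\tfrac{v_2}{\alpha_2},\tfrac{v_3}{\alpha_3})$. For $\alpha=(m,p,1)$ and the four targets $\trip{0}{0}{1},\trip{k}{0}{1},\trip{m-k}{0}{1},\trip{m}{0}{1}$ this yields, modulo the diagonal, the vectors $T(s):=(s,0,1)$ with $s=0,\tfrac{k}{m},\tfrac{m-k}{m},1$ respectively (the value $p$ drops out, since each target has vanishing second coordinate). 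Subtracting the mean to land in $\{\sum\beta_i=0\}$ gives the representatives $\tilde T(s)=\tfrac13(2s-1,-(s+1),2-s)$, an affine function of $s$ whose image is an affine segment avoiding the origin.

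The crux is the identity $\tilde T(1-s)=-\tau\,\tilde T(s)$, where $\tau$ denotes the transposition of the second and third coordinates; this is checked by direct substitution. Since $\tau$ is an orthogonal permutation preserving $\{\sum\beta_i=0\}$ and $-\id$ is orthogonal, $\Phi:=-\tau$ is a linear isometry of the plane (in fact a reflection, of determinant $-1$, fixing the direction $(0,-1,1)$). As the parameter involution $s\mapsto 1-s$ exchanges $c_1\leftrightarrow c_4$ (parameters $0\leftrightarrow 1$) and $c_2\leftrightarrow c_3$ (parameters $\tfrac{k}{m}\leftrightarrow\tfrac{m-k}{m}$), and $\Phi$ preserves angles, I conclude $\theta_{12}=\theta_{34}$.

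For the equivalent formulation I would compute $\theta_{14}$ directly: from $\tilde T(0)=\tfrac13(-1,-1,2)$ and $\tilde T(1)=\tfrac13(1,-2,1)$, both of squared norm $\tfrac23$ and inner product $\tfrac13$, one gets $\cos\theta_{14}=\tfrac12$, hence $\theta_{14}=\pi/3$, independently of $k$. Because $\tilde T$ is affine with image a segment subtending total angle $\pi/3<\pi$, the directions rotate monotonically in $s$; in particular the direction of $c_3$ (parameter $\tfrac{m-k}{m}\in[0,1]$) lies between those of $c_1$ and $c_4$, so $\theta_{14}=\theta_{13}+\theta_{34}$. Then $\theta_{12}+\theta_{13}=\pi/3$ is equivalent to $\theta_{12}=\theta_{14}-\theta_{13}=\theta_{34}$, matching the isometry argument. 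I expect the only delicate points to be the correct derivation of the tangent-direction formula (the bookkeeping modulo the diagonal and through the logarithm) and the monotonicity needed for angle additivity; the reflection identity $\tilde T(1-s)=-\tau\,\tilde T(s)$ itself is a short computation.
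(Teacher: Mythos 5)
Your proof is correct, and it ultimately rests on the same symmetry as the paper's argument: the swap of the second and third coordinates in logarithmic coordinates. The paper realizes this symmetry globally, as the projective involution $\tau\colon [\alpha_1,\alpha_2,\alpha_3]\mapsto[\alpha_1,p\alpha_3,\alpha_2/p]$ of $\PW$, which fixes $[\alpha]$, exchanges (by two collinearity checks) the \emph{lines} carrying $c_1,c_4$ and those carrying $c_2,c_3$, and becomes an axial symmetry for $|\cdot,\cdot|$; your linear map $\Phi=-\tau$ is exactly the linear part of that involution composed with the antipodal map, the minus sign compensating for the fact that the paper's involution sends each ray to the ray \emph{opposite} to its partner. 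Where you genuinely diverge is in the execution: you work entirely at the tangent level, with the explicit parametrization $\tilde T(s)=\tfrac13\,(2s-1,-(s+1),2-s)$ and the identity $\tilde T(1-s)=-\tau\,\tilde T(s)$ replacing the paper's collinearity arguments; and you reprove from scratch the two facts the paper imports, namely $\theta_{14}=\pi/3$ (which the paper deduces from the remark~\ref{rem:droites} on principal lines, $c_1$ and $c_4$ being principal) and the additivity $\theta_{14}=\theta_{13}+\theta_{34}$ (used implicitly in the paper's display $\pi/3=\theta_{12}+\theta_{13}+(\theta_{34}-\theta_{12})$), via a direct dot-product computation and the monotone rotation of directions along an affine segment avoiding the origin. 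Your version is thus more self-contained and purely computational — it never needs the global involution of $\PW$ nor the classification of principal lines — while the paper's version is shorter because it can quote that earlier material and gets the angle-preservation from a single synthetic statement; the justification of additivity via monotonicity is a welcome point of rigor that the paper leaves tacit. Both arguments are complete.
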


\begin{figure}[ht]
\begin{tikzpicture}[scale = 12,font=\small,rotate = -60]
\coordinate (010) at (0,0);
\coordinate (100) at (1,0);
\coordinate [label=left:{$\trip{0}{0}{1}$}] (001) at (.5, -.86);
\coordinate (310) at ($ (100)!1/4!(010) $) {};
\coordinate (210) at ($ (100)!1/3!(010) $) {};
\coordinate (110) at ($ (100)!1/2!(010) $) {};
\coordinate [label=right:{$\trip{m}{0}{1}$}] (601) at ($ (100)!1/7!(001) $) {};
\coordinate [label=below:{$\trip{m-k}{0}{1}$}] (401) at ($ (100)!1/5!(001) $) {};
\coordinate [label={[xshift=-10,below]:{$\trip{k}{0}{1}$}}] (201) at ($ (100)!1/3!(001) $) {};
\coordinate [label=above:{$[\alpha] = \trip{m}{p}{1}$}] (621) at (intersection of 310--001 and 010--601);
\draw (601) to (001);
\draw (621) to node[auto,swap, pos=.85] {$c_1$} (001);
\draw (621) to node[auto,swap, pos=.85] {$c_2$} (201);
\draw (621) to node[auto, swap, pos=.68] {$c_3$} (401);
\draw (621) to node[auto,pos=.85] {$c_4$} (601);
\draw pic["$\theta_{12}$",draw=black,angle eccentricity=1.3,angle radius=1.2cm] {angle=001--621--201};
\draw pic["$\theta_{34}$",draw=black,angle eccentricity=1.2,angle radius=1.5cm] {angle=401--621--601};
\end{tikzpicture}
\caption{Lemme angulaire~\ref{lem:angle}. Le dessin est dans le simplexe $\PW$, mais noter que les angles doivent être considérés relativement à la distance $|\cdot,\cdot|$.} \label{fig:angles}
\end{figure}
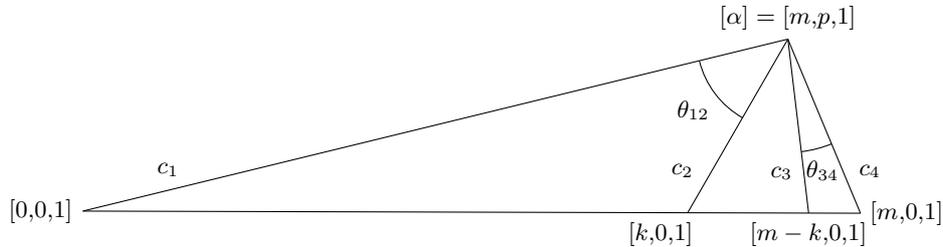

\begin{proof}
Notons que $[0,1,0]$ est colinéaire avec $[\alpha]$ et $[m,0,1]$, donc $c_1$ et $c_4$ sont principales et $\theta_{14}=\frac{\pi}{3} $ par la remarque~\ref{rem:droites}.
Le fait que les deux conclusions sont équivalentes vient alors de l'égalité
\[
\frac{\pi}{3} = \theta_{14}
=\theta_{12} + \theta_{13} + (\theta_{34} - \theta_{12}).
\]

Considérons l'involution de $\PW$ et son bord à l'infini donnée par :
\[
\tau\colon [\alpha_1, \alpha_2, \alpha_3] \mapsto [\alpha_1, p \alpha_3, \alpha_2 / p].
\]
Cette involution fixe la droite des poids $[t, p, 1]$, $t \ge 0$, qui contient $[\alpha]$.
De plus, $\tau [0,0,1] = [0,1,0]$ qui est colinéaire avec $[\alpha]$ et $[m,0,1]$, et
$\tau [k,0,1] = [k,p,0]$ qui est colinéaire avec $[\alpha]$ et $[m-k,0,1]$.
En particulier, $\tau$ échange les droites contenant $c_1$ et $c_4$, et également les droites contenant $c_2$ et $c_3$, ainsi $\tau$ envoie les deux courbes formant l'angle $\theta_{12}$ sur celles formant l'angle $\theta_{34}$.

Au niveau des $\beta_i = \log \alpha_i$, l'involution $\tau$ devient :
\[
(\beta_1, \beta_2, \beta_3) \mapsto (\beta_1, \beta_3 +
\log p, \beta_2 - \log p).
\]
Ainsi pour la distance $|\cdot,\cdot|$ l'involution $\tau$ est une symétrie axiale (d'axe $\beta_2 - \beta_3 = \log p$).
En particulier elle préserve les angles non orientés et on conclut $\theta_{12} = \theta_{34}$.
\end{proof}

\section{Simple connexité}
\label{sec:simple_connexite}
Dans cette section nous montrons que $\Xl_2$ est un arbre et que $\Xl_3$ est simplement connexe.
Dans ce but nous rappelons le formalisme des complexes de groupes simples, en suivant \cite[II.12]{BH}.

\subsection{Complexes de groupes}

Un \emph{complexe de groupes} $\Gl$ sur un poset $(\Sigma,<)$ est une collection de groupes $\{G_\sigma\}_{\sigma\in \Sigma}$, avec pour chaque couple $\sigma> \tau$ un homomorphisme injectif $\phi_{\tau\sigma}\colon G_\sigma\to G_\tau$.
On demande de plus que chaque triplet $\sigma> \tau> \rho$ donne lieu à une relation de compatibilité $\phi_{\rho\tau}\circ\phi_{\tau\sigma}=\phi_{\rho\sigma}$.
Un \emph{sous-complexe} de $\Gl$ est la donnée d'un sous-poset $\Sigma'$ de $\Sigma$, muni, pour tous $\sigma, \tau\in \Sigma'$, des mêmes $G_\sigma, \phi_{\tau\sigma}$ que $\Gl$.

Un \emph{morphisme} $\psi\colon\Gl\to G$ d'un complexe de groupes vers un groupe $G$ est une collection d'homomorphismes $\psi_\sigma\colon G_\sigma\to G$ satisfaisant $\psi_\sigma=\psi_\tau\circ \phi_{\tau\sigma}$ pour tout couple $\sigma> \tau$.
Dans la situation typique où $\Sigma$ est le poset des cellules d'un complexe polyédral $\Dl$, à un tel morphisme nous associons un \emph{développement} de $\Dl$ qui est la réunion disjointe $\Dl \times G$ de copies de $\Dl$ indicées par $G$ quotientée par la relation $(x,h)\sim (x,hg)$, où $x$ appartient à une cellule $\sigma$ et $g$ est contenu dans l'image de $\psi_\sigma$.
Cette notion de développement peut être vue comme l'inverse d'un passage au quotient :

\begin{theorem}[{\cite[II.12.20(1)]{BH}}]
\label{thm:quotientcomplex}
Soit $\Gl$ un complexe de groupes sur le poset des cellules d'un complexe polyédral $\Dl$.
Supposons que $\Dl$ est un domaine fondamental de l'action de $G$ sur un complexe polyédral $\Dl'$, que chaque $G_\sigma$ est le stabilisateur de $\sigma$, et que les $\phi_{\tau\sigma}\colon G_\sigma \to G_\tau, \psi_\sigma \colon G_\sigma\to G$ sont les inclusions évidentes.
Alors $\Dl'$ est isomorphe comme complexe polyédral, de manière $G$-équivariante,
au développement associé à $\psi$.
\end{theorem}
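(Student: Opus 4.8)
Le plan est de construire directement un isomorphisme $G$-équivariant entre le développement, que je note $\Dl_\psi$, et $\Dl'$. Rappelons qu'un point de $\Dl_\psi$ est une classe $[x,h]$, avec $x$ dans une cellule $\sigma$ de $\Dl$ et $h\in G$, soumise à la relation $[x,h]=[x,hg]$ pour tout $g\in\psi_\sigma(G_\sigma)=G_\sigma$, et que $G$ agit par $g'\cdot[x,h]=[x,g'h]$. Comme $\Dl$ est identifié à un sous-complexe de $\Dl'$ rencontrant chaque orbite en une unique cellule, je poserais
\[
\Phi\colon \Dl_\psi\to \Dl',\qquad \Phi([x,h]) := h\cdot x,
\]
où $h\cdot x$ désigne l'action de $h\in G$ sur le point $x\in \Dl\subset \Dl'$.

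La première étape est la bonne définition de $\Phi$. Pour $g\in G_\sigma=\Stab(\sigma)$, l'hypothèse (standard dans ce contexte) que l'action est sans inversion garantit que $g$ fixe $\sigma$ point par point, donc $hg\cdot x=h\cdot(g\cdot x)=h\cdot x$; ainsi $\Phi([x,h])$ ne dépend pas du représentant choisi. L'équivariance est alors immédiate, puisque $\Phi(g'\cdot[x,h])=g'h\cdot x=g'\cdot\Phi([x,h])$.

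Il reste à montrer que $\Phi$ est une bijection compatible avec les structures cellulaires. La surjectivité provient de l'hypothèse de domaine fondamental: tout $y\in \Dl'$ s'écrit $y=h\cdot x$ avec $x\in \Dl$ et $h\in G$, et alors $\Phi([x,h])=y$. Pour l'injectivité, supposons $h\cdot x=h'\cdot x'$ avec $x,x'\in \Dl$; les points $x$ et $x'$ sont dans une même $G$-orbite, donc $x=x'$ car $\Dl$ rencontre chaque orbite en un seul point. L'élément $h'^{-1}h$ fixe alors $x$, et comme il préserve la structure cellulaire et que $\sigma=\supp(x)$ est l'unique cellule dont l'intérieur contient $x$, il préserve $\sigma$, d'où $h'^{-1}h\in G_\sigma$, ce qui donne $[x,h]=[x',h']$. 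Enfin, la cellule de $\Dl_\psi$ indexée par $(\sigma,hG_\sigma)$ est une copie de $\sigma$ envoyée par $\Phi$ sur la cellule $h\sigma$ de $\Dl'$ via l'isomorphisme polyédral $x\mapsto h\cdot x$; les relations de face sont respectées car les $\phi_{\tau\sigma}$ sont des inclusions, de sorte que, combiné à la bijectivité, $\Phi$ est un isomorphisme de complexes polyédraux.

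Le point délicat, sur lequel reposent à la fois la bonne définition et l'injectivité, est l'identification entre le stabilisateur d'un point intérieur à $\sigma$ et le stabilisateur $G_\sigma$ de la cellule entière: c'est précisément l'absence d'inversion qui relie la description locale par les classes de $G/G_\sigma$ à la description globale par l'orbite de $\sigma$ dans $\Dl'$, et c'est elle qui fait fonctionner la reconstruction.
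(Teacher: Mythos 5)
Le texte ne démontre pas cet énoncé : il le cite tel quel de Bridson--Haefliger \cite[II.12.20(1)]{BH}, et votre preuve directe reconstitue pour l'essentiel l'argument standard qui se cache derrière cette référence. Votre construction est correcte : l'application canonique $\Phi([x,h])=h\cdot x$ du développement vers $\Dl'$ est bien définie, $G$-équivariante, bijective (la surjectivité venant de ce que $\Dl$ rencontre toute orbite, l'injectivité de ce qu'il la rencontre en un seul point), et respecte les structures cellulaires. Un seul point mérite d'être précisé : l'hypothèse «sans inversion» que vous invoquez comme «standard» pour la bonne définition de $\Phi$ n'a pas besoin d'être ajoutée aux hypothèses, car elle découle de la notion de domaine fondamental utilisée dans ce texte (toute orbite rencontre $\Dl$ en exactement un point, cf. corollaire~\ref{cor:intrin}\ref{cor:intrin2}) : si $g$ stabilise une cellule $\sigma\subset \Dl$ au sens ensembliste et si $x\in\sigma$, alors $x$ et $g\cdot x$ sont deux points de $\Dl$ dans la même orbite, donc égaux, et $g$ fixe $\sigma$ point par point. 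Le même argument montre que $G_\sigma\subset G_\tau$ pour toute face $\tau$ de $\sigma$, ce qui justifie à la fois que les $\phi_{\tau\sigma}$ soient des inclusions et que la relation $(x,h)\sim(x,hg)$ soit déjà une relation d'équivalence (le groupe pertinent en un point $x$ étant celui de son support). Avec cette précision, ce que vous appelez le «point délicat» se résout sans hypothèse supplémentaire, et votre preuve est complète.
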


Dans la situation du théorème~\ref{thm:quotientcomplex} nous disons que $\Gl$ est \emph{développable}.

Supposons $\Dl$ simplement connexe. Alors le \emph{groupe fondamental} $F\Gl$ de $\Gl$ est le produit libre de tous les $G_\sigma$ quotienté par les relations $g\sim \phi_{\tau\sigma}(g)$.
Si $\Gl$ est développable, on définit son \emph{développement universel} comme le développement associé au morphisme naturel $\psi\colon \mathcal G \to F\Gl$.
Chaque morphisme $\psi\colon \mathcal G \to G$ induit une application $F\psi \colon F\Gl\to G$.
Si $F\psi$ est un isomorphisme, alors le développement associé à $\psi$ est isomorphe au développement universel.

\begin{theorem}[{\cite[II.12.20(4)]{BH}}]
\label{thm:dev}
Le développement universel est simplement connexe.
\end{theorem}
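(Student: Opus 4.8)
The plan is to establish simple connectivity of the universal development $\widetilde D=(\Dl\times F\Gl)/\!\sim$ by a Seifert–Van Kampen argument that feeds the simple connectivity of $\Dl$ into the defining presentation of $F\Gl$. Write $\Dl_h:=\Dl\times\{h\}$ for the copy indexed by $h\in F\Gl$; two copies $\Dl_h$ and $\Dl_{hg}$ are glued along the cell $\sigma$ exactly when $g\in\iota_\sigma(G_\sigma)$, where $\iota_\sigma\colon G_\sigma\to F\Gl$ is the natural homomorphism. First I would check connectivity: since $F\Gl$ is, by construction as a quotient of $*_\sigma G_\sigma$, generated by $\bigcup_\sigma\iota_\sigma(G_\sigma)$, every $h$ is a product of such generators, and chaining the associated gluings while using that each $\Dl_h$ is connected shows that $\widetilde D$ is connected.

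Next I would give a combinatorial description of a based edge-loop at $\tilde v=(v_0,1)$. After an elementary homotopy one may assume the loop is a concatenation of paths, each lying in a single copy $\Dl_{h_i}$, with consecutive copies $\Dl_{h_{i-1}},\Dl_{h_i}$ meeting over a cell $\sigma_i$ and the crossing recorded by an element $g_i\in\iota_{\sigma_i}(G_{\sigma_i})$ with $h_i=h_{i-1}g_i$. The loop therefore determines a word $w=g_1\cdots g_k$ in the generators of $F\Gl$, and the requirement that it close up at $\tilde v$ forces the recorded word $w$ to equal $1$ in $F\Gl$ (while the projection of the loop is a loop in $\Dl$).

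The heart of the argument is to convert the algebraic identity $w=1$ into a null-homotopy. Since $\Dl$ is simply connected, every in-copy subpath can be contracted within its copy $\Dl_{h_i}$, so only the combinatorial word $w$ remains. As $w=1$ in $F\Gl$, it reduces to the empty word through finitely many Tietze moves using the two families of relators: the internal relations of the groups $G_\sigma$, and the identifications $g=\phi_{\tau\sigma}(g)$ for a face $\tau\subset\sigma$. I would realize each move by an elementary homotopy of the loop: an internal or free-product relation amounts to backtracking across a single cell shared by two adjacent copies, hence is immediately contractible, whereas an identification $g=\phi_{\tau\sigma}(g)$ reflects the compatibility of the gluing over $\sigma$ with that over its face $\tau$, guaranteed by the cocycle condition $\phi_{\rho\tau}\circ\phi_{\tau\sigma}=\phi_{\rho\sigma}$. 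Combining this with the contraction of the in-copy paths contracts the whole loop.

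The main obstacle is precisely this last translation: interleaving correctly the paths inside the copies of $\Dl$ with the group-theoretic transitions, and verifying that each Tietze reduction of $w$ is induced by a genuine elementary homotopy creating no new obstruction. A cleaner but equivalent way to organize the computation is to prove the general fact that, for the development of any morphism $\psi\colon\Gl\to G$ over a simply connected $\Dl$, there is an isomorphism $\pi_1(\widetilde D)\cong\ker(F\psi)$; applying this to the natural morphism $\iota\colon\Gl\to F\Gl$, for which $F\iota=\mathrm{id}_{F\Gl}$ has trivial kernel, yields the theorem. The bookkeeping underlying that exact sequence is where the real work lies.
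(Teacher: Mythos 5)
First, a point of reference: the paper does not prove this statement at all --- Théorème~\ref{thm:dev} is imported verbatim from Bridson--Haefliger \cite[II.12.20(4)]{BH} --- so your attempt has to be measured against the proof given there, which is exactly the kind of argument you sketch: decompose a based loop in the development into subpaths lying in single copies of $\Dl$, record the crossings by generators, observe that the resulting word $w$ is trivial in $F\Gl$, and realize the algebraic reduction of $w$ by homotopies. The strategy is sound and is the standard one.

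The problem is that your write-up stops precisely where the theorem begins, and you say so yourself (``the bookkeeping \dots is where the real work lies''). Concretely: (a) the relators of $F\Gl$ live inside single local groups $G_\sigma$, together with the identifications $g=\phi_{\tau\sigma}(g)$, whereas the consecutive crossings of your loop occur at cells $\sigma_i,\sigma_{i+1}$ which are in general distinct and far apart in $\Dl$. To apply a relator you must first homotope the intermediate subpath, rel endpoints, into a region of $\Dl_{h_i}$ where both gluings are simultaneously in force, and you must verify that such a region exists and is path-connected; simple connectivity of $\Dl$ gives homotopies between paths with common endpoints, not homotopies of a path into a prescribed subcomplex. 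This is also why a naive Seifert--Van Kampen argument does not apply directly: the intersection of two copies of $\Dl$ in the development is a union of closed cells that need not be connected. (b) Your ``cleaner'' alternative --- that $\pi_1$ of the development of any morphism $\psi$ is $\ker(F\psi)$ --- is not an independent shortcut: that isomorphism is a strengthening of the very statement to be proved, and you give no argument for it either, so invoking it is circular. (A small additional slip: closing up at the basepoint forces $w$ to lie in $\psi_{\sigma_0}(G_{\sigma_0})$ for the cell $\sigma_0$ containing the basepoint, not $w=1$; one must absorb this last factor by an extra crossing.) In short, you have correctly identified the architecture of the proof of \cite[II.12.20]{BH}, but the step you defer is the entire content of the theorem, so as it stands the proposal is a plan rather than a proof.
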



\subsection{\texorpdfstring{$\Xl_2$}{X2} est un arbre}
\label{sec:arbre}

Quand $n=2$, $\PW^+$ muni de la distance euclidienne $|\cdot,\cdot|$ est une demi-droite d'extrémité égale à $[1,1]$, et chaque hyperplan admissible dans $\PW^+$ est un point $[i,1]$ pour $i \ge 1$.
Le représentant de $[i,1]$ dans l'hyperbole $\alpha_1\alpha_2=1$ est $(\alpha_1, \alpha_2) = \big(\sqrt i, \frac{1}{\sqrt i}\big)$, et donc $\beta_1= \frac{\log i}{2},\beta_2=-\frac{\log i}{2}$.
On obtient que $[i+1,1]$ est à distance $\frac{\log(i+1)-\log i}{\sqrt{2}}$ de $[i,1]$.
Nous considérons alors $\PW^+$ comme un graphe métrique, avec un sommet $s_i=[i,1]$ pour chaque $i\geq 1$, et une arête $e_i$ de longueur $\frac{\log(i+1)-\log i}{\sqrt{2}}$ entre chaque $s_i$ et $s_{i+1}$.
Pour chaque cellule ouverte $\sigma$ de $\PW^+$, notons $G_\sigma\subset \Td$ le stabilisateur de $\nu_{\id,[\alpha]}$ pour $[\alpha]\in\sigma$: par la proposition~\ref{pro:lieu fixe}, cette définition est bien indépendante du choix de $[\alpha]$.

D'après la proposition~\ref{pro:stabilisateur}, $G_{s_1}$ est le groupe affine~$A_2$ et $G_{e_1}$ est le sous-groupe affine triangulaire dans~$A_2$.
En outre, pour $i\geq 2$, $G_{s_i}=G_{e_i}$ est le groupe des automorphismes de la forme $(ax_1 + P(x_2),bx_2 + c)$ avec $\deg P \le i$.
En particulier, pour tout $i \ge 2$ nous avons $G_{s_i} \subset G_{s_{i+1}}$.
Soit $\mathcal G$ le graphe de groupes sur le poset des cellules de $\PW^+$ muni de ces groupes.
La limite inductive $\lim_{i \ge 2} G_{s_i}$ est égale au groupe $E_2$ des automorphismes triangulaires.
Ainsi le groupe fondamental $F\mathcal G$ est le produit amalgamé de $A_2$ et $E_2$ le long du groupe affine triangulaire $A_2\cap E_2$.

D'après le corollaire~\ref{cor:intrin}\ref{cor:intrin2}, $\Ap_\id^+$ est un domaine fondamental pour l'action de $\Td$ sur $\Xl_2$. En identifiant $\Ap_\id^+$ avec $\PW^+$, d'après le théorème~\ref{thm:quotientcomplex}, le développement associé au morphisme naturel $\psi \colon \mathcal G\to\Td$ est isomorphe à $\Xl_2$.
Considérons le morphisme $F\psi\colon F\mathcal G\to \Td$ induit par $\psi$.
D'après le théorème de Jung--van der Kulk on a $\Td= A_2 \ast_{A_2\cap E_2}E_2$, ainsi $F\psi$ est un isomorphisme, et donc $\Xl_2$ est isomorphe au développement universel de~$\mathcal G$.
Celui-ci étant (un graphe)
simplement connexe d'après le théorème~\ref{thm:dev}, on conclut comme attendu que $\Xl_2$ est un arbre métrique.

\subsection{\texorpdfstring{$\Xl_3$}{X3} est
simplement connexe}

Nous utilisons maintenant les mêmes idées que dans le paragraphe précédent pour montrer :

\begin{proposition} \label{pro:X3 1-connexe}
Sur un corps $\K$ de caractéristique nulle, $\Xl_3$ est
simplement connexe.
\end{proposition}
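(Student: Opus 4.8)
The plan is to treat $\Xl_3$ exactly as the tree $\Xl_2$ was treated above: realize it as the development of a complex of groups over the fundamental domain $\Ap^+_\id$, and then identify the fundamental group of that complex of groups with $\Tt$ by feeding in the Shestakov--Umirbaev--Kuroda decomposition. Concretely, I would give $\Ap^+_\id\cong\PW^+$ the locally finite polyhedral structure cut out by the admissible lines; local finiteness is guaranteed by Lemma~\ref{lem:finite}. To each open cell $\sigma$ I attach the group $G_\sigma=\Stab(\nu_{\id,[\alpha]})$ for any $[\alpha]\in\sigma$, which is independent of the choice by Proposition~\ref{pro:lieu fixe}, with the obvious inclusions $\phi_{\tau\sigma}$ between the stabilizer of a cell and that of a face in its closure. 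Since $\Ap^+_\id$ meets each orbit in exactly one point (Corollary~\ref{cor:intrin}\ref{cor:intrin2}) and the $G_\sigma$ are exactly the cell stabilizers, Theorem~\ref{thm:quotientcomplex} identifies $\Xl_3$ with the development of the tautological morphism $\psi\colon\mathcal G\to\Tt$.

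Since $\PW^+$ is convex, hence simply connected, the group $F\mathcal G$ recalled above is defined and is by construction the colimit $\varinjlim_\sigma G_\sigma$ of the cell stabilizers along the maps $\phi_{\tau\sigma}$. The whole proposition then comes down to showing that $F\psi\colon F\mathcal G\to\Tt$ is an isomorphism: once this is known, the development of $\psi$ is isomorphic to the universal development, which is simply connected by Theorem~\ref{thm:dev}, and therefore so is $\Xl_3$. Surjectivity of $F\psi$ is immediate, because every linear and every elementary automorphism fixes some $\nu_{\id,[\alpha]}$ by Proposition~\ref{pro:lieu fixe}, hence lies in some $G_\sigma$, and such automorphisms generate $\Tt$. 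All the difficulty lies in injectivity, that is, in showing that the colimit $\varinjlim_\sigma G_\sigma$ introduces no relations beyond those already holding in $\Tt$.

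For injectivity I would organize the colimit around the three corners of the chamber. Viewed inside the closed simplex, $\overline{\PW^+}$ is a triangle with corners $[1,1,1]$, $[1,1,0]$ and $[1,0,0]$; along each of its three edges the stabilizers form an increasing chain, exactly as in the case $n=2$, whose colimit is the stabilizer attached to the corner at infinity. The apex $[1,1,1]$ contributes the affine group of Example~\ref{exple:stab}\ref{exple:stab1}, while the two walls $\alpha_1=\alpha_2$ and $\alpha_2=\alpha_3$ contribute, as their limits, the two groups arising from Examples~\ref{exple:stab}\ref{exple:stab4} and~\ref{exple:stab}\ref{exple:stab3}; the interior directions to infinity contribute the full triangular group of Example~\ref{exple:stab}\ref{exple:stab2}, and the generic interior stabilizer is the Borel subgroup $\Stab(\Ap^+_\id)$ of Corollary~\ref{cor:stab chambre}. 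Collapsing the chains along the edges and the face, the colimit $\varinjlim_\sigma G_\sigma$ becomes the colimit of a finite triangle of groups whose three vertex groups are these corner groups and whose edge and face groups are their successive intersections. The point is then that this triangle of groups is precisely the Shestakov--Umirbaev--Kuroda presentation of $\Tt$ as an amalgam of three factors along their respective intersections, so that its colimit is $\Tt$ and $F\psi$ is an isomorphism.

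The main obstacle is this last identification. It is exactly here that the hypothesis $\mathrm{char}\,\K=0$ enters, through the amalgamated structure of $\Tt$, and it is here that one must keep careful track of all the intermediate stabilizers and of the way the two-dimensional net of admissible lines organizes their inclusions, so as to be certain that collapsing to the finite triangle neither creates nor destroys relations. The local analysis of fixed loci and of local equivalence of apartments around the weights $[m,p,1]$ carried out in Section~\ref{sec:pointfixes} is what makes this bookkeeping feasible, and I expect it to be the technical heart of the argument.
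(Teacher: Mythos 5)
You follow the same skeleton as the paper: the polygonal structure on $\PW^+$ cut out by the admissible lines, cell stabilizers as the complex of groups, Theorem~\ref{thm:quotientcomplex} to realize $\Xl_3$ as the development, Theorem~\ref{thm:dev} for simple connectivity, and the reduction of everything to $F\Gl=\Tt$. The gap is in the collapse you propose at the crucial step. The limit of the stabilizers along the wall $\alpha_1=\alpha_2$ (Example~\ref{exple:stab}\ref{exple:stab4}) is $K_2$, and the limit of the interior stabilizers is the triangular group $H_1$; neither of these is the Shestakov--Umirbaev--Kuroda factor $C=\{f\in\Tt;\ f_3=cx_3+d\}$, which strictly contains both and is not an increasing union of cell stabilizers at all (the element $(x_1+x_2^2,\,x_2+(x_1+x_2^2)^3,\,x_3)\in C$ fixes no point of $\Ap^+_\id$, by Proposition~\ref{pro:lieu fixe}). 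Worse, $H_1$ is contained in none of your three corner groups $A$, $B$, $K_2$: for instance $(x_1+x_2^2,\,x_2+x_3^2,\,x_3)$ lies in $H_1$ but not in $A$, not in $B$ (second component not affine in $x_2,x_3$), and not in $K_2$ (first component involves $x_2^2$). Hence the complex of groups cannot be collapsed onto a triangle of groups over the three corners with edge and face groups given by intersections of corner groups — the interior forces a fourth vertex group — and the resulting finite diagram, on $A$, $B$, $K_2$, $H_1$, is \emph{not} the presentation furnished by Theorem~\ref{thm:triangle}, whose vertex groups are $A$, $B$, $C$. So the claimed identification is false as stated, not merely unproven.

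The missing idea is the second amalgam result, Proposition~\ref{pro:C amalgame}: $C=H_1\ast_{H_1\cap K_2}K_2$. This is exactly what the paper uses to prove that the whole subcomplex $\Cl$ of cells in the region $\alpha_2\geq 2\alpha_3$ — which contains both the half-line $\alpha_1=\alpha_2\geq 2\alpha_3$ (limit $K_2$) and the interior cells of that region (limit $H_1$) — has fundamental group $C$; only then does Theorem~\ref{thm:triangle} apply, to the regions realizing $A$, $B$, $C$ and their pairwise and triple intersections. Two further points that you treat as bookkeeping are also genuine steps: the stabilizers of a two-dimensional family of cells form a directed system only by virtue of Corollary~\ref{cor:stab}, applied along curves where $\alpha_1/\alpha_2$ and $\alpha_2/\alpha_3$ are non-decreasing (the paper uses $(t^2,t,1)$ and $(t^2,2t,t+1)$), and one must verify the compatibility property~\ref{tr8} of the paper's proof, which guarantees that an element of $B\cap C$ is identified with its copies in both $B$ and $C$ when the regions are glued. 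Your plan can be repaired, but only by importing Proposition~\ref{pro:C amalgame} and reorganizing the partition of the chamber around the three groups $A$, $B$, $C$ rather than around the three corners of $\overline{\PW^+}$.
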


Au lieu du théorème de Jung--van der Kulk nous utilisons les deux résultats suivants concernant la structure du groupe $\Tt$.
Notons $A=A_3$ le groupe affine,
\begin{align*}
B&=\{(ax_1 +P(x_2,x_3), bx_2+cx_3+d, b'x_2+c'x_3+d'); P\in \K[x_2,x_3], a\neq 0, bc'-b'c\neq 0\},\\
C&= \{(f_1,f_2,f_3) \in \Tt; f_3 = cx_3+d, c\neq 0\},\\
H_1&= \{(ax_1 + P(x_2,x_3), bx_2 + R(x_3), cx_3 + d); P\in \K[x_2,x_3], R\in \K[x_3]\ a,b,c\neq 0\},\\
K_2&=\{(ax_1 +bx_2+P(x_3), a'x_1+b'x_2+R(x_3), cx_3+d); P,R\in \K[x_3], ab'-a'b\neq 0,c\neq 0\}.
\end{align*}

\begin{theorem}[{\cite[Theorem 2]{Wright}, \cite[Corollary 5.8]{Lamy}}]
\label{thm:triangle}
Sur un corps $\K$ de caractéristique nulle, le groupe $\Tt$ est le produit amalgamé des groupes $A, B,C$ le long de leurs intersections respectives.
Autrement dit, $\Tt$ est le groupe fondamental du triangle de groupes où les groupes de sommets sont $A,B,C$ et les autres groupes sont leurs intersections adéquates.
\end{theorem}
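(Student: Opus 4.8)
Le plan est de montrer que le morphisme naturel $F\psi\colon F\Gl\to\Tt$ est un isomorphisme, où $\Gl$ désigne le triangle de groupes dont les groupes de sommets sont $A$, $B$, $C$, dont les groupes d'arêtes sont les intersections deux à deux $A\cap B$, $B\cap C$, $A\cap C$, et dont le groupe de face est l'intersection triple $A\cap B\cap C$, tandis que $\psi\colon\Gl\to\Tt$ est donné par les inclusions. La surjectivité de $F\psi$ est immédiate : le groupe affine $A$ contient $\GL_3(\K)$ ainsi que les permutations de coordonnées, et $B$ contient les automorphismes élémentaires de la forme $(x_1+P(x_2,x_3),x_2,x_3)$ ; comme tout automorphisme élémentaire est conjugué par une permutation à un tel élément, l'image de $\psi$ contient tous les générateurs de $\Tt$, et vaut donc $\Tt$ tout entier. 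Toute la difficulté réside dans l'injectivité.

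Pour l'injectivité, je suivrais une stratégie de forme normale issue de la théorie des réductions. À chaque $f\in\Tt$ on associe une complexité, par exemple le degré $\deg f$ ou le multidegré $(\deg f_1,\deg f_2,\deg f_3)$ réordonné, et l'on cherche à établir un algorithme de réduction : si $f\notin A\cup B\cup C$, alors il existe un élément de l'un des trois sous-groupes dont la composition avec $f$ fait strictement décroître la complexité. C'est précisément le contenu des théorèmes de réduction de Shestakov--Umirbaev, dans la forme simplifiée de Kuroda : les réductions élémentaires et les types de réductions de Shestakov--Umirbaev épuisent toutes les manières de faire chuter le degré, et chacune se traduit par une multiplication par un élément d'un groupe de sommet modulo un groupe d'arête. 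On en déduit une forme normale, et tout mot réduit non trivial de $F\Gl$, c'est-à-dire une suite alternée de représentants non triviaux de classes modulo les groupes d'arêtes, s'envoie alors sur un automorphisme de complexité strictement plus grande que celle de ses sous-mots, donc en particulier distinct de l'identité. Ainsi $\Ker F\psi$ est trivial.

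De façon équivalente et plus géométrique, on pourrait construire un complexe polyédral simplement connexe de dimension $2$ sur lequel $\Tt$ agit avec pour domaine fondamental un triangle et pour stabilisateurs de cellules les groupes $A$, $B$, $C$ et leurs intersections ; le théorème~\ref{thm:quotientcomplex} identifie alors ce complexe au développement associé à $\psi$, et sa simple connexité, combinée au théorème~\ref{thm:dev}, force $F\psi$ à être un isomorphisme. Dans les deux approches, l'obstacle principal est le même et se concentre dans les estimations de degré : contrôler le degré d'une composée $g\circ f$ lorsque $g$ et $f$ proviennent de sous-groupes distincts sans appartenir à leur groupe d'arête commun, et montrer que le degré ne peut chuter qu'au travers des mouvements autorisés. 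C'est le cœur analytico-combinatoire de la théorie de Shestakov--Umirbaev, et c'est aussi là qu'intervient de manière essentielle l'hypothèse de caractéristique nulle (en caractéristique positive, la question même de savoir si $\Tt=\Aut(\K^3)$ demeure ouverte). Il resterait enfin à effectuer la vérification \emph{locale} assurant que $\Gl$ est bien un complexe de groupes simple non dégénéré, en particulier la compatibilité des trois groupes d'arêtes avec le groupe de face $A\cap B\cap C$, ce qui est un calcul direct sur les formes explicites de $A$, $B$ et $C$.
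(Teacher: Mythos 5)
Le point essentiel à noter est que l'article ne démontre pas cet énoncé : il l'importe tel quel de la littérature (\cite[Theorem 2]{Wright} et \cite[Corollary 5.8]{Lamy}), et c'est précisément l'un des deux seuls endroits où l'hypothèse de caractéristique nulle intervient dans tout l'article. Votre plan reproduit d'ailleurs fidèlement les deux stratégies de ces références : l'approche «forme normale» via la théorie des réductions de Shestakov--Umirbaev dans la version de Kuroda (c'est la voie de Wright), et l'approche géométrique par action sur un complexe simplement connexe de dimension 2 avec un triangle pour domaine fondamental (c'est la voie de Lamy, le complexe étant le $\CComp_3$ décrit dans l'appendice de l'article). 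La surjectivité telle que vous la traitez est correcte et standard.

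En tant que preuve autonome, votre proposition comporte toutefois une lacune réelle : toute la difficulté est déléguée à l'affirmation que «les réductions de Shestakov--Umirbaev épuisent toutes les manières de faire chuter le degré, et chacune se traduit par une multiplication par un élément d'un groupe de sommet modulo un groupe d'arête». Or ce deuxième point n'est pas un corollaire formel de la théorie de Shestakov--Umirbaev : c'est exactement le contenu technique du papier de Wright (traduire les quatre types de réductions en opérations compatibles avec la structure de triangle de groupes demande un travail substantiel). De plus, votre argument d'injectivité par «mot réduit» est calqué sur le cas d'un produit amalgamé de deux facteurs ; pour un triangle de groupes il n'existe pas de théorie des mots réduits analogue (un produit amalgamé triple le long des intersections peut très bien ne pas être développable, voire dégénérer), et c'est pourquoi l'argument doit passer soit par une récurrence fine sur la complexité à la Kuroda--Wright, soit par la simple connexité d'un complexe explicite à la Lamy — simple connexité qui elle-même repose sur ces estimations de degré et non l'inverse. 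Enfin, la vérification «locale» de non-dégénérescence que vous reléguez en fin de preuve n'est pas le point dur ; le point dur est bien l'injectivité globale, que votre texte invoque plutôt qu'il ne l'établit.
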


\begin{proposition}[{\cite[Proposition~3.5]{LP}}] \label{pro:C amalgame}
Le groupe $C$ est le produit amalgamé de $H_1$ et $K_2$ le long de leur intersection.
\end{proposition}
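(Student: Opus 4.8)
The plan is to deduce Proposition~\ref{pro:C amalgame} from a relative Jung--van der Kulk theorem over the base ring $R=\K[x_3]$, and then to refine the affine vertex group by Nagao's theorem for $\GL_2(\K[x_3])$. First I would normalize the third coordinate: every $f=(f_1,f_2,f_3)\in C$ has $f_3=cx_3+d$, and the map $(x_1,x_2,cx_3+d)$ lies in $H_1\cap K_2$, so after composing with its inverse I may assume $f_3=x_3$. Writing $C_0=\{f\in C: f_3=x_3\}$, $H_1^0=H_1\cap C_0$, $K_2^0=K_2\cap C_0$, it is enough to prove $C_0=H_1^0 *_{H_1^0\cap K_2^0} K_2^0$, the general statement following because the normalizing maps belong to the edge group. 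Under the identification of $C_0$ with the tame $R$-automorphisms of $R[x_1,x_2]$, the subgroup $H_1^0$ is exactly the de Jonquières (elementary) group $E_R$, while $K_2^0=R^2\rtimes \GL_2(\K)$ is the affine group whose linear part has \emph{constant} coefficients; their intersection is the upper triangular affine subgroup with constant linear part.

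The core of the argument would be a two-step amalgamation. Over the fraction field $F=\K(x_3)$, the Jung--van der Kulk theorem presents $\Aut_F(F[x_1,x_2])$ as the amalgam of its affine and elementary subgroups, with an associated Bass--Serre tree $T$. I would let $C_0$ act on $T$ and check that the vertex stabilizers in $C_0$ are $\mathrm{Aff}_R=R^2\rtimes \GL_2(\K[x_3])$ and $E_R=H_1^0$, and that a single edge is a fundamental domain; Bass--Serre theory then gives the coarser decomposition $C_0=\mathrm{Aff}_R *_{\mathrm{Aff}_R\cap E_R} E_R$. To split the affine vertex I would invoke Nagao's theorem $\GL_2(\K[x_3])=\GL_2(\K) *_{B(\K)} B(\K[x_3])$, where $B$ denotes upper triangular matrices. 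Since the translations $R^2$ form a normal subgroup of $\mathrm{Aff}_R$ contained in each factor, semidirect product distributes over the amalgam and yields $\mathrm{Aff}_R=K_2^0 *_{K_2^0\cap G'} G'$ with $G'=R^2\rtimes B(\K[x_3])\subset E_R$.

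It then remains to collapse the two amalgams. One checks $G'=\mathrm{Aff}_R\cap E_R$ and $K_2^0\cap E_R=K_2^0\cap \mathrm{Aff}_R\cap E_R=K_2^0\cap G'=H_1^0\cap K_2^0$. Hence $C_0=(K_2^0 *_{H_1^0\cap K_2^0} G') *_{G'} E_R$ is an amalgam built in stages over the common group $G'=\mathrm{Aff}_R\cap E_R$, which simplifies formally to the single amalgam $C_0=K_2^0 *_{H_1^0\cap K_2^0} H_1^0$, exactly as claimed.

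The hard part will be the descent from $F=\K(x_3)$ to $R=\K[x_3]$, that is, realizing the field-level factorization integrally with the stated vertex groups and a single-edge quotient. Concretely this rests on two integrality inputs: that $\GL_2(\K[x_3])$ is generated by $\GL_2(\K)$ together with elementary matrices, which is the Euclidean-algorithm content of Nagao's theorem; and that $C_0$ is genuinely generated by $H_1^0$ and $K_2^0$, so that the degree reduction of a fibered tame automorphism never leaves the polynomial world. The latter is where the tameness assumption is essential: it excludes Nagata-type wild behaviour, so the relative reduction cannot get stuck (this generation could alternatively be extracted from the triangle decomposition of Theorem~\ref{thm:triangle}). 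Once these are secured, the collapsing of the iterated amalgam is purely formal, and the normalization of the first step upgrades the conclusion from $C_0$ to $C$.
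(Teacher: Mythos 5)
First, a point of reference: the paper does not prove this proposition at all — it imports it verbatim from \cite[Proposition 3.5]{LP} — so the comparison can only be with what such a proof must contain. Your formal skeleton is correct as Bass--Serre theory: the normalization to $C_0=\{f\in C;\ f_3=x_3\}$ via an element of $H_1\cap K_2$, the computation of the stabilizers $C_0\cap A_F=R^2\rtimes\GL_2(R)$ and $C_0\cap E_F=H_1^0$ for the action on the Jung--van der Kulk tree over $F=\K(x_3)$ (with $R=\K[x_3]$), Nagao's splitting of $\GL_2(R)$, the distribution of the semidirect product by $R^2$ over that amalgam, and the collapsing of the iterated amalgam are all sound manipulations.

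The genuine gap is the step you defer as ``the descent from $F$ to $R$'', namely that $C_0$ is generated by $H_1^0$ and $K_2^0$ --- equivalently, that every tame automorphism of $\A^3$ fixing $x_3$ is tame over $R$, equivalently that a single edge really is a fundamental domain for $C_0$ acting on the subtree spanned by the orbit of the base edge. This is not a technical verification that ``the relative reduction cannot get stuck'': it is the entire content of the proposition, and it is at least as strong as the Shestakov--Umirbaev theorem. Indeed, the Nagata automorphism fixes $x_3$ and is wild over $R$ by precisely the classical tree/amalgam argument you are invoking; hence your generation statement directly implies that Nagata's map is not tame in dimension $3$. No formal amalgam argument can therefore yield it. Moreover, your proposed fallback is incorrect: Theorem~\ref{thm:triangle} presents $\Tt$ as a triangle of groups in which $C$ is a single vertex group, so it carries no information about how elements of $C$ factor through $H_1$ and $K_2$ --- the internal structure of a vertex group is exactly what an amalgam decomposition does not see. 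What is needed is the characteristic-zero reduction theory of Shestakov--Umirbaev and Kuroda, which is what underlies \cite[Proposition 3.5]{LP} (and Wright's theorem); your proposal reduces the proposition to that input but does not supply it.
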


\begin{proof}[Preuve de la proposition~\ref{pro:X3 1-connexe}]
Les droites admissibles munissent $\PW^+$ d'une structure de complexe polygonal (nous oublions provisoirement la distance pour ne garder que l'objet combinatoire).
Précisément, les sommets sont les points d'intersection entre droites admissibles différentes, les arêtes ouvertes sont les composantes connexes du complément des sommets dans les droites admissibles, et les cellules ouvertes de dimension $2$ sont les composantes connexes dans $\PW^+$ du complément des sommets et des arêtes (i.e.\ des droites admissibles).
Pour chaque cellule ouverte $\sigma$ de $\PW^+$, soit $G_\sigma\subset \Tt$ le stabilisateur de $\nu_{\id,[\alpha]}$ pour un poids $[\alpha]\in\sigma$.
Soit $\mathcal G$ le complexe de groupes sur le poset des cellules de $\PW^+$ muni de ces groupes.

Pour obtenir la simple connexité de $\Xl_3$, selon le même argument que dans le cas $n=2$, il suffit de démontrer que $F\mathcal G=\Tt$.
Bien que le théorème~\ref{thm:dev} concerne la simple connexité de la réalisation géométrique du poset de $\Xl$, la distance $d_\Xl$ lui est localement bilipschitz et cela entraînera donc bien la simple connexité de $(\Xl_3,d_\Xl)$.

Afin d'analyser le groupe fondamental $F\mathcal G$, nous considérons la partition suivante de $\mathcal G$ en sous-complexes (figure~\ref{fig:decoupage}, à gauche), où par définition les cellules sont munies des mêmes groupes que dans $\Gl$.
Soit $\mathcal A$ le sommet $[1,1,1]$, $\mathcal B$ le sous-complexe sur le poset des cellules dans la demi-droite $\frac{1}{2}\alpha_1\geq \alpha_2=\alpha_3$, et $\mathcal C$ le sous-complexe sur le poset des cellules dans la région $\alpha_2\geq 2\alpha_3$.
Soit $\mathcal{AB}$ l'arête entre $[1,1,1]$ et $[2,1,1]$, $\mathcal{AC}$ l'arête entre $[1,1,1]$ et $[2,2,1]$, et $\mathcal{ABC}$ le triangle $([1,1,1],[2,1,1],[2,2,1])$. Finalement, soit $\mathcal{BC}$ le sous-complexe sur le poset des cellules dans la région $\alpha_1\geq 2\alpha_3>\alpha_2>\alpha_3$.

De plus, nous partitionnons encore le sous-complexe $\mathcal C$ de la manière suivante.
Soit $\mathcal K$ le sous-complexe sur le poset des cellules dans la demi-droite $\alpha_1=\alpha_2\geq 2\alpha_3$,
$\mathcal H$ le sous-complexe sur le poset des cellules dans la région définie par $\alpha_2\geq 2\alpha_3$ et $\alpha_1\geq \alpha_2+\alpha_3$,
et $\mathcal HK$ le sous-complexe sur le poset des cellules dans la région $R$ déterminée par $\alpha_2+\alpha_3>\alpha_1>\alpha_2\geq 2\alpha_3$.
Observons qu'ici comme plus haut certaines faces des cellules de ces posets ne leurs appartiennent pas.
Décrivons plus précisément le poset $\Sigma$ des cellules de $\Hl\Kl$.
Les seules droites admissibles intersectant $R$ sont d'équation $\alpha_1=m\alpha_3$ ou $\alpha_2=m\alpha_3$, pour $m>2$, et elles ne s'intersectent pas dans $R$.
Alors chaque $2$-cellule de $\Sigma$, sauf une, a exactement deux arêtes dans $\Sigma$ (voir figure~\ref{fig:decoupage}, à droite).
La structure des posets de $\mathcal {BC}$ et de $\mathcal {H}$ est plus compliquée.

\begin{figure}[ht]
\[
\begin{tikzpicture}[scale=12, font=\small]
\coordinate (100) at (1,0);
\coordinate [label=below:$\trip{1}{1}{1}$, label=above left:$\Al$] (111) at (.5, -.86/3);
\coordinate [label=above:$\trip{1}{1}{0}$](110) at (.5,0) {};
\coordinate [label=left:$\trip{2}{2}{1}$] (221) at ($ (110)!.6!(111) $);
\coordinate [label=below right:$\trip{2}{1}{1}$] (211) at ($ (111)!.25!(100) $);
\coordinate [label=below right:$\trip{3}{2}{1}$] (321) at ($ (221)!1/6!(100) $);
\draw (110) to (100);
\draw (111) to node[left]{$\Al\Cl$} (221);
\draw (111) to node[below right]{$\Al\Bl$} (211);
\draw (211) to node[below right]{$\Bl$} node[above left, near start]{$\Bl\Cl$} (100);
\draw (221) to node[left]{$\Kl$} (110);
\draw (221) to node[below, xshift= -5, yshift = -5]{$\Al\Bl\Cl$} (211);
\draw (221) to (100);
\draw (321) to node[left, near start]{$\Hl\Kl$} node[right, near start, xshift=10, yshift=5]{$\Hl$} (110);
\node at (100) [above] {$\trip{1}{0}{0}$};
\node at (.65,-.05) {$\Cl = \Hl \cup \Kl \cup \Hl\Kl$};
\end{tikzpicture}
\begin{tikzpicture}[scale = 20,font=\small]
\coordinate (010) at (0,0);
\coordinate (100) at (1,0);
\coordinate (001) at (.5, -.86);
\coordinate [label=above:$\trip{1}{1}{0}$](110) at (\tbary{1}{1}{0});
\coordinate [label=left:$\trip{2}{2}{1}$] (a) at (\tbary{2}{2}{1});
\coordinate [label=right:$\trip{3}{2}{1}$] (b) at (\tbary{3}{2}{1});
\foreach \x [evaluate=\x as \y using int(\x+1)] in {2,...,29}
{
  \coordinate (\x\x1) at (\tbary{\x}{\x}{1});
  \coordinate  (\y\x1) at (\tbary{\y}{\x}{1});
  \draw (\x\x1)--(\y\x1);
}
\foreach \x [evaluate=\x as \y using int(\x-1)] in {3,...,29}
{
  \draw (\x\x1)--(\x\y1);
}
\draw (221)--(110)--(321);
\draw[fill=black] (28281)--(29281)--(110)--(28281);
\end{tikzpicture}
\]
\caption{Découpage de $\Gl$ en sous-complexes, et zoom sur les cellules dans $\Hl\Kl$.}
\label{fig:decoupage}
\end{figure}

Pour démontrer $F\mathcal G=\Tt$, grâce au théorème~\ref{thm:triangle} il suffit d'obtenir :
\begin{enumerate}
\item \label{tr1} $F\mathcal A=A$
\item \label{tr2} $F\mathcal {AB}=A\cap B$
\item \label{tr3} $F\mathcal {AC}=A\cap C$
\item \label{tr4} $F\mathcal {ABC}=A\cap B\cap C$
\item \label{tr5} $F\mathcal B=B$
\item \label{tr6} $F\mathcal {BC}=B\cap C$
\item \label{tr7} $F\mathcal C=C$
\item \label{tr8} Les groupes de $\mathcal {BC}$ forment un système inductif de groupes;
de plus pour chaque élément $g\in G_\sigma$ dans $\mathcal {BC}$ il existe un élément $g'\in G_{\sigma'}$ (resp. $g''\in G_{\sigma''}$) dans $\mathcal {BC}$ équivalent à $g$ dans la limite $F\mathcal {BC}$, tel que $\sigma'$ (resp. $\sigma''$) a une face dans le poset de $\Bl$ (resp.\ $\Cl$).
\end{enumerate}

La propriété~\ref{tr8} est nécessaire pour s'assurer que chaque élément de $B\cap C$ soit identifié à sa copie dans $B$ et $C$.
Les propriétés analogues pour les autres inclusions sont immédiates.

Les propriétés~\ref{tr1}-\ref{tr4} découlent immédiatement de la proposition~\ref{pro:stabilisateur}, car dans chacun de ces cas le groupe fondamental est le stabilisateur d'un seul point.
Plus précisément,~\ref{tr1} correspond à l'exemple~\ref{exple:stab}\ref{exple:stab1}, et~\ref{tr2} à l'exemple~\ref{exple:stab}\ref{exple:stab3}.
De plus, par la proposition~\ref{pro:C amalgame} $A\cap C=A\cap K_2$ donc $A\cap B\cap C=A\cap B\cap K_2$ ce qui permet de déduire~\ref{tr3} et~\ref{tr4} des exemples~\ref{exple:stab}\ref{exple:stab4} et~\ref{exple:stab}\ref{exple:stab2}.

Pour~\ref{tr5}, notons que, également d'après la proposition~\ref{pro:stabilisateur} et l'exemple~\ref{exple:stab}\ref{exple:stab4}, les groupes de $\mathcal B$ forment une suite croissante dont la réunion est $B$.

Pour~\ref{tr6}, considérons une courbe $\alpha\colon (t_0, \infty) \to \W^+$ satisfaisant les hypothèses du corollaire~\ref{cor:stab}, telle que sa projection $[\alpha(t)]\subset \PW^+$ soit contenue dans les cellules de $\mathcal {BC}$, et qui tende vers le point $[1,0,0]$ en étant asymptote à la droite $\alpha_2=2\alpha_3$.
Par exemple la courbe $\alpha\colon (3,\infty)\to \W^+$ avec $\alpha(t)=(t^2,2t,t+1)$ convient (voir figure~\ref{fig:courbes}).
Pour $\alpha\in \W^+$, soit $\sigma(\alpha)$ la cellule de $\PW^+$ contenant $[\alpha]$ dans son intérieur.
D'après le corollaire~\ref{cor:stab}, les $G_{\sigma(\alpha(t))}$ forment une suite croissante de groupes.
Par la proposition~\ref{pro:stabilisateur}, la réunion de cette suite est égale à $B\cap C$.
Pour tout point dans une cellule de $\mathcal {BC}$, il existe un segment de droite reliant ce point à un point de la courbe $[\alpha(t)]$ auquel on peut appliquer le corollaire~\ref{cor:stab}.
On obtient ainsi que chaque groupe de $\mathcal {BC}$ est contenu dans un $G_{\sigma(\alpha(t))}$.
En conséquence les groupes de $\mathcal {BC}$ forment un système inductif de groupes (ce qui donne aussi la première partie de~\ref{tr8}) dont la limite est $B\cap C$.

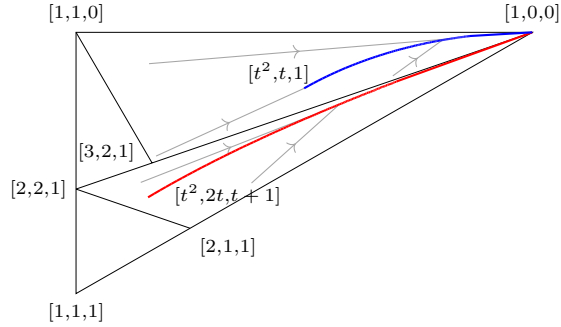
\begin{figure}[ht]
\[
\begin{tikzpicture}[scale=2,font=\scriptsize]
\coordinate [label=below:$\trip{1}{1}{1}$] (111) at (-90:{sqrt(3)});
\coordinate [label=above:$\trip{1}{1}{0}$] (110) at (0:0);
\coordinate [label=above:$\trip{1}{0}{0}$] (100) at (0:3);
\coordinate [label=below right:$\trip{2}{1}{1}$] (211) at (\bary{1}{0}{1});
\coordinate [label=left:$\trip{2}{2}{1}$] (221) at (\bary{1}{1}{0});
\coordinate [label={left,xshift=-3,yshift=4:$\trip{3}{2}{1}$}] (321) at (\bary{1}{1}{1});

\draw[very thin] (110)--(100)--(111)--(110)--cycle;
\draw[very thin] (221)--(100) (221)--(211) (321)--(110);
\foreach \x [evaluate=\x as \y using \x/10] in {30,...,140}
{
\coordinate (p\x) at (\bary{\y+1}{\y-1}{\y*\y-2*\y});
\coordinate (q\x) at (\bary{1}{\y-1}{\y*\y-\y+1});
}

\coordinate (ar1) at (\bary{.1}{.9}{.4});
\draw[mid arrow] (ar1)--(q105);
\coordinate (ar2) at (\bary{.9}{1}{1});
\draw[mid arrow] (ar2)--(q30);
\coordinate (ar3) at (\bary{.08}{0.1}{1});
\draw[mid arrow] (ar3)--(q95);
\coordinate (ar4) at (\bary{1.35}{1}{1});
\draw[mid arrow] (ar4)--(p80);
\coordinate (ar5) at (\bary{1}{.1}{2});
\draw[mid arrow] (ar5)--(p90);

\foreach \x [evaluate=\x as \y using \x+1] in {30,...,139}
{
\draw[red,thick] (p\x)--(p\y);
\draw[blue,thick] (q\x)--(q\y);
}

\draw[red,thick] (p140)--(100);
\draw[blue,thick] (q140)--(100);

\node at (p35) [below,xshift=19,yshift=2] {$\trip{t^2}{2t}{t+1}$};
\node at (q40) [left,xshift=-10,yshift=-2] {$\trip{t^2}{t}{1}$};
\end{tikzpicture}
\]
\caption{Courbes et segments satisfaisant les hypothèses du corollaire~\ref{cor:stab}.} \label{fig:courbes}
\end{figure}

Pour montrer~\ref{tr7} calculons $F\mathcal{C}$.
Nous utilisons la partition de $\mathcal{C}$ en $\mathcal{K},\mathcal{H}$ et $\mathcal{HK}$. D'abord, d'après la proposition~\ref{pro:stabilisateur} et l'exemple~\ref{exple:stab}\ref{exple:stab4}, les groupes de $\mathcal{K}$ forment une suite croissante dont la réunion est~$K_2$.
De même, les groupes de $\mathcal{HK}$ forment une suite croissante dont la réunion est $H_1\cap K_2$.
Enfin, dans les cellules de $\mathcal {H}$ considérons la projection $[\alpha(t)]$ de la courbe $\alpha\colon(2,\infty)\to\W^+$ avec $\alpha(t)=(t^2,t,1)$ (voir figure~\ref{fig:courbes}).
D'après le corollaire~\ref{cor:stab}, les $G_{\sigma(\alpha(t))}$ forment une suite croissante.
Par la proposition~\ref{pro:stabilisateur} la réunion de cette suite est égale à $H_1$. D'après le corollaire~\ref{cor:stab} appliqué à des segments de droites bien choisis, chaque groupe de $\mathcal {H}$ est contenu dans un $G_{\sigma(\alpha(t))}$.
Ainsi les groupes de $\mathcal {H}$ forment un système inductif dont la limite est $H_1$.
L'analogue de la propriété~\ref{tr8} avec $H_1\cap K_2$ au lieu de $B\cap C$ est immédiat, car les groupes de $\mathcal{HK}$ forment une suite croissante.
On conclut grâce à la proposition~\ref{pro:C amalgame} que $F\mathcal{C}=H_1 *_{H_1 \cap K_2} K_2=C$.

Finalement, pour la deuxième assertion de~\ref{tr8}, considérons $g\in G_\sigma$ dans $\mathcal {BC}$.
Par le lemme~\ref{lem:finite}, $\sigma$ est contenu dans un nombre fini de demi-espaces admissibles $L_1,\ldots, L_k$. De plus, puisque $\sigma$ est dans le poset de $\mathcal{BC}$, les $L_i$ sont définies par des inégalités de la forme $\alpha_1\geq m_2\alpha_2+m_3\alpha_3$ ou $\alpha_2\geq\alpha_3$.
Alors chaque cellule $\sigma'$ (resp.\ $\sigma''$) ayant une face dans la demi-droite $\alpha_2=\alpha_3$ (resp.\ la demi-droite $\alpha_2=2\alpha_3$), et suffisamment loin de $[1,1,1]$, est contenue dans tous les $L_i$.
Les groupes de $\mathcal {BC}$ forment un système inductif de groupes, donc il existe un $G_\rho$ dans $\mathcal {BC}$ contenant $G_\sigma$ et $G_\sigma'$ simultanément.
Par le corollaire~\ref{cor:stabjump}, $G_\sigma\subset G_\sigma'$ comme sous-groupes de $\Tt$, et donc aussi pour leurs copies dans $G_\rho$.
Ainsi il existe $g'\in G_{\sigma'}$ équivalent à $g$, comme attendu.
De même, il existe $g''\in G_{\sigma''}$ équivalent à $g$.
\end{proof}

\section{Courbure négative ou nulle}

Toute cette section est en dimension $n = 3$, sur un corps $\K$ arbitraire.
L'objectif est de montrer la

\begin{proposition} \label{pro:courbure negative}
Soit $\alpha \in \W$, et $g \in \Tt$.
Alors $\nu_{g,[\alpha]}$ admet un voisinage $\CAT(0)$ dans $\Xl_3$.
\end{proposition}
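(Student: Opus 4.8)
Le plan est d'abord de se ramener à un point modèle. Comme $\Tt$ agit par isométries sur $\Xl_3$ (chaque $f$ envoie isométriquement $\Ap_g$ sur $\Ap_{fg}$) et que toute orbite rencontre la chambre standard $\Ap^+_\id$ en exactement un point (corollaire~\ref{cor:intrin}\ref{cor:intrin2}), il suffit, quitte à appliquer $g^{-1}$ puis une permutation de $S_3\subset\Tt$, de montrer que $\nu:=\nu_{\id,[\alpha]}$ admet un voisinage $\CAT(0)$ pour $\alpha\in\W^+$. J'invoquerais ensuite le lemme~\ref{lem:boule}\ref{boule_b} : pour $\eps=\eps(\nu)$, la boule fermée $\overline{B}_\Xl(\nu,\tfrac{\eps}{4})$ est isométrique au recollement $\overline{\V}(\nu,\tfrac{\eps}{4})$ des boules fermées de rayon $\tfrac{\eps}{4}$ prises dans les chambres passant par $\nu$ — en nombre fini par le lemme~\ref{lem:finite} — recollées le long de leurs lieux fixes communs, lesquels sont convexes par le lemme~\ref{lem:convex}. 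Un voisinage de $\nu$ est donc un complexe de dimension $2$ obtenu en recollant des régions euclidiennes convexes le long de sous-ensembles convexes.

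Comme les droites admissibles passant par $[\alpha]$ sont lisses en ce point (lemme~\ref{lem:angle}), le cône tangent en $\nu$ s'identifie au cône euclidien $C(L)$ sur l'espace des directions $L$, qui est un graphe métrique fini. Par le théorème de Berestovskii et le critère de courbure \cite[II.3]{BH}, $\nu$ admet un voisinage $\CAT(0)$ si et seulement si $L$ est $\CAT(1)$, c'est-à-dire si $L$ ne contient aucun lacet localement géodésique de longueur $<2\pi$ (systole $\geq 2\pi$). Lorsque $[\alpha]$ est à l'intérieur de $\PW^+$ et $\mult(\alpha)=0$, aucune droite admissible ne passe par $[\alpha]$, tous les appartements passant par $\nu$ coïncident au voisinage de $\nu$ (corollaire~\ref{cor:stabjump}), $L$ est un unique cercle de longueur $2\pi$, et $C(L)=\R^2$ est $\CAT(0)$.

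Le cas essentiel est celui où $[\alpha]$ est intérieur à $\PW^+$ avec $\mult(\alpha)\geq 1$. Le graphe $L$ est alors la réunion des cercles de directions des appartements passant par $\nu$, chacun de longueur $2\pi$, recollés deux à deux le long des arcs découpés par les secteurs $S_{f,g}$ du lemme~\ref{lem:secteur commun}, les feuillets étant paramétrés par $M_\alpha/N_\alpha$ (lemme~\ref{lem:normal}) ; ses sommets sont les directions admissibles (listées, pour $[\alpha]=[m,p,1]$, par la remarque~\ref{rem:mp1_equations}) et ses arêtes les arcs entre directions admissibles consécutives. Le point crucial, et l'obstacle principal que j'anticipe, est d'établir que la systole de $L$ vaut au moins $2\pi$. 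J'utiliserais deux ingrédients : d'une part, le fait que deux appartements s'intersectent au voisinage de $\nu$ selon un \emph{secteur connexe} (lemme~\ref{lem:secteur commun}, à comparer au corollaire~\ref{cor:intersection appartements}), ce qui interdit les bigones courts dans $L$, puisque deux arêtes distinctes reliant le même couple de sommets proviennent nécessairement de feuillets qui se séparent ; d'autre part, le lemme angulaire~\ref{lem:angle}, dont la relation $\theta_{12}=\theta_{34}$ (équivalente à $\theta_{12}+\theta_{13}=\pi/3$), issue de la réflexion $\tau$, assure que les longueurs des arcs de part et d'autre de chaque direction admissible s'équilibrent. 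La comptabilité précise des secteurs partagés, combinée à cet équilibre d'angles, devrait forcer tout lacet injectif traversant plusieurs feuillets à avoir une longueur au moins égale à celle d'un cercle d'appartement entier, soit $2\pi$ ; le sous-cas d'une unique droite admissible se ramène d'ailleurs à un modèle $\R\times(\text{arbre})$, manifestement $\CAT(0)$.

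Enfin, si $[\alpha]$ est sur le bord de $\PW^+$ (c'est-à-dire $\alpha_i=\alpha_{i+1}$ pour un indice $i$), le stabilisateur contient un facteur linéaire $L_\alpha$ plus gros et la structure locale acquiert un facteur d'immeuble sphérique. Par le lemme~\ref{lem:GL}, les faces passant par $\nu$ le long desquelles des poids coïncident s'organisent en (un sous-immeuble de) l'immeuble sphérique $\Delta$ de $\GL_3(\K)$, qui est $\CAT(1)$ \cite{AB}. En réalisant un voisinage de $\nu$ comme un produit euclidien $\R^k\times C(L')$, où $L'$ est un joint sphérique d'un tel immeuble avec le graphe issu de l'analyse intérieure, et en utilisant que les immeubles sphériques sont $\CAT(1)$, que les joints sphériques d'espaces $\CAT(1)$ restent $\CAT(1)$, et que cônes et produits d'espaces $\CAT(0)$ restent $\CAT(0)$ \cite{BH}, on conclut que $\nu$ admet bien, dans tous les cas, un voisinage $\CAT(0)$ dans $\Xl_3$.
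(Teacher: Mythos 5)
Your reduction to $\nu=\nu_{\id,[\alpha]}$ with $\alpha\in\W^+$, the appeal to Lemma~\ref{lem:boule}\ref{boule_b}, and the multiplicity-zero case all match the paper, but the pivot of your argument is a criterion that is false in the generality you need. You assert that $\nu$ admits a $\CAT(0)$ neighborhood if and only if the space of directions $L$ at $\nu$ is $\CAT(1)$, via the tangent cone. Berestovskii's theorem \cite[II.3.14]{BH} says that the \emph{cone} $C(L)$ is $\CAT(0)$ iff $L$ is $\CAT(1)$; it does not say that a space whose tangent cone at a point is $\CAT(0)$ has a $\CAT(0)$ neighborhood of that point (a round sphere has tangent cone $\R^2$ everywhere). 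The link condition applies only when a neighborhood of $\nu$ is \emph{isometric} to the cone over $L$, and this is exactly what fails here: by Remark~\ref{rem:droites}, the non-principal admissible lines are curves, not straight lines, for the metric $|\cdot,\cdot|$, so in the cases $[\alpha]=[p,1,1]$, $p\ge 2$, and $\alpha_1>\alpha_2>\alpha_3$ the neighborhood $\V$ is \emph{not} a cone. The paper's entire Section~\ref{sec:limit} (Proposition~\ref{pro:V est CAT0} together with Lemma~\ref{lem:convergence}) exists to close precisely this gap: one replaces the curved gluing loci by piecewise-linear approximations, proves each approximation $\V_m$ is $\CAT(0)$, and passes to the Gromov--Hausdorff limit using \cite[II.3.10]{BH}, the convergence estimate requiring the taut-chain and angle-loss argument of Lemma~\ref{lem:convergence}. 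Nothing in your proposal substitutes for this step; only in the case $[\alpha]=[p,p,1]$, where all admissible lines through $[\alpha]$ are principal and hence straight, is $\V$ a genuine cone to which your criterion applies.

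The second gap is that the systole bound, which is the real content of the proposition, is conjectured rather than proved: ``la comptabilité précise \dots\ devrait forcer'' is exactly what must be established, and it occupies Propositions~\ref{pro:pp1}, \ref{pro:p11} and~\ref{pro:123} in the paper. For $[p,p,1]$ and $[p,1,1]$ with $p\ge 2$ one needs Lemma~\ref{lem:bruhat} (proved via Bruhat decomposition) combined with Lemma~\ref{lem:branch} to show that a cycle with at most two base vertices lies in some $\Gamma_f\cup\Gamma_g$ and hence has length $\ge 2\pi$. In the case $\alpha_1>\alpha_2>\alpha_3$ your two ingredients (Lemmas~\ref{lem:secteur commun} and~\ref{lem:angle}) do not by themselves yield the bound: the crux is the algebraic step in which a cycle made of four paths produces triangular automorphisms $h_0,\dots,h_3$ with $h_0h_1h_2h_3\sim_\nu\id$ --- this uses the normality of $N_\alpha$ in $M_\alpha$, Lemma~\ref{lem:normal}\ref{lem:normal2} --- whence the polynomial identity $P_1(x_2+dx_3^p,x_3)=-P_3(x_2,x_3)$, whose factorization gives $a'=b$ and the arithmetic inequality $p(a+a')\le m$; only then does Lemma~\ref{lem:angle} convert this into $\theta_a+\theta_{a'}\ge\pi$. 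Your proposal never produces this inequality. Finally, your boundary case misapplies Lemma~\ref{lem:GL}: that lemma identifies the link with the spherical building of $\GL_3(\K)$ only at $[1,1,1]$; for $[p,p,1]$ or $[p,1,1]$ with $p\ge 2$ the link is not a sub-building of $\Delta$, there is no product structure $\R^k\times C(L')$ around such points, and these are precisely the points for which the lemmas above were developed.
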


Nous commençons par établir un critère abstrait.

\subsection{Limite d'espaces \texorpdfstring{$\CAT(0)$}{CAT(0)}} \label{sec:limit}

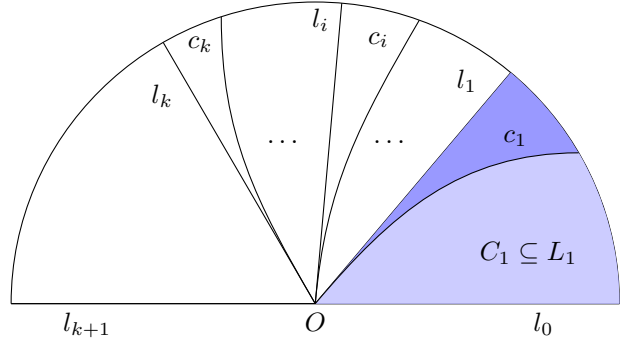
\begin{figure}[ht]
\[
\begin{tikzpicture}[scale=4]
\draw (1,0) arc (0:180:1);
\draw (0,0) to node[below, near end]{$l_{k+1}$} (-1,0);
\draw (0,0) to node[below, pos=0]{$O$} node[below, near end]{$l_0$} (1,0);
\draw (-1,0) to (1,0);
\draw (0,0) to node[auto, very near end]{$l_k$} ++(120:1);
\draw (0,0) to node[auto, very near end]{$l_i$} ++(85:1);
\draw (0,0) to node[auto, very near end]{$l_1$} ++(50:1);
\fill[blue!40] (0,0) to ++(50:1) arc (50:0:1) to cycle;
\fill[blue!20] (0,0) to[out=50,in=180] ++(30:1) arc(30:0:1) to cycle;
\draw (0,0) to[out=120,in=270] node[auto, pos=.92]{$c_k$} ++(108:1);
\draw (0,0) to[out=85,in=240] node[auto, very near end]{$c_i$} ++(70:1);
\draw (0,0) to[out=50,in=180] node[auto, very near end]{$c_1$} ++(30:1);
\coordinate [label=$C_1 \subseteq L_1$] (C1) at (0.7,0.1);
\coordinate [label=$\dots$] (dots) at (-0.1,0.5);
\coordinate [label=$\dots$] (dots) at (0.25,0.5);
\end{tikzpicture}
\]
\caption{Notations pour la section~\ref{sec:limit}.} \label{fig:limit}
\end{figure}

Soit $r>0$ et $\DD = \{(x,y) \in \R^2 \mid x^2 + y^2 \le r^2, y \ge 0\}$ le demi-disque fermé supérieur de rayon $r$ dans $(\R^2,|\cdot,\cdot|)$.
On note $O$ le centre de $\DD$.
Soit $l_0,l_1,\ldots,l_k, l_{k+1}$ une suite de rayons deux à deux distincts et ordonnés dans le sens direct, avec $l_0$ d'extrémité $(r,0)$ et $l_{k+1}$ d'extrémité $(-r,0)$.
Pour $j=1,\ldots,k$, soit $c_j$ l'image d'une courbe lisse plongée $[0,1]\to \DD$.
On suppose que $c_j$ est distincte du rayon $l_{j-1}$, et est contenue dans le secteur fermé délimité par les rayons $l_{j-1}$ et $l_{j}$, avec pour extrémités le centre $O$ du demi-disque et un point dans l'arc de cercle du secteur.
Soit $C_j$ l'adhérence de la composante de $\DD \setminus c_j$ contenant~$l_{0}$, et supposons chacun des $C_j$ convexe.
Soit $L_j$ le secteur fermé délimité par les rayons $l_j$ et~$l_{0}$: on a donc $L_{j-1} \subsetneq C_j \subseteq L_j$.
Par convention, $C_0=L_0=c_0=l_0, c_{k+1}=l_{k+1},C_{k+1}=L_{k+1}=\DD$, et $C_\infty=L_\infty=c_\infty=l_\infty$ est le diamètre $l_0\cup l_{k+1}$ de  $\DD$ (voir figure~\ref{fig:limit}).

Soit $(\DD_\lambda, \phi_\lambda)_{\lambda \in \Lambda}$ une famille de demi-disques indicée par un ensemble quelconque $\Lambda$, où pour chaque $\lambda$, $\phi_\lambda\colon \DD \to \DD_\lambda$ est une isométrie.
Supposons qu'il existe une relation d'équivalence $\sim$ sur l'union disjointe $\bigsqcup_{\lambda\in \Lambda} \DD_\lambda$, qui en restriction à chaque $\DD_\lambda\sqcup \DD_{\lambda'}$ (où $\lambda\neq \lambda'$) est donnée par $\phi_\lambda(x)\sim \phi_{\lambda'}(x)$ si et seulement si $x\in C_{j(\lambda,\lambda')}$, pour une fonction $j$ qui à chaque paire non ordonnée $\lambda, \lambda'$ d'éléments distincts de $\Lambda$ associe un élément de $\mathcal I=\{0,1,\ldots ,k, k+1,\infty\}$.

\begin{remark} \label{lem:espace linearise}
Nous affirmons que la relation $\sim_0$ sur $\bigsqcup_{\lambda\in \Lambda} \DD_\lambda$ qui en restriction à chaque $\DD_\lambda\sqcup \DD_{\lambda'}$ est donnée par $\phi_\lambda(x)\sim_0 \phi_{\lambda'}(x)$ si et seulement si $x\in L_{j(\lambda,\lambda')}$, pour la même fonction $j$ que précédemment, est une relation d'équivalence.

Pour voir cela, remarquons qu'il existe une application symétrique $\cap\colon \mathcal I\times \mathcal I\to \mathcal I$ telle que pour chaque $i,j\in \mathcal I$ on a $C_i\cap C_j=C_{i\cap j}$.
Précisément, si $i,j<\infty$ alors $i\cap j=\min \{i,j\}$, si $i<\infty$ alors $i\cap \infty=0$, et finalement $\infty\cap \infty =\infty$.
De plus, $L_i\cap L_j=L_{i\cap j}$ pour cette même application $\cap$.

La transitivité de la relation $\sim$ revient à dire que pour tout triplet d'indices distincts $\lambda, \lambda', \lambda''$ on a
\[
C_{j(\lambda,\lambda')\cap j(\lambda',\lambda'')}=C_{j(\lambda,\lambda')}\cap C_{j(\lambda',\lambda'')}\subset C_{j(\lambda,\lambda'')},
\]
donc
\[
\big(j(\lambda,\lambda')\cap j(\lambda',\lambda'')\big)\cap j(\lambda,\lambda'')=j(\lambda,\lambda')\cap j(\lambda',\lambda'').
\]
Ceci donne une égalité similaire pour les $L_i$ :
\[
L_{j(\lambda,\lambda')}\cap L_{j(\lambda',\lambda'')}=L_{j(\lambda,\lambda')\cap j(\lambda',\lambda'')}\subset L_{j(\lambda,\lambda'')},
\]
ce qui montre que $\sim_0$ est transitive, et donc est une relation d'équivalence.
\end{remark}

Notons $\V_0$, $\V$ les espaces quotients correspondant à $\sim_0$ et $\sim$ avec leurs pseudo-distances respectives $d_0$, $d$, définies par des longueurs de chaînes dans $\bigsqcup_{\lambda\in \Lambda} \DD_\lambda$ comme dans la définition~\ref{def:chaîne}.
De la même manière que dans la preuve de la proposition~\ref{pro:metrique}, on démontre que ces pseudo-distances sont des distances, et les espaces quotients sont des espaces de longueur. Remarquons que dans chaque cas les points $\phi_\lambda(O)$ sont identifiés à un point que nous appellerons le \emph{centre} de $\V_0$ ou~$\V$, et que nous noterons encore $O$. Par exemple, si $\Lambda=\{\lambda,\lambda'\}$ et $j(\lambda,\lambda')=\infty$, alors $\V_0=\V$ est un disque fermé.

Le résultat principal de cette section est le critère suivant.

\begin{proposition}
\label{pro:V est CAT0}
Les espaces $\V_0$ et $\V$ sont complets.
Si $\V_0$ est $\CAT(0)$, alors $\V$ est également $\CAT(0)$.
\end{proposition}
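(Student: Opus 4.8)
Commençons par la complétude, qui ne requiert pas l'hypothèse sur $\V_0$. L'application «coordonnée» $\pi\colon \V\to \DD$ qui à la classe de $\phi_\lambda(x)$ associe $x$ est bien définie, car les relations $\sim$ et $\sim_0$ n'identifient que des points de même coordonnée $x\in \DD$; elle est $1$-lipschitzienne et se restreint en une isométrie sur chaque page $\DD_\lambda$. Étant donnée une suite de Cauchy dans $\V$, sa projection est de Cauchy dans le compact $\DD$, donc converge vers un point $x_\infty$. La convexité des $C_j$ (et des secteurs $L_j$ pour $\V_0$) joue alors le rôle tenu par la convexité de $\Fix(f)$ dans le lemme~\ref{lem:boule}: elle fournit la propriété de saturation analogue à~\ref{pro:metrique2}, donc un $\eps>0$ pour lequel les boules fermées $\overline{B}(\cdot,\tfrac{\eps}{4})$ sont isométriques à des boules recollées de boules de pages. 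On conclut comme dans le lemme~\ref{lem:complet} que la suite converge vers le relevé de $x_\infty$; d'où la complétude de $\V$ et, par le même argument, de $\V_0$.

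Pour le caractère \CAT(0), le plan est d'invoquer le théorème de Cartan--Hadamard \cite{BH}: $\V$ étant un espace de longueur complet (ci-dessus) et connexe (toutes les pages contiennent $O$), il suffit de le montrer simplement connexe et localement \CAT(0). La simple connexité vient de ce que $\V$ est étoilé depuis le centre $O$: la contraction radiale $H_t([\phi_\lambda(x)])=[\phi_\lambda((1-t)x)]$ est bien définie, car chaque $C_j$ est convexe et contient $O$, donc stable par $x\mapsto (1-t)x$; ainsi $\V$ se rétracte par déformation sur $O$. Quant à la courbure locale, en tout point autre que $O$ elle est immédiate: les courbes $c_j$ ne se rencontrent qu'en $O$ (chacune est confinée dans un secteur distinct), de sorte qu'au voisinage d'un point intérieur d'une page $\V$ est plat, et au voisinage d'un point d'une $c_j$ il s'identifie au recollement de plusieurs demi-plans le long d'une droite lisse, c'est-à-dire à un produit de $\R$ par un arbre, qui est \CAT(0) par Reshetnyak.

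Tout le contenu se concentre donc en $O$, et c'est l'obstacle principal: montrer que $O$ possède un voisinage \CAT(0). L'espace des directions $\Gamma:=\mathrm{lk}(O,\V)$ étant un graphe métrique, cela revient à établir que $\Gamma$ est \CAT(1), i.e.\ de systole au moins $2\pi$. C'est là qu'intervient l'hypothèse. Comme toutes les régions de recollement de $\V_0$ — les secteurs $L_j$ et le diamètre $l_0\cup l_{k+1}$ — sont des cônes de sommet $O$, l'espace $\V_0$ est le cône euclidien sur $\Gamma_0:=\mathrm{lk}(O,\V_0)$, et «$\V_0$ est \CAT(0)» équivaut donc à «$\Gamma_0$ est \CAT(1)» \cite{BH}. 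Les graphes $\Gamma$ et $\Gamma_0$ ont exactement le même type combinatoire (les mêmes arcs de longueur $\pi$ provenant des pages, recollés selon le même schéma d'incidence codé par $j$), mais dans $\Gamma$ deux pages sont identifiées le long d'un segment initial plus court: l'inclusion $C_j\subseteq L_j$ force la direction initiale de $c_j$ à se situer entre celles de $l_{j-1}$ et de $l_j$, tandis que $L_{j-1}\subsetneq C_j$ en préserve l'ordre cyclique. Les points marqués se rapprochant ainsi de $l_0$ sans changer d'ordre, toute boucle de $\Gamma$ est au moins aussi longue que la boucle correspondante de $\Gamma_0$, d'où $\mathrm{sys}(\Gamma)\geq \mathrm{sys}(\Gamma_0)\geq 2\pi$ et $\Gamma$ est \CAT(1). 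Comme un voisinage de $O$ dans $\V$ est modelé — à la courbure (nulle) des $c_j$ près — sur le cône sur $\Gamma$, on obtient la courbure locale en $O$, et Cartan--Hadamard conclut. La difficulté que j'anticipe est précisément de rendre rigoureuse cette comparaison de systoles, c'est-à-dire le passage du recollement courbe le long des $c_j$ au recollement droit le long des $l_j$ au niveau des espaces de directions; c'est l'analogue abstrait du calcul d'angles du lemme~\ref{lem:angle}.
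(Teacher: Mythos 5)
Your completeness argument and your local analysis away from the center are essentially sound, and broadly parallel the paper's (which obtains completeness instead by noting that $\V$ and $\V_0$ are bilipschitz and invoking \cite[I.7.13]{BH}, and which also uses the gluing theorem \cite[II.11.3]{BH} at points of the curves $c_j$ other than $O$). The fatal problem is the last step, at $O$. You deduce that $O$ has a $\CAT(0)$ neighborhood from the claim that the link $\Gamma$ of $O$ in $\V$ is $\CAT(1)$, on the grounds that a neighborhood of $O$ is modeled, «à la courbure des $c_j$ près», on the cone over $\Gamma$. This is precisely what fails: the criterion \cite[II.3.14]{BH} applies to spaces that \emph{are} cones, and a neighborhood of $O$ in $\V$ is not the cone over $\Gamma$, because the gluing loci $C_j$ are bounded by genuinely curved arcs and hence are not cones with apex $O$. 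In general, knowing that the space of directions at a point is $\CAT(1)$ says nothing about local curvature: every point of a round $2$-sphere has space of directions a circle of length $2\pi$, yet no neighborhood is $\CAT(0)$. Note moreover that in the paper's application (the case $[\alpha]=[p,1,1]$) the rays $l_j$ are chosen \emph{tangent} to the curves $c_j$, so the space of directions of $\V$ at $O$ is actually isometric to $\Gamma_0$; the systole comparison you single out as the anticipated difficulty is thus not where the problem lies. The real gap is the unjustified passage from a $\CAT(1)$ link to local $\CAT(0)$ for the non-conical metric of $\V$, and no argument using only the infinitesimal data at $O$ can close it, since that data does not distinguish $\V$ from $\V_0$.

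The paper's proof supplies exactly the missing mechanism. It replaces each $c_j$ by piecewise-linear approximations $c_j^m$ within $1/m$ of $c_j$, producing spaces $\V_m$ which, near $O$, \emph{are} cones over their links $\Gamma_m$; there the link criterion does apply, and $\Gamma_m$ is $\CAT(1)$ because the natural distance-contracting homotopy equivalence $\Gamma_m\to\Gamma_0$ sends any short cycle of $\Gamma_m$ to a short essential loop of $\Gamma_0$ — this is your systole comparison, carried out where it is legitimate. The real work is then lemma~\ref{lem:convergence}: a uniform estimate $\lvert d_m - d\rvert \le M/m$, proved by tightening chains and by an angle-loss argument (each successive gluing in a taut chain costs an angle at least $\theta_k$, bounding the number of links of the chain by roughly $\pi/\theta_k$); this yields Gromov--Hausdorff convergence $\V_m\to\V$, and $\CAT(0)$ passes to such limits by \cite[II.3.10]{BH}. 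Without this approximation-and-limit step, or some substitute that quantitatively exploits how the curves $c_j$ leave $O$, your argument does not go through.
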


\begin{proof}
Notre stratégie est de montrer que $\V$ est une limite d'espaces $\CAT(0)$.
Pour tout entier $m\geq 1$ et $j=0,1,\ldots, k+1$ choisissons $c^m_j$ une courbe linéaire par morceaux dont
les extrémités sont les mêmes que celles de $c_j$, avec tous les sommets sur $c_j$, et dont chaque point est à distance $\leq \frac{1}{m}$ de $c_j$.
Notons ensuite $C^m_j$ l'adhérence de la composante de $\DD \setminus c^m_j$ contenant $l_0$.
Comme précédemment on convient que
$C^m_\infty=c^m_\infty=c_\infty$.
Notons $(\V_m,d_m)$ l'espace de longueur obtenu via la relation d'équivalence $\sim_m$ sur $\bigsqcup_{\lambda\in \Lambda} \DD_\lambda$ qui sur chaque $\DD_\lambda\sqcup \DD_{\lambda'}$ est définie par $\phi_\lambda(x)\sim \phi_{\lambda'}(x)$ pour $x\in C^m_{j(\lambda,\lambda')}$ (la remarque~\ref{lem:espace linearise} s'applique à $\sim_m$ de la même manière qu'à $\sim_0$).

Il existe une application bilipschitzienne de $\DD$ vers $\DD$ qui envoie chaque $l_j$ sur $c_j$ (observer que c'est ici que l'on a besoin de $c_j\neq l_{j-1}$).
De même, pour chaque $m\geq 1$ il existe une application bilipschitzienne de $\DD$ vers $\DD$ qui envoie chaque $l_j$ sur $c^m_j$.
Ainsi tous les espaces $\V,\V_0,\V_m$ sont deux à deux bilipschitziens.
L'espace $\V_0$ est complet par \cite[I.7.13]{BH}.
Précisément, il faut remplacer chaque secteur par un triangle isocèle, et donc le demi-disque par un polygone, afin de se ramener au cas d'un complexe polygonal avec un nombre fini de classes d'isométrie de pièces.
Ce remplacement est une application bilipschitzienne.
Les espaces $\V$, $\V_m$ sont alors également complets, car bilipschitziens à $\V_0$.

Montrons maintenant que chaque $\V_m$ est $\CAT(0)$.
Comme $\V_m$ est complet et simplement connexe (il se rétracte radialement sur $O$), par \cite[II.4.1(2)]{BH} il suffit de montrer que $\V_m$ est localement $\CAT(0)$.
Noter qu'il existe une application naturelle $\pr\colon \V_m\to \DD$ qui est l'inverse de chaque $\DD\to \DD_\lambda\subset \V_m$.
Soit $x\in \V_m$, et considérons différentes possibilités pour $\pr(x)$.
Si $\pr(x)$ n'appartient à aucune des courbes $c_j^m$, alors un voisinage suffisamment petit de $x$ est un disque plat.
Si $\pr(x)$ appartient à une courbe $c_j^m$ mais est distinct du centre $O$, alors un voisinage suffisamment petit de $x$ est obtenu en recollant des disques (ou demi-disques) plats le long d'un sous-ensemble convexe commun, et un tel voisinage est $\CAT(0)$ par \cite[II.11.3]{BH}. Finalement, considérons le cas $\pr(x)=O$, c'est-à-dire que $x$ est le centre de~$\V_m$.
Par \cite[I.7.16]{BH} il existe un voisinage de $x$ qui est un cône sur le link de $x$ dans $\V_m$ : ce link est un graphe métrique qu'on va noter $\Gamma_m$.
De même, $\V_0$ est un cône sur un graphe métrique $\Gamma_0$, et il existe une application naturelle $g_m\colon \Gamma_m\to \Gamma_0$ contractant les distances qui est une équivalence d'homotopie.
Comme $\V_0$ est supposé être $\CAT(0)$, par \cite[II.3.14]{BH} le graphe $\Gamma_0$ ne contient aucun cycle de longueur $<2\pi$.
Par conséquent, le graphe $\Gamma_m$ n'en contient pas non plus, et par \cite[II.3.14]{BH}, le voisinage de $x$ dans $\V_m$ est $\CAT(0)$.

Considérons maintenant le sous-ensemble $R_m \subset \V_m\times \V$ constitué des couples $([x]_{\sim_m},[x]_\sim)$ pour $x\in \bigsqcup_{\lambda\in \Lambda} \DD_\lambda$.
Au vu du lemme~\ref{lem:convergence} ci-dessous, $R_m$ est une $M/m$-relation surjective au sens de \cite[I.5.33]{BH}, pour une certaine constante $M >0$.
On en déduit que la suite $\V_m$ converge au sens de Gromov--Hausdorff vers $\V$, et donc $\V$ est $\CAT(0)$ par \cite[II.3.10]{BH}.
\end{proof}

\begin{lemma} \label{lem:convergence}
Il existe une constante $M >0$ tel que pour tout $m\ge 1$, et tous $x,y \in \bigsqcup_{\lambda\in \Lambda} \DD_\lambda$, on a
\[
\lvert d_m([x]_{\sim_m},[y]_{\sim_m})-d([x]_\sim,[y]_\sim) \rvert \leq \frac{M}{m}.
\]
\end{lemma}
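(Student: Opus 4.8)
The plan is to establish the two one-sided estimates $d\le d_m$ and $d_m\le d+M/m$ separately, the first being essentially free and the second carrying the work. For the first, I would observe that $c^m_j$ is inscribed in the convex region $C_j$: its vertices lie on $c_j=\partial C_j\cap\mathrm{int}\,\DD$, so each chord lies inside $C_j$, whence $C^m_j\subseteq C_j$. Consequently every $\sim_m$-jump (at a point of some $C^m_j$) is a legal $\sim$-jump, so every $\sim_m$-chain is a $\sim$-chain of the \emph{same} length; since $\sim_m$ is finer than $\sim$, the endpoint conditions are also inherited, and taking infima gives $d([x]_\sim,[y]_\sim)\le d_m([x]_{\sim_m},[y]_{\sim_m})$ for all $x,y$.

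For the reverse inequality I would construct, for each $m$, a homeomorphism $G_m\colon \DD\to\DD$ that fixes the arc and every ray $l_j$ pointwise and, inside each sector bounded by $l_{j-1}$ and $l_j$, pushes the smooth curve $c_j$ onto its approximation $c^m_j$. Because in each sector $C_j$ and $C^m_j$ differ only in the thin strip between $c_j$ and $c^m_j$, such a $G_m$ satisfies $G_m(C_j)=C^m_j$ for every $j\in\mathcal I$ (acting as the identity in the degenerate cases $j=0,\infty$), and therefore descends to a bijection $\overline G_m\colon \V\to\V_m$ compatible with the isometries $\phi_\lambda$. I would arrange two quantitative properties, with constants $c,C>0$ independent of $m$: that $G_m$ moves every point by at most $c/m$, and that $G_m$ is $(1+C/m)$-bilipschitz. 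Then, given any $\sim$-chain from $x$ to $y$, applying $G_m$ in each copy turns every $\sim$-jump at a point $p\in C_j$ into a $\sim_m$-jump at $G_m(p)\in C^m_j$, producing a $\sim_m$-chain from $G_m(x)$ to $G_m(y)$ of length at most $(1+C/m)$ times the original; prepending and appending the short segments $x\to G_m(x)$ and $G_m(y)\to y$ (each of length $\le c/m$ and contained in a single $\DD_\lambda$) repairs the endpoints at a cost of $2c/m$. Taking the infimum over $\sim$-chains yields $d_m\le (1+C/m)\,d+2c/m$.

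To turn this multiplicative estimate into the additive one I would exploit that the whole space has bounded diameter: the center $O$ lies in $l_0=C_0$, hence in every $C_j$, so all the points $\phi_\lambda(O)$ are identified to a single point of $\V$, and any class $[x]_\sim$ is joined to $O$ by a chain of length $\le r$ inside one copy. Thus $d\le 2r$, and $d_m-d\le (2rC+2c)/m=:M/m$; combined with $d\le d_m$ this gives $|d_m-d|\le M/m$. The main obstacle is precisely the uniform $(1+C/m)$-bilipschitz bound on $G_m$: mere $C^0$-closeness ($c^m_j$ within $1/m$ of $c_j$) does not prevent lengths from inflating under $G_m$, so I would need the inscribed approximations $c^m_j$ to be chosen with vertices spaced finely enough that their chords are $C^1$-close to the smooth curve $c_j$ (in particular matching its tangent direction at $O$ up to $O(1/m)$), which makes the interpolating shear in each sector $(1+O(1/m))$-bilipschitz. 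The behaviour near the cone point $O$, where all the curves $c_j$ meet, is the delicate spot to verify, and it is there that choosing the approximation $c^m_j$ carefully (rather than accepting an arbitrary $C^0$-approximation) becomes essential.
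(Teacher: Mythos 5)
Your first inequality $d\le d_m$ is correct (and is also how the paper starts), but the second half of your plan collapses at its central step: the homeomorphism $G_m$ you postulate cannot exist, for \emph{any} bilipschitz constant, precisely in the situation this lemma is designed for. In the paper's application (second case of the proof of the proposition~\ref{pro:courbure negative}), the ray $l_j$ is \emph{by definition} the ray of $\DD$ tangent to $c_j$ at the center $O$. Your $G_m$ fixes $l_j$ pointwise while sending $c_j$ onto $c^m_j$. Take $p\in c_j$ and $q\in l_j$ with $|Op|=|Oq|=r$: tangency gives $|pq|=o(r)$ as $r\to 0$, whereas the inscribed polygon $c^m_j$ leaves $O$ along a chord making a fixed angle $\delta_m>0$ with $l_j$, so that for $r$ small one has $|G_m(p)\,G_m(q)|=|G_m(p)\,q|\ge \mathrm{dist}(G_m(p),l_j)\ge (r/K)\sin\delta_m$. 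A $K$-bilipschitz bound would force $(r/K)\sin\delta_m\le K|pq|=K\,o(r)$, hence $\sin\delta_m=0$: contradiction. Choosing $c^m_j$ to be $C^1$-close to $c_j$, as you propose, changes nothing here: for each fixed $m$ the chord angle $\delta_m$ is a fixed positive number, and the obstruction comes from insisting that $l_j$ be fixed pointwise, not from the quality of the approximation. One could try to repair the plan by requiring only $G_m(C_j)\subseteq C^m_j$ (which is all the chain argument needs) instead of ray-fixing; but then the uniform $(1+C/m)$-bilipschitz bound near the cone point --- which you yourself identify as the main obstacle, and do not prove --- remains entirely open, so even the repaired proposal is incomplete exactly at its crux.

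The paper's proof avoids any global comparison map, and its key idea is of a different nature. Starting from a chain realizing $d([x]_\sim,[y]_\sim)$ up to $1/m$, it normalizes the chain (consecutive indices distinct, transition points lying on the curves $c_{j(i)}$, every second jump through the diameter) and then makes it \emph{taut}, using convexity and compactness of the $C_j$. The decisive quantitative fact is an angle-counting argument: along a taut chain, each successive transition consumes an angle at least $\theta_k>0$ out of the total $\pi$, whence the number of links satisfies $N-4<\pi/\theta_k$ --- a bound independent of $m$ and of the chain. It then suffices to snap each transition point $x_i\in c_{j(i)}$ to a point $z_i\in c^m_{j(i)}$ at distance $<1/m$, producing a $\sim_m$-chain of length at most $d+(1+2N)/m$ and giving the lemma with $M=1+2\bigl(\frac{\pi}{\theta_k}+4\bigr)$. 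This uniform bound on the number of links is the idea missing from your argument: it is what converts a cost of $1/m$ per modification into the global additive bound $M/m$, without ever needing a bilipschitz identification of $\V$ with $\V_m$.
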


\begin{proof}
Soit $l=d([x]_\sim,[y]_\sim)$.
Remarquons d'abord que comme $C_j^m\subset C_j$, on a
\[d_m([x]_{\sim_m},[y]_{\sim_m})\geq l.\]
Par définition de la (pseudo-)distance $d$, il existe une chaîne : un entier $N$ et pour chaque $i=0,1,\ldots, N-1$ un indice $\lambda_i\in \Lambda$ et des points $x'_i,x_{i+1}\in \DD_{\lambda_i}$ avec $x_i\sim x'_i$ et $x=x'_0, y=x_N$, tel que $\sum_{i=0}^{N-1}|x'_i,x_{i+1}|< l+\frac{1}{m}$.
Observons que si $\pr(x_i)=O$ ou $\pr(x'_i)=O$ pour un certain~$i$, alors en faisant peut-être décroître $\sum |x'_i,x_{i+1}|$ on peut se ramener à une sous-chaîne avec $N=2$ et $\pr(x_1)=\pr(x'_1)=O$.
En choisissant une chaîne analogue dans $\V_m$ on obtient alors l'estimé attendu avec $M=1$.
Si au contraire on a $\pr(x_i)\neq O$ et $\pr(x'_i)\neq O$ pour tout $i$, alors pour $i=1,\dots, N-1$ nous notons $j(i)=j(\lambda_{i-1},\lambda_{i})$ et nous opérons les réductions suivantes.

En passant à une sous-chaîne nous pouvons supposer que pour tout $i=1,\ldots, N-1$ on a $\lambda_i\neq \lambda_{i-1}$ et pour tout $i=2,\ldots, N-1$ on a $x'_{i-1}\notin \phi_{\lambda_{i-1}}(C_{j(i)})$, et $x_{i}\notin \phi_{\lambda_{i-1}}(C_{j(i-1)})$.
En particulier, si $N>2$ alors $j(i)\neq k+1$. En outre, si $j(i)\neq \infty$ alors $j(i+1)=\infty$, et vice-versa, ainsi sans perte de généralité nous pouvons supposer que pour chaque $i$ pair nous avons $j(i)=\infty$.
De plus, pour tout $i=1,\ldots, N-1$ l'inégalité triangulaire nous permet de supposer que $\pr(x_i)=\pr(x'_i)$ est dans $c_{j(i)}$.

De plus, nous affirmons que nous pouvons supposer la chaîne \emph{tendue}, au sens que pour chaque $i=2,\ldots, N-2$, l'union des segments $[x'_{i-1},x_i]$ et $[x_{i}',x_{i+1}]$ dans $\big( \DD_{\lambda_{i-1}} \sqcup \DD_{\lambda_{i}}\big) / \sim$  (voir \cite[I.7.20]{BH}).
Cela revient à dire que l'angle au point $x_i \in \DD_{\lambda_{i-1}}$ entre le segment de droite $[x'_{i-1},x_i]$ et la courbe $\phi_{\lambda_{i-1}}(c_{j(i)})$  est égal à l'angle au point $x_i' \in \DD_{\lambda_{i}}$ entre $[x_{i}',x_{i+1}]$ et la courbe $\phi_{\lambda_{i}}(c_{j(i)})$.
(Ce serait naturel de demander la même propriété pour $i=1,N-1$, mais on n'en pas besoin et cela raccourcit un peu la preuve.)

L'affirmation découle des deux observations suivantes.
Tout d'abord, si la chaîne n'est pas tendue, et que la définition précédente est mise en défaut pour un certain entier $i=t$, alors comme $C_{j(t)}$ est convexe et ne contient pas $\pr(x'_{t-1}),\pr(x_{t+1})$, la somme $\sum |x'_i,x_{i+1}|$ n'est pas minimale parmi les suites avec un $N$ et un $(\lambda_i)$ donnés, puisque nous pouvons la diminuer en bougeant $x_t$ le long de $\phi_{\lambda_{t-1}}(c_{j(t)})$ (et simultanément en bougeant $x'_{t}$).
D'autre part, comme $\DD_{\lambda_i}$ et $C_{j(i)}$ sont compacts, le minimum des $\sum |x'_i,x_{i+1}|$ est atteint.
En résumé, en remplaçant la chaîne par celle minimisant $\sum |x'_i,x_{i+1}|$ parmi les chaînes avec $N$ et $(\lambda_i)$ donnés, on produit une chaîne tendue, ce qui justifie notre affirmation.

Notons $\theta_j$ l'angle dans $\DD$ entre $c_j$ et $l_{k+1}$, en particulier
$\theta_j \ge \theta_k >0$ pour tout $j =1,\dots,k$.
Comme pour les $i$ pairs nous avons $j(i)=\infty$, l'angle dans la définition d'une chaîne tendue décroît à chaque $i$ successif par au moins $\theta_k$.
Précisément, pour $i$ impair, si $c_{j(i)} = l_{j(i)}$ alors en travaillant dans le triangle euclidien de sommets $O$, $x_i$, $x_{i+1}$ on voit que l'angle décroît exactement de $\theta_{j(i)}$, et, par convexité de $C_{j(i)}$, dans le cas général la perte d'angle est encore plus grande (voir figure~\ref{fig:chaine}).
Le cas $i$ pair est similaire.
Ainsi $N-4<\frac{\pi}{\theta_k}$.
Pour $i=0,\ldots, N-1$, il existe des $z_i\in \phi_{\lambda_{i-1}}(c^m_{j(i)})$ avec $|x_i,z_i|<\frac{1}{m}$.
En utilisant cette chaîne de $z_i$ au lieu des $x_i$, on obtient $d_m([x]_{\sim_m},[y]_{\sim_m})\leq l+\frac{1}{m}+2N\frac{1}{m}$.
Alors en prenant $M\geq 1+2(\frac{\pi}{\theta_k}+4)$ on obtient l'estimé attendu.
\end{proof}

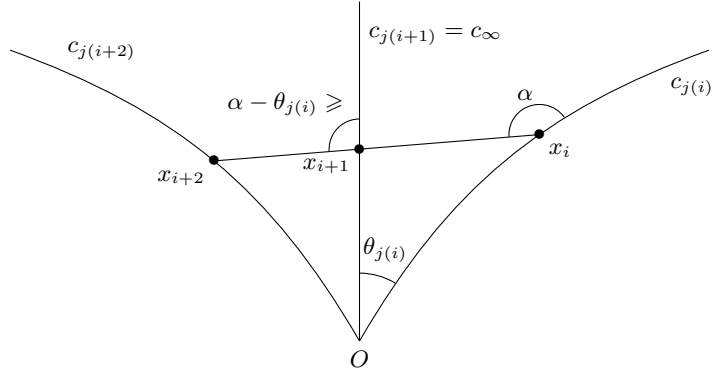
\begin{figure}[ht]
\[
\begin{tikzpicture}[scale=3,font=\small]

\coordinate (O) at (0:0);
\coordinate (A) at (40:2);
\coordinate (B) at (90:1.5);
\coordinate (C) at (140:2);

\draw (O) to[out=60,in=200] coordinate[pos=.1](theta1) coordinate[pos=.6](i-1) coordinate[pos=.61](alpha1) node[auto,swap,pos=.9]{$c_{j(i)}$} (A);
\draw (O) to node[auto,swap,pos=.9]{$c_{j(i+1)} = c_\infty$} (B);
\draw (O) to[out=120,in=-20] coordinate[pos=.1](theta2) coordinate[pos=.5](i+1) node[auto,swap,pos=.9]{$c_{j(i+2)}$} (C);
\coordinate (i) at (intersection of O--B and i-1--i+1);
\draw (i-1)--(i+1);
\draw pic["$\theta_{j(i)}$",draw=black,angle eccentricity=1.4,angle radius=.9cm] {angle=theta1--O--B};
\draw pic["$\alpha$",draw=black,angle eccentricity=1.4,angle radius=.4cm] {angle=alpha1--i-1--i+1};
\draw pic["$\alpha-\theta_{j(i)}\ge \qquad$",draw=black,angle eccentricity=2.2,angle radius=.4cm] {angle=B--i--i+1};
\node[below right] at (i-1) {$x_{i}$};
\node at (i-1) {$\bullet$};
\node[below left] at (i+1) {$x_{i+2}$};
\node at (i+1) {$\bullet$};
\node[below left] at (i) {$x_{i+1}$};
\node at (i) {$\bullet$};
\node[below] at (O) {$O$};
\end{tikzpicture}
\]
\caption{Perte d'angle dans le lemme~\ref{lem:convergence}.} \label{fig:chaine}
\end{figure}

\subsection{Preuve de la proposition}

La preuve de la proposition~\ref{pro:courbure negative} va être différente suivant la forme de $\alpha = (\alpha_1, \alpha_2, \alpha_3)$.
Par le corollaire~\ref{cor:intrin}\ref{cor:intrin2}, on peut supposer $g = \id$ et $\alpha \in \W^+$.

Notons $\Fl = \Stab(\nu_{\id,[\alpha]})$, qui nous servira d'ensemble d'indices tout au long de cette section.
Observons que $\Fl$ est l'ensemble des $f \in \Tame(\A^3)$ tel que $\Ap^+_f$ contienne $\nu_{\id,[\alpha]}$.
Soit
\[\V:=\bigsqcup_{f\in \Fl} \overline{B}(\nu_{f,[\alpha]},\tfrac{\eps}{4})/\sim\]
le voisinage du lemme~\ref{lem:boule}\ref{boule_b}.
Ainsi $\V$ est isométrique à la boule $\overline{B}_\Xl(\nu_{\id,[\alpha]}, \frac{\eps}{4})$.

Observons que si $\mult (\alpha) \le 1$, et $\alpha_1 > \alpha_2 > \alpha_3$ (resp. $\alpha_1 = \alpha_2$ ou $\alpha_2 = \alpha_3$), ce voisinage~$\V$ s'obtient en recollant des disques (resp. des demi-disques) le long d'un même convexe, donné par le lemme~\ref{lem:convex} (resp. correspondant au diamètre des demi-disques).
On conclut dans ce cas que $\V$ est $\CAT(0)$ par \cite[II.11.3]{BH}.
Nous supposons donc dans la suite $\mult (\alpha) \ge 2$.

\subsubsection{Premier cas : \texorpdfstring{$\alpha_1 = \alpha_2$}{a1 = a2}}
\label{sec:1ercas}

Dans cette situation on a $[\alpha] = [p,p,1]$ pour un entier $p \ge 1$, et il y a exactement trois droites admissibles passant par $[\alpha]$.
Ces droites sont principales, donc par la remarque~\ref{rem:droites} ce sont encore des droites pour la distance $|\cdot,\cdot|$, formant 6 secteurs chacun d'angle $\pi/3$.
Par \cite[I.7.16]{BH} le voisinage $\V$ est donc un cône sur un graphe métrique $\Gamma$ dont chaque arête est de longueur $\pi/3$.
Ici, par \emph{cône} on entend un $0$-cône au sens de \cite[I.5.6]{BH}.
En particulier, on n'a pas un plongement isométrique $\Gamma\to \V$, mais on a malgré tout une action induite naturelle de $\Stab(\nu_{\id,[\alpha]})$ sur $\Gamma$.
Pour montrer que $\V$ est $\CAT(0)$, par \cite[II.3.14]{BH} il suffit de montrer que $\Gamma$ est $\CAT(1)$, ce qui revient à dire que tout cycle plongé dans $\Gamma$ est formé d'au moins 6 arêtes.

Pour $p=1$, par le lemme~\ref{lem:GL} le graphe $\Gamma$ est l'immeuble sphérique de Bruhat--Tits de $\GL_3(\K)$, autrement dit le graphe d'incidence des points et des droites du plan projectif $\mathbb{P}^2(\K)$, donc cette propriété est immédiate.

Supposons maintenant $p\geq 2$.
Noter que pour chaque $f \in \Fl$ la boule $\overline{B}(\nu_{f,[\alpha]},\frac{\eps}{4})\subset \Ap^+_f$ est un demi-disque, ou autrement dit un cône sur un chemin de 3 arêtes $\Gamma^+_f \subset \Gamma$. L'application $\wgt$ du corollaire~\ref{cor:intrin}\ref{cor:intrin1} induit une application (pour laquelle on conserve la même notation) $\wgt\colon \Gamma \to I_3$, ou $I_3$ est un chemin de 3 arêtes --- le link de $[p,p,1]$ dans $\PW^+$.
Cette projection $\wgt$ est un isomorphisme en restriction à chaque $\Gamma^+_f$.
Notons $s$ l'extrémité de $I_3$ correspondant à la direction vers $[1,1,1]$ et $q$ l'autre l'extrémité, correspondant à la direction vers $[1,1,0]$.
Notons $s_f=\wgt^{-1}(s)\cap \Gamma^+_f$, $q_f=\wgt^{-1}(q)\cap \Gamma^+_f$ et $\Gamma_f=\Gamma^+_f\cup \Gamma^+_{f\sigma}$ pour $\sigma=(1,2)\in S_3$.
Ainsi $\Gamma_f$ est un cycle de 6 arêtes correspondant aux directions dans $\Ap_f$.
On dira que $s_f \in \wgt^{-1}(s)$ est le \emph{sommet de base} de $\Gamma_f$ (ou de $\Gamma^+_f$).

\begin{lemma}~
\label{lem:branch}
\begin{enumerate}
\item \label{lem:branch1}
La fibre $\wgt^{-1}(q) \subset \Gamma$ est un singleton.
\item \label{lem:branch2}
Pour tous $f,g\in \Fl$, l'intersection $\Gamma_f\cap \Gamma_g$ est soit connexe de longueur $\leq \pi$ (c'est-à-dire formée d'au plus 3 arêtes), soit égale à $\{q_f,s_f\}=\{q_g,s_g\}$.
En particulier tout cycle plongé dans $\Gamma_f \cup \Gamma_g$ est constitué d'au moins 6 arêtes.
\item \label{lem:branch3}
Chaque sommet $u\in \Gamma \setminus \wgt^{-1}\{q,s\}$ est incident à exactement une arête $e$ dont la projection $\wgt(e)$ sépare $\wgt(u)$ de $q$ dans $I_3$. De plus, si $u\in \Gamma^+_f$, alors $e$ est aussi dans $\Gamma^+_f$.
\end{enumerate}
\end{lemma}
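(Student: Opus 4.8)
The plan is to deduce all three statements from a single geometric principle: the (chamber and apartment) intersections at $\nu_{\id,[\alpha]}$ are convex fixed loci, by Corollaire~\ref{cor:intersection chambres}, Corollaire~\ref{cor:intersection appartements}, Proposition~\ref{pro:lieu fixe} and the convexity Lemme~\ref{lem:convex}, and their traces on the link circle are therefore arcs or antipodal points. The one extra input I would isolate first is that the ray in $\PW^+$ from $[\alpha]=[p,p,1]$ towards $q=[1,1,0]$, i.e.\ the weights $[t,t,1]$ with $t>p$, is fixed by \emph{every} $f\in\Fl$. Using Corollaire~\ref{cor:stabjump} this amounts to checking that for $p\geq 2$ each admissible half-space containing $[p,p,1]$ also contains $[t,t,1]$: evaluating $\alpha_i\geq\sum_{j\neq i}m_j\alpha_j$ at $(p,p,1)$ forces the index $i=3$ to be impossible and the inequality to be either $\alpha_1\geq\alpha_2$, $\alpha_2\geq\alpha_1$, or of shape $\alpha_i\geq m\alpha_3$ with $m\leq p$, and all of these persist along $[t,t,1]$. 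Hence $\Stab(\nu_{\id,[p,p,1]})\subseteq\Stab(\nu_{\id,[t,t,1]})$ and $\nu_{f,[t,t,1]}=\nu_{g,[t,t,1]}$ for all $f,g\in\Fl$. This yields part~(i) at once: every $q_f$ is the common direction of this shared ray, so $\wgt^{-1}(q)=\{q_\ast\}$ is a single point.

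For part~(iii) I would argue by convexity. Let $u$ lie over an interior vertex of $I_3$, with $u\in\Gamma^+_f$, and suppose $e\subseteq\Gamma^+_f$, $e'\subseteq\Gamma^+_g$ are two edges at $u$ whose projections separate $\wgt(u)$ from $q$. Then $u$ is a common direction of $\Ap^+_f$ and $\Ap^+_g$, whose intersection is a fixed locus by Corollaire~\ref{cor:intersection chambres}, hence convex by Lemme~\ref{lem:convex}; by the key observation it also contains the direction $q_\ast$. Its trace on the semicircle $I_3$ is thus a connected arc containing both $\wgt(u)$ and $q$, so it contains the unique edge of $I_3$ at $\wgt(u)$ separating it from $q$; consequently this whole edge is shared and $e=e'$. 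The unique such edge is then the one already present in $\Gamma^+_f$, giving both the uniqueness and the last assertion.

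For part~(ii) the point is that $\Gamma_f\cap\Gamma_g$ is exactly the space of directions at $\nu_{\id,[\alpha]}$ tangent to $\Ap_f\cap\Ap_g$, which is again a convex fixed locus; its tangent cone at $[\alpha]$ is a convex cone of $(\R^2,|\cdot,\cdot|)$ bounded by admissible lines through $[\alpha]$, namely the three principal lines $W=\{\alpha_1=\alpha_2\}$, $\ell_1=\{\alpha_1=p\alpha_3\}$, $\ell_2=\{\alpha_2=p\alpha_3\}$, which remain straight for $|\cdot,\cdot|$ by Remarque~\ref{rem:droites}. Three cases occur. If the cone is two-dimensional (and $\Ap_f\neq\Ap_g$), convexity bounds its angle by $\pi$, so the trace is a connected arc of at most $3$ edges; if it is a ray the trace is one point; if it is a full line $L$ it must be bounded on both sides by admissible half-spaces with the \emph{same} boundary, and since reversing $\ell_1$ or $\ell_2$ gives $\alpha_3\geq\frac1p\alpha_i$ with non-integral coefficient (not admissible), the only option is $L=W$, whose trace is the antipodal pair $\{s_f,q_\ast\}=\{s_g,q_\ast\}$. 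Finally the cycle estimate is formal: two hexagons, each a $6$-cycle of total length $2\pi$, glued either along a connected arc of length $\leq\pi$ or along two antipodal points, admit no embedded cycle shorter than $2\pi$, i.e.\ than $6$ edges.

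The main obstacle I anticipate is the clean reduction of the apartment and chamber intersections to \emph{single} convex fixed loci at the \emph{boundary} point $[\alpha]$: Corollaire~\ref{cor:intersection appartements} is stated when the apartment meets the interior of $\Ap^+_\id$, whereas $[p,p,1]$ sits on the wall $W$, so the argument has to be localized near $\nu_{\id,[\alpha]}$ — for instance by first handling directions pointing into the interior of $\PW^+$ and passing to the limit, or by decomposing each hexagon as $\Gamma^+_{f}\cup\Gamma^+_{f\sigma}$ and reassembling the pieces into a convex set. The second delicate point, purely algebraic, is the classification asserting that among $W,\ell_1,\ell_2$ only the wall $W$ can bound the intersection from both sides; this is precisely where the integrality of the exponents in an admissible equation is used.
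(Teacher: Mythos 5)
Your parts~\ref{lem:branch1} and~\ref{lem:branch3} are correct. For~\ref{lem:branch1} you follow exactly the paper's route (Corollaire~\ref{cor:stabjump} applied along the ray $[t,t,1]$, $t\geq p$), merely making explicit the verification of admissible inequalities that the paper leaves implicit. For~\ref{lem:branch3} you genuinely diverge: the paper deduces it from~\ref{lem:branch2}, whereas you get it directly from the convexity of a single \emph{chamber} intersection $\Ap^+_f\cap\Ap^+_g$ via Corollaire~\ref{cor:intersection chambres}, Proposition~\ref{pro:lieu fixe} and Lemme~\ref{lem:convex}, which require no interior-point hypothesis; this is a clean alternative. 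The one detail to add is that the set of directions of a convex set at a point need not be connected: it can be a pair of \emph{antipodal} directions. Inside the path $\Gamma^+_f$, of length $\pi$, an antipodal pair can only be $\{s_f,q_f\}$, which does not contain $u$, so connectedness indeed holds; but this caveat is precisely the case that matters in~\ref{lem:branch2}, so it should be said.

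The gap is in part~\ref{lem:branch2}, and it is exactly the obstacle you flag at the end: your argument needs $\Ap_f\cap\Ap_g$ to be a \emph{single} convex fixed locus near $\nu$, i.e.\ Corollaire~\ref{cor:intersection appartements}, whose hypothesis (a common point with the interior of $\Ap^+_\id$) is not automatic here because $[p,p,1]$ lies on a wall. Of your two suggested remedies, the chamber-decomposition one does not suffice as stated: each of the four pieces $\Gamma^+_{f\omega}\cap\Gamma^+_{g\omega'}$, $\omega,\omega'\in\{\id,\sigma\}$, is indeed a convex trace containing $q_\ast$, but a union of arcs issuing from $q_\ast$ into the two halves of the hexagon could a priori have length $>\pi$ (for instance one piece of length $\pi$ on one side and another of length $\pi/3$ on the other); excluding this is the actual content of~\ref{lem:branch2} and cannot be obtained piecewise. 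Your first remedy is the right one, and it is what the paper does: either $\Gamma_f\cap\Gamma_g\subset\wgt^{-1}\{q,s\}$, in which case by~\ref{lem:branch1} it equals $\{q_\ast\}$ or $\{q_f,s_f\}=\{q_g,s_g\}$ and one is done; or it contains a direction pointing into the interior of $\PW^+$, and then, after replacing $g$ by $g\sigma$ if necessary and translating, Corollaire~\ref{cor:intersection appartements} applies and, combined with Proposition~\ref{pro:lieu fixe}, exhibits $\Gamma_f\cap\Gamma_g$ as an intersection of arcs of length $\pi$ (traces of admissible half-spaces on the hexagon), hence connected of length $\leq\pi$ or an antipodal pair. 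Note that once this dichotomy is in place, your cone analysis --- in particular the nice integrality argument showing that among $W,\ell_1,\ell_2$ only the wall $W$ bounds admissible half-spaces on both sides --- becomes unnecessary; and it cannot substitute for the missing step, since it presupposes the fixed-locus description of the intersection that is exactly what has to be justified.
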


\begin{proof}
\begin{enumerate}[wide]
\item
Pour $p'\geq p$, le poids $(p',p',1)$ appartient à chaque demi-espace admissible contenant $(p,p,1)$.
Ainsi, par le corollaire~\ref{cor:stabjump}, pour tout $f\in \Fl$ on a  $q_\id=f(q_\id)=q_f$.
Donc $q_f=q_g$ pour chaque $f,g\in \Fl$, comme attendu.

\item
Supposons que $\Gamma_f\cap \Gamma_g$ n'est ni $\{q_f,s_f\}$ ni $\{q_f\}$.
Alors il existe $\omega\in \{\id,\sigma\}$ avec $\Gamma_{f}\cap\Gamma^+_{g\omega}\neq \{q_f,s_f\},\{q_f\}$.
Ainsi on peut appliquer le corollaire~\ref{cor:intersection appartements} à $f'=\omega^{-1}g^{-1}f$, d'où l'existence d'une permutation $\sigma'\in S_n$ tel que $\Ap_{f'}\cap \Ap_{\id}=\Fix(f'\sigma')\cap\Ap_{\id}$.
Par la proposition~\ref{pro:lieu fixe}, l'ensemble $\Fix(f'\sigma')\cap\Ap_{\id}$ est une intersection d'hyperplans admissibles.
Donc $\Gamma_{f'}\cap \Gamma_{\id}=\Fix(f'\sigma')\cap\Gamma_{\id}$ est une intersection de chemins de longueur $\pi$.
En faisant agir $g\omega$ on obtient la même propriété pour $\Gamma_{f}\cap \Gamma_{g\omega}=\Gamma_{f}\cap \Gamma_{g}$.
Par~\ref{lem:branch1} l'intersection $\Gamma_{f}\cap \Gamma_{g}$ contient $q_{f}$ et on a supposé qu'elle était distincte de $\{q_f,s_f\}$, elle est donc connexe de longueur $\leq \pi$.

\item
Soit $f$ tel que $u\in \Gamma^+_f$, et considérons l'arête incidente à $\wgt(u)$ qui sépare ce sommet de $q$ dans $I_3$.
Sa préimage $e$ dans $\Gamma^+_f$ satisfait la condition annoncée.
Supposons que $e'$ est une autre arête  dans $\Gamma^+_g$ satisfaisant cette condition. Alors $\Gamma_f\cap \Gamma_g$ contient $q_f$ et $u$ mais ne contient pas $e$.
Cela contredit~\ref{lem:branch2}.
\qedhere
\end{enumerate}
\end{proof}

\begin{lemma} \label{lem:bruhat}
Supposons que $s_f=s_{f'}$ pour certains $f,f' \in \Fl$.
Alors il existe $f'' \in \Fl$ tel que $\Gamma^+_f,\Gamma^+_{f'}\subset \Gamma_{f''}$.
\end{lemma}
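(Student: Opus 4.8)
Le plan est d'interpréter l'énoncé comme une décomposition de Bruhat portant sur le facteur $\GL_2(\K)$ du stabilisateur $\Fl=\Stab(\nu_{\id,[\alpha]})$. Je commencerais par noter que pour tout $g\in\TA$ on a $\Ap^+_g=g(\Ap^+_\id)$, donc au niveau des germes en $\nu_{\id,[\alpha]}$ on a $\Gamma^+_g=g\cdot\Gamma^+_\id$ et $s_g=g\cdot s_\id$, l'action de $\Fl$ sur $\Gamma$ commutant avec $\wgt$. En faisant agir $f^{-1}\in\Fl$, on peut alors supposer $f=\id$, et l'hypothèse $s_f=s_{f'}$ devient $f'\in\Stab_\Fl(s_\id)$. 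Posons $P_\nu:=\Stab_\Fl(\Gamma^+_\id)=\{h\in\Fl;\ \Gamma^+_h=\Gamma^+_\id\}$. Puisque par le lemme~\ref{lem:branch} l'hexagone $\Gamma_{f''}$ ne possède qu'un sommet au-dessus de $s$ (à savoir $s_{f''}$) et un seul au-dessus de $q$, un chemin de $3$ arêtes reliant un sommet au-dessus de $s$ à $q$ et contenu dans $\Gamma_{f''}$ est nécessairement l'une de ses deux moitiés. La condition $\Gamma^+_\id,\Gamma^+_{f'}\subset\Gamma_{f''}$ équivaut donc à $\{\Gamma^+_{f''},\Gamma^+_{f''\sigma}\}=\{\Gamma^+_\id,\Gamma^+_{f'}\}$, et en traduisant via l'action il suffit de trouver $h_1,h_2\in P_\nu$ avec $f'=h_1\sigma h_2$ puis de poser $f''=h_1$. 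Tout revient ainsi à établir $f'\in P_\nu\cup P_\nu\sigma P_\nu$.

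Ensuite j'identifierais explicitement ces deux sous-groupes. Par la proposition~\ref{pro:lieu fixe}, un élément de $\Fl$ fixe le germe $s_\id$ (la direction vers $[1,1,1]$) si et seulement s'il fixe $\nu_{\id,[t,t,1]}$ pour $t$ tendant vers $p$ par valeurs inférieures, c'est-à-dire si et seulement si les parties polynomiales en $x_3$ de ses deux premières composantes sont de degré $\le p-1$ --- le seul monôme perdu en passant de $[p,p,1]$ à $[t,t,1]$ étant $x_3^p$. Avec l'exemple~\ref{exple:stab}\ref{exple:stab4}, ceci donne $\Stab_\Fl(s_\id)=R\rtimes(\GL_2(\K)\times\GL_1(\K))$, où $R$ désigne le sous-groupe (normal) des translations par des polynômes en $x_3$ de degré $\le p-1$. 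De même, un élément de $P_\nu$ fixe le chemin $\Gamma^+_\id$ en fixant ses extrémités $s_\id$ et $q$, donc le fixe point par point, et fixe en particulier des points intérieurs $\nu_{\id,[\beta]}$ avec $\beta_1>\beta_2>\beta_3$ voisins de $[\alpha]$; par l'exemple~\ref{exple:stab}\ref{exple:stab2} et le corollaire~\ref{cor:stab chambre} sa partie linéaire est alors triangulaire supérieure. On obtient $P_\nu=R\rtimes(B_2\times\GL_1(\K))$ avec $B_2\subset\GL_2(\K)$ le sous-groupe triangulaire supérieur.

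Je conclurais en relevant la décomposition de Bruhat du facteur $\GL_2(\K)$. Comme $R$ est normal dans $\Stab_\Fl(s_\id)$ et commun à $P_\nu$, et comme $\sigma=(1,2)$ agit comme la transposition du facteur $\GL_2(\K)$, l'égalité $\GL_2(\K)=B_2\sqcup B_2\sigma B_2$ se relève en $\Stab_\Fl(s_\id)=P_\nu\sqcup P_\nu\sigma P_\nu$. Puisque $f'\in\Stab_\Fl(s_\id)$, on a $f'\in P_\nu$ (cas dégénéré, $f''=\id$) ou $f'\in P_\nu\sigma P_\nu$, ce qui fournit le $f''$ recherché. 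L'obstacle principal sera l'identification précise de $\Stab_\Fl(s_\id)$ et de $P_\nu$ : il faudra soigneusement relier « fixer le germe $s_\id$ » à la condition de degré via la proposition~\ref{pro:lieu fixe} et le corollaire~\ref{cor:stabjump}, et « fixer le germe de chambre $\Gamma^+_\id$ » à la triangularité supérieure, en justifiant qu'un élément de $\Fl$ fixant les deux extrémités du chemin $\Gamma^+_\id$ le fixe point par point.
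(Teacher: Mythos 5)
Votre preuve est correcte et suit essentiellement la même démarche que celle de l'article : réduction à $f=\id$ via le domaine fondamental, décomposition $f'=hg$ dans $\Fl=M_\alpha\rtimes L_\alpha$ où l'hypothèse $s_{f'}=s_\id$ force $\deg P,\deg Q\le p-1$ (de sorte que la partie triangulaire fixe les trois arêtes de $\Gamma^+_\id$), puis décomposition de Bruhat du facteur $\GL_2(\K)$ pour produire $f''$. Votre reformulation en double classe $\Stab_{\Fl}(s_\id)=P_\nu\sqcup P_\nu\sigma P_\nu$ n'est qu'un ré-emballage structurel du choix direct $f''=hb_1$ fait dans l'article.
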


\begin{proof}
Par le corollaire~\ref{cor:intrin}\ref{cor:intrin2} on peut supposer $f=\id$.
Par la proposition~\ref{pro:stabilisateur}, on peut écrire $f'=hg$ avec $h\in M_\alpha, g\in L_\alpha$.
Observons que $s_\id$ est fixé par tout élément de $L_\alpha$, et également par hypothèse par $f' = hg$.
Ainsi on obtient que $h$ fixe $s_\id$, ce qui revient à dire que $h$ fixe $\nu_{\id,[p-1,p-1,1]}$.
Donc, par la proposition~\ref{pro:stabilisateur}, et au vu de la description de $M_\alpha$ donnée dans l'exemple~\ref{exple:stab}\ref{exple:stab4}, $h$ est un automorphisme triangulaire de la forme
$(x_1 + P(x_3), x_2 + Q(x_3), x_3 + d)$ avec $p-1 \ge \deg
P,\deg Q$.
Toujours par la proposition~\ref{pro:stabilisateur}, $h$ fixe toute valuation de $\Ap^+_\id$ dont le poids $(\alpha_1', \alpha_2', \alpha_3')$ satisfait l'inégalité $\alpha_2' \ge (p-1) \alpha_3'$.
En particulier $h$ fixe un voisinage de $\nu_{\id, [p,p,1]}$ dans $\Ap^+_\id$, et donc $h$ fixe les trois arêtes de $\Gamma^+_{\id}$.

Écrivons $g = b_1 \sigma b_2$ une décomposition de Bruhat de~$g$, où $b_1,b_2$ sont triangulaires supérieures, et $\sigma=(1,2)$ ou $\id$.
Alors $f'' = hb_1$ convient, en effet :
\begin{align*}
\Gamma^+_{\id}&=h\Gamma^+_{\id}=h\Gamma^+_{b_1}=\Gamma^+_{hb_1}, \text{ et}\\
\Gamma^+_{hg} &= hb_1 \sigma \Gamma^+_{b_2} = hb_1\sigma \Gamma^+_\id \subset hb_1 \Gamma_\id = \Gamma_{hb_1}. \qedhere
\end{align*}
\end{proof}

Nous pouvons maintenant terminer la preuve du premier cas :

\begin{proposition}
\label{pro:pp1}
Si $\alpha = (p,p,1)$ avec $p \ge 2$ un entier, alors $\Gamma$ est $\mathrm{\CAT(1)}$.
\end{proposition}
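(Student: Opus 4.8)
The plan is to apply the criterion recalled just above: since $\Gamma$ is a metric graph all of whose edges have length $\pi/3$, it is $\CAT(1)$ once we show that every embedded cycle has at least $6$ edges (\cite[II.3.14]{BH}). The first step is a parity reduction. The projection $\wgt\colon\Gamma\to I_3$ is a morphism of graphs carrying each edge isometrically onto an edge of $I_3$; writing $I_3$ as the path with successive vertices $s,a,b,q$, it is a map onto a tree, hence onto a bipartite graph. Consequently the image of any closed edge-path of $\Gamma$ is a closed walk of even length in $I_3$, and since $\wgt$ collapses no edge, every embedded cycle of $\Gamma$ has an \emph{even} number of edges. It therefore suffices to exclude embedded cycles with $2$ or with $4$ edges.

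The second ingredient is the near-tree structure furnished by Lemma~\ref{lem:branch}. By \ref{lem:branch1} the fibre $\wgt^{-1}(q)$ is the single apex $q$, and by \ref{lem:branch3} every vertex over $a$ or over $b$ has a \emph{unique} incident edge pointing strictly towards $q$: each vertex over $b$ has $q$ as its only upward neighbour, each vertex over $a$ has a unique upward neighbour over $b$, and every remaining edge joins a vertex over $s$ to a vertex over $a$. I would read off a cycle $\gamma$ of length $2$ or $4$ through its $\wgt$-profile and first dispose of all configurations in which some vertex over $a$ or $b$ is forced to carry two edges going strictly upward, as these contradict \ref{lem:branch3}; this covers every bigon whose lower endpoint lies over $a$ or $b$, and every $4$-cycle whose profile attains its maximum at an interior vertex of $I_3$ (for a $4$-cycle the maximum cannot be $q$ itself, as $\wgt^{-1}(q)$ is a point).

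The surviving configurations branch only at vertices over $s$, and are treated by promoting edges to chambers. An $sa$-edge with lower endpoint $w$ lies in a unique half-chamber $\Gamma^+_f$ based at $w$, which by \ref{lem:branch3} then runs $w$, a vertex over $a$, a vertex over $b$, $q$. For the \emph{diamond} (profile $s,a,b,a$) the two half-chambers leaving the common base through its two $sa$-edges share their vertex over $b$; Lemma~\ref{lem:bruhat} places both inside a single $6$-cycle $\Gamma_{f''}$, whose two base-to-$q$ halves pass through \emph{distinct} vertices over $b$, so the two half-chambers must coincide, contradicting that $\gamma$ is embedded.

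The case I expect to be the main obstacle is the $4$-cycle with profile $s,a,s,a$: two distinct vertices $w,w'$ over $s$, each joined to two distinct vertices $u,u'$ over $a$. Vertices over $s$ escape both the uniqueness of \ref{lem:branch3} and the apex argument, so neither applies directly. Here I would apply Lemma~\ref{lem:bruhat} twice to build two $6$-cycles $\Gamma_{f''}$ (based at $w$) and $\Gamma_{g''}$ (based at $w'$), each passing through both $u$ and $u'$, through their respective unique parents over $b$, and through $q$. Their intersection then contains the connected four-edge arc joining $u$ to $u'$ through $q$; having length $4\pi/3>\pi$ and not being $\{q,s\}$, this contradicts Lemma~\ref{lem:branch}\ref{lem:branch2}. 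The same device (or simply the fact that a $6$-cycle has no repeated edge) eliminates the remaining $sa$-bigon. Since no embedded cycle with fewer than $6$ edges can exist, $\Gamma$ is $\CAT(1)$.
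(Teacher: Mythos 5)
Your proof is correct and rests on the same two pillars as the paper's — Lemma~\ref{lem:branch} and Lemma~\ref{lem:bruhat} — so it is essentially the paper's argument unfolded into a profile-by-profile case analysis rather than a genuinely different route. The paper treats all cycles of at most $5$ edges at once (so it needs no parity step): orienting the edges of an embedded cycle $\gamma$ towards $s$, Lemma~\ref{lem:branch}\ref{lem:branch3} forbids sinks over the interior vertices of $I_3$ (this is exactly your ``two upward edges'' principle), so $\gamma$ splits into an even number of descending paths, each inside a half-chamber and ending at a base vertex; with at most $5$ edges there are at most $2$ base vertices, Lemma~\ref{lem:bruhat} then gives $\gamma\subset\Gamma_f\cup\Gamma_g$, and Lemma~\ref{lem:branch}\ref{lem:branch2} concludes. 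Your square case ($s,a,s,a$) is this argument verbatim, and your diamond and $sa$-bigon arguments are correct variants in which the contradiction is found inside a single $6$-cycle.

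However, one accounting sentence of yours is false as stated, even though the proof survives because every affected case is handled correctly elsewhere in your text. What Lemma~\ref{lem:branch}\ref{lem:branch3} excludes are the cycles whose profile has a local \emph{minimum} at an interior vertex of $I_3$ — for $4$-cycles, exactly the profiles avoiding $s$, namely $a,b,a,b$, then $a,b,q,b$, and $b,q,b,q$ — and not those ``whose profile attains its maximum at an interior vertex'': the diamond $s,a,b,a$ and the square $s,a,s,a$ both attain their maximum at an interior vertex, yet none of their vertices over $a$ or $b$ carries two upward edges, which is precisely why you must (and do) treat them by hand afterwards. Likewise the parenthetical claim that for a $4$-cycle ``the maximum cannot be $q$ itself, as $\wgt^{-1}(q)$ is a point'' is wrong: the profile $a,b,q,b$ visits $q$ only once, so the one-point fibre rules out nothing there (it only rules out $q,b,q,b$); what kills $a,b,q,b$ is your own principle, since its vertex over $a$ would carry two upward edges. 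With this sentence corrected — Lemma~\ref{lem:branch}\ref{lem:branch3} disposes of the profiles avoiding $s$, and Lemma~\ref{lem:bruhat} disposes of the three profiles meeting $s$ ($sa$-bigon, diamond, square) — your case list is complete and each case is settled by a valid argument.
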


\begin{proof}
Soit $\gamma$ un cycle plongé dans $\Gamma$. Orientons chaque arête $e$ de $\gamma$ de manière que sa projection $\wgt(e)\subset I_3$ soit orientée vers $s$.
Par le lemme~\ref{lem:branch}\ref{lem:branch3}, le cycle $\gamma$ se décompose en un nombre pair de chemins orientés dont chacun se termine sur un sommet de base.
De plus, chaque tel chemin est contenu dans un $\Gamma^+_f$.
Si $\gamma$ est constitué d'au plus $5$ arêtes, alors il contient au plus 2 sommets de base.
Par le lemme~\ref{lem:bruhat}, il existe $f,g \in \Fl$ tels que ces deux sommets soient $s_f$, $s_g$, et $\gamma\subset \Gamma_f\cup \Gamma_g$.
Cela contredit le lemme~\ref{lem:branch}\ref{lem:branch2}.
\end{proof}

\subsubsection{Deuxième cas : \texorpdfstring{$\alpha_1 > \alpha_2=\alpha_3$}{a1 > a2 = a3}}
\label{sec:1>2=3}

Dans cette situation l'hypothèse $\mult(\alpha) \ge 2$ implique $[\alpha] = [p,1,1]$ pour un entier $p \ge 2$.
Par la remarque~\ref{rem:mp1_equations}, $[\alpha]$ est contenu dans $p + 2 \ge 4$ droites admissibles, dont seulement 3 sont principales.

Soit $\DD\subset \PW^+$ le demi-disque de centre $[\alpha]$ et rayon $\eps/4$ pour la distance $|\cdot,\cdot|$. Pour $f\in \Fl$, soient $\phi_f$ l'isométrie identifiant $\DD$ avec le demi-disque $\DD_f:=\overline{B}(\nu_{f,[\alpha]},\frac{\eps}{4})$ dans $\Ap_f^+$.
Ainsi le voisinage $\V$ discuté plus haut s'écrit $\V =\bigsqcup_{f\in \Fl} \DD_f/\sim$.
Noter que les droites admissibles non principales passant par $[\alpha]$ ne sont plus des droites pour la distance $|\cdot,\cdot|$.
En particulier~$\V$ n'est plus un cône, et pour nous ramener à cette situation nous allons utiliser le cadre décrit dans la section~\ref{sec:limit}.

Notons $c_1, \dots, c_{p+1}$ la trace sur le demi-disque $\DD$ des droites admissibles, sauf la droite principale $\alpha_2=\alpha_3$ (qui correspondrait au diamètre de $\DD$).
Noter que chacune de ces courbes $c_i$ admet $[\alpha]$ pour extrémité.
Dans la distance euclidienne $|\cdot,\cdot|$ ces courbes ne sont plus en général des segments de droites (précisément, ce sont encore des droites si et seulement si elles proviennent d'une droite principale), et on note $l_i$ le rayon de $\DD$ tangent à $c_i$ au point $[\alpha]$.
On note $C_{i} \subset \DD$ le convexe délimité par $c_{i}$, trace sur~$\DD$ du demi-espace admissible associé à $c_i$, et $L_{i} \subset \DD$ le secteur délimité par le rayon tangent $l_{i}$ et contenant $C_{i}$.
On numérote les $c_i$ de façon à ce que les $C_i$ forment une famille croissante de convexes.
En particulier, $c_1$ et $c_{p+1}$ sont les deux seuls segments de droites principales parmi les $c_i$.
Quitte à diminuer $\eps$, pour chaque $i=1,\ldots, p+1$ on peut supposer $c_i\cap l_{i-1}=[\alpha]$, où par convention $l_0$ est le rayon au bord de $\DD$ contenu dans chaque $C_i$.
Il sera aussi utile de noter $l_{p+2}$ l'autre rayon bordant $\DD$.
Observer que tout ceci correspond aux conventions de la section~\ref{sec:limit} et de la figure~\ref{fig:limit} en prenant $k = p+1$.

Soit $f, g \in \Fl$. Par le corollaire~\ref{cor:intersection chambres} et la proposition~\ref{pro:lieu fixe}, $\wgt(\DD_f\cap \DD_g)$ est ou bien un des convexes $C_i$, ou bien le demi-disque $\DD$ entier, ou bien son diamètre.
Ainsi on est ramené au cadre de la section~\ref{sec:limit}, où on a introduit le «quotient linéarisé» $\V_0$ de $\V$, qui est un cône sur un graphe métrique $\Gamma_0$.
En tant que graphe, $\Gamma_0$ est isomorphe au link de $\nu_{\id,[\alpha]}$ dans $\Xl$ vu comme un complexe polygonal déterminé par les droites admissibles.
Soit $e$ une arête de $\Gamma_0$ correspondant à une région délimitée par deux courbes de $\Xl$ envoyées par $\wgt$ sur $c_i,c_{i+1}$.
Alors la longueur de $e$ dans $\Gamma_0$ est égale à l'angle entre $c_i,c_{i+1}$ pour la distance $|\cdot,\cdot|$.
Par la proposition~\ref{pro:V est CAT0} (et à nouveau par \cite[II.3.14]{BH}) pour montrer que $\V$ est CAT(0) il suffit de montrer :

\begin{proposition}
\label{pro:p11}
Si $\alpha = (p,1,1)$ avec $p \ge 2$ un entier, alors $\Gamma_0$ est $\mathrm{\CAT(1)}$.
\end{proposition}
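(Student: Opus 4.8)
The plan is to mirror the structure of the first case (Proposition~\ref{pro:pp1}), replacing the combinatorial ``at least six edges'' count by the metric statement that every embedded cycle in $\Gamma_0$ has length at least $2\pi$; by \cite[II.3.14]{BH} this is exactly what it means for the metric graph $\Gamma_0$ to be $\CAT(1)$. First I would record the metric skeleton of $\Gamma_0$. Each half-disk $\DD_f$ contributes a path $\Gamma^+_{0,f}\subset\Gamma_0$ of length $\pi$, an isometric copy of the link of $[\alpha]$ in $\PW^+$, running from the distinguished direction $q$ towards $[1,0,0]$ to the base vertex $s_f$ towards $[1,1,1]$; gluing the chambers $\Ap^+_f$ and $\Ap^+_{f\sigma}$ with $\sigma=(2,3)$ produces a circle of length exactly $2\pi$, an \emph{apartment} of $\Gamma_0$. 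By Remark~\ref{rem:droites} the three principal directions (the two ends of the diameter, and the tangent rays to $c_1,c_{p+1}$) are spaced at angle $\pi/3$, so $c_1$ and $c_{p+1}$ sit at heights $\pi/3$ and $2\pi/3$ from $q$, while the remaining non-principal curves $c_2,\dots,c_p$, whose tangent rays are governed by Lemma~\ref{lem:angle}, all lie between them. The decisive consequence is that \emph{no admissible direction occurs strictly between height $2\pi/3$ and $s$}.

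Next I would transcribe the branch and Bruhat lemmas. The analogue of Lemma~\ref{lem:branch}(i) is that $\wgt^{-1}(q)$ is a single point: the weight $(p',1,1)$ with $p'>p$ lies in every admissible half-space containing $(p,1,1)$, so Corollary~\ref{cor:stabjump} forces $q_f=q_\id$ for all $f\in\Fl$. The analogue of Lemma~\ref{lem:branch}(ii) is the key metric input: for $f,g\in\Fl$ the intersection of the two apartment circles is either the two-point set $\{q,s_f\}=\{q,s_g\}$ or a connected arc of length at most $\pi$. This follows from Corollary~\ref{cor:intersection appartements} and Proposition~\ref{pro:lieu fixe}, which identify the intersection with a fixed-point locus $\Fix(f'\sigma')\cap\Ap_\id$: in the linearised model $\Gamma_0$ each admissible half-space is cut out by the tangent line to its boundary through $[\alpha]$, hence is exactly a half-circle of length $\pi$, and the fixed-point locus, being an intersection of such half-circles all containing $q$, is a convex cone of angular measure at most $\pi$. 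Two circles of length $2\pi$ meeting in an arc of length $\ell\le\pi$ contain no embedded cycle shorter than $(2\pi-\ell)+(2\pi-\ell)\ge 2\pi$. The descent statement Lemma~\ref{lem:branch}(iii) and the Bruhat statement Lemma~\ref{lem:bruhat} transcribe verbatim, the latter using the block form $L_\alpha\simeq\GL_1(\K)\times\GL_2(\K)$ of Example~\ref{exple:stab}\ref{exple:stab3} together with a Bruhat decomposition of the $\GL_2$-factor.

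With these in hand I would conclude as follows. Orient every edge of an embedded cycle $\gamma$ towards $s$; by the descent lemma each maximal ascending subpath stays in one half-chamber and terminates at a base vertex, and local maxima of $\wgt\circ\gamma$ occur only at base vertices over $s$, since $q$ has singleton fibre and every interior vertex has a unique edge towards $q$. Let $r$ be the number of base vertices visited. If $r=1$, the two half-chambers at that vertex lie in a common apartment by Lemma~\ref{lem:bruhat}, so $\gamma$ is an embedded cycle in a single circle and has length $2\pi$. If $r=2$, Lemma~\ref{lem:bruhat} places $\gamma$ inside the union of two apartments whose base vertices differ, hence meeting in an arc of length $\le\pi$, and the intersection estimate gives length $\ge 2\pi$. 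If $r\ge 3$, I would use that each local minimum $v_i$ between two \emph{distinct} consecutive base vertices lies in the intersection of two half-chambers with different base vertices; were $\wgt(v_i)>2\pi/3$, the absence of admissible directions above height $2\pi/3$ would force those two half-chambers to agree all the way up to $s$, that is $s_i=s_{i+1}$, a contradiction. Thus $\wgt(v_i)\le 2\pi/3$, each of the $r$ excursions has length at least $2\bigl(\pi-\wgt(v_i)\bigr)\ge 2\pi/3$, and the total length is at least $r\cdot 2\pi/3\ge 2\pi$.

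I expect the main obstacle to be the intersection estimate, the analogue of Lemma~\ref{lem:branch}(ii): one must pass correctly from the curved admissible boundaries to their tangent lines in the linearised link $\Gamma_0$ and check that the resulting fixed-point cones never exceed angular measure $\pi$, together with the complementary ``no admissible direction above height $2\pi/3$'' fact that powers the case $r\ge 3$. The remaining ingredients are faithful metric analogues of the first case and should present no essential difficulty.
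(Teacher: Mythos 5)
Your proposal is correct and follows essentially the same route as the paper: the paper's proof of Proposition~\ref{pro:p11} consists precisely of transposing Lemmas~\ref{lem:branch} and~\ref{lem:bruhat} to $\Gamma_0$ (with $\sigma=(2,3)$ and $h=(x_1+P(x_2,x_3),x_2+d,x_3+d')$, $\deg P<p$) and then rerunning the cycle-decomposition argument of Proposition~\ref{pro:pp1}, where Remark~\ref{rem:2pi/3} — your observation that no admissible direction lies strictly between height $2\pi/3$ and $s$, so edges at base vertices have length $\pi/3$ — bounds the number of base vertices of a short cycle by $2$. Your case split $r=1$, $r=2$, $r\ge 3$ and the excursion estimate $2(\pi-\wgt(v_i))\ge 2\pi/3$ are just an expanded writing of that same counting, so there is no substantive difference.
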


Soit $\Gamma^+_f \subset \Gamma_0$ le chemin correspondant au demi-disque $\DD_f\subset \Ap^+_f$.
Appelons \emph{sommet de base} de $\Gamma^+_f$, noté $s_f$, l'extrémité de $\Gamma^+_f$ correspondant à la direction vers $[1,1,1]$.
Notons $\Gamma_f=\Gamma^+_f\cup \Gamma^+_{f\sigma}$ pour $\sigma=(2,3)\in S_3$.

\begin{remark}
\label{rem:2pi/3}
Soit $s_f$ un point base.
Toute arête de $\Gamma_0$ issue de $s_f$ est de longueur $\pi/3$, car correspond après projection à l'angle entre les deux segments de droites principales $c_{p+1} = l_{p+1}$ et $l_{p+2}$.
\end{remark}

\begin{proof}[Preuve de la proposition~\ref{pro:p11}]
Notons que le lemme~\ref{lem:bruhat} reste vrai avec $\Gamma_0$ au lieu de $\Gamma$ : dans la preuve il suffit de remplacer $\sigma=(1,2)$ par $(2,3)$ et $h$ par un automorphisme de la forme $(x_1 + P(x_2,x_3), x_2+d, x_3 + d')$ où $p > \deg P$.
De même, l'analogue du lemme~\ref{lem:branch} reste valable.

La preuve est maintenant identique à la preuve de la proposition~\ref{pro:pp1}, excepté que c'est maintenant la remarque ~\ref{rem:2pi/3} qui permet d'affirmer que si un cycle $\gamma \subset \Gamma_0$ est de longueur $<2\pi$, alors il contient au plus 2 sommets de base.
\end{proof}

\subsubsection{Troisième cas : \texorpdfstring{$\alpha_1 > \alpha_2>\alpha_3$}{a1 > a2 > a3}}

Comme précédemment, soit $\Gamma_0$ le link de $[\alpha]$ dans $\Xl$ avec la structure polygonale déterminée par les droites admissibles.
On a une projection $\wgt\colon \Gamma_0\to S^1$ vers le cercle combinatoire correspondant au link de $[\alpha]$ dans $\PW^+$.
Nous munissons les arêtes de $S^1$ (et de $\Gamma_0$ via $\wgt^{-1}$) des longueurs venants des angles entre les droites admissibles (devenues courbes) relativement à la distance $|\cdot,\cdot|$.
Soit $q\in S^{1}$ (resp.\ $s\in S^1$) le point correspondant à la direction vers $[1,0,0]$ (resp.\ à la direction opposée, c'est-à-dire vers $[0,\alpha_2,\alpha_3]$).
Soit $I\subset S^1$ le chemin de $q$ à $s$, de longueur $\pi$, correspondant au demi-espace $\frac{\alpha'_2}{\alpha'_3}\geq \frac{\alpha_2}{\alpha_3}$.
Enfin, notons $e^q\subset S^1$ (resp. $e^s\subset S^1$) l'arête qui contient le chemin de longueur $\frac{\pi}{3}$ dans $I$ commençant en~$q$ (resp. en~$s$): voir figure~\ref{fig:1>2>3}.

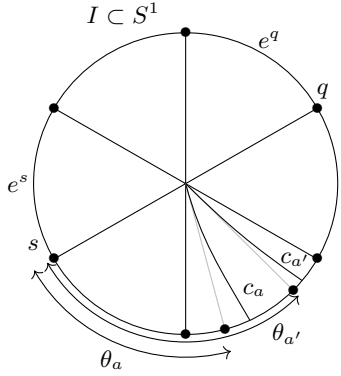
\begin{figure}[ht]
\[
\begin{tikzpicture}[scale = 2,font=\small]
\coordinate (0) at (0,0);
\foreach \angle in {30,-30,90,-90,150,-150,105,-75,135,-45}
{
\coordinate (\angle) at (\angle:1);
}
\draw (0) circle (1);
\node[above left,xshift=-2] at (-150) {$s$};
\node[above,xshift=2] at (30) {$q$};
\draw (-150)--(30) (-90)--(90) (-30)--(150);
\draw[white!75!black] (0)--(-75);
\draw (0) to[out=-75,in=120] node[auto,xshift=-3,pos=.9]{$c_a$} (-65:1);
\draw[white!75!black] (0)--(-45);
\draw (0) to[out=-45,in=145] node[auto,xshift=-9,pos=.95]{$c_{a'}$} (-40:1);
\draw pic[<->,draw=black,angle eccentricity=1.1,angle radius=2.1cm] {angle=-150--0---45};
\draw pic["$\theta_{a}$",<->,draw=black,angle eccentricity=1.1,angle radius=2.3cm] {angle=-150--0---75};
\node at (-56:1.2) {$\theta_{a'}$};
\node at (180:1.1) {$e^s$};
\node at (60:1.1) {$e^q$};
\node at (110:1.2) {$I \subset S^1$};
\foreach \angle in {30,-30,90,-90,150,-150,-45,-75}
{
\node at (\angle) {$\bullet$};
}
\end{tikzpicture}
\]
\caption{Les notations du cas $\alpha_1 > \alpha_2 > \alpha_3$ (dans la situation où $s$ et $q$ sont des sommets, et $e^s, e^q$ sont de longueur exactement $\pi/3$).}
\label{fig:1>2>3}
\end{figure}

Pour tout $f \in \Fl$, notons $\Gamma_f \subset \Gamma_0$ le cycle de longueur $2\pi$ induit par $\Ap_f^+$, et notons $ s_f,q_f,e^q_f\subset \Gamma_f$ les préimages respectives par $\wgt$ de $s,q,e^q$.
Grâce au corollaire~\ref{cor:intersection chambres} et à la proposition~\ref{pro:lieu fixe}, pour tout couple $f, g\in \Fl$, ou bien $\Gamma_f = \Gamma_g$, ou bien  $\Gamma_f\cap\Gamma_g$ est un chemin de longueur $\leq \pi$ contenant $e^q_f=e^q_g$.
En particulier, $\Gamma_f \cup \Gamma_g$ ne contient pas de cycle plongé de longueur $< 2\pi$.
Noter que $q_f$ et $s_f$ ne sont pas nécessairement des sommets (précisément, ce sont des sommets si et seulement si $\alpha_2$ est un multiple entier de $\alpha_3$), donc on appelle les $s_f$ les \emph{points de base}.
Similairement à la remarque~\ref{rem:2pi/3}, le chemin de longueur $\frac{2\pi}{3}$ dans $\Gamma_f$ centré en $s_f$ ne contient aucun sommet distinct de $s_f$ dans son intérieur.

Notons que par la proposition~\ref{pro:V est CAT0}, la proposition suivante entraîne que $\V$ est CAT(0), ce qui terminera la preuve de la proposition~\ref{pro:courbure negative}.
Ici, on représente $\V$ comme une union de demi-disques en coupant chaque disque $\overline{B}(\nu_{f,[\alpha]},\frac{\eps}{4})\subset \Ap^+_f$ le long du diamètre de direction $\frac{\alpha'_2}{\alpha'_3}=\frac{\alpha_2}{\alpha_3}$.

\begin{proposition}
\label{pro:123}
Si $\alpha_1 > \alpha_2 > \alpha_3$, alors $\Gamma_0$ est $\mathrm{\CAT(1)}$.
\end{proposition}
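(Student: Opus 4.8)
My plan is to apply the criterion \cite[II.3.14]{BH}: since $\Gamma_0$ is a metric graph, it is $\CAT(1)$ exactly when it contains no embedded cycle of length $<2\pi$; together with Proposition~\ref{pro:V est CAT0} this gives that $\V$, and hence the ball $\overline B_\Xl(\nu_{\id,[\alpha]},\frac{\eps}{4})$, is $\CAT(0)$, completing the proof of Proposition~\ref{pro:courbure negative}. So suppose, toward a contradiction, that $\gamma\subset\Gamma_0$ is an embedded cycle of length $<2\pi$. Since $\wgt$ restricts to an isometry $\Gamma_f\to S^1$ on each chamber, every edge of $\gamma$ is sent isometrically into $S^1$; if $\wgt$ mapped $\gamma$ onto all of $S^1$ its length would be at least $2\pi$, so $\wgt(\gamma)$ is a proper arc, and the direction \emph{toward $q$} is well defined at each point of $\gamma$.

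The two structural inputs I would record are the analogues for $\Gamma_0$ of Lemma~\ref{lem:branch} and Lemma~\ref{lem:bruhat}. For the branching statement, every vertex $u$ with $\wgt(u)\neq q,s$ carries a unique edge whose projection separates $\wgt(u)$ from $q$, and this edge belongs to every $\Gamma_f$ through $u$. Indeed, recall from Corollary~\ref{cor:intersection chambres} and Proposition~\ref{pro:lieu fixe} that $\Gamma_f\cap\Gamma_g$ is a connected path containing $e^q$; so if two chambers offered distinct such edges at $u$, the subpath of $\Gamma_f\cap\Gamma_g$ joining $u$ to $e^q$ would begin at $u$ with an edge toward $q$ lying in both chambers, contrary to assumption. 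In particular a local maximum of $\wgt$ along $\gamma$ can only sit over $s$, i.e.\ at a base point. For the bruhat analogue I claim that $s_f=s_{f'}$ forces $\Gamma_f$ and $\Gamma_{f'}$ to coincide along the half lying over $I$; the proof copies Lemma~\ref{lem:bruhat}, factoring $f^{-1}f'=hg$ with $h\in M_\alpha$ fixing the base point (hence fixing the edges of that half) and $g\in L_\alpha$, the simplification being that $L_\alpha$ is now the diagonal torus, so its Bruhat factor is trivial and no transposition is produced.

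With these I would argue as in Propositions~\ref{pro:pp1} and~\ref{pro:p11}. Orient each edge of $\gamma$ so that its projection points away from $q$; by branching, every local maximum of $\gamma$ is a base point. By the analogue of Remark~\ref{rem:2pi/3} --- the arc of length $\frac{2\pi}{3}$ of $\Gamma_f$ centred at $s_f$ has no interior vertex --- two consecutive base points along $\gamma$ are at distance at least $\frac{2\pi}{3}$, so a cycle of length $<2\pi$ meets at most two base points. At a base point $v$ the two arcs of $\gamma$ descending toward $q$ leave on opposite sides of $s$: one into $I$ and one into its complementary arc. By branching the first lies in the half over $I$ shared by all chambers with base point $v$, while the second lies in the complementary half of some such chamber $\Gamma_F$; hence $\Gamma_F$ contains both. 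Doing this at each base point yields $\gamma\subset\Gamma_F\cup\Gamma_G$ (with $F=G$ allowed), which is impossible since $\Gamma_F\cup\Gamma_G$ contains no embedded cycle of length $<2\pi$.

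The step I expect to be delicate is the bruhat analogue together with the recombination at a base point: because $[\alpha]$ now lies in the interior of $\PW^+$, each $\Gamma_f$ is a full $2\pi$-cycle rather than a half, so one must check that the two arcs meeting at a base point, possibly coming from different apartments, can be housed in a \emph{single} cycle. This is precisely where the triviality of the torus $L_\alpha$ renders the half over $I$ canonical while branching pins down the opposite half. A further point, absent from the principal case of Proposition~\ref{pro:pp1}, is that the admissible lines through $[\alpha]$ are no longer geodesics for $|\cdot,\cdot|$, so edge lengths vary; it is the uniform separation $\frac{2\pi}{3}$ near base points, in place of a constant edge length $\frac{\pi}{3}$, that keeps the count valid.
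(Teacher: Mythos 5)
Your preparatory steps are sound and reproduce the first half of the paper's argument: the criterion \cite[II.3.14]{BH}, the fact that two links $\Gamma_f,\Gamma_g$ either coincide or meet along a path of length $\le\pi$ containing $e^q_f=e^q_g$ (whence chambers sharing a base point share exactly their half over $I$), and the count showing that a cycle of length $<2\pi$ meets at most two base points. The proof breaks at the sentence ``at a base point $v$ the two arcs of $\gamma$ descending toward $q$ leave on opposite sides of $s$: one into $I$ and one into its complementary arc''. This dichotomy is asserted, not proved, and it is false. By your own Bruhat analogue, two chambers $\Gamma_{f_0},\Gamma_{f_1}$ with $s_{f_0}=s_{f_1}$ intersect \emph{exactly} in the half over $I$; hence they branch at the base point itself on the other side, and nothing prevents both arcs of $\gamma$ from descending into $S^1\setminus\mathrm{int}(I)$ through two \emph{distinct} edges at $v$ (uniqueness of the edge issuing from a base point holds only on the $I$ side, where it is the paper's $e^s_f$; on the opposite side there is genuine branching). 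Your phrase ``branching pins down the opposite half'' is precisely the point that fails. Consequently the recombination $\gamma\subset\Gamma_F\cup\Gamma_G$ collapses, and the configuration $\wgt(\gamma)\subset S^1\setminus\mathrm{int}(I)$ --- two base points over $s$, four monotone paths, two junctions over directions toward $[m-pa,0,1]$ and $[m-pa',0,1]$ as in the remark~\ref{rem:mp1_equations} --- is left completely untreated; for it, your separation bound near base points only gives length $\ge 4\cdot\frac{\pi}{3}$, which is $<2\pi$.

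This untreated configuration is where the actual content of the proposition lies, and it cannot be closed with the combinatorial tools you use. The paper handles it by an algebraic--metric argument: after normalizing the four transition elements to sector automorphisms $h_i$ (lemmes~\ref{lem:normal} et~\ref{lem:secteur commun}), the normality of $N_\alpha$ in $M_\alpha$ gives $h_0h_1h_2h_3\sim_\nu\id$, and a polynomial factorization forces $a'=b$, hence $p(a+a')\le m$; the angular lemma~\ref{lem:angle} then converts this arithmetic constraint into $\theta_a+\theta_{a'}\ge\pi$, i.e.\ length of $\gamma$ at least $2\pi$. None of these ingredients appears in your proof. Concretely, your argument does not exclude a cycle through four chambers whose two junctions both lie over the admissible direction closest to $s$ (the case $a=a'=\lfloor m/p\rfloor$, of length close to $\frac{4\pi}{3}$): this pattern is perfectly compatible with the intersection combinatorics of chambers, and only the inequality $p(a+a')\le m$ rules it out. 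The example~\ref{exple:angles} shows, in the same spirit, that the incidence structure alone is compatible with short cycles (for the bilipschitz-equivalent simplex distance), so some argument genuinely exploiting the logarithmic metric, such as lemma~\ref{lem:angle}, is indispensable.
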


\begin{proof}
Supposons que $\gamma \subset \Gamma_0$ soit un cycle plongé de longueur $<2\pi$.
Orientons comme dans la preuve de la proposition~\ref{pro:pp1} chaque arête de $\gamma$ ne contenant ni un point de base ni un $q_f$ de manière que sa projection dans $S^1 \setminus q$ soit orientée vers $s$.
Nous divisons chaque arête de $\gamma$ contenant un point de base $s_f$ (resp.\ un $q_f$) en deux parties orientées vers $s_f$ (resp.\ dans le sens opposé à $q_f$).
Comme dans les cas précédents, $\gamma$ se décompose alors en un nombre pair de chemins orientés dont chacun se termine sur un point de base.
Chaque tel chemin est contenu dans un $\Gamma_f$ et contribue par au moins $\frac{\pi}{3}$ à la longueur de $\gamma$.
S'il n'y en a que 2, $\gamma\subset \Gamma_f\cup\Gamma_g$ pour certains $f,g \in \Fl$ et on a vu plus haut que c'était impossible.
Ainsi on peut supposer que $\gamma$ se décompose en 4 chemins se terminant sur 2 points de base $s_f \neq s_g$, pour certains $f,g \in \Fl$.
Observons également que $q\notin \wgt(\gamma)$, car sinon au moins 2 de ces chemins seraient de longueur~$\pi$.

Si $s$ est un sommet, vu que $\wgt^{-1}(e^q)$ est une seule arête, similairement au lemme~\ref{lem:branch}\ref{lem:branch3} on démontre l'énoncé suivant : il y a exactement une arête $e^s_f$ (resp.\ $e^s_g$) incidente à $s_f$ (resp.\ $s_g$) dont la projection sépare $s$ de $q$ dans $I$. En particulier $e^s_f\subset \Gamma_f, e^s_g\subset \Gamma_g$.
Ainsi, si $\gamma$ passe par~$e^s_f$, la projection $\gamma\to S^1$ est localement injective autour de $s_f$.
Puisque $q\notin \pi(\gamma)$ cela entraîne que $e^s_g$ est aussi contenue dans $\gamma$.
Par conséquent $\gamma\subset \Gamma_f\cup\Gamma_g$, contradiction.
Si $s$ n'est pas un sommet, les arêtes entières contenant $s_f,s_g$ sont évidemment empruntées par $\gamma$ et on obtient une contradiction de la même manière.

Ainsi on peut supposer que $s$ est un sommet et que ni $e^s_f$, ni $e^s_g$ ne sont contenues dans $\gamma$, autrement dit $\wgt(\gamma)\subset S^1 \setminus\mathrm{int}\ I$.
Puisque $s$ est un sommet, $\alpha$ satisfait une équation admissible de la forme $\alpha_2=p\alpha_3$, où $s$ correspond à la demi-droite principale $l$ vers $[0,p,1]$.
Par la remarque~\ref{rem:mp1}, $[\alpha]=[m,p,1]$ pour des entiers $m > p > 1$.
Écrivons $m=pq+r$, avec $0\leq r<p$.
Il existe donc $f_0, \dots, f_3 \in \Fl$ tels que
\[
\gamma = \gamma_0 \cup \gamma_1^{-1} \cup \gamma_2 \cup \gamma^{-1}_3
\]
où chaque $\gamma_i$ est un chemin dans $\Gamma_{f_i}$, orienté comme précédemment.
En particulier
\[
\wgt(\gamma_0\cap\gamma_1)=\wgt(\gamma_2\cap\gamma_3)=s.
\]
Par la remarque~\ref{rem:mp1_equations}, les points $\wgt(\gamma_1\cap \gamma_2),\wgt(\gamma_3\cap\gamma_0)$ correspondent aux directions vers
$[m-pa, 0,1]$ et $[m-pa', 0,1]$, pour certains entiers $0\leq a, a' \leq q$.
Soient $c_a,c_{a'} \subset \PW$ les demi-droites issues de $[\alpha]$ correspondant à ces directions.
Soit $\theta_a$ (resp. $\theta_{a'}$) l'angle entre $l$ et $c_a$ (resp. $l$ et $c_{a'}$) relativement à la distance $|\cdot,\cdot|$ (pour laquelle $c_a,c_{a'}$ deviennent des courbes).
Quitte à faire une permutation cyclique des $\gamma_i$ on peut supposer $a \ge a'$, ce qui équivaut à $\theta_a \le \theta_{a'}$.
La longueur de $\gamma$ étant $2 (\theta_a + \theta_{a'})$, pour obtenir une contradiction il suffit de montrer $\theta_a + \theta_{a'} \ge \pi$.
Nous allons obtenir cette inégalité sous la forme $ (\theta_a-\pi/3) + (\theta_{a'}-\pi/3) \ge \pi/3$ (voir figure~\ref{fig:1>2>3}).

En utilisant le corollaire~\ref{cor:intrin}\ref{cor:intrin2} on peut supposer $f_0 = \id$.
Par le lemme~\ref{lem:normal}\ref{lem:normal1}, on peut supposer aussi $f_1,f_2,f_3\in M_\alpha$.
D'après le lemme~\ref{lem:secteur commun}, on peut écrire
\begin{align*}
f_1=f_0^{-1}f_1 &\sim_\nu h_0,&
f_1^{-1}f_2 &\sim_\nu h_1,\\
f_2^{-1}f_3 &\sim_\nu h_2, &
f_3^{-1}=f_3^{-1}f_0 &\sim_\nu h_3,
\end{align*}
où les $h_i$ sont des automorphismes triangulaires de la forme suivante :
\begin{align*}
h_0 &= (x_1, x_2 + d' x_3^p, x_3),&
h_1 &= (x_1 + P_1(x_2, x_3), x_2, x_3), \\
h_2 &= (x_1, x_2 + d x_3^p, x_3), &
h_3 &= (x_1 + P_3(x_2, x_3), x_2, x_3),
\end{align*}
avec $P_1, P_3$ homogènes de degré $m$ relativement aux variables $x_2, x_3$ de poids respectifs $p, 1$.
Par le lemme~\ref{lem:normal}\ref{lem:normal2}, le groupe $N_\alpha$  est normal dans $M_\alpha$, donc $h_0h_1h_2h_3\sim_\nu\id$. Ainsi
\[
h_0h_1h_2h_3=(x_1 + P(x_2, x_3), x_2 + Q(x_3), x_3)
\]
avec $-\nu(P)<m, \deg Q=-\nu(Q)<p$. On a $Q(x_3)= (d+d')x_3^p$, donc $d+d'=0$.
Puisque $P_1,P_3$ sont homogènes de degré $m$, $P$ l'est aussi, donc $P=0$ et $P_1(x_2 + d x_3^p, x_3) = -P_3(x_2, x_3)$.
On écrit
\begin{align*}
P_3(x_2, x_3) = x_2^a (x_2 + d x_3^p)^b R_3(x_2, x_3) \\
P_1(x_2, x_3) = x_2^{a'} (x_2 - d x_3^p)^{b'} R_1(x_2, x_3)
\end{align*}
où $R_3$ est premier avec  $x_2$ et $x_2 + d x_3^p$, et $R_1$ est premier avec  $x_2$ et $x_2 - d x_3^p$.
Noter que par le lemme~\ref{lem:secteur commun}\ref{lem:secteur1}, les exposants $a$ et $a'$ correspondent bien aux entiers définissant les demi-droites $c_a$ et $c_{a'}$ introduites plus haut.
L'égalité $P_1(x_2 + d x_3^p, x_3) = -P_3(x_2, x_3)$ équivaut à
\[
(x_2 + d x_3^p)^{a'} x_2^{b'} R_1(x_2+ d x_3^p, x_3) = -x_2^a (x_2 + d x_3^p)^b R_3(x_2, x_3).
\]
Finalement $a' = b$, et $pa + pa' = p(a + b) \le m$.
On conclut grâce au lemme~\ref{lem:angle} que $ (\theta_a-\pi/3) + (\theta_{a'}-\pi/3) \ge \pi/3$, d'où la contradiction attendue.
Précisément, dans le cas d'égalité $p(a+a') = m$, les rayons $c_2, c_3$ du lemme~\ref{lem:angle} correspondent respectivement aux demi-droites $c_a, c_{a'}$, et on obtient $\theta_a = \pi/3 + \theta_{12}$, $\theta_{a'} = \pi/3 + \theta_{13}$, et $(\theta_a-\pi/3) + (\theta_{a'}-\pi/3) = \pi/3$.
Le cas d'une inégalité stricte $p(a+a') < m$ correspond à des angles $\theta_a$, $\theta_{a'}$ plus grands, et donc est encore plus favorable.
\end{proof}

Dans la preuve de la proposition~\ref{pro:V est CAT0} une bonne part des complications vient du fait que $d_\Xl$ n'est pas la distance de la structure polygonale de $\Xl$, ce qui nous oblige à d'abord linéariser la situation avant de pouvoir conclure en étudiant le link de chaque point.
Il semblerait \textit{a priori} naturel de vouloir garder la distance du simplexe, pour laquelle toutes les droites admissibles, principales ou non, sont bien des droites.
Nous concluons cette section avec un exemple qui montre le problème avec cette approche.

\begin{figure}[ht]
\begin{tikzpicture}[scale = 3.5,font=\scriptsize]
\coordinate [label=above left:$\trip{0}{1}{0}$] (010) at (0,0);
\coordinate [label=above right:$\trip{1}{0}{0}$] (100) at (1,0);
\coordinate [label=below:$\trip{0}{0}{1}$] (001) at (.5, -.86);
\coordinate (110) at ($ (010)!.5!(100) $) {};
\coordinate (101) at ($ (001)!.5!(100) $) {};
\coordinate (011) at ($ (001)!.5!(010) $) {};
\coordinate (111) at (intersection of 011--100 and 101--010);
\coordinate [label=below right:$\trip{m}{0}{1}$] (m01) at ($ (100)!1/11!(001) $) {};
\coordinate [label=left:$\trip{0}{p}{1}$] (0p1) at ($ (010)!1/3!(001) $) {};
\coordinate [label=above:$\trip{q}{1}{0}$] (q10) at ($ (100)!1/6!(010) $) {};
\coordinate [label={[below right, yshift=-1, xshift=-4]:$[\alpha]$}] (alpha) at (intersection of 0p1--100 and 010--m01);
\draw [white!50!black] (010)--(100)--(001)--(010);
\draw (0p1)--(100) (m01)--(010) (q10)--(001);
\draw [white!80!black] (011)--(100) (101)--(010) (110)--(001);
\draw pic[thick,draw=black,angle eccentricity=2.2,angle radius=.6cm] {angle=010--alpha--001};
\end{tikzpicture}
\caption{Un arc de longueur $2\pi/3$ pour la métrique $|\cdot,\cdot|$, mais de longueur $\pi/3 + \eps$ pour la métrique du simplexe.}
\label{fig:arc}
\end{figure}
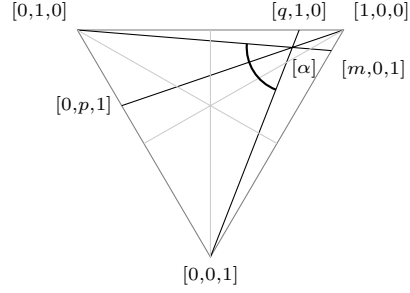

\begin{example}[figure~\ref{fig:arc}] \label{exple:angles}
Soient $p,q \ge 1$ deux entiers, et notons $m = pq$, $\alpha = (m,p,1)$, $\nu = \nu_{\id, [\alpha]}$.
Les automorphismes
\[
f = (x_1 + x_2^q, x_2, x_3) \quad \text{ et } \quad g = (x_1 + x_3^m, x_2,x_3)
\]
appartiennent au groupe $M_\alpha \subset \Stab(\nu)$, et commutent.
En parcourant quatre arcs de cercles centrés en $\nu$ dans les appartements successifs $\Ap_\id$, $\Ap_f$, $\Ap_{fg}$ et $\Ap_g$, où les poids varient entre les demi-droites principales dirigées vers $[0,0,1]$ et $[0,1,0]$, on obtient un cycle.
Pour la distance du graphe $\Gamma_0$ correspondant au link du voisinage linéarisé $\V_0$, chacun de ces arcs est de longueur $2\pi/3$, et donc le lacet est de longueur totale $8\pi/3 > 2\pi$.
En revanche, si l'on utilise la distance du simplexe, chacun de ces arcs est de longueur $\pi/3 + \eps$ avec $\eps$ tendant vers 0 quand $q$ tend vers l'infini, et ainsi on produit des lacets de longueur $4\pi/3 + 4\eps < 2\pi$.
On voit que munir chaque appartement de la distance du simplexe conduirait à une distance sur $\Xl$ qui ne serait plus de courbure négative ou nulle.
\end{example}

\section{Preuve des résultats principaux}
\label{sec:princ}

Nous obtenons d'abord le théorème~\ref{thm:main} annoncé dans l'introduction:

\begin{proof}[Preuve du théorème~\ref{thm:main}]
On sait déjà que l'espace métrique $\Xl_3$ est connexe par le corollaire~\ref{cor:connexe}, et complet par le lemme~\ref{lem:complet}.
De plus $\Xl_3$ est
simplement connexe par la proposition~\ref{pro:X3 1-connexe}, et à courbure négative ou nulle par la proposition~\ref{pro:courbure negative}.
Le résultat découle alors du théorème de Cartan--Hadamard (\cite[II.4.1(2)]{BH}).
\end{proof}

Pour obtenir la linéarisabilité des sous-groupes finis, nous utilisons le critère suivant, qui repose sur un argument de moyennisation.
Ici on profite de la structure vectorielle de $\K^n$ pour définir un endomorphisme par moyenne d'automorphismes (en général une telle moyenne n'a pas de raison d'être inversible).

\begin{lemma}[{\cite[Lemma 5.1]{BFL}}] \label{lem:abstract linearization}
Soit $\K$ un corps de caractéristique zéro, et $G$ un sous-groupe du groupe des bijections de $\K^n$ admettant une structure de produit semi-direct $G = M \rtimes L$ avec $L\subset\GL_n(\K)$.
Supposons que pour toute suite finie $m_1, \dots, m_r$ de $M$, la moyenne $\frac1r \sum_{i=1}^r m_i$ est encore dans $M$.
Alors tout sous-groupe fini de $G$ est conjugué par un élément de $M$ à un sous-groupe de $L$.
\end{lemma}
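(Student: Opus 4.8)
Le plan est de reformuler la recherche d'un conjugateur comme la recherche d'un point fixe commun pour une représentation linéaire de $H$, puis d'obtenir ce point fixe par moyennisation. Soit donc $H$ un sous-groupe fini de $G$. Grâce à la structure $G = M\rtimes L$, chaque $h\in H$ s'écrit de manière unique $h = m_h\ell_h$ avec $m_h\in M$ et $\ell_h\in L$; l'application $h\mapsto\ell_h$ est un homomorphisme de $H$ vers $L$, tandis que $h\mapsto m_h$ satisfait la relation de cocycle $m_{hh'} = m_h\,(\ell_h m_{h'}\ell_h^{-1})$, obtenue en réordonnant $hh'=m_h\ell_h m_{h'}\ell_{h'}$. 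Comme $M$ est le noyau de la projection $G\to L$, trouver $\varphi\in M$ tel que $\varphi h\varphi^{-1}\in L$ pour tout $h$ forcera automatiquement $\varphi h\varphi^{-1}=\ell_h$; un calcul direct montre que cette condition équivaut à
\[
\ell_h\,\varphi\,\ell_h^{-1}=\varphi\, m_h \qquad\text{pour tout } h\in H.
\]

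L'idée centrale, qui est aussi le point délicat, est de linéariser ce problème en plongeant $M$ dans l'espace vectoriel $V$ de \emph{toutes} les applications $\K^n\to\K^n$, muni de l'addition ponctuelle (c'est ici que sert la structure d'espace vectoriel de $\K^n$, et c'est dans ce $V$ que vivent les moyennes de l'hypothèse). Pour chaque $h\in H$ je poserais $T_h\colon V\to V$, $T_h(\xi)=\ell_h\circ\xi\circ\ell_h^{-1}\circ m_h^{-1}$. L'observation clé est que, \emph{puisque $\ell_h$ est linéaire}, la conjugaison $\xi\mapsto\ell_h\circ\xi\circ\ell_h^{-1}$ et la composition à droite sont des opérateurs $\K$-linéaires de $V$; ainsi chaque $T_h$ appartient à $\GL(V)$. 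De plus, en utilisant la relation de cocycle on vérifie que $T_hT_{h'}=T_{hh'}$, donc $h\mapsto T_h$ est une représentation linéaire de $H$ sur $V$. Par construction, un élément $\varphi\in V$ est fixé par tous les $T_h$ si et seulement si $\ell_h\varphi\ell_h^{-1}=\varphi m_h$ pour tout $h$, c'est-à-dire si et seulement si $\varphi$ résout l'équation cherchée.

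Il reste à produire un point fixe qui soit en outre dans $M$. En caractéristique nulle je poserais
\[
\varphi=\frac{1}{|H|}\sum_{h\in H}T_h(\id)=\frac{1}{|H|}\sum_{h\in H}m_h^{-1},
\]
la seconde égalité venant de $T_h(\id)=\ell_h\ell_h^{-1}m_h^{-1}=m_h^{-1}$. L'argument standard de moyenne sur un groupe donne $T_{h_0}(\varphi)=\varphi$ pour tout $h_0$ : en effet $T_{h_0}$ étant linéaire, il passe à travers la somme finie et l'on réindexe $h\mapsto h_0h$. Par ailleurs $\varphi$ est la moyenne des $|H|$ éléments $m_h^{-1}$ de $M$, donc l'hypothèse de stabilité par moyenne assure $\varphi\in M$; en particulier $\varphi$ est une bijection, donc un élément inversible de $G$. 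Enfin je déplierais la relation de point fixe : $\varphi h\varphi^{-1}=\varphi m_h\ell_h\varphi^{-1}=(\ell_h\varphi\ell_h^{-1})\ell_h\varphi^{-1}=\ell_h\in L$, ce qui montre $\varphi H\varphi^{-1}\subset L$ avec $\varphi\in M$, comme voulu.

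Le seul véritable obstacle est conceptuel et se situe dans le deuxième paragraphe : reconnaître que la formule $T_h(\xi)=\ell_h\circ\xi\circ\ell_h^{-1}\circ m_h^{-1}$ définit bien une \emph{action linéaire} de $H$ (ce qui repose crucialement sur la linéarité de $\ell_h$, et non sur celle des $m_h$), et comprendre que l'hypothèse de clôture par moyenne de $M$ est exactement la propriété nécessaire pour que le point fixe moyenné reste dans $M$. Une fois ce cadre posé, tout le reste — vérification du cocycle, de l'homomorphisme $h\mapsto T_h$, et calcul final de conjugaison — est une routine que je ne détaillerais pas.
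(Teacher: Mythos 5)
Your proof is correct and takes essentially the same route as the source: the paper does not reprove this lemma but cites \cite[Lemma 5.1]{BFL}, whose proof is precisely this averaging argument. Your conjugator $\varphi=\frac{1}{|H|}\sum_{h\in H}m_h^{-1}$ is exactly the element $\frac{1}{|H|}\sum_{h\in H}\ell_h\circ h^{-1}$ used there, and your linear $H$-action $T_h(\xi)=\ell_h\circ\xi\circ\ell_h^{-1}\circ m_h^{-1}=\ell_h\circ\xi\circ h^{-1}$ on the space of all maps $\K^n\to\K^n$ is just a conceptual repackaging of the direct verification $\varphi\circ h=\ell_h\circ\varphi$.
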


\begin{proof}[Preuve du corollaire~\ref{cor:main}]
En fait, on va montrer que chaque sous-groupe fini $F$
de $\Tame(\K^3)$ est conjugué par un élément de $\Tame(\K^3)$ (est pas seulement de $\Aut (\K^3)$) à un sous-groupe de $\GL_3(\K)$.

Le groupe $F\subset \Tame(\A^3)$ agit par isométries sur $\Xl_3$, qui est complet CAT(0) par
le théorème~\ref{thm:main}. Par \cite[II.2.8(1)]{BH},
$F$ admet (au moins) un point fixe global $\nu\in \Xl_3$, que l'on peut construire comme le «circumcenter» d'une orbite quelconque.
Par le corollaire~\ref{cor:intrin}\ref{cor:intrin2}, en conjuguant par un élément de $\Tt$ nous pouvons supposer que $\nu = \nu_{\id,[\alpha]} \in \Ap_\id^+$.
Par la proposition~\ref{pro:stabilisateur}, $F\subset \Stab(\nu) = M_\alpha \rtimes L_\alpha$. On constate immédiatement que le groupe d'automorphismes
triangulaires $M_\alpha$ est stable par moyenne, on peut donc conclure grâce au lemme~\ref{lem:abstract linearization}.
\end{proof}

\section{Appendice: autres propriétés de $\Xl$}

\subsection{Fidélité de l'action}

\begin{proposition} \label{pro:fidele}
L'action de $\TA$ sur $\Xl$ est fidèle.
\end{proposition}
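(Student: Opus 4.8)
The plan is to identify the kernel $N \trianglelefteq \TA$ of the action $\TA \to \Isom(\Xl)$ and to show that it is trivial. The starting point is Corollary~\ref{cor:stab chambre}: an element of $\TA$ fixes $\Ap_\id$ pointwise if and only if it is \emph{diagonal affine}, that is of the form $(c_1 x_1 + t_1, \dots, c_n x_n + t_n)$ with $c_i \in \K^*$ and $t_i \in \K$. If $f \in N$ then $f$ fixes all of $\Xl$, in particular $\Ap_\id$; moreover, since $N$ is normal, every conjugate $gfg^{-1}$ also lies in $N$ and is therefore diagonal affine as well. The whole strategy is to exploit this strong invariance --- that \emph{every} conjugate of $f$ is diagonal affine --- to force $f = \id$.

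First I would determine the linear part. Writing $M = \mathrm{diag}(c_1,\dots,c_n)$ for the linear part of $f$, conjugation by $g \in \GL_n(\K) \subset \TA$ produces an automorphism whose linear part is $gMg^{-1}$, which must again be diagonal. Testing against the transvection $g = \mathrm{Id} + E_{12}$ (so $g^{-1} = \mathrm{Id} - E_{12}$) gives $gMg^{-1} = M + (c_2 - c_1)E_{12}$, so diagonality forces $c_1 = c_2$; running over the analogous transvections yields $c_1 = \dots = c_n =: c$, i.e.\ $M = c\,\mathrm{Id}$ is scalar.

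Next I would conjugate by a quadratic elementary automorphism $e = (x_1 + x_2^2, x_2, \dots, x_n)$, with $e^{-1} = (x_1 - x_2^2, x_2, \dots, x_n)$. A direct computation shows the first coordinate of $efe^{-1}$ equals $c x_1 + (c^2 - c)x_2^2 + 2ct_2 x_2 + (t_1 + t_2^2)$; since $efe^{-1}$ is diagonal affine this coordinate depends on $x_1$ only, so the quadratic coefficient $c^2 - c$ must vanish, and as $c \neq 0$ this gives $c = 1$ in every characteristic. Thus $f$ is a pure translation $(x_1 + t_1, \dots, x_n + t_n)$, and it remains to kill the $t_i$. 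For this, conjugating by $(x_1 + x_j^k, x_2, \dots, x_n)$ turns the first coordinate into $x_1 + t_1 + \big[(x_j + t_j)^k - x_j^k\big]$, whose bracket has leading term $k\,t_j\,x_j^{k-1}$; diagonality forces $k t_j = 0$, and symmetrically $t_1 = 0$ using $(x_1, x_2 + x_1^k, x_3, \dots, x_n)$.

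The main obstacle is precisely this last step in positive characteristic: to deduce $t_j = 0$ one needs $k \neq 0$ in $\K$, so the monomial degree $k \ge 2$ must be chosen coprime to $\mathrm{char}\,\K$ (for instance $k = 2$ when $\mathrm{char}\,\K \neq 2$ and $k = 3$ when $\mathrm{char}\,\K = 2$). With such a choice every $t_i$ vanishes, hence $f = \id$ and the action is faithful. The only ingredients are Corollary~\ref{cor:stab chambre} together with the elementary behaviour of conjugation by transvections and by elementary automorphisms, so the argument works uniformly for all $n \ge 2$.
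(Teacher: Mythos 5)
Your proof is correct, and it takes a genuinely different route from the paper's after the common starting point. Both arguments begin with Corollary~\ref{cor:stab chambre}: an element acting trivially on $\Ap_\id$ must be diagonal affine, $f = (c_1x_1+t_1,\dots,c_nx_n+t_n)$. From there the paper works directly with valuations: it picks $g = (x_1 - x_2^r, x_2,\dots,x_n)$ with $r$ coprime to $\car\K$, evaluates $\nu_{g,\alpha}$ and $f\cdot\nu_{g,\alpha}$ on the test polynomial $x_1+x_2^r$, and shows the two classes differ (for $\alpha_2 > \alpha_1$) unless $c_2^r = c_1$ and $t_2 = 0$; running the argument symmetrically and over all admissible $r$, it then needs a small arithmetic trick (e.g.\ $c_1^3 = c_2$ and $c_1^5 = c_2$ in characteristic $2$) to force $c_1 = c_2 = 1$, $t_1 = t_2 = 0$ and reach a contradiction. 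You instead exploit the normality of the kernel $N$: every conjugate $gfg^{-1}$ lies in $N$, hence fixes $\Ap_\id$ pointwise, hence is \emph{again} diagonal affine by Corollary~\ref{cor:stab chambre} — and this structural constraint replaces all valuation computations. Note the two mechanisms are secretly the same, since ``$g^{-1}fg$ is diagonal affine'' is exactly ``$f$ fixes $\Ap_g$ pointwise''; but your packaging pays off: conjugation by transvections $\mathrm{Id}+E_{ij} \in \GL_n(\K)$ makes the linear part scalar, one quadratic conjugation gives $c=1$ in any characteristic, and degree-$k$ conjugations with $k$ coprime to $\car\K$ kill the translations, with no case analysis and no arithmetic juggling of several exponents. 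What the paper's version buys in exchange is explicitness — it exhibits a concrete valuation that $f$ moves, together with the exact discrepancy of values — which is in the spirit of the rest of the paper, but as a proof of faithfulness your argument is cleaner and equally valid for all $n \ge 2$ and all characteristics.
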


\begin{proof}
Soit $f$ un élément non trivial de $\TA$.
On veut montrer que l'application induite par $f$ sur $\Xl$ n'est pas l'identité.
Supposons que $f$ agisse trivialement sur l'appartement standard $\Ap_\id$ (sinon il n'y a rien à montrer).
Alors par le corollaire~\ref{cor:stab chambre},
\[
f = (c_1 x_1 + t_1, c_2 x_2 + t_2, \dots).
\]
Comme $f$ est non trivial, en conjuguant par un élément du groupe symétrique, nous pouvons de plus supposer que l'on n'est pas dans le cas $c_1 = c_2 = 1$ et $t_1 = t_2 = 0$.

Soit $r \ge 2$ un entier premier avec la caractéristique de $\K$.
Considérons $g = (x_1 - x_2^r, x_2, \dots, x_n)$ et $\alpha \in \W$, et comparons $\nu = \nu_{g,\alpha}$ et $f \cdot \nu$ en évaluant ces valuations sur le polynôme $P = x_1 + x_2^r$.
On a
\begin{align*}
-\nu(x_1+x_2^r) &= \alpha_1;\\
-(f\cdot \nu)(x_1+ x_2^r)
&= -\nu(c_1x_1 + t_1 + (c_2 x_2 + t_2)^r) \\
&= -\nu_{\id, \alpha}(c_1x_1 -c_1x_2^r + t_1 + c_2^r x_2^r + rc_2^{r-1}t_2x_2^{r-1} + \dots + t_2^r) \\
&=
\begin{cases}
\alpha_1 & \text{si } c_2^r = c_1, t_2 = 0,\\
\max \{ \alpha_1, (r-1)\alpha_2 \} & \text{si } c_2^r = c_1, t_2 \neq 0 \\
\max \{ \alpha_1, r\alpha_2 \} & \text{sinon}.
\end{cases}
\end{align*}
Ainsi, sauf dans le premier cas, les deux valuations sont distinctes dès que $\alpha_2 > \alpha_1$, et dans ce cas leurs classes d'homothétie sont distinctes car par ailleurs $-\nu(x_2) = -(f\cdot \nu)(x_2) = \alpha_2$.

Considérons pour finir le cas où $c_2^r = c_1$ et $t_2 = 0$.
Si $c_1^r \neq c_2$, ou si $t_1 \neq 0$, on peut reproduire l'argument précédent après avoir conjugué par la transposition qui échange $x_1$ et $x_2$.
Sinon, on a $t_1 = t_2 = 0$, et $c_1^r = c_2$ et $c_2^r = c_1$ pour tout entier $r$ premier avec la caractéristique de $\K$.
Mais ceci implique $c_1 = c_2 = 1$ : si $\car \K \neq 2$, les équations $c_1^2 = c_2$, $c_2^2 = c_1$ plus une équation supplémentaire $c_1^r = c_2$ avec $r \equiv 1 \mod 3$ suffisent déjà, et si $\car \K = 2$ cela découle des deux équations $c_1^3 = c_2$, $c_1^5 = c_2$.
Dans tous les cas on obtient une contradiction, ce qui conclut la preuve.
\end{proof}

\subsection{Une application vers un complexe simplicial}

La preuve de la proposition~\ref{pro:X3 1-connexe} est une version algébrique d'une première approche topologique qui nous a finalement semblé plus délicate à mettre au point.
L'idée est de construire en toute dimension $n$ une application naturelle, candidate à être une équivalence d'homotopie, depuis $\Xl_n$ vers un complexe simplicial $\CComp_n$ étudié dans \cite{Lamy, LP}.
Nous indiquons rapidement la construction, qui mène à des questions intéressantes et qui met également mieux en lumière le rôle clé joué par la théorie des réductions de Shestakov--Umirbaev et Kuroda (encapsulée plus haut dans le théorème~\ref{thm:triangle}, et ici dans la simple connexité de $\CComp_3$).

Rappelons d'abord la construction du complexe simplicial qui était noté $\mathcal C_n$ dans \cite{Lamy}, mais que nous noterons ici $\CComp_n$ afin d'éviter toute confusion avec le complexe de groupes utilisé plus haut.
Pour tout $1 \le r \le n$, nous appelons \emph{$r$-uplet de composantes} un morphisme
\begin{align*}
f\colon\A^n &\to \A^r \\
x = (x_1, \ldots, x_n) &\mapsto \left( f_1(x), \ldots, f_r(x) \right)
\end{align*}
qui peut s'étendre en un automorphisme modéré $f =
(f_1,\ldots, f_n)$ de $\A^n$.
On définit $n$ types distincts de sommets, en considérant des $r$-uplets de composantes
 modulo composition par un automorphisme affine au but, $r = 1,
\dots, n$:
\begin{equation*}
[f_1, \dots,f_r] := A_r (f_1, \ldots, f_r) = \{ a \circ (f_1, \ldots, f_r) ; a \in A_r\}
\end{equation*}
où $A_r = \GL_r(\K) \ltimes \K^r$ est le groupe affine en dimension $r$.
Nous dirons que $[f_1, \ldots,f_r]$ est un sommet de \emph{type} $r$.
Ensuite, pour tout automorphisme modéré $(f_1,\ldots, f_n) \in \Tame(\A^n)$ nous
attachons un $(n-1)$-simplexe sur les sommets $[f_n],[f_{n-1},f_n],\ldots,[f_1, \ldots,f_n]$.
Cette définition ne dépend pas d'un choix de représentants, et produit
un complexe simplicial $\CComp = \CComp_n$ de dimension $(n-1)$ sur lequel le groupe modéré
agit
par isométries, via
\[
g\cdot [f_1, \ldots,f_r] := [f_1 \circ g^{-1},\ldots, f_r \circ g^{-1}].
\]

\begin{example}
Le complexe $\CComp_2$ est l'arbre de Bass--Serre du scindement $\Td=A_2\ast_{A_2\cap E_2}E_2$, où $A_2$ est le groupe affine et $E_2= \{(ax_1+P(x_2),bx_2+d); a,b\neq 0\}$.
En caractéristique nulle, le complexe $\CComp_3$ est le développement universel du triangle de groupes $A,B,C$  et de leurs intersections (voir théorème~\ref{thm:triangle}).
\end{example}

On paramétrise le simplexe $[x_n],[x_{n-1},x_n],\ldots,[x_1, \dots,x_n]$ de la manière suivante.
Soit $\Delta\subset \W \subset \R^n$ le $(n-1)$-simplexe de sommets
\[
v_1=(2,\ldots, 2,1), v_2=(2,\ldots,2, 1,1),\ldots, v_n=(1,\dots, 1,1).
\]
On définit $\iota\colon \Delta \to \CComp$ comme l'application affine envoyant chaque $v_r$ sur le  sommet de type $r$ $[x_{n-r+1}, \ldots,x_n]$.
Notons que $\Delta\subset \W^+$ s'identifie par projectivisation à un simplexe de $\PW^+$ que l'on note encore $\Delta$.

On définit maintenant une application $\PW^+\to \Delta$.
Soit $\alpha\in \W^+$. Nous définissons un nouveau poids $\alpha'\in \Delta$ en posant, pour tout $i = 1, \dots, n$,
\[
\alpha_i' = \min (2, \alpha_i / \alpha_{\min}),
\]
qui ne dépend que de $[\alpha]\in \PW^+$.
On définit $\pi\colon \Xl \to \CComp$ par $\pi(\nu_{f,[\alpha]})= f\cdot\iota(\alpha')$.
L'application $\pi$ envoie la chambre $\Ap^+_{\id}$ sur le simplexe $[x_n],[x_{n-1},x_n], \ldots,[x_1, \dots,x_n]$.
La figure~\ref{fig:r} illustre l'effet de $\pi$ dans le cas $n = 3$.

\begin{figure}[ht]
\[
\begin{tikzpicture}[scale = 9,font=\small]
\coordinate (010) at (0,0);
\coordinate [label=above:$\trip{1}{0}{0}$] (100) at (1,0);
\coordinate (001) at (.5, -.86);
\coordinate [label=above:$\trip{1}{1}{0}$](110) at ($ (010)!.5!(100) $) {};
\coordinate  (101) at ($ (001)!.5!(100) $) {};
\coordinate  (011) at ($ (001)!.5!(010) $) {};
\coordinate  (210) at ($ (100)!1/3!(010) $) {};
\coordinate  (m10) at ($ (100)!1/6!(010) $) {};
\coordinate (120) at ($ (010)!1/3!(100) $) {};
\coordinate (201) at ($ (100)!1/3!(001) $) {};
\coordinate  (102) at ($ (001)!1/3!(100) $) {};
\coordinate  (021) at ($ (010)!1/3!(001) $) {};
\coordinate  (012) at ($ (001)!1/3!(010) $) {};
\coordinate  (221) at (intersection of 001--110 and 100--021) {};
\coordinate  (211) at (intersection of 100--011 and 010--201) {};
\coordinate (432) at ($ (221)!.5!(211) $) {};
\node  (111) [typethree] at (intersection of 011--100 and 101--010) {};
\node  (221) [typeone] at (intersection of 001--110 and 100--021) {};
\node  (211) [typetwo] at (intersection of 100--011 and 010--201) {};
\draw (111)--(211)--(221)--(111);
\draw[mid arrow] (110)--(221);
\draw[mid arrow] (210)--(221);
\draw[mid arrow] (m10)--(221);
\draw[mid arrow] (100)--(211);
\draw[mid arrow] (100)--(221);
\draw[mid arrow] (100)--(432);
\draw[white!75!black] (110)--(100);
\node  (111) [label=left:$\trip{1}{1}{1}$, typethree] at (intersection of 011--100 and 101--010) {};
\node  (221) [label=left:$\trip{2}{2}{1}$, typeone] at (intersection of 001--110 and 100--021) {};
\node  (211) [label=below right:$\trip{2}{1}{1}$, typetwo] at (intersection of 100--011 and 010--201) {};
\end{tikzpicture}
\]
\caption{L'application $\pi$ en restriction à une chambre de $\Xl_3$.}
\label{fig:r}
\end{figure}
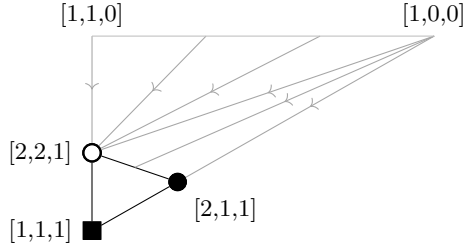

\begin{lemma}
Si $\nu_{g, [\alpha]} = \nu_{f, [\beta]}$, alors
$f\cdot \iota(\alpha') = g\cdot\iota(\beta')$.
En particulier $\pi$ est une application bien définie de $\Xl$
vers $\CComp$.
\end{lemma}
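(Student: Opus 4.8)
The plan is to reduce well-definedness to a single fixed-point statement in $\CComp$ for an element of a stabilizer, and then to read that fixed point off from the weighted-degree structure of the stabilizer.

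First I would use the $\TA$-action to normalize. Given $\nu_{g,[\alpha]}=\nu_{f,[\beta]}$ with $\alpha,\beta\in\W^+$, applying $g^{-1}$ yields $\nu_{\id,[\alpha]}=\nu_{\tilde f,[\beta]}$ for $\tilde f:=g^{-1}f$. By Proposition~\ref{pro:alpha intrinseque} we get $\alpha^+=\beta^+$, hence $[\alpha]=[\beta]$ (since $\alpha,\beta\in\W^+$), and therefore $\alpha'=\beta'$ because $\alpha'$ depends only on $[\alpha]$. Moreover $\tilde f\in\Stab(\nu_{\id,[\alpha]})$. Writing $f=g\tilde f$, the desired equality $f\cdot\iota(\alpha')=g\cdot\iota(\beta')=g\cdot\iota(\alpha')$ then amounts to the single statement that $\tilde f$ \emph{fixes} the point $\iota(\alpha')\in\CComp$. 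So everything reduces to: every $\tilde f\in\Stab(\nu_{\id,[\alpha]})$ fixes $\iota(\alpha')$.

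Next I would unravel what fixing $\iota(\alpha')$ means combinatorially. The point $\iota(\alpha')$ lies in the simplex on the vertices $[x_n],[x_{n-1},x_n],\dots,[x_1,\dots,x_n]$; writing $\alpha'=\sum_{r=1}^n t_r v_r$ in barycentric coordinates one finds $t_r=\alpha'_{n-r}-\alpha'_{n-r+1}$ for $r<n$ and $t_n=2-\alpha'_1$ (uniformly $t_r=\alpha'_{n-r}-\alpha'_{n-r+1}$ under the convention $\alpha'_0:=2$). The support of $\iota(\alpha')$ is thus the face spanned by the type-$r$ vertices with $t_r>0$. Since $\tilde f$ acts simplicially, preserves the type $r$, and preserves the barycentric weights, we have $\tilde f\cdot\iota(\alpha')=\iota(\alpha')$ iff $\tilde f$ fixes each vertex $[x_{n-r+1},\dots,x_n]$ with $t_r>0$. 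Setting $i_0:=n-r$, a short computation with $\alpha'_i=\min(2,\alpha_i/\alpha_n)$ shows that $t_r>0$ forces $i_0$ to be a block boundary of $\alpha$ and forces $\alpha_j<2\alpha_n$ for every $j>i_0$.

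The heart of the matter is then to fix these individual vertices. Writing $\tilde f^{-1}=(g_1,\dots,g_n)$, the action gives $\tilde f\cdot[x_{n-r+1},\dots,x_n]=[g_{n-r+1},\dots,g_n]$, and I must show $[g_{i_0+1},\dots,g_n]=[x_{i_0+1},\dots,x_n]$, i.e.\ that $(g_{i_0+1},\dots,g_n)$ agrees with $(x_{i_0+1},\dots,x_n)$ up to an element of $A_r$. Since $\tilde f^{-1}\in\Stab(\nu_{\id,[\alpha]})$, Corollary~\ref{cor:intrin}\ref{cor:intrin3} gives $\tilde f^{-1}\cdot\nu_{\id,\alpha}=\nu_{\id,\alpha}$, whence $-\nu_{\id,\alpha}(g_j)=\alpha_j$; that is, every monomial of $g_j$ has $\alpha$-weighted degree $\le\alpha_j$. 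For $j>i_0$ we have $\alpha_j<2\alpha_n$, and since every variable carries weight $\ge\alpha_n$, any monomial of weighted degree $<2\alpha_n$ has total degree $\le 1$, so $g_j$ is affine. Comparing weighted degrees once more, a variable $x_k$ can occur linearly in $g_j$ only if $\alpha_k\le\alpha_j$, which for a block boundary $i_0$ forces $k>i_0$. Hence $g_{i_0+1},\dots,g_n$ are affine functions of $x_{i_0+1},\dots,x_n$ alone, and their linear parts are independent (being part of the independent family of linear parts of the automorphism $\tilde f^{-1}$, as in Lemma~\ref{lem:formes_lineaire}), so they differ from $(x_{i_0+1},\dots,x_n)$ by an element of $A_r=\GL_r(\K)\ltimes\K^r$. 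This yields the vertex equality and completes the argument.

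I expect the main obstacle to be purely bookkeeping: correctly identifying the support of $\iota(\alpha')$ (the set of $r$ with $t_r>0$) and matching it to the weighted-degree threshold $2\alpha_n$, so that precisely the vertices one must fix are the ones to which the affineness argument applies. The algebraic core — that a weighted degree strictly below twice the minimal weight forces the component to be affine and supported on the lightest variables — is short once this combinatorial translation is in place.
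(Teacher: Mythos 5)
Your proof is correct and follows essentially the same route as the paper's: reduce via the action and Proposition~\ref{pro:alpha intrinseque} to showing that an element of $\Stab(\nu_{\id,[\alpha]})$ fixes $\iota(\alpha')$, identify the support of $\iota(\alpha')$ as the vertices $[x_i,\dots,x_n]$ at the jumps $\alpha'_{i-1}>\alpha'_i$, and check that the corresponding trailing components form an affine automorphism of the trailing variables alone. The only difference is in how that last structural fact is justified: where the paper invokes Proposition~\ref{pro:stabilisateur} ($\Stab(\nu_{\id,[\alpha]})=M_\alpha\rtimes L_\alpha$), you rederive it directly from Corollary~\ref{cor:intrin}\ref{cor:intrin3} via weighted-degree bounds and the independence of linear parts, which is equally valid.
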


\begin{proof}
Par la proposition~\ref{pro:alpha intrinseque}, on peut supposer $g = \id$ et
$\beta=\alpha$. On a donc $\nu_{\id, [\alpha]} = \nu_{f,[\alpha]}$ où, d'après la proposition
\ref{pro:stabilisateur}, $f \in M_\alpha \rtimes L_\alpha$. On normalise $\alpha$ en demandant $\alpha_{\min} = 1$.

Il nous faut montrer que $f\cdot\iota(\alpha') = \iota(\alpha')$.
Pour cela il suffit de montrer que $f$ fixe chaque sommet d'un simplexe contenant $\iota(\alpha')$.
Considérons $(i_1, \ldots, i_k)$ la collection des indices $1\le i \le n$ tels que
$\alpha_{i-1}' > \alpha_i'$, où par convention, $\alpha'_{0} = 2$.
Remarquons que $\alpha'$ est une combinaison convexe des $v_{i_j}$, ainsi $\iota(\alpha')$ est contenu dans le simplexe de sommets $v_{i_j}$.

Pour chaque $j$ les coordonnées $f_{i_j}, \dots, f_n$ de $f$ correspondent à un
automorphisme en les variables $x_{i_j}, \dots, x_n$, puisque par
définition de $i_j$ on a $\alpha'_{i_j -1} > \alpha'_{i_j}$, et donc aussi $\alpha_{i_j -1} > \alpha_{i_j}$.
De plus cet automorphisme est affine, car $2 > \alpha'_{i_j} = \alpha_{i_j}$.
Ceci donne l'égalité centrale dans
\[
f^{-1}\cdot\iota(v_{i_j}) = [f_{i_j}, \dots, f_n] = [x_{i_j}, \dots, x_n] =\iota(v_{i_j}). \qedhere
\]
\end{proof}

\begin{remark}\label{rem:centre}
Pour tout $f = (f_1, \dots, f_n) \in \TA$, l'application $\pi$ induit une bijection locale entre un voisinage de la  valuation $\nu_{f^{-1}, [1, \dots, 1]}$ dans $\Xl$ et un voisinage du sommet de type $n$ correspondant $[f_1, \dots, f_n] \in \CComp$.

Suivant \cite{Lamy}, rappelons que les sommets de type $i$ à distance 1 de $[f_1, \dots, f_n]$ sont en bijection avec les sous-espaces vectoriels de dimension $i$ de
\[\text{vect}(f_1, \dots, f_n) \subset \K[x_1, \dots, x_n],\] et de même les simplexes contenant $[f_1, \dots, f_n]$ correspondent aux drapeaux dans cet espace.
On obtient ainsi une preuve alternative du fait que le link dans $\Xl_n$ d'une valuation de poids $[1, \dots, 1]$ est isomorphe à l'immeuble sphérique standard de $\GL_n(\K)$ (lemme~\ref{lem:GL}).
\end{remark}

\begin{question}
Les fibres de $\pi$ sont-elles contractiles? Si oui, cela entraîne-t-il que $\pi$ est une équivalence d'homotopie?
Une réponse positive donnerait une autre preuve de la proposition~\ref{pro:X3 1-connexe} grâce à la simple connexité de $\CComp_3$ \cite[Proposition 5.7]{Lamy}.
De plus, une fois le théorème~\ref{thm:main} établi, cela donnerait une autre preuve du fait que $\CComp_3$ est contractile (voir \cite[Theorem~A]{LP}).
\end{question}

\subsection{Comparaison avec la notion classique d'immeuble}
\label{sec:appart}

Nous justifions ici la remarque mentionnée dans l'introduction concernant le fait que le système d'appartements $\{ \Ap_f \mid f \in \TA$\} ne fait pas de $\Xl$ la réalisation de Davis d'un immeuble \cite[Definitions 4.1, 12.65]{AB}.

\begin{lemma} \label{lem:not a building}
Soit $f \in \TA$, et $[\alpha] \in \PW$ distinct de $[1, \ldots, 1]$ et de multiplicité au moins~1.
Alors tout voisinage de $\nu_{f, [\alpha]}$ contient deux points qui n'appartiennent pas à un même appartement.
\end{lemma}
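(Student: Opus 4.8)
The plan is to reduce to a standard base valuation, produce two nearby points on genuinely different ``branches'' arising from a single admissible equation, and then show that a common apartment would be forced to fold two same--sided chamber--germs into one flat sheet, which a Euclidean apartment cannot do.

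First I would reduce. Since each $f\in\TA$ maps $\Ap_g$ isometrically onto $\Ap_{fg}$ and permutes apartments, I may replace $\nu_{f,[\alpha]}$ by $f^{-1}\nu_{f,[\alpha]}=\nu_{\id,[\alpha]}$ and assume $f=\id$; and using the permutation subgroup $S_n\subset\TA$ together with Lemma \ref{lem:permute} I may assume $\alpha\in\W^+$. As $\mult([\alpha])\ge 1$, the weight $\alpha$ satisfies an admissible equation; let $L\subset\PW$ be the corresponding closed admissible half-space, so that $[\alpha]\in\partial L$, and let $g\in\TA$ be the elementary automorphism of Remark \ref{rem:lieu fixe}, for which $\Fix(g)\cap\Ap_\id=\{\nu_{\id,[\gamma]}:[\gamma]\in L\}$; in particular $g\in\Stab(\nu_{\id,[\alpha]})$. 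By Lemma \ref{lem:finite} only finitely many admissible hyperplanes meet a neighbourhood of $[\alpha]$, so by Remark \ref{rem:nervure} I can fix an open sector $S$ with apex $[\alpha]$ lying off $L$ and meeting no admissible hyperplane, and pick $[\beta]\in S$ as close to $[\alpha]$ as desired. I then set $x=\nu_{\id,[\beta]}$ and $y=\nu_{g,[\beta]}$. Since $[\beta]\notin L$, Proposition \ref{pro:lieu fixe} gives $x\ne y$, and because $\Ap_\id$ and $\Ap_g$ are isometric to $\R^{n-1}$ (Remark \ref{rem:plonge}) both points lie within distance $|[\alpha],[\beta]|$ of $\nu_{\id,[\alpha]}$; thus every prescribed neighbourhood contains such a pair.

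Now suppose some apartment $\Ap_h$ contains both $x$ and $y$. The key step is to show that $\Ap_h$ then contains the whole germ. As $[\beta]$ avoids every admissible hyperplane, $x$ lies in the interior of a chamber of $\Ap_\id$, so after relabelling chambers by a permutation, Corollary \ref{cor:intersection appartements} with Proposition \ref{pro:lieu fixe} identifies $\Ap_h\cap\Ap_\id$ with the image of an intersection of closed admissible half-spaces, each of which must contain $[\beta]$. Choosing $[\beta]$ inside the Remark \ref{rem:nervure} neighbourhood of $[\alpha]$, every admissible hyperplane meeting that neighbourhood passes through $[\alpha]$; hence each such half-space contains $[\alpha]$, and by convexity it contains the part of $S$ near $[\alpha]$. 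Consequently $\Ap_h\supseteq\Sigma_\id:=\{\nu_{\id,[\gamma]}:[\gamma]\in S\text{ near }[\alpha]\}$, and in particular $\nu_{\id,[\alpha]}\in\Ap_h$. Applying the same argument to the pair $(\Ap_g,\Ap_h)$ — conjugating by $g$, which fixes $\nu_{\id,[\alpha]}$ — yields $\Ap_h\supseteq\Sigma_g:=\{\nu_{g,[\gamma]}:[\gamma]\in S\text{ near }[\alpha]\}$.

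Finally I would derive the contradiction from flatness of $\Ap_h$. The map $\wgt$ sends both $\Sigma_\id$ and $\Sigma_g$ onto the same open set $\{[\gamma]^+:[\gamma]\in S\}\subset\PW^+$, and this is injective on the wall-free sector $S$. Moreover $\Sigma_\id$ and $\Sigma_g$ are \emph{disjoint}: a common point would lie in $\Ap_\id\cap\Ap_g=\Fix(g)\cap\Ap_\id$ (Corollary \ref{cor:intersection chambres}), forcing its weight into $L$, contrary to $S\cap L=\emptyset$. Thus $\Ap_h$ would contain two disjoint sheets with equal $\wgt$-image, both accumulating at $\nu_{\id,[\alpha]}$ and both approaching it from the same side of the wall $\partial L$ along which $\Ap_\id$ and $\Ap_g$ branch. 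In the Euclidean apartment $\Ap_h\cong\R^{n-1}$ the map $\wgt$ is the Weyl folding, injective on each chamber and identifying only opposite sides of a wall; so two sheets with equal image that occupy the same position relative to all walls through $\nu_{\id,[\alpha]}$ must coincide, contradicting their disjointness. Making this last step precise — in particular the case where $[\alpha]$ lies on a face of $\PW^+$, so that $\nu_{\id,[\alpha]}$ sits on a wall of $\Ap_h$ and $\wgt$ genuinely folds there, and one must check the two sheets are not fold--images of one another — is the heart of the proof and the main obstacle; the reductions and the permutation relabelling of chambers are routine by comparison.
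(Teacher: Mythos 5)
Your reduction, your choice of the pair $x=\nu_{\id,[\beta]}$, $y=\nu_{g,[\beta]}$, and the argument that a common apartment $\Ap_h$ would have to contain the two germs $\Sigma_\id$ and $\Sigma_g$ are reasonable, but the final step is a genuine gap, and the principle you invoke to close it is false. Since $\Sigma_\id$ and $\Sigma_g$ are disjoint, connected, and have the same image under $\wgt$, inside $\Ap_h\cong\PW$ they necessarily lie in two \emph{distinct} open Weyl chambers $\sigma_1(\mathrm{int}\,\PW^+)$ and $\sigma_2(\mathrm{int}\,\PW^+)$, with $\sigma_2^{-1}\sigma_1$ fixing $\alpha$ (both sheets accumulate at $\nu_{\id,[\alpha]}$); so they never ``occupy the same position relative to the walls'' of $\Ap_h$, and when $[\alpha]$ lies on a face of $\PW^+$ they can be mirror images across a wall of $\Ap_h$ --- a configuration perfectly compatible with flatness and with equal $\wgt$-image. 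This loophole is not hypothetical: your construction lets $L$ be the half-space of \emph{any} admissible equation satisfied by $\alpha$, and if one takes the equation $\alpha_1=\alpha_2$ (available e.g.\ for $[\alpha]=[p,p,1]$), so that $g=(x_1+x_2,x_2,x_3)$ and $\beta_1<\beta_2$, then for $h=(x_1,x_1-x_2,x_3)$ and $\sigma=(1,2)$ the proposition~\ref{pro:lieu fixe} shows that $h$ and $g^{-1}h\sigma=(x_1,x_2-x_1,x_3)$ both fix $\nu_{\id,[\beta]}$ (their only constraint is $\beta_2\geq\beta_1$); hence $x=\nu_{h,[\beta]}$ and $y=\nu_{g,[\beta]}=\nu_{h\sigma,[\beta]}=\nu_{h,[\sigma(\beta)]}$ both lie in $\Ap_h$, and your two points simply fail. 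With a non-wall choice of $L$ (say $\alpha_1\geq p\alpha_3$ at $[p,p,1]$) the pair does work, but proving it amounts to showing $g\notin\Stab(\nu_{\id,[\beta]})\,\tau\,\Stab(\nu_{\id,[\beta]})$ for every permutation $\tau\neq\id$, and that is precisely the content of the lemma, not a routine check. (Worse, at a weight such as $[\pi,\pi,1]$, whose only admissible equations are of the form $\alpha_i=\alpha_j$, the local pages are parametrized by $\p^1(\K)$ and every pair of distinct pages is realized by an apartment through $\nu$, so \emph{no} choice of points can work there; any correct argument must exploit an admissible equation not of the form $\alpha_i=\alpha_j$.)

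The paper's proof avoids the folding problem altogether by a different, purely metric perturbation: it takes $\alpha'$ on the segment from $[\alpha]$ towards $[1,\ldots,1]$, so that $\alpha$ and $\alpha'$ have the \emph{same symmetries} (the same stabilizer in $S_n$), and sets $\delta>0$ equal to the minimal distance between distinct points of the $S_n$-orbit of $[\alpha]$ (here $[\alpha]\neq[1,\ldots,1]$ is used). A half-space containing $\alpha$ but not $\alpha'$ yields $g\in\Stab(\nu_{\id,[\alpha]})$ not fixing $\nu_{\id,[\alpha']}$, and if some $\Ap_h$ contained both $\nu_{\id,[\alpha']}$ and $\nu_{g,[\alpha']}$, these distinct points would be $\nu_{h,[\sigma_2(\alpha')]}$ and $\nu_{h,[\sigma_1(\alpha')]}$ with $\sigma_1(\alpha')\neq\sigma_2(\alpha')$, hence $\sigma_1(\alpha)\neq\sigma_2(\alpha)$ by equality of symmetries; the isometric embedding of $\Ap_h$ (remarque~\ref{rem:plonge}) then gives $2r\geq|[\sigma_1(\alpha')],[\sigma_2(\alpha')]|\geq\delta-2r$, absurd for $r<\delta/4$. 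Note that this symmetric perturbation automatically forces the separating half-space to come from an equation not of the form $\alpha_i=\alpha_j$ --- exactly the point your generic perturbation into a sector destroys: near a face of $\PW^+$, the $S_n$-orbit of your generic $[\beta]$ contains pairs of points at distance comparable to $|[\alpha],[\beta]|$, so no distance count can exclude the mirror configuration, and your sheets argument by itself cannot either.
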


\begin{proof}
Par le corollaire~\ref{cor:intrin}\ref{cor:intrin2}, sans perte de généralité on peut supposer $f = \id$.
Notons $\delta > 0$ le minimum des distances $|\sigma_1([\alpha]),\sigma_2([\alpha])|$ entre deux points distincts de l'orbite de $[\alpha]$ pour l'action de $S_n$ sur~$\PW$.
Noter que $\delta$ est bien défini car l'hypothèse $[\alpha] \neq [1, \dots, 1]$ assure que cette orbite contient au moins 2 points.
Un réel $r > 0$ étant fixé, considérons un point $\alpha'$ du segment ouvert reliant $\alpha$ à $[1, \ldots, 1]$ tel que $|\alpha, \alpha'| \le r$.
Alors $\alpha, \alpha'$ ont les mêmes symétries, c'est-à-dire que pour tout $\sigma \in S_n$, $\sigma(\alpha) = \alpha$ si et seulement si $\sigma(\alpha') = \alpha'$.
De plus il existe un demi-espace admissible contenant $\alpha$ mais pas $\alpha'$, et donc par la remarque~\ref{rem:lieu fixe} il existe  $g \in \Stab(\nu_{\id,[\alpha]})$ ne fixant pas~$\nu_{\id,[\alpha']}$.
Supposons qu'il existe un appartement $\Ap_h$ contenant $\nu_{g,[\alpha']}$ et $\nu_{\id,[\alpha']}$.
Alors on devrait avoir des permutations $\sigma_1, \sigma_2 \in S_n$ telles que
\[
\nu_{h, [\sigma_1(\alpha')]} = \nu_{g,[\alpha']} \neq \nu_{\id, [\alpha']} = \nu_{h, [\sigma_2(\alpha')]},
\]
donc $\sigma_1(\alpha') \neq \sigma_2(\alpha')$.
Ceci mène à une contradiction dès que $r< \delta/4$, car en utilisant la remarque~\ref{rem:plonge} on obtient :
\begin{align*}
2r
&\ge  d_\Xl (\nu_{h, [\sigma_1(\alpha')]}, \nu_{\id, [\alpha]}) + d_\Xl ( \nu_{\id, [\alpha]}, \nu_{h, [\sigma_2(\alpha')]})
\ge d_\Xl (\nu_{h, [\sigma_1(\alpha')]}, \nu_{h, [\sigma_2(\alpha')]}) \\
&= | [\sigma_1(\alpha')],  [\sigma_2(\alpha')] |
\ge | [\sigma_1(\alpha)],  [\sigma_2(\alpha)] | - | [\sigma_1(\alpha')],  [\sigma_1(\alpha)] | - | [\sigma_2(\alpha')],  [\sigma_2(\alpha)] | \\
&\ge \delta - 2r. \qedhere
\end{align*}
\end{proof}

Par contraste, par le lemme~\ref{lem:GL}, le link d'un point de poids $[1,\dots,1]$ est un immeuble sphérique.

De même, pour $n=3$ on pourrait démontrer que le link de chaque valuation de poids $[p,p,1]$ est un «hexagone généralisé», c'est-à-dire un graphe de diamètre 3 et de systole 6.
En particulier ce link est un immeuble sphérique pour le système d'appartements
constitué de tous les hexagones (dont certains ne sont pas la trace d'un appartement $\Ap_f$).
Notre preuve de ce fait étant un peu longue et calculatoire nous l’omettons, mais nous souhaitions le mentionner comme motivation pour la remarque suivante.

\begin{remark}
\label{rem:nonimmeuble}
On peut se demander si $\Xl$ pourrait devenir la réalisation de Davis d'un immeuble après avoir agrandi le système d'appartements.
Pour $n=2$ c'est vrai, car $\Xl_2$ et un arbre complet sans feuilles (voir la section~\ref{sec:arbre}).
La réponse est par contre toujours négative pour $n\geq 3$.
En effet, dans la réalisation de Davis d'un immeuble euclidien de dimension $\geq 2$, par deux points passe une copie de $\R^2$ isométriquement plongée.
En particulier, pour tous points $\nu,\nu',\nu''$ et $\delta >0$ avec $|\nu,\nu'|=|\nu,\nu''|=\delta,|\nu',\nu''|=2\delta$, il existe un point $\nu^\perp$ avec $|\nu,\nu^\perp|=\delta, |\nu',\nu^\perp|=|\nu'',\nu^\perp|=\sqrt{2}\delta$.

Cependant, prenons $\nu=\nu_{\id, [\alpha]}\in \Xl$ avec $[\alpha]\in \PW$ de multiplicité $1$ dans un hyperplan admissible $c$ qui ne passe par aucun sommet de $\PW$ (et qui par conséquent ne contient aucune droite dans la métrique $|\cdot,\cdot|$).
Soit $\mathbf{n}$ le vecteur normal à $c$ en $[\alpha]$ orienté vers l'extérieur du demi-espace admissible $L$ délimité par $c$, dans la métrique $|\cdot,\cdot|$.
Pour $\beta=(\beta_i)$ avec $\beta_i=\log \alpha_i$ posons $\beta'=\beta+\delta\mathbf{n}$ et $\alpha'_i=\exp \beta_i'$, pour $\delta = \frac{\eps}4$ venant du lemme~\ref{lem:boule}\ref{boule_b}.
Soit $\nu'=\nu_{\id, [\alpha']}$ et $\nu''=\nu_{g,[\alpha']}$ pour un $g$ qui fixe $\nu$ mais pas $\nu'$ (voir remarque~\ref{rem:lieu fixe}).
Puisque $L$ est convexe, $|\nu',\nu''|=2\delta$.
Pourtant, l'intersection de la sphère de rayon $\sqrt{2}\delta$ centrée en $\nu'$ (resp.\ en $\nu''$) avec la sphère de rayon $\delta$ centrée en $\nu$ est contenue dans $\Ap_\id \setminus \Ap_g$ (resp.\ $\Ap_g \setminus \Ap_\id$).
Ainsi un $\nu^\perp$ comme plus haut n'existe pas.
\end{remark}

\bibliographystyle{myalpha}
\bibliography{biblio}

\begin{thebibliography}{BvdEW12}

\bibitem[AB08]{AB}
P.~Abramenko \& K.~S. Brown.
\newblock {\em Buildings}, volume 248 of {\em Graduate Texts in Mathematics}.
\newblock Springer, New York, 2008.
\newblock Theory and applications.

\bibitem[BFJ08]{BFJ}
S.~Boucksom, C.~Favre \& M.~Jonsson.
\newblock Valuations and plurisubharmonic singularities.
\newblock {\em Publ. Res. Inst. Math. Sci.}, 44(2):449--494, 2008.

\bibitem[BFL14]{BFL}
C.~Bisi, J.-P. Furter \& S.~Lamy.
\newblock The tame automorphism group of an affine quadric threefold acting on
  a square complex.
\newblock {\em J. \'Ec. Polytech. Math.}, 1:161--223, 2014.

\bibitem[BH99]{BH}
M.~R. Bridson \& A.~Haefliger.
\newblock {\em Metric spaces of non-positive curvature}, volume 319 of {\em
  Grundlehren der Mathematischen Wissenschaften}.
\newblock Springer-Verlag, Berlin, 1999.

\bibitem[Bou68]{Bourbaki}
N.~Bourbaki.
\newblock {\em \'El\'ements de math\'ematique. {F}asc. {XXXIV}. {G}roupes et
  alg\`ebres de {L}ie. {C}hapitre {IV}: {G}roupes de {C}oxeter et syst\`emes de
  {T}its. {C}hapitre {V}: {G}roupes engendr\'es par des r\'eflexions.
  {C}hapitre {VI}: syst\`emes de racines}.
\newblock Actualit\'es Scientifiques et Industrielles, No. 1337. Hermann,
  Paris, 1968.

\bibitem[BT84]{BT}
F.~Bruhat \& J.~Tits.
\newblock Sch\'emas en groupes et immeubles des groupes classiques sur un corps
  local.
\newblock {\em Bull. Soc. Math. France}, 112(2):259--301, 1984.

\bibitem[BvdEW12]{BEW}
J.~Berson, A.~van~den Essen \& D.~Wright.
\newblock Stable tameness of two-dimensional polynomial automorphisms over a
  regular ring.
\newblock {\em Adv. Math.}, 230(4-6):2176--2197, 2012.

\bibitem[DGO17]{DGO}
F.~Dahmani, V.~Guirardel \& D.~Osin.
\newblock Hyperbolically embedded subgroups and rotating families in groups
  acting on hyperbolic spaces.
\newblock {\em Mem. Amer. Math. Soc.}, 245, 2017.

\bibitem[FJ07]{FJ}
C.~Favre \& M.~Jonsson.
\newblock Eigenvaluations.
\newblock {\em Ann. Sci. \'Ecole Norm. Sup. (4)}, 40(2):309--349, 2007.

\bibitem[FMJ02]{FMJ}
G.~Freudenburg \& L.~Moser-Jauslin.
\newblock A nonlinearizable action of {$S_3$} on {$\mathbf C^4$}.
\newblock {\em Ann. Inst. Fourier}, 52(1):133--143, 2002.

\bibitem[GR08]{GR}
S.~Gaussent \& G.~Rousseau.
\newblock Kac-{M}oody groups, hovels and {L}ittelmann paths.
\newblock {\em Ann. Inst. Fourier (Grenoble)}, 58(7):2605--2657, 2008.

\bibitem[Lam02]{LamyJung}
S.~Lamy.
\newblock Une preuve g\'eom\'etrique du th\'eor\`eme de {J}ung.
\newblock {\em Enseign. Math. (2)}, 48(3-4):291--315, 2002.

\bibitem[Lam19]{Lamy}
S.~Lamy.
\newblock Combinatorics of the tame automorphism group.
\newblock {\em Ann. Fac. Sci. Toulouse Math. (6)}, 28(1):145--207, 2019.

\bibitem[LP19]{LP}
S.~Lamy \& P.~Przytycki.
\newblock Acylindrical hyperbolicity of the three-dimensional tame automorphism
  group.
\newblock {\em Ann. Sci. \'{E}c. Norm. Sup\'{e}r. (4)}, 52(2):367--392, 2019.

\bibitem[OS02]{OS}
B.~Oliver \& Y.~Segev.
\newblock Fixed point free actions on {$\bf Z$}-acyclic 2-complexes.
\newblock {\em Acta Math.}, 189(2):203--285, 2002.

\bibitem[Pap14]{Pa}
A.~Papadopoulos.
\newblock {\em Metric spaces, convexity and non-positive curvature}, volume~6
  of {\em IRMA Lectures in Mathematics and Theoretical Physics}.
\newblock European Mathematical Society (EMS), Z\"{u}rich, second edition,
  2014.

\bibitem[Par00]{Parreau2000}
A.~Parreau.
\newblock Immeubles affines: construction par les normes et \'etude des
  isom\'etries.
\newblock In {\em Crystallographic groups and their generalizations
  ({K}ortrijk, 1999)}, volume 262 of {\em Contemp. Math.}, pages 263--302. AMS,
  2000.

\bibitem[{Sus}77]{Suslin}
A.~{Suslin}.
\newblock {On the structure of the special linear group over polynomial rings.}
\newblock {\em {Math. USSR, Izv.}}, 11:221--238, 1977.

\bibitem[Wri15]{Wright}
D.~Wright.
\newblock The generalized amalgamated product structure of the tame
  automorphism group in dimension three.
\newblock {\em Transform. Groups}, 20(1):291--304, 2015.

\end{thebibliography}

\end{document}